\numberwithin{equation}{section}
\newtheorem{thm}{Theorem}[section]
\newtheorem{lem}[thm]{Lemma}
\newtheorem{prop}[thm]{Proposition}
\newtheorem{cor}[thm]{Corollary}
\newtheorem*{mainquestion}{The Main Question}
\newtheorem*{thma}{Theorem A}
\newtheorem*{thmb}{Theorem B}
\theoremstyle{definition}
\theoremstyle{remark}
\newtheorem{ex}[thm]{Example}
\newcommand{\ebf}[1]{\textbf{\emph{#1}}}
\newcommand{\comdia}[8]
{
\begin{displaymath}
\begin{CD}
#1 @> #5 >> #2\\
@VV #7 V @VV #8 V\\
#3 @> #6 >> #4
\end{CD}
\end{displaymath}
}
\begin{document}

\title{Mating the Basilica with a Siegel Disk}
\author{Jonguk Yang}
\date{\today}

\maketitle

\begin{abstract}
Let $f_\mathbf{S}$ be a quadratic polynomial with a fixed Siegel disc of bounded type. Using an adaptation of complex a priori bounds for critical circle maps, we prove that $f_\mathbf{S}$ is conformally mateable with the basilica polynomial $f_\mathbf{B}(z) := z^2-1$.
\end{abstract}

\section{The Definition of Mating} \label{defnmating}

The simplest non-linear examples of holomorphic dynamical systems are given by quadratic polynomials in $\mathbb{C}$. By a linear change of coordinates, any quadratic polynomial can be uniquely normalized as
\begin{displaymath}
f_c (z) := z^2 + c, \hspace{5 mm} c \in \mathbb{C}.
\end{displaymath}
This is referred to as the \ebf{quadratic family}.

The critical points for $f_c$ are $\infty$ and $0$. Observe that $\infty$ is a superattracting fixed point for $f_c$. Let $\mathbf{A}^\infty_c$ be the attracting basin of $\infty$. It follows from the maximum modulus principle that $\mathbf{A}^\infty_c$ is a connected set. The complement of $\mathbf{A}^\infty_c$ is called the filled Julia set $K_c$. The boundary of $K_c$ is equal to the Julia set $J_c$.

The non-escape locus in the parameter space for $f_c$ (referred to as the \ebf{Mandelbrot set}) is defined as a compact subset of $\mathbb{C}$:
\begin{displaymath}
\mathcal{M} := \{c \in \mathbb{C} \hspace{2mm} | \hspace{2mm} 0 \notin \mathbf{A}^\infty_c\},
\end{displaymath}
which is known to be connected (see \cite{DH}). It is not difficult to prove that $J_c$ is connected (and therefore, $\mathbf{A}^\infty_c$ is simply connected) if and only if $c \in \mathcal{M}$. In fact, if $c \notin \mathcal{M}$, then $J_c=K_c$ is a Cantor set, and the dynamics of $f_c$ restricted to $J_c$ is conjugate to the dyadic shift map (see \cite{M2}). We also define the following subset of the Mandelbrot set:
\begin{displaymath}
\mathcal{L} := \{c \in \mathcal{M} \hspace{2mm} | \hspace{2mm} J_c \text{ is locally connected}\}.
\end{displaymath}

\begin{figure}[h]
\centering
\includegraphics[scale=0.6]{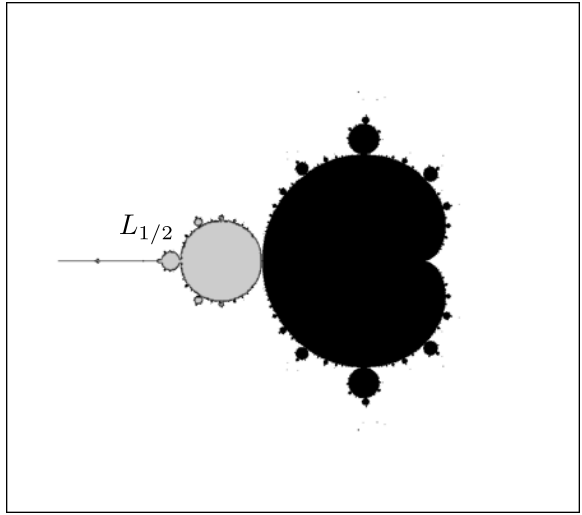}
\caption{The Mandelbrot set $\mathcal{M}$. The 1/2-limb $L_{1/2}$ is highlighted.}
\label{fig:mandelbrot}
\end{figure}

The quadratic family has been the center of attention in holomorphic dynamics for the past three decades, and we now have an almost complete understanding of its dynamics (see e.g. \cite{M2}). It should be noted however, that the main conjecture in the field (the local connectedness property of the Mandelbrot set, or \emph{MLC} for short) remains open (see \cite{DH}).

In contrast to the quadratic family, the dynamics of non-polynomial quadratic rational maps is still a wide open area of research. In this section, we describe a construction originally put forward by Douady and Hubbard (see \cite{Do}) which produces quadratic rational maps by combining the dynamics of two quadratic polynomials.

Suppose $c \in \mathcal{L}$. Since $J_c$ is connected, $\mathbf{A}^\infty_c$ must be a simply connected domain. Let
\begin{displaymath}
\phi_c : \mathbf{A}^\infty_c \to \mathbb{D}
\end{displaymath}
be the unique conformal Riemann mapping such that $\phi_c(\infty) = 0$, and $\phi_c'(\infty) > 0$. It is not difficult to prove that the following diagram commutes:
\comdia{\mathbf{A}^\infty_c}{\mathbf{A}^\infty_c}{\mathbb{D}}{\mathbb{D}}{f_c}{z \mapsto z^2}{\phi_c}{\phi_c}
and hence, $\phi_c$ is the B\"ottcher uniformization of $f_c$ on $\mathbf{A}^\infty_c$. Moreover, since $J_c$ is locally connected, Carath\'eodory's theory implies that the inverse of $\phi_c$ extends continuously to the boundary of $\mathbb{D}$ (see \cite{M1}). If we let
\begin{displaymath}
\tau_c := \phi_c^{-1}|_{\partial \mathbb{D}},
\end{displaymath}
we obtain a continuous parametrization of $J_c$ by the unit circle $\partial \mathbb{D} = \mathbb{R} / \mathbb{Z}$ known as a \ebf{Carath\'eodory loop}. Observe that $f_c$, when restricted to $J_c$, acts via $\tau_c$ as the angle doubling map:
\begin{displaymath}
f_c(\tau_c(t)) = \tau_c(2t).
\end{displaymath}

Now, suppose $c_1, c_2 \in \mathcal{L}$. Using $\tau_{c_1}$ and $\tau_{c_2}$, we can glue the dynamics of $f_{c_1}$ and $f_{c_2}$ together to construct a new dynamical system as follows.

First we construct a new dynamical space $K_{c_1} \vee K_{c_2}$ by gluing the filled Julia sets $K_{c_1}$ and $K_{c_2}$:
\begin{equation} \label{gluing def}
K_{c_1} \vee K_{c_2} : = (K_{c_1} \sqcup K_{c_2}) / \{ \tau_{c_1}(t) \sim \tau_{c_2}(-t) \}.
\end{equation}
We refer to the resulting equivalence relation $\sim$ as \ebf{ray equivalence}, and denote it by $\sim_{\text{ray}}$. For $x \in K_{c_i}$, $i = 1, 2$, we denote the ray equivalency class of $x$ by $[x]_{\text{ray}}$.

We now define a new map
\begin{displaymath}
f_{c_1} \vee f_{c_2} : K_{c_1} \vee K_{c_2} \to K_{c_1} \vee K_{c_2},
\end{displaymath}
called the \ebf{formal mating of $f_{c_1}$ and $f_{c_2}$}, by letting $f_{c_1} \vee f_{c_2} \equiv f_{c_1}$ on $K_{c_1}$ and $f_{c_1} \vee f_{c_2} \equiv f_{c_2}$ on $K_{c_2}$.

\begin{figure}[h]
\centering
\includegraphics[scale=0.7]{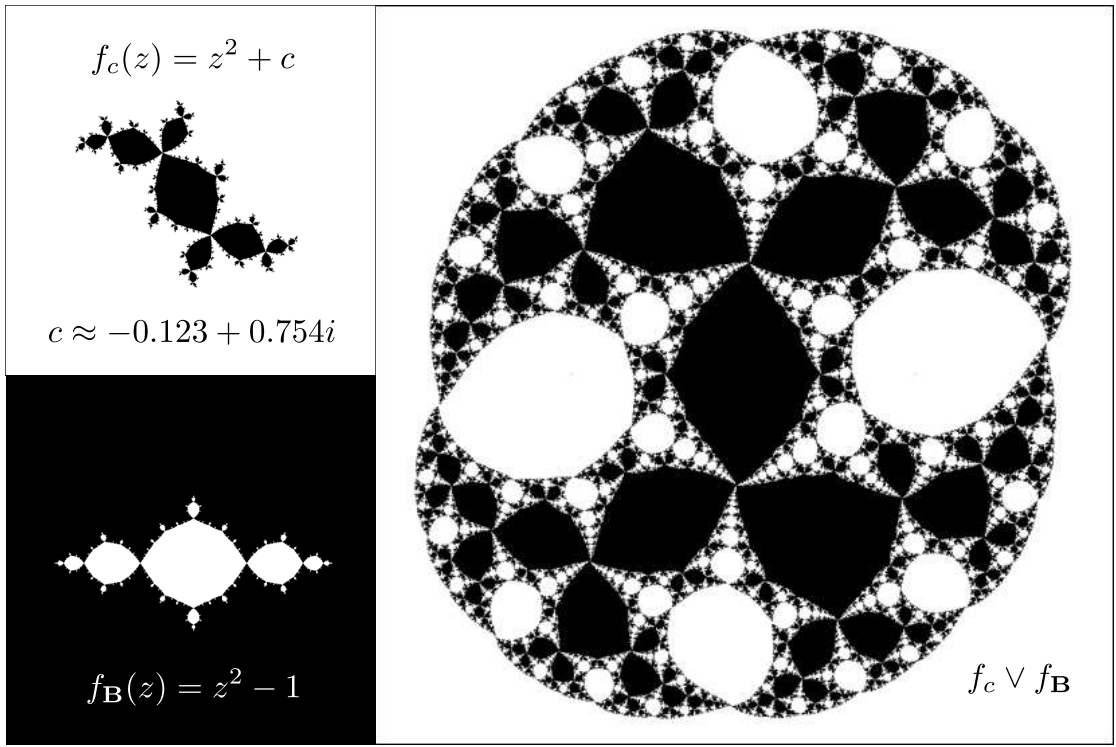}
\caption{The Douady rabbit $f_c$, $c \approx -0.123+0.754i$, mated with the basilica polynomial $f_\mathbf{B}$.}
\label{fig:douadyrabbit}
\end{figure}

If the space $K_{c_1} \vee K_{c_2}$ is homeomorphic to the 2-sphere, then $f_{c_1}$ and $f_{c_2}$ are said to be \ebf{topologically mateable}. If, in addition, there exists a quadratic rational map $R$ and a homeomorphism 
\begin{displaymath}
\Lambda : K_{c_1} \vee K_{c_2} \to \hat{\mathbb{C}}
\end{displaymath}
such that $\Lambda$ is conformal on $\mathring{K}_{c_1} \sqcup \mathring{K}_{c_2} \subset K_{c_1} \vee K_{c_2}$, and the following diagram commutes:
\begin{displaymath}
\begin{CD}
K_{c_1} \vee K_{c_2} @>f_{c_1} \vee f_{c_2}>> K_{c_1} \vee K_{c_2}\\
@VV\Lambda V @VV\Lambda V\\
\hat{\mathbb{C}} @>R>> \hat{\mathbb{C}}
\end{CD}
\end{displaymath}
then $f_{c_1}$ and $f_{c_2}$ are said to be \ebf{conformally mateable}. The quadratic rational map $R$ is called a \ebf{conformal mating of $f_{c_1}$ and $f_{c_2}$}. We also say that $R$ \ebf{realizes the conformal mating of $f_{c_1}$ and $f_{c_2}$}.

In applications, it is sometimes more useful to work with the following reformulation of the definition of conformal mateability:
 
\begin{prop} \label{mating def}
Suppose $c_1, c_2 \in \mathcal{L}$. Then $f_{c_1}$ and $f_{c_2}$ are conformally mateable if and only if there exists a pair of continuous maps
\begin{displaymath}
\Lambda_1 : K_{c_1} \to \hat{\mathbb{C}}, \hspace{5mm} \Lambda_2 : K_{c_2} \to \hat{\mathbb{C}}
\end{displaymath}
such that for all $i, j \in \{1, 2\}$ the following three conditions are satisfied:
\begin{enumerate}[label=(\roman{*})]
\item $\Lambda_i(z) = \Lambda_j(w)$ if and only if $z \sim_{\text{ray}} w$
\item $\Lambda_i$ is conformal on $\mathring{K}_{c_i}$
\item there exists a rational function $R$ of degree 2 such that the following diagrams commute:
\begin{displaymath}
\begin{CD}
K_{c_1} @>f_{c_1}>> K_{c_1}\\
@VV\Lambda_1 V @VV\Lambda_1 V\\
\hat{\mathbb{C}} @>R>> \hat{\mathbb{C}}
\end{CD}
\hspace{20 mm}
\begin{CD}
K_{c_2} @>f_{c_2}>> K_{c_2}\\
@VV\Lambda_2 V @VV\Lambda_2 V\\
\hat{\mathbb{C}} @>R>> \hat{\mathbb{C}}
\end{CD}
\end{displaymath}
\end{enumerate}
\end{prop}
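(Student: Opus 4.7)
The plan is to prove the two implications by treating $(\Lambda_1,\Lambda_2)$ as the two ``summand restrictions'' of a single map $\Lambda$ defined on the wedge $K_{c_1}\vee K_{c_2}$. Both directions are formal translations between the wedge and its two pieces, except that in the reverse direction one must separately verify surjectivity of the induced map in order to conclude that $K_{c_1}\vee K_{c_2}$ is homeomorphic to $\hat{\mathbb{C}}$.

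For the forward implication, suppose $\Lambda \colon K_{c_1}\vee K_{c_2} \to \hat{\mathbb{C}}$ realizes the conformal mating, and let $\pi_i \colon K_{c_i} \to K_{c_1}\vee K_{c_2}$ denote the composition of the inclusion $K_{c_i} \hookrightarrow K_{c_1}\sqcup K_{c_2}$ with the quotient projection from (\ref{gluing def}). Setting $\Lambda_i := \Lambda \circ \pi_i$, condition (i) is immediate from the definition of $\sim_{\text{ray}}$ together with injectivity of $\Lambda$; (ii) follows because $\pi_i$ is a homeomorphism onto its image when restricted to the open set $\mathring{K}_{c_i}$, on which $\Lambda$ is conformal by hypothesis; and (iii) is obtained by restricting the given commuting square to the summand $K_{c_i}$, using that $f_{c_1}\vee f_{c_2}$ agrees with $f_{c_i}$ on $K_{c_i}$.

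For the reverse implication, given $\Lambda_1,\Lambda_2$ satisfying (i)--(iii), define $\Lambda \colon K_{c_1}\vee K_{c_2} \to \hat{\mathbb{C}}$ by $\Lambda([z]_{\text{ray}}) := \Lambda_i(z)$ whenever $z \in K_{c_i}$. Condition (i) makes $\Lambda$ both well defined and injective; continuity is automatic from the universal property of the quotient topology. Once surjectivity is granted, $\Lambda$ is a continuous bijection from a compact space to a Hausdorff one, hence a homeomorphism. This yields simultaneously the topological mateability $K_{c_1}\vee K_{c_2}\cong S^2$ and, via (ii) and (iii) transported through the quotient, the required conformality on $\mathring{K}_{c_1}\sqcup\mathring{K}_{c_2}$ and semiconjugacy to $R$, so that $\Lambda$ realizes the conformal mating.

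The main obstacle is therefore surjectivity of $\Lambda$, i.e.\ showing that $X := \Lambda_1(K_{c_1}) \cup \Lambda_2(K_{c_2})$ is all of $\hat{\mathbb{C}}$. The set $X$ is compact and connected. Since each $f_{c_i}$ maps $K_{c_i}$ onto itself, (iii) gives $R(X)=X$. For the reverse inclusion, fix $y \in X$ with $y=\Lambda_i(u)$ and let $\{u_1,u_2\}=f_{c_i}^{-1}(u)\subset K_{c_i}$ (counted with multiplicity); then by (iii), $\Lambda_i(u_1),\Lambda_i(u_2) \in R^{-1}(y)$, and since $\deg R = 2$ these exhaust $R^{-1}(y)$, proving $R^{-1}(X)\subseteq X$. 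Thus $X$ is fully $R$-invariant; being infinite closed backward-invariant, it contains $J(R)$, while by (ii) it contains every conformal image $\Lambda_i(\mathring{K}_{c_i})$, an open union of Fatou components. A short analysis of the forward-invariant open set $\hat{\mathbb{C}}\setminus X$ under the degree-$2$ map $R$---its components must be Fatou components of $R$ whose grand orbits miss every $\Lambda_i(\mathring{K}_{c_i})$---then rules out nonemptiness and gives $X=\hat{\mathbb{C}}$.
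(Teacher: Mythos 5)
The paper does not actually supply a proof of this proposition; it is stated as a routine reformulation, and the \texttt{proof} environment that follows it belongs to the corollary on local connectedness of $J(R)$. So there is nothing to compare against directly. Your forward implication is correct, and your reverse direction correctly reduces everything to a single substantive point: surjectivity of the induced map $\Lambda\colon K_{c_1}\vee K_{c_2}\to\hat{\mathbb{C}}$. The well-definedness, injectivity, continuity, and the ``compact-to-Hausdorff'' step are all fine.

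The surjectivity argument, however, has two genuine gaps. First, in establishing $R^{-1}(X)\subseteq X$, the step ``$\Lambda_i(u_1),\Lambda_i(u_2)\in R^{-1}(y)$, and since $\deg R=2$ these exhaust $R^{-1}(y)$'' is incomplete: condition (i) permits $u_1\ne u_2$ with $\Lambda_i(u_1)=\Lambda_i(u_2)$ whenever the two $f_{c_i}$-preimages of $u$ are ray-equivalent, and this really occurs (iterated preimages of pinch points of the type discussed around $\beta$ in the paper). In that case you have exhibited only \emph{one} point of $R^{-1}(y)$; to finish you would need to show that this point is a critical point of $R$ of local degree $2$ (or, failing that, locate the missing preimage via $\Lambda_j$), which requires a separate argument about how the local topological degree of $f_{c_1}\vee f_{c_2}$ at a ray class transfers to $R$ through the embedding $\Lambda$. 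Second, the concluding sentence --- that the components of $\hat{\mathbb{C}}\setminus X$ ``must be Fatou components of $R$ whose grand orbits miss every $\Lambda_i(\mathring{K}_{c_i})$,'' which ``rules out nonemptiness'' --- is asserted without proof and is not obviously correct as stated: a component of $\hat{\mathbb{C}}\setminus X$ lies inside some Fatou component, but there is no a priori reason it equals one, since $X$ can meet the Fatou set through $\Lambda_i(\mathring{K}_{c_i})$ and, potentially, through parts of $J_X=\Lambda_i(J_{c_i})$ that need not coincide with $J(R)$ at this stage of the argument. Even granting the claim, the case of a rotation domain in $\hat{\mathbb{C}}\setminus X$ is not excluded by a ``no critical orbit'' argument alone. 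The approach is reasonable in outline, but the last two steps need actual proofs rather than a gesture.
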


\begin{cor}
Suppose $R$ is a conformal mating of $f_{c_1}$ and $f_{c_2}$ for some $c_1, c_2 \in \mathcal{L}$. Then $R$ has a locally connected Julia set $J(R)$.
\end{cor}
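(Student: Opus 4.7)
The plan is to identify $J(R)$ as a continuous image of $J_{c_1}$ and then appeal to the Hahn--Mazurkiewicz theorem. Let $\Lambda : K_{c_1} \vee K_{c_2} \to \hat{\mathbb{C}}$ be the homeomorphism provided by the hypothesis, and write $\Lambda_i := \Lambda|_{K_{c_i}}$. Since the Carath\'eodory loops $\tau_{c_i}$ take values in $J_{c_i}$, ray equivalence never touches interior points, so $\hat{\mathbb{C}}$ decomposes as the disjoint union
$$ \Lambda_1(\mathring{K}_{c_1}) \sqcup \Lambda_2(\mathring{K}_{c_2}) \sqcup \Lambda_1(J_{c_1}), $$
with $\Lambda_1(J_{c_1}) = \Lambda_2(J_{c_2})$ closed.

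The main step is to verify that $J(R) = \Lambda_1(J_{c_1})$. For the inclusion $J(R) \subseteq \Lambda_1(J_{c_1})$, I would use that $\Lambda_i$ is conformal on $\mathring{K}_{c_i}$ and conjugates $f_{c_i}$ to $R$; the iterates $\{f_{c_i}^n\}$ form a normal family on $\mathring{K}_{c_i}$ because their values lie in the compact set $K_{c_i}$, and this normality transfers under $\Lambda_i$, placing $\Lambda_i(\mathring{K}_{c_i})$ inside $F(R)$. Together with the decomposition above this yields the inclusion. For the reverse inclusion, repelling periodic cycles of $f_{c_i}$ are dense in $J_{c_i}$, and $\Lambda_i$ sends them to periodic cycles of $R$ lying in the closed set $\Lambda_1(J_{c_1})$; since a degree-two rational map has only finitely many non-repelling cycles (Fatou--Shishikura), all but finitely many of these images are genuinely repelling periodic points of $R$, hence belong to $J(R)$. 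Taking closures and using the continuity of $\Lambda_i$ gives $\Lambda_1(J_{c_1}) \subseteq J(R)$.

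Finally, because $c_1 \in \mathcal{L}$ and $c_1 \in \mathcal{M}$, the set $J_{c_1}$ is a compact, connected, locally connected metric space, i.e.\ a Peano continuum. By the Hahn--Mazurkiewicz theorem, a Hausdorff continuous image of a Peano continuum is itself a Peano continuum, so $J(R) = \Lambda_1(J_{c_1})$ is locally connected.

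The main obstacle is the first inclusion $J(R) \subseteq \Lambda_1(J_{c_1})$: this is precisely the step that converts the topological data of conformal mating into a dynamical normality statement, and it depends essentially on the conformality clause (ii) of Proposition~\ref{mating def}. Once that inclusion is secured, the reverse inclusion is a standard density argument and the local-connectedness conclusion is formal via Hahn--Mazurkiewicz.
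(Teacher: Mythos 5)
Your proof is correct and follows essentially the same route as the paper: identify $J(R) = \Lambda_1(J_{c_1})$ and then observe that the continuous image of a compact locally connected metric space is locally connected. The paper simply asserts the identity $J(R) = \Lambda_1(J_{c_1})$, whereas you supply a Montel-normality argument for $J(R) \subseteq \Lambda_1(J_{c_1})$ and a density-of-repelling-cycles argument (via Fatou--Shishikura) for the reverse inclusion; invoking Hahn--Mazurkiewicz at the end is mild overkill where the elementary fact about continuous images suffices, but the reasoning is sound.
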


\begin{proof}
Let $\Lambda_1 : K_{c_1} \to \hat{\mathbb{C}}$ and $\Lambda_2 : K_{c_2} \to \hat{\mathbb{C}}$ be as given in proposition \ref{mating def}. Note that
\begin{displaymath}
J(R) = \Lambda_1(J_{c_1}) = \Lambda_2(J_{c_2}).
\end{displaymath}
Since the continuous image of a compact locally connected set is locally connected, the result follows.
\end{proof}

\begin{ex} \label{triv ex}
For every $c \in \mathcal{L}$, $f_c$ is trivially conformally mateable with the squaring map $f_0(z) = z^2$. This follows from choosing $\Lambda_1$ and $\Lambda_2$ in proposition \ref{mating def} to be the identity map on $K_c$ and the inverse of the B\"ottcher uniformization of $f_c$ on $\mathbf{A}^\infty_c$ respectively. Note that the conformal mating of $f_c$ and $f_0$ is given by $f_c$ itself.

The converse is given by the following easy result:

\begin{prop} \label{polytriv}
Suppose a quadratic polynomial $P : \mathbb{C} \to \mathbb{C}$ is a conformal mating of $f_{c_1}$ and $f_{c_2}$ for some $c_1, c_2 \in \mathcal{L}$. Then either $f_{c_1}$ or $f_{c_2}$ must be equal to the squaring map $f_0$.
\end{prop}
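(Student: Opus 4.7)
The plan is to trace where $\infty$---the superattracting fixed point of the quadratic polynomial $P$---sits under $\Lambda_1$ or $\Lambda_2$, and to show that its preimage must be the finite critical point of one of $f_{c_1}$, $f_{c_2}$, forcing that map to be $f_0$. First I would note that $P$ has $\infty$ as a totally invariant superattracting fixed point, so its basin $\mathbf{A}^\infty(P)$ is open, connected, and disjoint from $J(P) = \Lambda_1(J_{c_1}) = \Lambda_2(J_{c_2})$. Condition (i) of Proposition \ref{mating def}, combined with the observation that $\sim_{\mathrm{ray}}$ identifies only points of $J_{c_1}$ and $J_{c_2}$, makes $\Lambda_1$ and $\Lambda_2$ injective on $\mathring{K}_{c_1}$ and $\mathring{K}_{c_2}$ respectively with disjoint images. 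Together with (ii), this exhibits $\Lambda_1(\mathring{K}_{c_1})$ and $\Lambda_2(\mathring{K}_{c_2})$ as disjoint open subsets of $\hat{\mathbb{C}}$ whose union is $\hat{\mathbb{C}} \setminus J(P)$. Connectedness of $\mathbf{A}^\infty(P)$ forces it to lie entirely in one of them; without loss of generality, $\mathbf{A}^\infty(P) \subseteq \Lambda_1(\mathring{K}_{c_1})$.

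Next I would set $U := \Lambda_1^{-1}(\mathbf{A}^\infty(P)) \subseteq \mathring{K}_{c_1}$. The semi-conjugacy $P \circ \Lambda_1 = \Lambda_1 \circ f_{c_1}$ from (iii), together with the injectivity of $\Lambda_1$ on $\mathring{K}_{c_1}$ and total invariance of $\mathbf{A}^\infty(P)$ under $P$, implies that $U$ is totally invariant under $f_{c_1}$ and that $\Lambda_1|_U$ conformally conjugates $f_{c_1}|_U$ to $P|_{\mathbf{A}^\infty(P)}$. Since $J(P)$ is locally connected and hence connected, the B\"ottcher coordinate of $P$ at $\infty$ further conjugates $P|_{\mathbf{A}^\infty(P)}$ to $z \mapsto z^2$ on $\mathbb{D}$. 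Pulling back the unique critical fixed point $0 \in \mathbb{D}$ of the doubling map yields a point $p \in U \subseteq \mathbb{C}$ that is simultaneously a critical point and a fixed point of $f_{c_1}$.

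The only finite critical point of $f_{c_1}(z) = z^2 + c_1$ is $0$, so $p = 0$, and then $c_1 = f_{c_1}(0) = 0$, whence $f_{c_1} = f_0$. No substantial obstacle arises; the step most meriting explicit verification is the decomposition $\hat{\mathbb{C}} \setminus J(P) = \Lambda_1(\mathring{K}_{c_1}) \sqcup \Lambda_2(\mathring{K}_{c_2})$, which reduces to checking via condition (i) that ray equivalence never identifies an interior point of $K_{c_1}$ with any other point, and likewise for $K_{c_2}$.
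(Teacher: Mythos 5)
Your proposal is correct and follows essentially the same approach as the paper: locate $\mathbf{A}^\infty(P)$ inside one of $\Lambda_1(\mathring{K}_{c_1})$ or $\Lambda_2(\mathring{K}_{c_2})$, pull back by the conformal conjugacy, and observe that $\Lambda_1^{-1}(\infty)$ is a bounded superattracting (i.e.\ critical) fixed point of $f_{c_1}$, forcing $c_1=0$. The only difference is cosmetic: you spell out the disjoint-union decomposition of $\hat{\mathbb{C}}\setminus J(P)$ and take a small detour through the B\"ottcher coordinate at $\infty$, whereas the paper states these steps more tersely.
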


\begin{proof}
Let $J(P)$ and $\mathbf{A}^\infty_P$ denote the Julia set and the attracting basin of infinity for $P$ respectively. We have
\begin{displaymath}
J(P) = \Lambda_1(J_{c_1}) = \Lambda_2(J_{c_2}).
\end{displaymath}
Hence, $\mathbf{A}^\infty_P$ must be contained in either $\Lambda_1(\mathring{K}_{c_1})$ or $\Lambda_2(\mathring{K}_{c_2})$. Assume for concreteness that it is contained in the former. Since $\Lambda_1|_{\mathring{K}_{c_1}}$ is conformal, and
\begin{displaymath}
f_{c_1}(z) = \Lambda_1^{-1} \circ P \circ \Lambda_1(z)
\end{displaymath}
for all $z \in K_{c_1}$, we see that $\Lambda_1^{-1}(\infty)$ must be a superattracting fixed point for $f_{c_1}$. The only member in the quadratic family that has a bounded superattracting fixed point is the squaring map $f_0$.
\end{proof}

By proposition \ref{polytriv}, we see that except in the trivial case, the mating construction yields non-polynomial dynamical systems.
\end{ex}

\begin{ex} \label{no half limb}
Consider the formal mating of the basilica polynomial $f_\mathbf{B}(z):= f_{-1}(z) = z^2 -1$ with itself. The glued space $K_\mathbf{B} \vee K_\mathbf{B}$ consists of infinitely many spheres connected together at discrete nodal points (refer to section \ref{basilica bubble}). Hence, it is not homeomorphic to the 2-sphere. Therefore, $f_\mathbf{B}$ is not conformally mateable with itself (since it is not even topologically mateable with itself). This is actually a specific instance of a more general result, which we state below.

Let $H_0$ be the \ebf{principal hyperbolic component} defined as the set of $c\in\mathcal M$ for which $f_c$ has an attracting fixed point $z_c\in\mathbb C$. It is conformally parametrized by the multiplier of $z_c$:
\begin{displaymath}
\lambda : c \mapsto \mu_c := f_c'(z_c)
\end{displaymath}
(see e.g. \cite{M2}). Note that $\lambda$ extends to a homeomorphism between $\overline{H_0}$ and $\overline{\mathbb{D}}$.

A connected component of ${\mathcal M}\setminus \overline{H_0}$ is called a \ebf{limb}. It is known (see e.g. \cite{M2}) that the closure of every limb intersects $\partial H_0$ at a single point. Moreover, the image of this point under $\lambda$ is a root of unity. Henceforth, the limb growing from the point $\lambda^{-1}(e^{2\pi ip/q})$, $p/q\in\mathbb Q$, will be denoted by $L_{p/q}$. For example, the parameter value $-1$ for the basilica polynomial $f_\mathbf{B}(z)= z^2 -1$ is contained in the 1/2-limb $L_{1/2}$.

The following standard observation is due to Douady \cite{Do}:

\begin{prop}
Suppose $c_1$ and $c_2$ are contained in complex conjugate limbs $L_{p/q}$ and $L_{-p/q}$ of the Mandelbrot set $\mathcal{M}$. Then $f_{c_1}$ and $f_{c_2}$ are not topologically mateable.
\end{prop}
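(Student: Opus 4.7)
The plan is to upgrade the formal mating to a quotient of a topological sphere $\Sigma$ (the \emph{formal mating sphere}), then exhibit a single ray-equivalence class whose preimage in $\Sigma$ is a separating continuum, and finally invoke R.L.~Moore's decomposition theorem to rule out $K_{c_1}\vee K_{c_2}\cong S^2$. I begin by recalling the classical combinatorics of $\alpha$-fixed points (see e.g.\ \cite{M2}): for $c\in L_{p/q}$ with $p/q$ in lowest terms and $q\ge 2$, the repelling fixed point $\alpha_c\in J_c$ is the landing point of exactly $q$ external rays whose angles $\Theta=\{\theta_1,\ldots,\theta_q\}\subset\mathbb{Q}/\mathbb{Z}$ are cyclically permuted by $t\mapsto 2t$ with combinatorial rotation number $p/q$. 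By complex conjugation, for $c\in L_{-p/q}$ the analogous set of landing angles at $\alpha_c$ is precisely $-\Theta$.

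Next I would construct $\Sigma$ by compactifying each of the two dynamical planes to a closed disk $\bar D_i$ (adding a circle at infinity parametrized by the external angle $t$), and gluing $\bar D_1$ and $\bar D_2$ along these boundary circles via $t\leftrightarrow -t$. Inside $\Sigma$ each matched pair of external rays fuses into a \emph{combined ray}: an arc crossing the equator from $\tau_{c_1}(t)\in J_{c_1}$ to $\tau_{c_2}(-t)\in J_{c_2}$. Collapsing each combined ray to a point identifies $K_{c_1}\vee K_{c_2}$ with the quotient of $\Sigma$ by the corresponding upper semicontinuous decomposition into continua. I then examine the decomposition element $[\alpha_{c_1}]$: it consists of $\{\alpha_{c_1},\alpha_{c_2}\}$ together with the $q$ combined rays at angles $\theta_1,\ldots,\theta_q$, each running from $\alpha_{c_1}$ through the equator to $\alpha_{c_2}$. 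Because $\Theta$ and $-\Theta$ are each invariant under doubling, chasing further ray identifications from either endpoint only oscillates between these two Julia points, so no additional members enter the class.

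For $q\ge 2$, the decomposition element is therefore a \emph{generalized theta graph}---two vertices joined by $q$ pairwise interior-disjoint arcs---which embeds in $\Sigma\cong S^2$ as a separating continuum dividing it into $q$ open disks. R.L.~Moore's decomposition theorem then forces $K_{c_1}\vee K_{c_2}\not\cong S^2$, since an upper semicontinuous decomposition of $S^2$ into continua can yield a quotient homeomorphic to $S^2$ only when every decomposition element is non-separating. The main obstacle will be verifying rigorously that the element is exactly this theta graph and nothing larger---i.e.\ that iterating the gluing drags in no further Julia points---which reduces to a routine combinatorial check on the angle-doubling orbits of $\pm\Theta$ together with the planar embedding of the external ray foliation furnished by the B\"ottcher coordinates.
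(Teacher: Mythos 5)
Your argument is correct and reaches the same conclusion, but it routes through heavier machinery than the paper does. The paper's proof is a one-liner: since $c_1,c_2$ lie in conjugate limbs, the $\alpha$-fixed points $\alpha_{c_1}$ and $\alpha_{c_2}$ are glued to a \emph{single} point in $K_{c_1}\vee K_{c_2}$ (exactly because the $q$ landing angles at $\alpha_{c_1}$ are $\Theta$ and those at $\alpha_{c_2}$ are $-\Theta$), and removing that single point from $K_{c_1}\vee K_{c_2}$ leaves $q\geq 2$ components; since $S^2$ minus a point is connected, $K_{c_1}\vee K_{c_2}\not\cong S^2$. You instead pass to the formal mating sphere $\Sigma$, identify the ray class of $\alpha_{c_1}$ as a theta graph with $q$ arcs, observe that it separates $\Sigma$, and then appeal to Moore's theorem. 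That works, and the theta-graph picture gives more structural insight (it is the natural viewpoint for the \emph{positive} mateability results in the rest of the paper), but it is overkill here: what you actually need is only the trivial converse direction --- a decomposition element that separates $S^2$ has image a cut point in the quotient, so the quotient cannot be $S^2$ --- which is exactly the paper's direct argument, not Moore's theorem proper (Moore's theorem is the sufficiency direction, non-separating $\Rightarrow$ sphere). Be careful also that for the quotient of $\Sigma$ to be identified with $K_{c_1}\vee K_{c_2}$ one should at least say that the combined rays together with their landing points are closed sets forming an upper semicontinuous decomposition (local connectivity of $J_{c_1},J_{c_2}$ is what makes this work); you flag a ``routine combinatorial check'' on the angle orbits, but the decomposition-theoretic side is the part that genuinely needs attention if you want to use Moore's language rather than the elementary cut-point argument.
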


\begin{proof}
There exists a unique repelling fixed point $\alpha_1 \in K_1$ (resp. $\alpha_2 \in K_2$) such that $K_1\setminus \{\alpha_1\}$ (resp. $K_2 \setminus \{\alpha_2\}$) is disconnected. Since $c_1$ and $c_2$ are contained in complex conjugate limbs, $\alpha_1$ and $\alpha_2$ are in the same ray equivalency class. Hence they are glued together to a single point in $K_{c_1} \vee K_{c_2}$. Removing this single point from $K_{c_1} \vee K_{c_2}$ leaves it disconnected, which is impossible if $K_{c_1} \vee K_{c_2}$ is homeomorphic to the 2-sphere. For more details, see \cite{M2}.
\end{proof}
\end{ex}

\newpage

\section{Matings with the Basilica Polynomial} \label{motivate}

Matings can be particularly useful in describing the dynamics in certain one-parameter families of rational maps. The best studied example of such a family is
\begin{displaymath}
R_a(z) := \frac{a}{z^2 + 2z}, \hspace{5mm} a \in \mathbb{C} \setminus \{0\},
\end{displaymath}
which is referred to as the \ebf{basilica family}.

The critical points for $R_a$ are $\infty$ and $-1$. Observe that $\{\infty, 0\}$ is a superattracting 2-periodic orbit for $R_a$. Let $\mathcal{A}^\infty_a$ be the attracting basin of $\{\infty, 0\}$. The boundary of $\mathcal{A}^\infty_a$ is equal to the Julia set $J(R_a)$.

\begin{prop} \label{super 2orbit}
Suppose $f : \hat{\mathbb{C}} \to \hat{\mathbb{C}}$ is a quadratic rational map with a superattracting 2-periodic orbit. Then $f$ can be normalized as $R_a$ for some unique $a \in \mathbb{C} \setminus \{0\}$ by a linear change of coordinates.
\end{prop}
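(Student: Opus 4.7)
The plan is to use a M\"obius change of coordinates (which has exactly three degrees of freedom) to move the super-attracting $2$-cycle and the free critical point into canonical positions $\{0, \infty\}$ and $-1$ respectively, and then read the form of $R_a$ off directly.

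Let $\{p_0, p_1\}$ denote the super-attracting $2$-cycle of $f$, so that $f'(p_0) f'(p_1) = (f\circ f)'(p_0) = 0$. Hence at least one of $p_0, p_1$ is a critical point; label them so that $p_0$ is critical. Since $f$ has degree $2$ there is exactly one other critical point $w$, and I proceed under the standing assumption that $w \notin \{p_0,p_1\}$. (The bicritical degenerate case, in which both cycle points are critical, leads instead to a map of the form $A/z^2$ and is not in the basilica family.)

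Next I would pick the unique M\"obius transformation $M$ with $M(\infty) = p_0$, $M(0) = p_1$, and $M(-1) = w$, and study the conjugate $g := M^{-1} \circ f \circ M$. By construction $g(\infty) = 0$, $g(0) = \infty$, and both $\infty$ and $-1$ are critical points of $g$. The condition $g(\infty) = 0$ with $\infty$ critical (so that the zero at $\infty$ has order $\geq 2$) forces $g(z) = A/Q(z)$ for some nonzero constant $A$ and some monic quadratic $Q$; the condition $g(0) = \infty$ forces $Q(0) = 0$, so $Q(z) = z(z-r)$; and the critical point condition at $-1$, equivalent to $Q'(-1) = 0$, gives $r = -2$. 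Setting $a := A$ yields $g = R_a$.

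For uniqueness, any M\"obius transformation conjugating $R_{a_1}$ to $R_{a_2}$ must send each distinguished point to itself, since $\infty$ is canonically the critical point in the $2$-cycle, $0$ is the non-critical point in the $2$-cycle, and $-1$ is the free critical point; a M\"obius map fixing three distinct points is the identity, forcing $a_1 = a_2$. The argument is essentially mechanical; the only real subtlety is the correct identification of the three canonical marked points on $\hat{\mathbb{C}}$, which uses up precisely the three degrees of freedom of the M\"obius group and singles out the monocritical case handled by the basilica normalization.
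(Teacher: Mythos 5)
Your proof is correct and follows essentially the same route as the paper: move the two cycle points and the free critical point to $\infty$, $0$, $-1$ by a M\"obius transformation and then read off the normal form (the paper instead first derives the normal form for the already-normalized positions and then applies the M\"obius map, and uses an argument-principle phrasing where you use the order of the zero at $\infty$, but these are the same computation). You are somewhat more careful than the paper in explicitly flagging and excluding the bicritical degenerate case $f \sim A/z^2$, which the paper's argument also silently assumes away when it invokes a M\"obius map through three \emph{distinct} marked points.
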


\begin{proof}
First we show that any quadratic rational map $f$ that has a superattracting 2-periodic orbit $\{\infty, 0\}$ with critical points at $\infty$ and $-1$ must be of the form $R_a$ for some unique $a \in \mathbb{C} \setminus \{0\}$.

To this end, let
\begin{displaymath}
f(z) = \frac{a_2 z^2 + a_1 z + a_0}{b_2 z^2 + b_1 z + b_0}.
\end{displaymath}
If $f(\infty) = 0$ then $a_2 = 0$ and $b_2 \neq 0$. If $f(0) = \infty$, then $a_0 \neq 0$ and $b_0 = 0$. Hence, $f$ can be uniquely expressed as
\begin{displaymath}
f(z) = \frac{a_1 z + a_0}{z^2 + b_1 z}.
\end{displaymath}

If $a_1 \neq 0$, then for $r$ sufficiently large,
\begin{displaymath}
f(r e^\theta) \sim \frac{a_1}{r} e^{-\theta}. 
\end{displaymath}
Which implies that $\infty$ cannot be a critical point for $f$ by the argument principle. Hence, $a_1 = 0$.

Finally, we compute
\begin{displaymath}
f'(-1) = \frac{2a_0 - a_0 b_1}{(1 - b_1)^2} = 0.
\end{displaymath}
Hence, $b_1 = 2$.

In the general case, suppose $f : \hat{\mathbb{C}} \to \hat{\mathbb{C}}$ has a superattracting 2-periodic orbit $\{z_\infty, z_0\}$ and critical points at $z_\infty$ and $z_{-1}$. Then there exists a unique linear change of coordinates which sends $z_\infty$ to $\infty$, $z_0$ to $0$, and $z_{-1}$ to $-1$. The result follows. 
\end{proof}

Analogously to $\mathcal{M}$, the non-escape locus in the parameter space for $R_a$ is defined as
\begin{displaymath}
\mathcal{M}_\mathbf{B} := \{a \in \mathbb{C} \setminus \{0\} \hspace{2mm} | \hspace{2mm} -1 \notin \mathcal{A}^\infty_a\}.
\end{displaymath}
We also define the following subset of $\mathcal{M}_\mathbf{B}$:
\begin{displaymath}
\mathcal{L}_\mathbf{B} :=  \{a \in \mathcal{M}_\mathbf{B} \hspace{2mm} | \hspace{2mm} J(R_a) \text{ is locally connected}\}.
\end{displaymath}

\begin{figure}[h]
\centering
\includegraphics[scale=0.5]{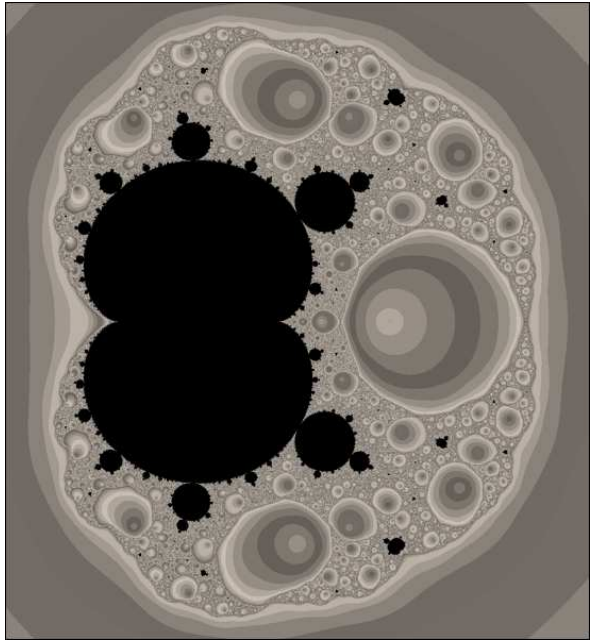}
\caption{The non-escape locus $\mathcal{M}_\mathbf{B}$ for $R_a$ (in black). Compare with figure \ref{fig:mandelbrot}. Note the absence of a copy of the 1/2-limb $L_{1/2}$ (see example \ref{no half limb}).}
\label{fig:matedmandelbrot}
\end{figure}

The basilica polynomial
\begin{displaymath}
f_\mathbf{B}(z) := z^2 -1
\end{displaymath}
is the only member of the quadratic family that has a superattracting 2-periodic orbit. Let $K_\mathbf{B}$ be the filled Julia set for $f_\mathbf{B}$. The following result is an analogue of the B\"ottcher uniformization theorem for the quadratic family. Refer to \cite{AY} for the proof.

\begin{prop} \label{basilica unif}
Suppose $a \in \mathcal{M}_\mathbf{B}$. Then there exists a unique conformal map $\psi_a : \mathcal{A}^\infty_a \to \mathring{K}_\mathbf{B}$ such that the following diagram commutes:
\comdia{\mathcal{A}^\infty_a}{\mathcal{A}^\infty_a}{\mathring{K}_\mathbf{B}}{\mathring{K}_\mathbf{B}}{R_a}{f_\mathbf{B}}{\psi_a}{\psi_a}
Moreover, if $B$ is a connected component of $\mathcal{A}^\infty_a$, then $\phi_a$ extends to a homeomorphism between $\overline{B}$ and $\phi_a(\overline{B})$.
\end{prop}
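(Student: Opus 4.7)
The plan is to adapt the classical Böttcher uniformization of polynomials to this ``doubled'' setting, exploiting the fact that both $R_a$ and $f_{\mathbf{B}}$ carry superattracting $2$-cycles (at $\{\infty,0\}$ and $\{0,-1\}$ respectively). Concretely, $R_a^2$ has superattracting fixed points of local degree $2$ at $\infty$ and at $0$, and similarly $f_{\mathbf{B}}^2$ has superattracting fixed points of local degree $2$ at $0$ and at $-1$. First I would apply the classical Böttcher theorem in a neighborhood of each of these four points to obtain local analytic conjugacies to $z \mapsto z^2$.

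Next I would globalize these local coordinates to the immediate basins. Let $B_\infty$ and $B_0$ be the immediate basins of $\infty$ and $0$ under $R_a$, and let $U_0,U_{-1}\subset\mathring{K}_\mathbf{B}$ be the analogous immediate basins for $f_\mathbf{B}$. The hypothesis $a\in\mathcal{M}_{\mathbf{B}}$ says the free critical point $-1$ of $R_a$ does not lie in $\mathcal{A}^\infty_a$, so $\infty$ (respectively $0$) is the unique critical point of $R_a^2$ in $B_\infty$ (respectively $B_0$). By the standard extension argument for Böttcher coordinates on immediate basins of superattracting fixed points, the local conjugacies extend to conformal isomorphisms $B_\infty\to\mathbb{D}$ and $B_0\to\mathbb{D}$ intertwining $R_a^2$ with $z\mapsto z^2$, and analogously for $f_\mathbf{B}^2$ on $U_0,U_{-1}$. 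Composing yields conformal maps $B_\infty\to U_0$ and $B_0\to U_{-1}$ that conjugate $R_a^2$ to $f_\mathbf{B}^2$; the one-parameter indeterminacy in the Böttcher coordinate at $\infty$ can then be fixed so that these two maps assemble into a single conformal $\psi_a$ on $B_\infty\cup B_0$ that intertwines $R_a$ with $f_\mathbf{B}$, and uniqueness of $\psi_a$ follows because a holomorphic self-map of $\mathbb{D}$ commuting with $z\mapsto z^2$ and fixing $0$ is the identity.

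I would then extend $\psi_a$ to the remaining components of $\mathcal{A}^\infty_a$ by iterated pullback. Every component $B\subset\mathcal{A}^\infty_a$ eventually lands in $B_\infty\cup B_0$ after finitely many iterations of $R_a$, and no such component contains a critical point of $R_a$ (since $-1\notin\mathcal{A}^\infty_a$). Hence $R_a$ is a conformal isomorphism between any such $B$ and its image, and one may recursively define $\psi_a$ on $B$ as the unique branch of $f_\mathbf{B}^{-1}\circ\psi_a\circ R_a$ landing in the corresponding preimage component of $\mathring{K}_\mathbf{B}$. This produces the desired conformal $\psi_a:\mathcal{A}^\infty_a\to\mathring{K}_\mathbf{B}$.

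For the boundary extension, I would argue component-by-component. Because $f_\mathbf{B}$ is postcritically finite with locally connected Julia set, every bounded Fatou component of $f_\mathbf{B}$ is a Jordan domain, so each target $\psi_a(B)$ has a Jordan curve boundary; by Carath\'eodory, it suffices to show that $\partial B$ is locally connected. For the immediate basins one can obtain this from a direct combinatorial/puzzle-piece analysis around the superattracting cycle, and for the remaining components it then propagates by the fact that $R_a$ acts as a local homeomorphism from each preimage component onto its image. The boundary extension is the main obstacle: local connectedness of individual Fatou component boundaries is \emph{not} automatic from $a\in\mathcal{M}_\mathbf{B}$ (note that we have not assumed $a\in\mathcal{L}_\mathbf{B}$), and is precisely the delicate content that is supplied by \cite{AY}.
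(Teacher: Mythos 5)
The paper does not give its own proof of this proposition but simply refers the reader to \cite{AY}; your sketch is the expected Böttcher-type construction one would find there, and you correctly single out the boundary local connectivity as the nontrivial content of that citation. Two small points deserve correction. First, the Böttcher coordinate at a superattracting fixed point of local degree $2$ is determined only up to multiplication by $-1$, not up to a one-parameter family, so what must be arranged is the discrete choice of sign making the assembled map intertwine $R_a$ itself (not just $R_a^2$) with $f_\mathbf{B}$; relatedly, the uniqueness step should invoke conformal \emph{automorphisms} of $\mathbb{D}$ commuting with $z\mapsto z^2$, since a general holomorphic self-map need not be the identity (e.g.\ $z\mapsto z^3$) -- in your situation the map in question is automatically an automorphism, so this is merely a matter of wording. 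Second, to get a \emph{homeomorphism} $\overline{B}\to\overline{\psi_a(B)}$ it is not enough to know that $\partial B$ is locally connected: Carath\'eodory applied to a conformal map onto the Jordan domain $\psi_a(B)$ then yields a continuous but possibly non-injective extension of $\psi_a^{-1}$ to the closed disk. One needs $\partial B$ to actually be a Jordan curve (no pinch points), for every bubble $B$ of $\mathcal{A}^\infty_a$ and every $a\in\mathcal{M}_\mathbf{B}$ -- recall that $a\in\mathcal{L}_\mathbf{B}$ is not assumed, nor is it automatic that the free critical point $-1$ stays off $\overline{\mathcal{A}^\infty_a}$. This Jordan property is the precise result supplied by \cite{AY}, and your propagation-by-pullback step should also be stated in terms of Jordan curves rather than local connectivity.
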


Suppose for some $c \in \mathcal{L} \cap (\mathbb{C} \setminus L_{1/2})$, $f_c$ and $f_\mathbf{B}$ are conformally mateable. If $F : \hat{\mathbb{C}} \to \hat{\mathbb{C}}$ is a conformal mating of $f_c$ and $f_\mathbf{B}$, then $F$ has a superattracting 2-periodic orbit. By proposition \ref{super 2orbit}, $F$ can be normalized as $R_a$ for some $a \in \mathcal{L}_\mathbf{B}$.

In view of proposition \ref{basilica unif}, it is natural to ask whether for every $a \in \mathcal{L}_\mathbf{B}$, $R_a$ is a conformal mating of $f_c$ and $f_\mathbf{B}$ for some $c \in \mathcal{L} \cap (\mathbb{C} \setminus L_{1/2})$. It turns out this cannot be true: for some $a \in \mathcal{L}_\mathbf{B}$, $R_a$ is the result of a more general form of mating called \emph{mating with laminations} between $f_c$ and $f_\mathbf{B}$ with $c \notin \mathcal{L}$ (see \cite{Du}). However, the following weaker statement does hold. The proof is completely analogous to the proof of proposition \ref{polytriv}, so we omit it here.

\begin{prop}
Suppose $R_a$ is a conformal mating. Then $R_a$ is a conformal mating of $f_c$ and $f_\mathbf{B}$ for some $c \in \mathcal{L} \cap (\mathbb{C} \setminus L_{1/2})$.
\end{prop}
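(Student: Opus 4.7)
The plan is to mimic the proof of Proposition \ref{polytriv} verbatim, with the superattracting fixed point $\infty$ of a quadratic polynomial replaced by the distinguished superattracting 2-cycle $\{0,\infty\}$ of $R_a$. The key idea is that this periodic critical orbit of $R_a$ has to pull back through one of the two semi-conjugacies $\Lambda_1,\Lambda_2$ to a superattracting 2-cycle of $f_{c_1}$ or $f_{c_2}$ inside $\mathbb{C}$, and this will rigidly pin down that parameter as $c=-1$.

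Concretely, let $\Lambda_1,\Lambda_2$ be the semi-conjugacies supplied by Proposition \ref{mating def}, so that $J(R_a)=\Lambda_1(J_{c_1})=\Lambda_2(J_{c_2})$ and the Fatou set of $R_a$ is covered by the two open sets $\Lambda_1(\mathring K_{c_1})$ and $\Lambda_2(\mathring K_{c_2})$. Since each $\mathring K_{c_i}$ is forward $f_{c_i}$-invariant and $\Lambda_i$ is a semi-conjugacy, each $\Lambda_i(\mathring K_{c_i})$ is forward $R_a$-invariant. The immediate basin components $U_0\ni 0$ and $U_\infty\ni\infty$ therefore each sit in one of these two sets, and because $R_a(U_\infty)=U_0$ and both images are forward-invariant, they must lie in the \emph{same} one, say $\Lambda_2(\mathring K_{c_2})$. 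Conformality of $\Lambda_2$ on $\mathring K_{c_2}$, together with the commuting diagram, then shows that $\Lambda_2^{-1}(\{0,\infty\})$ is a superattracting 2-cycle of $f_{c_2}$ lying in $\mathbb{C}$.

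A direct computation finishes the identification of $c_2$: the condition that the critical point $0$ of $f_c$ be periodic of period $2$ reduces to $c^2+c=0$, which has roots $c=0$ (giving a fixed point, not a 2-cycle) and $c=-1$. Hence $f_{c_2}=f_\mathbf{B}$. To rule out $c_1\in L_{1/2}$, I invoke the Douady obstruction proved in Example \ref{no half limb}: since $-1/2\equiv 1/2\pmod 1$, the limb $L_{1/2}$ is its own complex conjugate, and since $c_2=-1\in L_{1/2}$, topological mateability of $f_{c_1}$ and $f_\mathbf{B}$ forces $c_1\notin L_{1/2}$. Combined with the hypothesis $c_1\in\mathcal L$, this yields $c_1\in\mathcal L\cap(\mathbb C\setminus L_{1/2})$.

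I expect no substantive obstacle: the only mildly nonroutine step is verifying that the two immediate basin components $U_0,U_\infty$ cannot be split between $\Lambda_1(\mathring K_{c_1})$ and $\Lambda_2(\mathring K_{c_2})$, and that point is immediate from forward $R_a$-invariance of each $\Lambda_i(\mathring K_{c_i})$ as noted above. Every remaining ingredient — the conformality of $\Lambda_i$ on the interior, the uniqueness $c=-1$, and the Douady obstruction — has already been recorded in the preceding material, so the proof is essentially a one-paragraph transcription of the argument for Proposition \ref{polytriv} and can reasonably be omitted as the authors do.
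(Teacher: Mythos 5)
Your argument is correct and is exactly the transcription of the proof of Proposition \ref{polytriv} that the paper alludes to when it says the proof is "completely analogous." You correctly handle the one wrinkle (the 2-cycle rather than a fixed point) by noting that each $\Lambda_i(\mathring K_{c_i})$ is forward invariant so that both immediate basin components land on the same side, you pull back the superattracting 2-cycle to force $c_2 = -1$, and you correctly supply the additional step the paper leaves implicit — namely, invoking the Douady obstruction from Example \ref{no half limb} together with the self-conjugacy of $L_{1/2}$ to rule out $c_1 \in L_{1/2}$.
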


The principal motivation for this paper is to answer the following question:

\begin{mainquestion}
Suppose $c \in \mathcal{L} \cap (\mathbb{C} \setminus L_{1/2})$. Are $f_c$ and $f_\mathbf{B}$ conformally mateable? If so, is there a unique member of the basilica family that realizes their conformal mating?
\end{mainquestion}

We now summarize the known results on this topic.

\begin{thm} [Rees, Tan, Shishikura \cite{Re, Tan, S}] \label{rees}
Suppose $c \in \mathcal{L} \cap (\mathbb{C} \setminus L_{1/2})$. If $f_c$ is hyperbolic, then $f_c$ and $f_\mathbf{B}$ are conformally mateable. Moreover, their conformal mating is unique up to conjugacy by a M\"obius map.
\end{thm}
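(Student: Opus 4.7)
The plan is to follow the Thurston-theoretic approach pioneered by Rees and Tan for the topological step, combined with a quasiconformal surgery argument in the style of Shishikura to pass from the postcritically finite case to general hyperbolic parameters.

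First I would establish topological mateability. Since $J_c$ and $J_\mathbf{B}$ are both locally connected, the Carath\'eodory loops $\tau_c$ and $\tau_\mathbf{B}$ are defined and ray equivalence is concretely described in terms of external angles. The obstruction highlighted in Example \ref{no half limb}, where the $\alpha$-fixed points of the two polynomials get glued together and disconnect the quotient, is excluded precisely by the assumption $c \notin L_{1/2}$: a careful angle computation shows that every ray-equivalence class is either a point or a finite tree that does not separate $K_c \vee K_\mathbf{B}$. Invoking Moore's decomposition theorem then yields that $K_c \vee K_\mathbf{B}$ is homeomorphic to $S^2$, and $f_c \vee f_\mathbf{B}$ descends to an orientation-preserving degree-$2$ branched covering of $S^2$.

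Next I would reduce to the postcritically finite setting. Let $c_0$ be the center of the hyperbolic component of $\mathcal{M}$ containing $c$, so that $f_{c_0}$ is postcritically finite; then the formal mating $f_{c_0} \vee f_\mathbf{B}$ is a postcritically finite Thurston map on $S^2$. I would apply Thurston's characterization theorem: it suffices to rule out invariant multicurves with Thurston matrix of spectral radius at least one. For quadratic matings, such an obstruction forces the existence of a Levy cycle, which by a combinatorial argument (following Tan) corresponds exactly to an identification of $\alpha$-fixed points of the two summands, and hence cannot occur when the parameters lie in non-conjugate limbs. This produces a postcritically finite rational map $R_0$ realizing the mating of $f_{c_0}$ and $f_\mathbf{B}$, which by Proposition \ref{super 2orbit} can be normalized as $R_{a_0}$ for some $a_0\in\mathcal{L}_\mathbf{B}$.

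To reach a general hyperbolic $c$ in the same component as $c_0$, I would perform a quasiconformal surgery on $R_0$. On the Fatou components of $R_0$ belonging to the $f_{c_0}$-side of the mating, I replace the superattracting B\"ottcher model by the geometrically attracting model dictated by $f_c$ (which has the same topological combinatorics, only a different multiplier), producing a new $R_0$-invariant Beltrami coefficient of bounded dilatation. Straightening it via the Measurable Riemann Mapping Theorem yields a quasiconformal conjugacy to a rational map $R$, which by construction satisfies the commuting-diagram conditions of Proposition \ref{mating def} with $\Lambda_1,\Lambda_2$ built from the Riemann maps of the Fatou components on each side; hence $R$ is a conformal mating of $f_c$ and $f_\mathbf{B}$. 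Uniqueness up to M\"obius conjugacy follows from standard rigidity for hyperbolic rational maps: any two such matings agree via a quasiconformal map that is conformal on the Fatou set, extends to the Julia set by local connectivity, and carries no invariant line field on the hyperbolic Julia set, hence is M\"obius.

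The main obstacle is the Thurston-obstruction step in paragraph three. Translating the limb hypothesis into the nonexistence of a Levy cycle requires an explicit combinatorial inventory of periodic ray-equivalence classes in $K_{c_0} \vee K_\mathbf{B}$ and a verification that none of them supports an obstructing multicurve; this combinatorial bookkeeping is the technical heart of the Rees--Tan--Shishikura proof, while the surgery and uniqueness steps are comparatively routine applications of measurable Riemann mapping and hyperbolic rigidity.
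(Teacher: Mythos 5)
The paper does not supply a proof of Theorem \ref{rees}; it cites Rees, Tan, and Shishikura directly and only remarks that the result follows from Tan's general mateability criterion for postcritically finite quadratic polynomials (combined with a standard extension to the rest of the hyperbolic component, as in Proposition \ref{hyper comp unif}). Your outline is a faithful reconstruction of the route taken in those references, and all the essential moves are in the right place: Moore's decomposition theorem for the topological step, Thurston's characterization plus the reduction of Thurston obstructions to Levy cycles in degree two for the postcritically finite case, identification of Levy cycles with conjugate-limb phenomena, then quasiconformal deformation of the multiplier to reach non-PCF hyperbolic parameters, and uniqueness by the absence of invariant line fields on hyperbolic Julia sets.

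One small technical point worth being careful about: Moore's theorem is a statement about decompositions of $S^2$, so you should first form the genuine $2$-sphere $\hat{\mathbb{C}} \cup_\gamma \hat{\mathbb{C}}$ by gluing two copies of $\hat{\mathbb{C}}$ along the circle at infinity, push the formal mating onto it as a branched cover, and then quotient by the ray-equivalence decomposition; the space $K_c \vee K_{\mathbf{B}}$ of display \eqref{gluing def} is the image of $K_c \sqcup K_{\mathbf{B}}$ under this quotient but is not itself the space to which Moore's theorem is applied. Also, as you flag yourself, the step "Thurston obstruction $\Rightarrow$ Levy cycle $\Rightarrow$ conjugate limbs" is the genuine technical content of the cited proofs and would need to be supplied rather than merely invoked; that said, since the paper itself treats the theorem as a black box, your proposal is at the appropriate level of detail for a reconstruction.
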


Theorem \ref{rees} is actually a corollary of a much more general result which states that two post-critically finite quadratic polynomials $f_{c_1}$ and $f_{c_2}$ are (essentially) mateable if and only if $c_1$ and $c_2$ do not belong to conjugate limbs of the Mandelbrot set. See \cite{Tan} for more details.

\begin{thm} [Aspenberg, Yampolsky \cite{AY}] \label{asp}
Suppose $c \in \mathcal{L} \cap (\mathbb{C} \setminus L_{1/2})$. If $f_c$ is at most finitely renormalizable and has no non-repelling periodic orbits, then $f_c$ and $f_\mathbf{B}$ are conformally mateable. Moreover, their conformal mating is unique up to conjugacy by a M\"obius map.
\end{thm}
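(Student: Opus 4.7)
The plan is to obtain the mating by quasiconformal surgery, following the general template for matings of non post-critically finite polynomials, in four stages. First, I would verify that the topological mating exists, i.e., that $K_c\vee K_\mathbf{B}$ is homeomorphic to $S^2$. By Moore's decomposition theorem, this reduces to showing that the ray-equivalence relation on $K_c\sqcup K_\mathbf{B}$ is a closed, upper semi-continuous equivalence whose classes are non-separating continua. Local connectedness of $K_c$ and $K_\mathbf{B}$ provides the Carath\'eodory loops $\tau_c,\tau_\mathbf{B}$, the assumption $c\notin L_{1/2}$ rules out the $\alpha$-fixed-point obstruction of Example \ref{no half limb}, and the absence of non-repelling cycles of $f_c$ prevents Siegel/Cremer-type accumulation of ray classes. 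The resulting quotient sphere carries a topological degree-$2$ branched self-covering $F\colon S^2\to S^2$ induced by $f_c\vee f_\mathbf{B}$.

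Second, I would reduce the problem to a Thurston-type realization question for $F$. Since $f_c$ is not assumed post-critically finite, Thurston's classical theorem does not directly apply, but a Cui--Tan or Ha\"issinsky--Pilgrim style extension (or an explicit Shishikura surgery) should work provided the post-critical geometry is controlled. The hypotheses on $f_c$ supply exactly this control: by Yoccoz's theorem, finite renormalizability together with the absence of non-repelling periodic orbits yields a shrinking sequence of puzzle pieces around the critical point of $f_c$ with \emph{a priori} complex bounds, so $f_c$ is topologically semi-hyperbolic on the accumulation set of its post-critical orbit.

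Third, I would perform the surgery. On one hemisphere of $S^2$ I would equip $F$ with the conformal structure pulled back from $f_c$ and on the other with that pulled back from $f_\mathbf{B}$; these structures already make $F$ holomorphic away from the common equator. Using the Yoccoz complex bounds, I would interpolate between the two structures by a Beltrami coefficient $\mu$ supported in an annular collar of the equator, whose iterated pullbacks under $F$ have uniformly bounded dilatation. The Measurable Riemann Mapping Theorem then straightens $F$ to a holomorphic self-covering of $\hat{\mathbb{C}}$ of degree two, i.e., to a quadratic rational map $R$; the semiconjugacies $\Lambda_1$ and $\Lambda_2$ demanded by Proposition \ref{mating def} arise as restrictions of the straightening to $K_c$ and $K_\mathbf{B}$. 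For uniqueness, any two matings would be quasiconformally conjugate via a map conformal off the Julia set, and Yoccoz-style rigidity (again leveraging finite renormalizability and the lack of non-repelling cycles) rules out invariant line fields on $J(R)$, forcing the conjugacy to be M\"obius.

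The main obstacle is the third stage: producing a Beltrami coefficient with iterated dilatation uniformly bounded in the absence of post-critical finiteness. Finite renormalizability is essential here, since it ensures the Yoccoz puzzle construction terminates after boundedly many renormalization levels and therefore delivers the uniform geometric bounds required by the surgery; the no-non-repelling-cycles hypothesis then guarantees that no parabolic or Siegel obstruction reappears during the pullback iteration.
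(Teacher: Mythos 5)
This theorem is cited in the paper without proof (it is Aspenberg--Yampolsky \cite{AY}), so there is no in-paper argument to compare against; the comparison must be with the approach of the cited reference. Aspenberg and Yampolsky do \emph{not} use a Moore--Thurston--collar-surgery scheme. Instead they use the blueprint that this paper itself adapts to the Siegel setting: identify a candidate $R_a$ in the basilica family via the multiplier correspondence, build bubble rays for $R_a$ and for $f_\mathbf{B}$, build Yoccoz-type puzzle partitions on the $f_c$-side and the candidate side, prove a Shrinking Theorem for the candidate's puzzles using complex \emph{a priori} bounds (this is where the hypotheses ``finitely renormalizable'' and ``no non-repelling cycles'' enter, through Yoccoz's puzzle machinery), and then extend the bubble-ray conjugacy continuously to the Julia sets and verify the ray-equivalence identifications directly, exactly as in Sections 5--8 of the present paper. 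Your plan is therefore a genuinely different route, and it has a serious gap at its core.

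The gap is in your third stage. For a non-post-critically-finite $f_c$ the critical orbit accumulates on the Julia set, so any annular collar of the equator on which you support the interpolating Beltrami coefficient $\mu$ will be hit infinitely often by iterated pullbacks along the critical orbit; there is no way to choose the collar disjoint from its own forward images. Consequently the dilatations compound and the total pullback Beltrami has essential supremum $1$ -- the Measurable Riemann Mapping Theorem does not apply. This is precisely the obstruction that restricts the classical Rees--Shishikura surgery to the PCF case, and it is the reason Ha\"issinsky had to develop \emph{trans}-quasiconformal surgery for the parabolic boundary case, and why Aspenberg--Yampolsky abandon surgery entirely in favour of direct puzzle shrinkage. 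Invoking ``Yoccoz complex bounds'' does not repair this: those bounds give moduli estimates forcing nested puzzle pieces to shrink, which is exactly what the direct construction needs, but they say nothing about bounding the dilatation of an infinitely-pulled-back Beltrami form supported near the equator. A secondary problem is your second stage: Cui--Tan and Ha\"issinsky--Pilgrim Thurston-type realization theorems require a geometrically finite (or otherwise tame) post-critical set, and a finitely renormalizable $f_c$ with all cycles repelling need not be semi-hyperbolic; Yoccoz's theorem gives local connectivity and rigidity of $J_c$, not the post-critical tameness those realization theorems require. Your first stage (Moore's theorem for the topological mating) and the rigidity step in the uniqueness argument (no invariant line fields on $J_c$ by Yoccoz) are sound, but the construction of the rational map in the middle does not go through as proposed.
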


\begin{thm} [Dudko \cite{Du}] \label{dudko}
Suppose $c \in \mathcal{L} \cap (\mathbb{C} \setminus L_{1/2})$. If $f_c$ is at least 4 times renormalizable, then $f_c$ and $f_\mathbf{B}$ are conformally mateable. Moreover, their conformal mating is unique up to conjugacy by a M\"obius map.
\end{thm}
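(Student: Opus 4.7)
The plan is to reduce to the hyperbolic case covered by Theorem~\ref{rees} via an approximation argument, and to use the renormalization hypothesis to control the limit. I would begin by choosing a sequence of hyperbolic parameters $c_n \to c$ in $\mathcal{L} \cap (\mathbb{C} \setminus L_{1/2})$ that share with $c$ the first four levels of renormalization combinatorics, which should be possible by density of hyperbolicity inside each small Mandelbrot copy visited along the renormalization tower of $f_c$. Theorem~\ref{rees} then supplies a conformal mating $R_{a_n}$ of $f_{c_n}$ and $f_\mathbf{B}$, and Proposition~\ref{super 2orbit} lets me normalize so that $a_n \in \mathcal{L}_\mathbf{B}$. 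The goal is to realize $R_{a_\infty}$, for $a_\infty$ a subsequential limit, as a conformal mating of $f_c$ and $f_\mathbf{B}$ in the sense of Proposition~\ref{mating def}.

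To control the limit, I would use complex a priori bounds for deep renormalizations. Because $f_c$ is at least four times renormalizable, a Kahn--Lyubich-type theorem should produce a uniform lower bound on the modulus of the fundamental annulus around the small filled Julia set at a deep renormalization level, stable under small perturbations of $c$. This modulus transfers through the formal matings $f_{c_n} \vee f_\mathbf{B}$ into uniform geometric control on candidate semi-conjugacies $\Lambda_1^{(n)} \colon K_{c_n} \to \hat{\mathbb{C}}$ and $\Lambda_2^{(n)} \colon K_\mathbf{B} \to \hat{\mathbb{C}}$, so that along a subsequence they converge to non-degenerate continuous maps $\Lambda_1, \Lambda_2$, conformal on the interiors of $K_c$ and $K_\mathbf{B}$, intertwining $f_c$ and $f_\mathbf{B}$ with $R_{a_\infty}$. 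In parallel, one must verify that ray equivalence classes in $K_c \vee K_\mathbf{B}$ are finite trees, so that Moore's theorem yields a sphere quotient; here the hypothesis $c \notin L_{1/2}$ eliminates the Levy-type obstruction identified in Example~\ref{no half limb}, and the renormalization hierarchy partitions the remaining identifications into finitely many combinatorial patterns per level.

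For the uniqueness statement, I would argue that any two normalized conformal matings $R, R'$ of $f_c$ and $f_\mathbf{B}$ determine, via condition (i) of Proposition~\ref{mating def}, the same gluing data on the pair $(K_c, K_\mathbf{B})$, hence the same combinatorial model. A standard Sullivan-type pullback argument, together with the same a priori bounds used above and the absence of invariant line fields for infinitely renormalizable quadratics of bounded combinatorics, should force the quasiconformal map relating $R$ and $R'$ to be conformal, and therefore M\"obius.

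The main obstacle, as in every conformal mating theorem, is to preclude geometric degeneration of $R_{a_n}$ as $c_n \to c$: without uniform complex bounds, the small post-critical sets coming from the basilica hemisphere and from the deep renormalizations on the $f_c$ hemisphere can be pinched together along some ray equivalence class, producing a Thurston obstruction in the limit. The quantitative threshold ``at least four times renormalizable'' is precisely what should be needed so that the deep renormalization modulus dominates the geometry generated by $f_\mathbf{B}$'s own periodic structure and its interaction with the ray tree attached to the $\alpha$-fixed point on the basilica side; extracting these bounds in a form stable under the approximation $c_n \to c$ is, to my mind, the real work of the proof.
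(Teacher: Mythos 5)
This statement is a cited background result, not one the paper proves: Theorem~\ref{dudko} is quoted verbatim from Dudko~\cite{Du}, and no argument for it appears anywhere in the present paper. So there is nothing internal to compare your proposal against, and the relevant benchmark is Dudko's actual proof.

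Your sketch is a compactness argument: approximate $c$ by hyperbolic parameters $c_n$ with matched deep combinatorics, invoke Theorem~\ref{rees} to get matings $R_{a_n}$, and pass to a limit using complex a~priori bounds to prevent degeneration. That is not Dudko's route. His argument in~\cite{Du} works directly with laminations: he builds a lamination model of the mating, uses Kahn--Lyubich a~priori bounds and rigidity for infinitely and finitely renormalizable quadratics to show the lamination is realized by a rational map, and obtains uniqueness from rigidity rather than from a separate pullback step. The quantitative threshold ``at least $4$ times'' is not a geometric coincidence to be rediscovered by optimization, as you suggest at the end; in Dudko's framework it is a combinatorial bound that guarantees the renormalization tower is deep enough for the a~priori bounds machinery to apply uniformly, and it is pinned down explicitly rather than left as ``what should be needed.''

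Even taken on its own terms, your sketch has two concrete gaps. First, a uniform lower modulus bound at a deep renormalization level gives precompactness of the quadratic-like restrictions of $f_{c_n}$, but it does not by itself give equicontinuity or any convergence of the candidate semi-conjugacies $\Lambda_i^{(n)}$ on the full filled Julia sets, including the non-renormalizable part of the dynamics and, crucially, the parts of $J_{c_n}$ glued to the basilica side; an additional argument (e.g.\ control of the ray-equivalence diameters, or a puzzle-shrinking statement as in Sections~\ref{secapriori}--\ref{shrink proofs} of this paper for the Siegel case) is needed to promote geometric bounds to convergence of the gluing maps. Second, ``the renormalization hierarchy partitions the remaining identifications into finitely many combinatorial patterns per level'' is asserted but not justified, and it is exactly the point where one must rule out a Thurston/Levy obstruction in the limit; Dudko handles this by working inside the lamination model where the identification structure is explicit, and a compactness-based proof would need a substitute for that step.

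If you intend this as a genuine alternative proof strategy, the two items above are what would have to be supplied; as a reconstruction of the cited proof, it does not match the method of~\cite{Du}.
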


Together, theorem \ref{rees}, \ref{asp} and \ref{dudko} provide a positive answer to the main question in almost all cases. However, the parameters contained in the boundary of hyperbolic components that are not too ``deep'' inside the Mandelbrot set are still left unresolved. We discuss these parameters in greater detail in the next section.

\newpage

\section{Matings in the Boundary of Hyperbolic Components} \label{mainthms}

Let $H$ be a hyperbolic component of $\mathcal{M} \setminus L_{1/2}$. By theorem \ref{rees}, the quadratic polynomial $f_c$ and the basilica polynomial $f_\mathbf{B}$ are conformally mateable for all $c \in H$. Our goal is to determine if this is also true for $c \in \partial H \cap \mathcal{L}$.

Choose a parameter value $c_0 \in H$, and let $a_0 \in \mathcal{M}_\mathbf{B}$ be a parameter value such that $R_{a_0}$ is a conformal mating of $f_{c_0}$ and $f_\mathbf{B}$. Since $R_{a_0}$ must be hyperbolic, $a_0$ is contained in some hyperbolic component $H_\mathbf{B}$ of $\mathcal{M}_\mathbf{B}$.

For all $c \in \overline{H}$, $f_c$ has a non-repelling $n$-periodic orbit $\mathbf{O}_c := \{f_c^i(z_c)\}_{i=0}^{n-1}$ for some fixed $n \in \mathbb{N}$ (see e.g. \cite{M2}). Likewise, for all $a \in \overline{H_{\mathbf{B}}}$, $R_a$ has a non-repelling $n$-periodic orbit $\mathcal{O}_a := \{R_a^i(w_a)\}_{i=0}^{n-1}$. Define the multiplier maps $\lambda: \overline{H} \to \overline{\mathbb{D}}$ and $\mu : \overline{H_\mathbf{B}} \to \overline{\mathbb{D}}$ by:
\begin{displaymath}
\lambda(c) := (f_c^i)'(z_c) \hspace{2.5 mm} \text{and} \hspace{2.5 mm} \mu(a) := (R_a^i)'(w_a).
\end{displaymath}
It is known that $\lambda$ and $\mu$ are homeomorphisms which are conformal on the interior of their domains (see \cite{M2}).

The following result can be proved using a standard application of quasiconformal surgery (see chapter 4 in \cite{BF}).

\begin{prop} \label{hyper comp unif}
Define a homeomorphism $\phi_H : \overline{H} \to \overline{H_\mathbf{B}}$ by
\begin{displaymath}
\phi_H := \mu^{-1} \circ \lambda.
\end{displaymath}
Then for all $c \in H$, $R_{\phi_H(c)}$ is a conformal mating of $f_c$ and $f_\mathbf{B}$.
\end{prop}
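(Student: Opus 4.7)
The approach is quasiconformal surgery inside the hyperbolic components. By the choice of $c_0 \in H$ we already have a conformal mating $R_{a_0}$ of $f_{c_0}$ and $f_\mathbf{B}$, equipped with semi-conjugacies $\Lambda_1^0 : K_{c_0} \to \hat{\mathbb{C}}$ and $\Lambda_2^0 : K_\mathbf{B} \to \hat{\mathbb{C}}$ supplied by Proposition~\ref{mating def}. Since $\Lambda_1^0(\mathbf{O}_{c_0}) = \mathcal{O}_{a_0}$, the multipliers automatically agree, confirming $a_0 = \phi_H(c_0)$. For an arbitrary $c \in H$, the plan is to deform $R_{a_0}$ along a suitable $R_{a_0}$-invariant Beltrami differential to produce a new quadratic rational map that, once affinely normalized via Proposition~\ref{super 2orbit}, falls into the basilica family and realizes the conformal mating of $f_c$ and $f_\mathbf{B}$.

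First, I would invoke the standard theory of hyperbolic components in the quadratic family (chapter~4 of \cite{BF}) to build a quasiconformal homeomorphism $h_c : \hat{\mathbb{C}} \to \hat{\mathbb{C}}$ conjugating $f_{c_0}$ to $f_c$. This $h_c$ can be chosen to be conformal off the grand orbit of a small fundamental annulus in the immediate basin of $\mathbf{O}_{c_0}$, and to preserve the Carath\'eodory parametrization on the Julia set, $h_c \circ \tau_{c_0} = \tau_c$. Next, transplant the Beltrami coefficient $\bar\nu$ of $h_c$ across $\Lambda_1^0$ to the attracting basin of $\mathcal{O}_{a_0}$ and extend by zero to the remaining Fatou components and to $J(R_{a_0})$. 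The semi-conjugacy $\Lambda_1^0 \circ f_{c_0} = R_{a_0} \circ \Lambda_1^0$, together with the $f_{c_0}$-invariance of $\bar\nu$, makes the resulting coefficient $\nu$ $R_{a_0}$-invariant, and $\|\nu\|_\infty = \|\bar\nu\|_\infty < 1$.

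Now apply the measurable Riemann mapping theorem to obtain a qc homeomorphism $\Phi : \hat{\mathbb{C}} \to \hat{\mathbb{C}}$ integrating $\nu$, and set $R := \Phi \circ R_{a_0} \circ \Phi^{-1}$. The $R_{a_0}$-invariance of $\nu$ makes $R$ holomorphic, and the vanishing of $\nu$ on the basilica basin preserves the superattracting $2$-periodic orbit, so $\Phi$ can be normalized via Proposition~\ref{super 2orbit} so that $R = R_a$ for a unique $a \in \mathbb{C} \setminus \{0\}$. The candidate mating data are then
\begin{displaymath}
\Lambda_1 := \Phi \circ \Lambda_1^0 \circ h_c^{-1}, \qquad \Lambda_2 := \Phi \circ \Lambda_2^0.
\end{displaymath}
Continuity and the semi-conjugacy relations with $R_a$ are formal consequences of the construction; the matched Beltrami coefficients guarantee holomorphicity of $\Lambda_i$ on the respective interiors; and condition~(i) of Proposition~\ref{mating def} transfers from $(\Lambda_1^0, \Lambda_2^0)$ precisely because $h_c \circ \tau_{c_0} = \tau_c$ preserves the angular parametrization on the Julia set. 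Finally, a multiplier computation along $\Phi(\mathcal{O}_{a_0})$ yields $\mu(a) = \lambda(c)$, so $a = \phi_H(c)$.

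The main obstacle is arranging $h_c$ in Step~1 so that it simultaneously (a) preserves the Carath\'eodory parametrization on $J_{c_0}$, and (b) has Beltrami coefficient that admits a $\Lambda_1^0$-transplant to a well-defined and $R_{a_0}$-invariant differential on the sphere. Both requirements are individually standard in hyperbolic-component theory, but together they must be executed compatibly with the three overlapping structures present in a mating: the polynomial $f_c$ on $K_c$, the preserved basilica piece carrying the superattracting $2$-cycle, and the ray identifications gluing them together.
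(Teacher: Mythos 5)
The paper itself does not spell out a proof of Proposition \ref{hyper comp unif}; it only remarks that the result ``can be proved using a standard application of quasiconformal surgery'' and cites chapter 4 of \cite{BF}. Your proposal is a correct and carefully executed version of precisely that argument: pull back the Beltrami coefficient of the qc conjugacy $h_c$ inside $H$ across the semi-conjugacy $\Lambda_1^0$, integrate, straighten, and normalize with Proposition \ref{super 2orbit}, then transfer the mating data and confirm $a = \phi_H(c)$ via the multiplier. This is the same approach the paper intends, just with the details filled in.
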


Our goal is to extend this result to the boundary of $H$ where possible.

Consider $c \in \partial H$, and let $a = \phi_H(c) \in \partial H_\mathbf{B}$. The multiplier of $\mathbf{O}_c$ and $\mathcal{O}_a$ is equal to $e^{2\pi \theta i}$ for some $\theta \in \mathbb{R} / \mathbb{Z}$. The number $\theta$ is referred to as the \ebf{rotation number}. If $\theta$ is rational, then $\mathbf{O}_c$ and $\mathcal{O}_a$ are parabolic. In this case, an application of trans-quasiconformal surgery due to Ha\"issinsky implies the following result (see \cite{Ha}).

\begin{thm}
Suppose that the rotation number $\theta$ is rational, so that $\mathbf{O}_c$ and $\mathcal{O}_a$ are parabolic. Then $f_c$ and $f_\mathbf{B}$ are conformally mateable, and $R_a$ is the unique member of the basilica family that realizes their conformal mating.
\end{thm}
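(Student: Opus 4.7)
The plan is to construct the mating by deforming a known hyperbolic mating via Ha\"issinsky's trans-quasiconformal (David) surgery, so that the attracting periodic orbits on each side are simultaneously converted into parabolic orbits with rotation number $\theta$. Pick a basepoint $c_0 \in H$ and set $a_0 := \phi_H(c_0) \in H_\mathbf{B}$. By proposition \ref{hyper comp unif}, $R_{a_0}$ is a conformal mating of $f_{c_0}$ and $f_\mathbf{B}$, with semi-conjugacies $\Lambda_1^0 : K_{c_0} \to \hat{\mathbb{C}}$ and $\Lambda_2^0 : K_\mathbf{B} \to \hat{\mathbb{C}}$ as in proposition \ref{mating def}; the attracting cycle $\mathcal{O}_{a_0}$ lies in the Fatou set of $R_{a_0}$, with linearizing coordinates that conjugate the return map to $z \mapsto \lambda(c_0)\cdot z$.

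First I would set up the surgery. Near each point of $\mathcal{O}_{a_0}$, I would replace the linearizing model by a parabolic one (a Fatou coordinate identifying the return map with $w \mapsto w+1$ composed with the rotation $e^{2\pi i \theta}$) and spread this modification equivariantly along the grand orbit. This determines a Beltrami coefficient $\nu$ on $\hat{\mathbb{C}}$ with $\|\nu\|_\infty = 1$, whose pointwise distortion $K_\nu$ blows up only along the grand orbit of $\mathcal{O}_{a_0}$. Ha\"issinsky's construction verifies the David integrability condition
\begin{displaymath}
\mathrm{area}\bigl\{z : K_\nu(z) > K\bigr\} \leq C e^{-\alpha K},
\end{displaymath}
by explicit control of the hyperbolic-to-parabolic transition in Fatou coordinates. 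The David version of the measurable Riemann mapping theorem then produces a homeomorphism $\Phi : \hat{\mathbb{C}} \to \hat{\mathbb{C}}$, conformal off a set of zero area, for which $R := \Phi \circ R_{a_0} \circ \Phi^{-1}$ is holomorphic, hence a quadratic rational map. Since $R$ retains a superattracting $2$-cycle at $\Phi(\{\infty,0\})$, proposition \ref{super 2orbit} normalizes it as $R_{a'}$ for a unique $a'$; its parabolic cycle has multiplier $e^{2\pi i \theta}$, and since $\mu$ is a homeomorphism of $\overline{H_\mathbf{B}}$ onto $\overline{\mathbb{D}}$, this forces $a' = \phi_H(c) = a$.

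Next I would transport the semi-conjugacies. An analogous David surgery on $f_{c_0}$, converting $\mathbf{O}_{c_0}$ into a parabolic cycle with rotation number $\theta$, produces a homeomorphism $\Psi : \mathbb{C} \to \mathbb{C}$ conjugating $f_{c_0}$ to $f_c$ and conformal off the grand orbit of the attracting cycle. Setting $\Lambda_1 := \Phi \circ \Lambda_1^0 \circ \Psi^{-1}$ and $\Lambda_2 := \Phi \circ \Lambda_2^0$ yields continuous maps verifying conditions (ii) and (iii) of proposition \ref{mating def}, because David homeomorphisms are conformal off measure-zero sets and the surgery is designed to respect the $n$-periodic puzzle partitions and the external-ray combinatorics. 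Condition (i) is then inherited from the hyperbolic mating, since the ray-equivalence structure is encoded entirely in the unmodified dynamics on $J_c$ and $J_\mathbf{B}$. Uniqueness of the realizing $a$ follows from proposition \ref{super 2orbit} combined with the injectivity of $\mu$ on $\overline{H_\mathbf{B}}$.

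The main obstacle is verifying the David integrability inequality with uniform constants across all preimage components of the petals. The delicate point is the transition estimate between the linearizing coordinates near $\mathbf{O}_{c_0}$ and the Fatou coordinates near the forthcoming parabolic cycle: the surgery must pinch the linearizing disks into parabolic petals while ensuring that the accumulated distortion decays exponentially in distance to the cycle, uniformly over all preimages, so that the induced Beltrami coefficient truly satisfies the David condition rather than merely failing quasiconformality. Once this estimate is in place, the rest of the argument is a bookkeeping of external rays and ray equivalences unchanged from the hyperbolic case.
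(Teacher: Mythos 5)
The paper gives no proof of this statement; it records it as a consequence of Ha\"issinsky's trans-quasiconformal surgery and cites \cite{Ha}. Your proposal reconstructs that surgery — pinch the attracting cycle of the hyperbolic mating $R_{a_0}$ into a parabolic one via a David homeomorphism $\Phi$, do the matching David surgery $\Psi$ on the polynomial side, and transport the semi-conjugacies — so the route does match the cited source.

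There is, however, a genuine gap in your verification of condition (ii) of proposition \ref{mating def}. You assert that $\Lambda_1=\Phi\circ\Lambda_1^0\circ\Psi^{-1}$ and $\Lambda_2=\Phi\circ\Lambda_2^0$ are conformal on $\mathring K_c$ and $\mathring K_\mathbf{B}$ ``because David homeomorphisms are conformal off measure-zero sets.'' That is not a property of David maps: a David homeomorphism $\Phi$ solves $\partial_{\bar z}\Phi=\mu\,\partial_z\Phi$ a.e.\ with $|\mu|<1$ a.e.\ and an exponential area bound on $\{K_\mu>K\}$, but the set $\{\mu\neq 0\}$ — here the grand orbit of the surgery region inside the basin of $\mathcal{O}_{a_0}$ — has positive area, and $\Phi$ is genuinely non-conformal there. (Holomorphy of $R=\Phi\circ R_{a_0}\circ\Phi^{-1}$ comes from the $R_{a_0}$-invariance of $\mu$, not from $\Phi$ being conformal.) The correct argument distinguishes the two semi-conjugacies. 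For $\Lambda_2$: the surgery leaves the superattracting basin of $\{0,\infty\}$ untouched, so $\mu\equiv 0$ on $\Lambda_2^0(\mathring K_\mathbf{B})$ and $\Phi$ really is conformal there. For $\Lambda_1$: you must \emph{couple} the two David surgeries, taking the Beltrami coefficient that defines $\Psi$ on the attracting basin of $\mathbf{O}_{c_0}$ to be the $\Lambda_1^0$-pullback of the one that defines $\Phi$ on the attracting basin of $\mathcal{O}_{a_0}$; only then do $\Phi\circ\Lambda_1^0$ and $\Psi$ share the same David datum on $\mathring K_{c_0}$, so that the composite $\Lambda_1$ is conformal on the parabolic basin of $f_c$. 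This compatibility has to be built into the construction of the Beltrami forms, not invoked after the fact. With that repair, the rest of the transport argument — including condition (i), which as you note only sees the unmodified Julia combinatorics — does go through.
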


If $\theta$ is irrational, then $\mathbf{O}_c$ is either Siegel or Cremer. In the latter case, it is known that the Julia set $J_c$ for $f_c$ is non-locally connected (see e.g. \cite{M1}). This means that the formal mating of $f_c$ and $f_\mathbf{B}$ cannot be defined, and hence, they are not conformally mateable.

For our discussion of the Siegel case, we first recall a classical result of Siegel \cite{S}. An irrational number $x$ is said to be \ebf{Diophantine of order $\kappa$} if there exists a fixed constant $\epsilon > 0$ such that for all $\frac{p}{q} \subset \mathbb{Q}$, the following inequality holds:
\begin{displaymath}
|x - \frac{p}{q}| \geq \frac{\epsilon}{q^\kappa}.
\end{displaymath}
The set of all irrational numbers that are Diophantine of order $\kappa$ is denoted $\mathcal{D}(\kappa)$. The smallest possible value of $\kappa$ such that $\mathcal{D}(\kappa)$ is non-empty is $2$ (see \cite{M1}).

\begin{thm} [Siegel \cite{S}] \label{siegel thm}
Let $f : U \to V$ be an analytic function. Suppose $f$ has an indifferent periodic orbit  $\mathcal{O}$ with an irrational rotation number $\theta$. If $\theta \in \mathcal{D}(\kappa)$ for some $\kappa \geq 2$, then $\mathcal{O}$ is a Siegel orbit.
\end{thm}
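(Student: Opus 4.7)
The plan is to construct, for each point of $\mathcal{O}$, a local holomorphic linearization of an appropriate iterate of $f$. Replacing $f$ by its $n$-th iterate (where $n$ is the period of $\mathcal{O}$) turns each point of $\mathcal{O}$ into a fixed point whose multiplier is still $\lambda := e^{2\pi i \theta}$, so it suffices to treat a single fixed point. Translating coordinates, we may assume the fixed point is at the origin, so that $f(z) = \lambda z + \sum_{k \geq 2} a_k z^k$. The goal is to produce a holomorphic germ $h(z) = z + \sum_{k \geq 2} h_k z^k$, defined and univalent on some disc of positive radius around $0$, satisfying $f \circ h(z) = h(\lambda z)$. Once such an $h$ is constructed, its image is an $f$-invariant topological disc on which $f$ is conformally conjugate to the irrational rotation $z \mapsto \lambda z$, so the fixed point is of Siegel type; applying this to each point of $\mathcal{O}$ gives the theorem.

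First I would solve the conjugacy equation formally. Matching coefficients in $f \circ h = h \circ (\lambda \,\cdot\,)$ yields a triangular recursion of the form
\begin{equation*}
(\lambda^k - \lambda)\, h_k = P_k(a_2, \ldots, a_k;\, h_2, \ldots, h_{k-1}),
\end{equation*}
where each $P_k$ is a universal polynomial with non-negative integer coefficients. Irrationality of $\theta$ ensures $\lambda^k \neq \lambda$ for all $k \geq 2$, so the formal power series $h$ is uniquely determined. This is the only place where solvability can fail, and it is also the source of the well-known \emph{small divisor problem}: the factors $\lambda^k - \lambda$ may come arbitrarily close to $0$, inflating the $h_k$. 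The Diophantine hypothesis $\theta \in \mathcal{D}(\kappa)$ intervenes here via the elementary estimate
\begin{equation*}
|\lambda^k - \lambda| \;=\; |e^{2\pi i (k-1)\theta} - 1| \;\geq\; \frac{C}{(k-1)^{\kappa - 1}}
\end{equation*}
for some $C = C(\theta) > 0$ and all $k \geq 2$.

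Next I would prove convergence of the formal series on some disc of positive radius. The classical route, due to Siegel himself, is the \emph{majorant method}: one bounds $|h_k|$ by the coefficients of an auxiliary power series that dominates the $P_k$-recursion while absorbing the polynomial loss $(k-1)^{\kappa-1}$ at each step, and then shows the majorant has a positive radius of convergence. An alternative, more robust approach is a Newton--KAM scheme: at each step one solves the linearized cohomological equation $g(\lambda z) - g(z) = \psi(z)$ using the small-divisor estimate, and the quadratic convergence of Newton's method absorbs the polynomial loss coming from $(k-1)^{\kappa-1}$. Either route yields a univalent holomorphic $h$ on some disc $\{|z| < r\}$ with $h(0) = 0$, $h'(0) = 1$, and $f \circ h = h \circ (\lambda \,\cdot\,)$, completing the construction.

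The main obstacle throughout is this convergence step: controlling the accumulation of small divisors along the recursion is delicate, and the estimates must be tight enough to give a nonzero radius of convergence for $h$. Everything else is routine, being either formal algebra (the recursion for the $h_k$), a standard Diophantine estimate (the lower bound on $|\lambda^k - \lambda|$), or a straightforward reduction (passing from a linearization at one periodic point to the full orbit $\mathcal{O}$).
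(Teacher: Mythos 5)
The paper does not actually prove this statement; it is cited as a classical theorem of Siegel (1942), so there is no ``paper's own proof'' to compare against. That said, your sketch is a faithful outline of the standard argument. The reduction to a single fixed point by passing to the $n$-th iterate, the formal linearization equation $f \circ h = h \circ (\lambda \,\cdot\,)$ with its triangular coefficient recursion, the small-divisor estimate $|\lambda^k - \lambda| = |e^{2\pi i(k-1)\theta} - 1| \geq C/(k-1)^{\kappa-1}$ derived from the Diophantine condition (via $|e^{2\pi i x} - 1| \geq 4\,\mathrm{dist}(x,\mathbb{Z})$ and $\|(k-1)\theta\| \geq \epsilon/(k-1)^{\kappa-1}$), and the convergence via Siegel's majorant argument or a Newton--KAM scheme are all exactly the right ingredients. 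The one place where you understate the difficulty is the convergence step: the majorant construction must track the \emph{accumulation} of small divisors along the recursion, and Siegel's original argument relies on a clever combinatorial lemma (roughly, that at most $O(2^m k / 2^m) $ indices $j \leq k$ can have $|\lambda^j - 1|$ smaller than a dyadic threshold $2^{-m}$) to show that the total loss over the whole recursion is only geometric rather than factorial. Calling the rest ``routine'' is fair, but that combinatorial bound is the technical heart of the proof and deserves to be flagged rather than folded into the word ``delicate.'' As a sketch, though, your proposal is correct and captures the structure of the classical proof accurately.
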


There is a classical connection between Diophantine classes and continued fraction approximations (see e.g. \cite{M1}). In particular, if
\begin{displaymath}
x = \cfrac{1}{a_1+\cfrac{1}{a_2+ \ldots{}}}
\end{displaymath}
is the continued fraction representation of $x$, then $x \in \mathcal{D}(2)$ if and only if all the $a_i$'s are uniformly bounded. In view of this, we say that the numbers $x \in \mathcal{D}(2)$ are of \ebf{bounded type}. Siegel quadratic polynomials of bounded type are prominently featured in the study of renormalization (see e.g. \cite{P, Mc, Y1, Y2}).

\begin{thm} [Peterson \cite{P}] \label{bounded type lc}
Suppose a quadratic polynomial $f_c$ has an indifferent periodic orbit with an irrational rotation number of bounded type. Then $f_c$ has a locally connected Julia set $J_c$.
\end{thm}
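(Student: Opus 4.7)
The plan is to follow the quasiconformal surgery strategy originally carried out by Petersen. First, by replacing $f_c$ with the iterate $f_c^n$ if necessary, I would reduce to the case of a Siegel \emph{fixed} point at some $z_0 \in \mathbb{C}$: the filled Julia set is unchanged, and local connectivity transfers through finitely many iterates. Let $\Delta$ be the Siegel disk about $z_0$ and let $\theta \in \mathcal{D}(2)$ be its rotation number. The first main move is to replace $f_c$ by a rational model better adapted to circle-map analysis. Specifically, I would construct a degree-$3$ Blaschke product $F$ preserving the unit disk $\mathbb{D}$ with a Siegel fixed point at $0$ of rotation number $\theta$, a single critical point on $\partial \mathbb{D}$, and expanding dynamics on $\hat{\mathbb{C}} \setminus \overline{\mathbb{D}}$. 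A quasiconformal surgery along $\partial \mathbb{D}$ that glues $F|_{\mathbb{D}}$ to the exterior dynamics of $f_c$ then produces a quadratic-like map hybrid-equivalent to $f_c$, so it suffices to prove that the Julia set of this Blaschke-type model is locally connected.

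The second step is the boundary analysis. Because $F$ has a critical point on $\partial \mathbb{D}$ and the rotation number on $\partial \mathbb{D}$ equals $\theta$ of bounded type, the restriction $F|_{\partial \mathbb{D}}$ is a critical circle map of bounded combinatorial type. The backbone of the argument is the real a priori bounds of \'Swi\c{a}tek and Herman: for every such map, the sequence of renormalizations about the critical point is precompact with constants depending only on the upper bound of the continued-fraction entries of $\theta$. Promoting these real bounds to \emph{complex} bounds, either via the holomorphic commuting pair formalism of de~Faria--Yampolsky or by directly producing a nest of annular neighborhoods of $\partial \Delta$ with uniformly bounded moduli, one concludes that $\partial \Delta$ is a quasicircle passing through the unique critical value on the boundary.

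With the Siegel boundary now tame, I would finish by a puzzle argument outside $\overline{\Delta}$. Using external rays landing at the $\alpha$-fixed point of $f_c$ together with suitable equipotentials, one sets up a Yoccoz-type puzzle whose pieces tile $K_c \setminus \overline{\Delta}$. The only non-repelling cycle is the Siegel one, whose closure has just been shown to be a quasicircle; elsewhere the standard Yoccoz tableau analysis shows that the nest of puzzle pieces around every non-critical point $z \in J_c$ shrinks to $z$. The critical point $0$ falls into $\partial \Delta$ after one iterate and is therefore handled by the quasicircle statement. Combined, these two ingredients give local connectivity of $J_c$ at every point.

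The main obstacle, and the technical heart of the theorem, is the passage from real a priori bounds for critical circle maps of bounded type to complex geometric bounds strong enough to conclude that $\partial \Delta$ is a quasicircle. The Blaschke surgery, the reduction to the fixed-point case, and the Yoccoz puzzle argument in the complement of $\overline{\Delta}$ are each comparatively routine once the quasicircle boundary of the Siegel disk is in hand; it is the critical circle map theory, and its complex-analytic upgrade, that does the real work.
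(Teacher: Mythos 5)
The paper does not prove this theorem---it is imported as a citation to Petersen---so there is no in-text argument to compare against. Your outline does capture the standard strategy: replace the Siegel quadratic by a degree-$3$ Blaschke model via the Douady--Ghys--Herman--Shishikura surgery, invoke the real a priori bounds of \'Swi\c{a}tek--Herman for the resulting critical circle map, upgrade to complex geometric control to show $\partial\Delta$ is a quasicircle, and finish with a puzzle shrinking argument. You also correctly identify the complex bounds as the technical heart; this is indeed the philosophy behind Petersen's proof, and behind the machinery the present paper deploys in Sections 7--8 for its candidate mating.

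There are, however, two genuine gaps. First, the opening reduction ``replace $f_c$ by $f_c^n$'' does not work: when the Siegel cycle has period $n>1$, the iterate $f_c^n$ is a degree-$2^n$ polynomial with $n$ finite critical points, so the Blaschke surgery and the whole quadratic-polynomial framework no longer apply to it. The correct reduction goes through polynomial-like renormalization: extract a quadratic-like restriction of $f_c^n$ around a cycle point, straighten to a quadratic $f_{c'}$ with a Siegel \emph{fixed} point, apply the fixed-point result there, and then separately argue that local connectivity of the small Julia set propagates to all of $J_c$. That propagation is a nontrivial renormalization statement and is not captured by the remark that ``local connectivity transfers through finitely many iterates.'' Second, the appeal to ``the standard Yoccoz tableau analysis'' at non-critical points misidentifies the tool: Yoccoz's tableau argument is designed for non-renormalizable combinatorics with no indifferent cycles and does not apply once a Siegel disk with the critical point on its boundary is present. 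What is actually needed---and what both Petersen and the present paper (Theorem \ref{total shrinkage} and Section \ref{shrink proofs}) use---is a shrinking argument tied to the Siegel puzzle: complex bounds give shrinking of the critical nest, and shrinking at non-critical points follows from hyperbolic contraction of univalent inverse branches away from the post-critical set, not from tableau combinatorics.
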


In this paper, we present a positive answer to the main question (stated in section \ref{motivate}) for quadratic polynomials $f_\mathbf{S}$ that have an indifferent fixed point with an irrational rotation number of bounded type. Note that by theorem \ref{siegel thm}, the indifferent fixed point is Siegel, and by theorem \ref{bounded type lc}, the formal mating of $f_\mathbf{S}$ and $f_\mathbf{B}$ is well defined.

The solution to the uniqueness part of the main question is elementary.

\begin{prop} \label{uniqueness mat}
Suppose $\lambda \in \overline{\mathbb{D}}$. Then there exists a unique $c \in \mathcal{M}$ (resp. $a \in \mathcal{M}_\mathbf{B}$) such that $f_c$ (resp. $R_a$) has a bounded non-repelling fixed point $z_0 \neq \infty$ with multiplier $\lambda$.
\end{prop}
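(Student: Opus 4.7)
The strategy is a direct algebraic computation in each of the two families, combined with a standard application of Fatou's theorem on the location of critical orbits.

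For the quadratic family, I would begin with the fixed-point equation $z^2 - z + c = 0$, whose two roots $z_1, z_2$ satisfy $z_1 + z_2 = 1$ by Vieta. The multipliers are $\lambda_i = f_c'(z_i) = 2z_i$, and hence $\lambda_1 + \lambda_2 = 2$. Consequently, for any $\lambda_1 \in \overline{\mathbb{D}}$ the companion multiplier $\lambda_2 = 2 - \lambda_1$ satisfies $|\lambda_2| \geq 1$, with equality only at $\lambda_1 = 1$, so $f_c$ has at most one non-repelling fixed point. Given a target $\lambda \in \overline{\mathbb{D}}$, this forces $z_0 = \lambda/2$ and $c = z_0 - z_0^2 = \lambda(2-\lambda)/4$, proving uniqueness. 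For existence in $\mathcal{M}$, I would invoke Fatou's theorem: when $|\lambda|<1$, the immediate basin of the attracting fixed point must contain a critical point, and since $\infty$ already belongs to its own superattracting basin, the finite critical point $0$ lies in $\mathring{K}_c$; when $|\lambda|=1$, the parameter $c$ lies on $\partial H_0\subset \mathcal{M}$.

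For the basilica family the plan is identical, with slightly heavier algebra. The finite fixed points of $R_a$ are the roots of $z^3 + 2z^2 = a$, and substituting $a = z^2(z+2)$ into $R_a'(z) = -a(2z+2)/(z^2+2z)^2$ gives the closed form
\begin{displaymath}
R_a'(z) = -\frac{2(z+1)}{z+2}
\end{displaymath}
at any fixed point. This can be inverted explicitly as $z_0 = -2(\lambda+1)/(\lambda+2)$, which is well-defined for all $\lambda\in\overline{\mathbb D}$ since the singularity at $\lambda=-2$ lies outside $\overline{\mathbb D}$, and yields the unique parameter $a = z_0^2(z_0+2) = 8(\lambda+1)^2/(\lambda+2)^3$. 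Membership in $\mathcal{M}_\mathbf{B}$ follows from the same Fatou argument as before: since the second critical orbit $\{\infty, 0\}$ is already superattracting, any further attracting or indifferent fixed point must capture (or lie on the boundary of the Fatou component of) the finite critical point $-1$, keeping it away from $\mathcal{A}^\infty_a$.

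I do not expect any real obstacle; the proposition is essentially an explicit formula. The only mild bookkeeping is to confirm that the constructed parameter lands in the correct non-escape locus (in particular, in $\mathbb{C}\setminus\{0\}$ for the basilica case), and to take the standard limit from the interior of the relevant hyperbolic component when $|\lambda|=1$, using that the multiplier map extends to a homeomorphism on the closure of the component as stated in the excerpt.
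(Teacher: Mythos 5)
Your uniqueness argument coincides with the paper's: solve the fixed-point and multiplier equations explicitly for $c$ (resp.\ $a$) in terms of $\lambda$. You go further by also addressing existence --- the Fatou argument that an attracting fixed point must capture the free critical point, and the multiplier-map homeomorphism on $\overline{H_0}$ for the indifferent case --- which the paper's proof simply omits. One substantive discrepancy: your closed form $a = 8(\lambda+1)^2/(\lambda+2)^3$ differs from the $a = -8\lambda/(\lambda-1)^3$ appearing in the paper, and checking against $R_a(z) = a/(z^2+2z)$ shows yours is the correct one. (At $\lambda=0$ the finite critical point $-1$ must be a fixed point, which forces $a=1$; your formula gives $a=1$, whereas the paper's returns $a=0$ there and is undefined at $\lambda=1$.) Finally, a detail you flag as bookkeeping but do not carry out: at $\lambda = -1$ your formula yields $z_0=0$ and hence $a=0$, which is excluded from the parameter space $\mathbb{C}\setminus\{0\}$. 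Indeed $0$ is a pole of every $R_a$, so no member of the basilica family has a fixed point of multiplier $-1$, and the proposition as literally stated fails at that single boundary point. This is consistent with the absence of a copy of the $1/2$-limb in $\mathcal{M}_\mathbf{B}$, and it is harmless for the paper's application (which only needs $\lambda = e^{2\pi i\nu}$ with $\nu$ irrational), but it does mean the existence claim cannot hold on all of $\overline{\mathbb{D}}$, and your remark that the construction is well-defined for every $\lambda\in\overline{\mathbb{D}}$ needs this one exclusion.
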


\begin{proof}
Suppose $f_c$ has a fixed point $z_0 \neq \infty$ with multiplier $\lambda \in \mathbb{C}$. It is easy to check that the value of $c$ is given by
\begin{displaymath}
c = \frac{\lambda}{2} - \frac{\lambda^2}{4}.
\end{displaymath}
Hence, $c$ is uniquely determined.

Likewise, suppose $R_a$ has a fixed point with multiplier $\lambda \in \mathbb{C}$. Then the value of $a$ is given by
\begin{displaymath}
a = -\frac{8\lambda}{(\lambda-1)^3}.
\end{displaymath}
Hence, $a$ is uniquely determined.
\end{proof}

Our main results are stated below.

\begin{thma}
Suppose $\nu \in \mathbb{R} \setminus \mathbb{Q}$ is of bounded type. Let $R_\nu$ be the unique member of the basilica family that has a Siegel fixed point $z_0$ with rotation number $\nu$. Let $\mathcal{S}_0$ be the fixed Siegel disc containing $z_0$. Then $\mathcal{S}_0$ is a quasidisk, and contains a unique critical point in its boundary.
\end{thma}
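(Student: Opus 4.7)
The plan is to adapt Petersen's argument \cite{P} for Siegel quadratics of bounded type to the basilica family. The proof proceeds in three steps: identify the unique critical point on $\partial \mathcal{S}_0$, build a Blaschke product model for $R_\nu$ via quasiconformal surgery, and apply complex \emph{a priori} bounds for critical circle maps to establish quasiconformal rigidity of $\partial \mathcal{S}_0$.

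For step one, the critical set of $R_\nu$ is exactly $\{\infty, -1\}$. Since $\{0, \infty\}$ is a superattracting 2-cycle, $\infty \in \mathcal{A}^\infty$ lies in the Fatou set, so $\infty \notin \partial \mathcal{S}_0$. To show $-1 \in \partial \mathcal{S}_0$, which simultaneously yields existence and uniqueness, I would invoke the Fatou--Shishikura post-critical accumulation principle: the boundary of any irrationally indifferent Siegel disc lies in the closure of the postcritical set, and the only postcritical orbit not contained in $\mathcal{A}^\infty$ is that of $-1$. A standard univalent-pullback argument then shows that if $-1$ were separated from $\overline{\mathcal{S}_0}$, the linearizing coordinate on $\mathcal{S}_0$ would extend via inverse branches of $R_\nu^n$ to a strictly larger invariant disc, contradicting the maximality of $\mathcal{S}_0$ as a Siegel disc.

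Step two is the Douady--Ghys--Herman type construction. I would exhibit a Blaschke product $B_\nu$ preserving $\overline{\mathbb{D}}$ with (i) a superattracting 2-cycle in $\mathbb{D}$ modeling the basilica cycle $\{0,\infty\}$, and (ii) a fixed critical point on $\partial \mathbb{D}$, tuned so that $B_\nu|_{\partial \mathbb{D}}$ is a real-analytic critical circle homeomorphism of rotation number $\nu$. Replacing $B_\nu|_{\mathbb{D}}$ by the rigid rotation $z \mapsto e^{2\pi i \nu} z$ via a quasiconformal interpolation and applying the measurable Riemann mapping theorem produces a quasiconformal conjugacy between the modified map and $R_\nu$ under which $\mathbb{D}$ corresponds to $\mathcal{S}_0$. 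At this stage, $\partial \mathcal{S}_0$ is only controlled up to quasiconformal (not quasisymmetric) distortion from $\partial \mathbb{D}$.

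The crux is step three, the complex \emph{a priori} bounds. Following the Yampolsky--de~Faria--de~Melo approach, the bounded-type hypothesis on $\nu$ should yield uniform moduli for a nested sequence of dynamically defined annular puzzle pieces enclosing $\partial \mathbb{D}$ in the renormalization of $B_\nu$. Uniform moduli force the interpolating conjugacy to extend quasisymmetrically across $\partial \mathbb{D}$, and then the Ahlfors characterization of quasicircles identifies $\partial \mathcal{S}_0$ as a quasicircle, so $\mathcal{S}_0$ is a quasidisk. I expect this last step to be the principal obstacle: classical complex bounds are developed for Blaschke models whose only nontrivial feature is a single critical circle, whereas here the basilica-type superattracting 2-cycle introduces additional pullback geometry that must be controlled simultaneously with the critical circle dynamics. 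The abstract's reference to ``an adaptation of complex a priori bounds for critical circle maps'' indicates that the delicate point of the paper is precisely this simultaneous control.
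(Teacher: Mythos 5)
Your steps one and two match the skeleton of the paper's argument, but step three is a genuine misdiagnosis of where the bounded-type hypothesis earns its keep. For Theorem A, the paper uses the Douady--Ghys--Herman surgery exactly as you describe in step two, and the bounded-type assumption enters only at the moment you cite the Herman--\'Swi\c{a}tek theorem: since $F_\nu|_{\partial\mathbb{D}}$ is a critical circle map with bounded-type rotation number, the conjugating homeomorphism $\psi$ to $\mathrm{Rot}_\nu$ is already \emph{quasisymmetric}, and hence admits a quasiconformal extension to $\mathbb{D}$. Feeding this into the Beltrami construction and integrating produces a global quasiconformal map $\phi:\hat{\mathbb{C}}\to\hat{\mathbb{C}}$ with $\mathcal{S}_0=\phi(\mathbb{D})$, so $\mathcal{S}_0$ is automatically a quasidisk and $\phi(1)$ is automatically a critical point on its boundary (uniqueness comes free since $F_\nu$ has a single critical point on $\partial\mathbb{D}$ and the other critical points of $R_\nu$ lie in the superattracting basin). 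Your claim that after the surgery ``$\partial\mathcal{S}_0$ is only controlled up to quasiconformal (not quasisymmetric) distortion'' is wrong, and your proposed step three --- upgrading via complex \emph{a priori} bounds --- is therefore not needed here and does not appear in the paper's proof of Theorem A.

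The complex \emph{a priori} bounds you anticipated (Section 6, adapting Yampolsky and Yampolsky--Zakeri) are indeed a delicate point of the paper, but they serve Theorem B, not Theorem A: they power the Shrinking Theorem (every nested puzzle sequence for $R_\nu$ collapses to a point), which is the engine behind the continuous extension of the bubble conjugacies $\Phi_\mathbf{B},\Phi_\mathbf{S}$ to the filled Julia sets and hence the verification that $R_\nu$ is a conformal mating. Your instinct that the basilica superattracting 2-cycle adds pullback geometry that must be managed alongside the circle dynamics is correct in spirit --- but the paper handles this by cutting the plane into puzzle pieces using both basilica and Siegel bubble rays and running the Yampolsky estimates on the resulting critical puzzle pieces --- and none of this is required to conclude that $\mathcal{S}_0$ is a quasidisk. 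A secondary, smaller gap: your step one presupposes the existence part (that the critical point actually lies on $\partial\mathcal{S}_0$) via a maximality argument, whereas in the paper this falls out of the explicit Blaschke model, where the double critical point at $1\in\partial\mathbb{D}$ is engineered in Lemma 4.1 before the surgery is even performed.
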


\begin{thmb}
Suppose $\nu \in \mathbb{R} \setminus \mathbb{Q}$ is of bounded type. Let $f_\mathbf{S}$ be the unique member of the quadratic family that has a Siegel fixed point with rotation number $\nu$. Then $f_\mathbf{S}$ and $f_\mathbf{B}$ are conformally mateable, and $R_\nu$ is the unique member of the basilica family that realizes their conformal mating.
\end{thmb}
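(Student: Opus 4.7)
The plan is to realize the conditions of Proposition \ref{mating def} with the candidate rational map $R = R_\nu$. The uniqueness clause is essentially automatic: any conformal mating $F$ of $f_\mathbf{S}$ and $f_\mathbf{B}$ inherits a superattracting 2-periodic orbit from the basilica factor and a Siegel fixed point of rotation number $\nu$ from the Siegel factor. Proposition \ref{super 2orbit} then normalizes $F$ as some $R_a$, and Proposition \ref{uniqueness mat} (applied to the multiplier at the Siegel fixed point) forces $a = \nu$. Moreover, the semiconjugacy $\Lambda_2 : K_\mathbf{B} \to \hat{\mathbb{C}}$ on the basilica side is essentially dictated by Proposition \ref{basilica unif}: one sets $\Lambda_2 := \psi_\nu^{-1}$ on each bounded Fatou component of $f_\mathbf{B}$, extends to the closure of each such component as in that proposition, and glues across $J_\mathbf{B}$ using the Carath\'eodory loop $\tau_\mathbf{B}$ together with local connectivity of $J(R_\nu)$ (to be obtained in parallel with the construction). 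So the entire problem reduces to building $\Lambda_1 : K_\mathbf{S} \to \hat{\mathbb{C}}$ and verifying the matching condition (i).

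For $\Lambda_1$, let $\Delta$ denote the fixed Siegel disk of $f_\mathbf{S}$ and let $\mathcal{S}_0$ denote the fixed Siegel disk of $R_\nu$. By the theorems of Petersen and Zhang, $\Delta$ is a quasidisk whose boundary carries the critical point of $f_\mathbf{S}$; by Theorem A the analogous statement holds for $\mathcal{S}_0$ inside $R_\nu$. Consequently $f_\mathbf{S}|_{\partial \Delta}$ and $R_\nu|_{\partial \mathcal{S}_0}$ are analytic critical circle maps with the common bounded-type rotation number $\nu$, and the rigidity theory of such maps supplies a quasisymmetric boundary conjugacy between them, which extends to $\overline{\Delta} \to \overline{\mathcal{S}_0}$ via the linearizing coordinates inside the Siegel disks. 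The key step is to upgrade this to a \emph{pseudo-conjugacy}: a quasiconformal map on a genuinely two-dimensional neighborhood of $\partial \Delta$ in $K_\mathbf{S}$, holomorphic off a compact set, that semiconjugates $f_\mathbf{S}$ to $R_\nu$. For this I would adapt the complex a priori bounds for critical circle maps to the present setting, so as to pin down the geometry of renormalizations of $f_\mathbf{S}$ and $R_\nu$ near the critical Siegel disk boundaries. Pulling the pseudo-conjugacy back along inverse branches of $f_\mathbf{S}$ yields quasiconformal extensions over arbitrarily deep Yoccoz-type puzzle pieces with a common dilatation; combined with the local connectivity of $J_\mathbf{S}$ from Theorem \ref{bounded type lc} and a normal-family argument, this produces a continuous limit $\Lambda_1$ on $K_\mathbf{S}$ that is conformal on $\mathring{K}_\mathbf{S}$. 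Condition (i), namely $\Lambda_1(\tau_\mathbf{S}(t)) = \Lambda_2(\tau_\mathbf{B}(-t))$ for all $t$, is then a combinatorial consequence of tracking how the external rays of $f_\mathbf{S}$ and $f_\mathbf{B}$ land and are identified under the pseudo-conjugacy.

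The principal obstacle is the complex a priori bounds step. As formulated in the existing literature, such bounds pertain either to stand-alone critical circle maps or to the purely Siegel setting in the quadratic family; here one needs uniform geometric control of renormalizations around $\partial \mathcal{S}_0$ inside the ambient rational map $R_\nu$, which simultaneously carries a superattracting 2-cycle. Establishing the required bounds in this mixed setting is precisely the adaptation advertised in the abstract, and should be the main technical content of the paper. Once these bounds are in place, the pseudo-conjugacy construction, the spreading-via-dynamics argument, and the verification of conditions (ii) and (iii) for $\Lambda_1$ follow the general framework of Aspenberg--Yampolsky \cite{AY}.
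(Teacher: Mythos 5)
Your proposal is a genuinely different route from the paper's, and it has several concrete gaps.

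The paper does not construct $\Lambda_1$ by building a pseudo-conjugacy near $\partial\Delta$ and spreading it by pullbacks. Instead, it defines $\Phi_{\mathbf S}$ (and $\Phi_{\mathbf B}$) bubble-by-bubble as the unique conformal map between bubbles of matching \emph{bubble address}, sending root to root (Theorem~\ref{interior maps}); this is tautologically conformal on $\mathring K_{\mathbf S}$, with no dilatation to worry about. The entire analytic content is then concentrated in a single shrinking statement (Theorem~\ref{total shrinkage}): nested puzzle sequences for $R_\nu$, built from the fixed bubble rays of $\mathcal B$ and $\mathcal S$, shrink to points. That is what the adapted complex a priori bounds of \S\ref{secapriori} are for — proving the critical puzzle pieces $P^{\mathrm{crit}}_n$ shrink — and the rest is a careful combinatorial bookkeeping to propagate this to all puzzle sequences and to the continuity of the boundary extension. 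So although you correctly identify the a priori bounds as the central technical step, the use the paper makes of them is different from what you propose.

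The concrete gaps in your route: \emph{(a)} The semiconjugacies $\Lambda_1,\Lambda_2$ of Proposition~\ref{mating def} are not injective — they must identify ray-equivalent points — so they cannot be quasiconformal. In the Aspenberg--Yampolsky framework the pseudo-conjugacy is a homeomorphism between $R_\nu$ and a \emph{topological mating model}, from which the two semiconjugacies are then derived; your sketch compresses this into a single map and thereby conflates two objects of different type. \emph{(b)} Even granting the pullback construction, a normal-family limit of quasiconformal maps with a common dilatation bound is quasiconformal, not conformal; to get conformality on $\mathring K_{\mathbf S}$ you need either a Measurable Riemann Mapping Theorem straightening step or an argument that the dilatation is supported on a set that vanishes in the limit. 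You assert conformality as if it were automatic. \emph{(c)} The matching condition~(i) is not ``a combinatorial consequence of tracking external rays'' for free. The delicate point — and the reason the paper sets up bubble rays for \emph{both} factors and proves Proposition~\ref{angular span}, that the angular spans of a puzzle piece for $R_\nu$ with respect to $\mathcal B$ and $\mathcal S$ coincide — is precisely to pin down how $\mathcal R^{\mathcal B}_t$ and $\mathcal R^{\mathcal S}_t$ land at the same point for dyadic $t$ and hence how the two semiconjugacies interlock at iterated preimages of $\kappa$. Your pseudo-conjugacy construction near $\partial\Delta$ only ``sees'' the Siegel bubbles of $R_\nu$ and gives no direct control over where the basilica bubble rays accumulate; some replacement for Proposition~\ref{angular span} is needed and is not supplied.

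The uniqueness half of your argument is essentially right modulo a notational slip (the rotation number $\nu$ labels the map but is not the parameter $a$; the parameter is fixed by Proposition~\ref{uniqueness mat}). The existence half needs to be rebuilt along one of the two lines above to close the gaps.
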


\begin{figure}[h]
\centering
\includegraphics[scale=0.7]{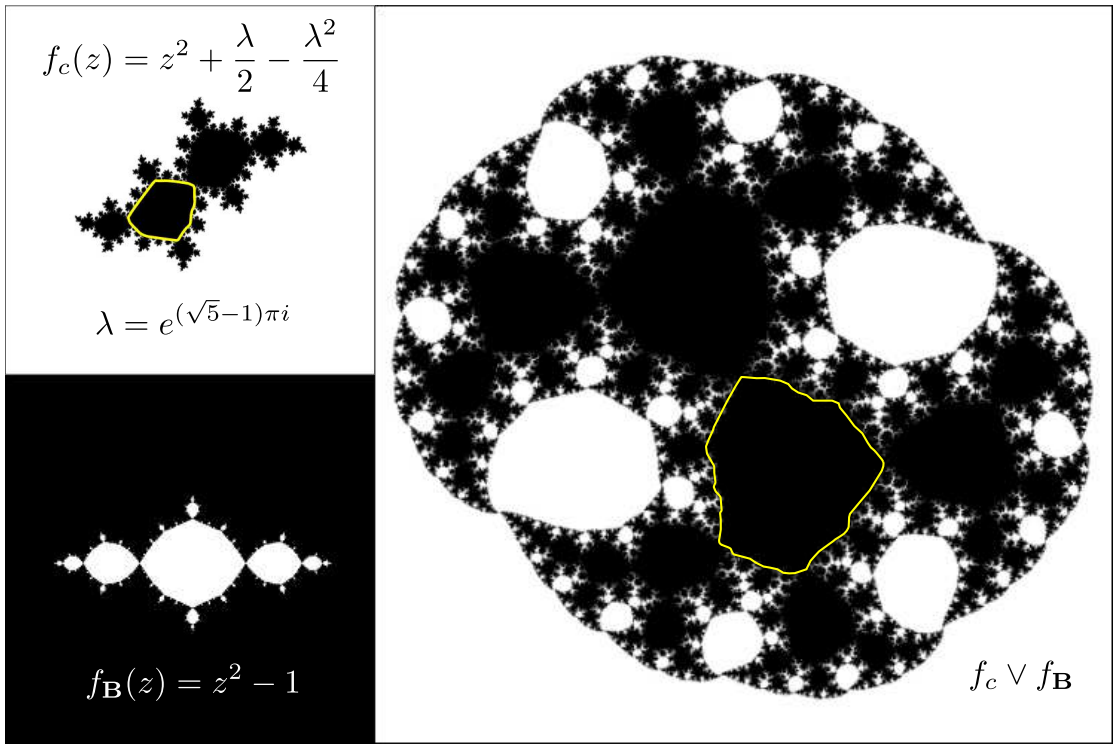}
\caption{Mating of a Siegel polynomial $f_c$, $c = \frac{\lambda}{2} - \frac{\lambda^2}{4}$, $\lambda = e^{(\sqrt{5}-1) \pi i}$, and the basilica polynomial $f_\mathbf{B}$. The Siegel disc is highlighted.}
\label{fig:matingsiegelbasilica}
\end{figure}

\newpage

\section{The Construction of a Blaschke Product Model and the Proof of Theorem A}

Suppose $\nu \in \mathbb{R} \setminus \mathbb{Q}$ is of bounded type. It follows from proposition \ref{uniqueness mat} that there exists a unique member of the basilica family $R_\nu$ that has a fixed Siegel disc $\mathcal{S}_0$ with rotation number $\nu$. In this section, we use quasiconformal surgery to show that $\mathcal{S}_0$ is a quasidisc. Refer to \cite{BF} for generalities about quasiconformal maps and quasiconformal surgeries.

Consider the Blaschke product
\begin{displaymath}
F_{a,b}(z) := -\frac{1}{e^{i \theta}} \frac{z(z-a)(z-b)}{(1-\bar{a}z)(1-\bar{b}z)},
\end{displaymath}
where
\begin{displaymath}
ab = r e^{i \theta}, \hspace{5 mm} r \in \mathbb{R}^+, \hspace{2 mm} \theta \in [0, 2 \pi).
\end{displaymath}
Note that $0$ is a fixed point with multiplier $-r$.

\begin{lem}\label{crit}
For any value of $r$ and $\theta$, $a = a(r, \theta)$ and $b = b(r, \theta)$ can be chosen such that $F_{a,b}$ has a double critical point at $1$.
\end{lem}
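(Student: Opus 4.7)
The plan is to reformulate the critical-point condition as a single real equation in the elementary symmetric functions $\sigma := a+b$ and $\pi := ab$, and then exhibit a one-parameter family of solutions satisfying $\pi = re^{i\theta}$.

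First I would observe that $F_{a,b}$ is a degree-$3$ rational map satisfying the symmetry $F_{a,b}(1/\bar{z}) = 1/\overline{F_{a,b}(z)}$, so it preserves $\partial\mathbb{D}$ and its critical set is invariant under the involution $z \mapsto 1/\bar{z}$. By Riemann--Hurwitz there are $2\cdot 3 - 2 = 4$ critical points (with multiplicity) on the Riemann sphere, and since interior and exterior critical points pair up under the involution, the total critical multiplicity on the fixed set $\partial\mathbb{D}$ must be even. Consequently, merely imposing $F_{a,b}'(1) = 0$ already forces the critical point at $1$ to have multiplicity at least two, and so the problem reduces to solving the single equation $F_{a,b}'(1) = 0$ together with $ab = re^{i\theta}$.

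Next I would compute $F_{a,b}'(1)$ directly. Writing $F_{a,b}(z) = -e^{-i\theta}\,N(z)/D(z)$ with $N(z) = z^3 - \sigma z^2 + \pi z$ and $D(z) = \bar{\pi} z^2 - \bar{\sigma} z + 1$, one notes the symmetry-induced identity $D(1) = \overline{N(1)}$. Setting $w := N(1) = 1 - \sigma + \pi$, the equation $F_{a,b}'(1) = 0$, namely $N'(1)\,\overline{N(1)} = N(1)\,D'(1)$, collapses after the substitutions $3 - 2\sigma + \pi = 2w + (1-\pi)$ and $2\bar{\pi} - \bar{\sigma} = \bar{w} - (1-\bar{\pi})$ to the real equation
\[
|w|^2 + 2\operatorname{Re}\bigl((1-\pi)\,\bar{w}\bigr) = 0.
\]
Completing the square and observing that $w + (1-\pi) = 2 - \sigma$, this condenses to the clean condition
\[
|\,2 - \sigma\,| \;=\; |\,1 - \pi\,|.
\]

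To conclude, for any $\pi = re^{i\theta}$ (distinct from the degenerate value $1$, where the locus of valid $\sigma$ collapses to a point and $F_{a,b}$ drops in degree), I would pick any $\sigma$ on the circle of radius $|1-\pi|$ centred at $2$ in the $\sigma$-plane, and take $(a,b)$ to be the two roots of $z^2 - \sigma z + \pi = 0$; by Vieta's formulas $a+b = \sigma$ and $ab = \pi$ as required, and the critical-point condition is enforced by construction. The main subtlety I would expect is verifying that, for a generic choice of $\sigma$ on this circle, the resulting pair $(a,b)$ is non-degenerate --- that is, neither $a$ nor $b$ lies on $\partial\mathbb{D}$ and no second pair of critical points collides at $1$, so that the critical point at $1$ has multiplicity exactly two rather than four. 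This amounts to avoiding a finite set of exceptional values of $\sigma$ on the circle and hence can always be arranged.
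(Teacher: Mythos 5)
Your computation of the condition $F_{a,b}'(1) = 0$ is correct and quite clean: writing $w = N(1) = 1 - \sigma + \pi$, the equation $N'(1)\overline{N(1)} = N(1)D'(1)$ does collapse (using $D(1) = \overline{N(1)}$) to $|w|^2 + 2\operatorname{Re}\bigl((1-\pi)\bar w\bigr) = 0$, which completes to $|2 - \sigma| = |1 - \pi|$. The paper's answer $\sigma = 3 - \bar\pi$ does lie on this circle, so your single real condition is consistent with the paper.

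The gap is in the parity step: \emph{``merely imposing $F_{a,b}'(1) = 0$ already forces the critical point at $1$ to have multiplicity at least two.''} The Riemann--Hurwitz/involution argument only shows that the \emph{total} critical multiplicity on $\partial\mathbb{D}$ is even; it says nothing about the multiplicity at the single point $1$. Concretely, the numerator $P(z)$ of $F_{a,b}'$ is a degree-$4$ self-inversive polynomial ($z^4\overline{P(1/\bar z)} = P(z)$), so its roots form a multiset invariant under $z \mapsto 1/\bar z$. If $1$ were a \emph{simple} root, the remaining three roots form a $\sigma$-invariant multiset of odd cardinality, which must contain a fixed point of the involution, i.e.\ another root on $\partial\mathbb{D}$; the parity is then $1 + 1 + 2 = 4$, perfectly consistent. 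In fact this is exactly what happens for generic $\sigma$ on your circle: starting from the paper's solution $\sigma = 3 - \bar\pi$ (where $1$ is a double root) and perturbing along $|2-\sigma|=|1-\pi|$, the double root at $1$ splits into a simple root staying at $1$ and a second simple root which, by the self-inversive constraint, slides onto $\partial\mathbb{D}$ near $1$. So the generic point on your circle gives a \emph{simple} critical point at $1$, not a double one.

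Put differently, you need two real conditions (not one): $P(1)=0$ and $P'(1)=0$. By self-inversiveness one can check $P'(1) = 4\overline{P(1)} - \overline{P'(1)}$, so once $P(1) = 0$ the quantity $P'(1)$ is purely imaginary; thus $P'(1) = 0$ is an additional single real condition on the circle $|2-\sigma|=|1-\pi|$, with a finite solution set. The paper imposes both conditions simultaneously (its equations (4.1)--(4.2)) and solves them explicitly, obtaining the point $\sigma = 3 - \bar\pi$ on your circle. Your write-up only imposes the first condition and so does not prove the lemma; the non-degeneracy worry you flag at the end (avoiding multiplicity $4$) is not the real issue --- the issue is that multiplicity $1$ is the generic case on your circle.
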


\begin{proof}
Let
\begin{displaymath}
F_{a,b}'(z) = \frac{P(z)}{Q(z)}.
\end{displaymath}
Then
\begin{displaymath}
F_{a,b}''(z) = \frac{P'(z)Q(z) - P(z)Q'(z)}{Q(z)^2}.
\end{displaymath}
Thus, the condition
\begin{displaymath}
F_{a,b}'(1) = F_{a,b}''(1) = 0
\end{displaymath}
is equivalent to
\begin{displaymath}
P(1) = P'(1) = 0.
\end{displaymath}

A straightforward computation shows that
\begin{displaymath}
P(z) = \overline{\kappa}z^4 - 2 \overline{\zeta}z^3 + (3 - |\kappa|^2 + |\zeta|^2)z^2 - 2 \zeta z + \kappa,
\end{displaymath}
where
\begin{displaymath}
\kappa := ab, \hspace{5 mm} \zeta := a+b.
\end{displaymath}
Thus, $F_{a,b}$ has a double critical point at $1$ if the following two equations are satisfied:
\begin{equation} \label{1}
2\kappa - 3 \zeta + (3 - |\kappa|^2 + |\zeta|^2) = \overline{\zeta}
\end{equation}
\begin{equation} \label{2}
3\kappa - 2 \zeta + (3 - |\kappa|^2 + |\zeta|^2) = \overline{\kappa}.
\end{equation}

Subtracting \eqref{1} from \eqref{2}, we see that
\begin{displaymath}
\kappa - \zeta = \overline{\kappa} - \overline{\zeta}.
\end{displaymath}
Substituting $\kappa = x + i y$ and $\zeta = u + i y$ into \eqref{1}, we obtain
\begin{equation} \label{3}
u^2 -4u+(2x-x^2+3)=0.
\end{equation}
\eqref{3} has two solutions: $u = -x+3$ and $u = x+1$. The first solution corresponds to the relation
\begin{displaymath}
\zeta = -\overline{\kappa} + 3.
\end{displaymath}
Therefore, by choosing $a$ and $b$ to be the solutions of
\begin{displaymath}
z^2 + (re^{-i\theta}-3)z + re^{i\theta} = 0,
\end{displaymath}
we ensure that the map $F_{a,b}$ has a double critical point at $1$.
\end{proof}

\begin{lem} \label{conv coord}
Let $a = a(r, \theta)$ and $b = b(r, \theta)$ satisfy the condition in lemma \ref{crit}. Then for all $r > 1$ sufficiently close to $1$, there exists a local holomorphic change of coordinates $\phi$ at $0$ so that the map $G := \phi^{-1} \circ F_{a,b}^2 \circ \phi$ takes the form
\begin{displaymath}
G(z) = r^2 z (1 + z^2 + \mathcal{O}(z)).
\end{displaymath}
\end{lem}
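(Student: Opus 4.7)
My plan is to construct $\phi$ as a composition of a quadratic Taylor correction that kills the $z^2$ term and a linear rescaling that normalizes the $z^3$ coefficient. First I expand $F_{a,b}$ at $0$ as
\[
F_{a,b}(z) \,=\, -rz + \alpha z^2 + \beta z^3 + \mathcal{O}(z^4),
\]
where $\alpha = \alpha(r,\theta)$ and $\beta = \beta(r,\theta)$ are holomorphic functions of $r$ near $1$, computable from the symmetric functions $ab = re^{i\theta}$ and $a + b = 3 - re^{-i\theta}$ furnished by Lemma \ref{crit}. A direct composition yields
\[
F_{a,b}^2(z) \,=\, r^2 z \,+\, r(r-1)\alpha \cdot z^2 \,+\, \bigl(-r(1+r^2)\beta - 2r\alpha^2\bigr) z^3 \,+\, \mathcal{O}(z^4).
\]
The key feature is that the quadratic coefficient $B_2(r) := r(r-1)\alpha$ vanishes at $r = 1$, reflecting the general fact that the second iterate of a germ with multiplier $-1$ has zero quadratic coefficient.

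Next I conjugate by $\phi_1(z) = z + \gamma z^2$ to eliminate the $z^2$ term. The cohomological equation $\gamma\, r^2(1 - r^2) + B_2(r) = 0$ has the unique solution $\gamma = \alpha/(r(r+1))$; the $(r-1)$ factor in $B_2$ cancels with that in $r^2 - r^4$, so $\gamma$ extends holomorphically across $r = 1$. After this step
\[
\phi_1^{-1} \circ F_{a,b}^2 \circ \phi_1(z) \,=\, r^2 z + C_3(r) z^3 + \mathcal{O}(z^4),
\]
with $C_3(r)$ holomorphic on a neighborhood of $r = 1$. I then rescale by $\phi_2(z) = \epsilon z$ with $\epsilon^2 := r^2/C_3(r)$; this leaves the linear coefficient $r^2$ unchanged and converts the cubic coefficient into $r^2$. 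The composition $\phi := \phi_1 \circ \phi_2$ is the desired change of coordinates, yielding $\phi^{-1} \circ F_{a,b}^2 \circ \phi(z) = r^2 z + r^2 z^3 + \mathcal{O}(z^4)$.

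The main obstacle is verifying $C_3(r) \neq 0$ for $r > 1$ near $1$, since the rescaling divides by $C_3$. By continuity of $C_3$ in $r$ it suffices to check $C_3(1) \neq 0$. Since $B_2(1) = 0$, no conjugation is needed at $r = 1$ and one reads off $C_3(1) = -2\bigl(\alpha(1,\theta)^2 + \beta(1,\theta)\bigr)$ directly from the Taylor expansion above. Substituting $ab = e^{i\theta}$ and $a + b = 3 - e^{-i\theta}$ into the power series of $F_{a,b}$ at $0$ gives explicit trigonometric expressions for $\alpha(1,\theta)$ and $\beta(1,\theta)$, and a direct but somewhat lengthy computation then checks that $\alpha^2 + \beta \neq 0$ for every $\theta \in [0,2\pi)$. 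Continuity in $r$ then yields $C_3(r) \neq 0$ on some one-sided neighborhood of $r = 1$, which completes the construction.
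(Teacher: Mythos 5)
Your overall strategy matches the paper's: kill the $z^2$ term of the second iterate by a quadratic conjugacy, then rescale so that the $z^3$ coefficient becomes $r^2$. The paper performs the quadratic conjugacy on $F_{a,b}$ itself and then squares; you perform it directly on $F_{a,b}^2$. Since conjugation commutes with iteration, these are identical (and your computed $\gamma = \alpha/(r(r+1))$ is in fact the corrected value of the paper's $\mu$, which as printed appears to carry a sign/factor slip that the paper never uses anyway). So the mechanics of your proof are fine.

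The issue is the crucial nonvanishing step. You reduce the problem to showing $C_3(1) = -2(\alpha(1,\theta)^2 + \beta(1,\theta)) \neq 0$, then dispose of it with ``a direct but somewhat lengthy computation then checks that $\alpha^2 + \beta \neq 0$ for every $\theta \in [0,2\pi)$.'' That computation is not shown, and, more seriously, the statement is false as written: at $\theta = 0$ one has $ab = 1$ and $a + b = 2$, so $a = b = 1$ and $F_{a,b}(z) \equiv -z$, whence $\alpha(1,0) = \beta(1,0) = 0$. So the computation you appeal to cannot possibly produce the claimed conclusion for all $\theta$, and the argument as stated has a genuine gap. (The degeneracy at $\theta = 0$ also undercuts the paper's own one-line justification, namely that ``$0$ is a parabolic fixed point of multiplicity $2$'' when $r = 1$; for $\theta = 0$ there is no parabolic point at all. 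The paper's argument, which bounds the number of attracting petals by the number of critical points of $F_{a,b}$ strictly inside the disk, is a cleaner route than brute-force trigonometry, and is the approach worth adopting --- but it, too, is only valid away from the degenerate parameter.) To make your version of the proof complete, you would need to either (a) actually carry out the trigonometric verification and correctly record for which $\theta$ it succeeds, or (b) replace it by the dynamical argument that $m$ two-cycles of parabolic petals at $0$ require $m$ critical points of $F_{a,b}$ in $\mathbb{D}$, and a cubic Blaschke product with a double critical point at $1 \in \partial\mathbb{D}$ has exactly one critical point in $\mathbb{D}$, so $m = 1$; in either case the degenerate parameter(s) must be excluded or handled separately.
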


\begin{proof}
Expanding $F_{a,b}(z)$ as a power series around $0$, we have
\begin{displaymath}
F_{a,b}(z) = -rz + \lambda z^2 + \mathcal{O}(z^3)
\end{displaymath}
for some $\lambda = \lambda(r, \theta)$ depending continuously on $r$ and $\theta$. Define
\begin{displaymath}
\psi_\mu(z) := z + \mu z^2, \hspace{5mm} \mu \in \mathbb{C}.
\end{displaymath}
A straightforward computation shows that
\begin{displaymath}
H(z) := \psi_\mu^{-1} \circ F_{a,b} \circ \psi_\mu (z) = -rz + (\lambda + (1+r) \mu) z^2 + \mathcal{O}(z^3).
\end{displaymath}
Thus, by choosing $\mu = \frac{-\lambda}{1+r}$, we have
\begin{displaymath}
H(z) = -rz(1 + \nu z^2 + \mathcal{O}(z^3))
\end{displaymath}
for some $\nu = \nu(r, \theta)$ depending continuously on $r$ and $\theta$.

Observe that the second iterate of $H$ is equal to
\begin{displaymath}
H^2(z) = r^2 z (1 + (1+r^2) \nu z^2 + \mathcal{O}(z^3)).
\end{displaymath}
When $r = 1$, $0$ is a parabolic fixed point of multiplicity $2$. This means that $\nu(1, \theta)$ cannot be equal to zero for all $\theta \in [0, 2\pi)$. Hence, for some $\epsilon>0$ sufficiently small, $\nu(r, \theta)$ is not equal to zero for all $r \in (1 ,  1+\epsilon)$ and $\theta \in [0, 2\pi)$. After one more change of coordinates, we arrive at
\begin{displaymath}
G(z) := \sqrt{(1+r^2)\nu} \cdot H^2(\frac{z}{\sqrt{(1+r^2)\nu}}) = r^2 z (1 + z^2 + \mathcal{O}(z^3)).
\end{displaymath}
\end{proof}

\begin{lem} \label{attract}
Let $a = a(r, \theta)$ and $b = b(r, \theta)$ satisfy the condition in lemma \ref{crit}. Then for all $r > 1$ sufficiently close to $1$, $F_{a,b}$ has an attracting 2-periodic orbit near $0$.
\end{lem}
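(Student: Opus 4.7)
The plan is to apply Lemma \ref{conv coord} to reduce the analysis of $F_{a,b}^2$ near $0$ to its normal form $G(z) = r^2 z (1 + z^2 + \mathcal{O}(z^3))$, then find the fixed points of $G$ other than $0$ explicitly. Via the conjugacy $\phi$, such fixed points of $G$ correspond to fixed points of $F_{a,b}^2$ distinct from $0$, and hence (given that $0$ is the unique nearby fixed point of $F_{a,b}$) must form a genuine $2$-periodic orbit of $F_{a,b}$. The final task is to compute the multiplier of this $2$-cycle and verify that it lies in the open unit disk.

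To locate the non-trivial fixed points, factor
\[
G(z) - z \;=\; z\bigl[r^2(1+z^2+\mathcal{O}(z^3)) - 1\bigr],
\]
so the non-zero fixed points satisfy $z^2 = \tfrac{1-r^2}{r^2} + \mathcal{O}(z^3)$. For $r>1$ the leading term is small and negative, and an application of the implicit function theorem (or a Rouch\'e argument) produces exactly two solutions
\[
z_\pm(r) \;=\; \pm i\sqrt{\tfrac{r^2-1}{r^2}}\,(1+o(1)) \qquad \text{as } r \to 1^+,
\]
both tending to $0$. Since $0$ is a repelling fixed point of $F_{a,b}$ with multiplier $-r$ for $r>1$, it is the unique fixed point of $F_{a,b}$ in some fixed neighborhood of $0$. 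Hence the two nearby points $\phi(z_\pm)$ cannot themselves be fixed points of $F_{a,b}$, so they must together form a single $2$-periodic orbit of $F_{a,b}$.

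It remains to verify that this $2$-cycle is attracting. Differentiating the normal form gives
\[
G'(z) \;=\; r^2 + 3r^2 z^2 + \mathcal{O}(z^3),
\]
and substituting the asymptotic $z_\pm^2 = (1-r^2)/r^2 + o(1)$ yields
\[
G'(z_\pm) \;=\; r^2 + 3(1-r^2) + o(1) \;=\; 3 - 2r^2 + o(1) \qquad \text{as } r \to 1^+.
\]
Since $3 - 2r^2 = 1 - 2(r^2-1)$ tends to $1$ from below as $r\to 1^+$, we have $|G'(z_\pm)| < 1$ for all $r>1$ sufficiently close to $1$. The multiplier of a periodic orbit is invariant under holomorphic conjugation, so this coincides with the multiplier of the $2$-cycle $\{\phi(z_+),\phi(z_-)\}$ of $F_{a,b}$, finishing the proof.

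The computational heart of the argument, namely the reduction to the parabolic normal form $r^2 z(1+z^2+\mathcal{O}(z^3))$, has already been carried out in Lemma \ref{conv coord}; with that reduction in hand, what remains is the classical analysis of a period-doubling (parabolic) bifurcation of a fixed point with multiplier $-1$, and presents no serious obstacle. The only subtlety is ensuring that the two solutions $z_\pm$ correspond to a single $F_{a,b}$-cycle of period $2$ rather than to two separate $F_{a,b}$-fixed points, which is handled above by appealing to the isolation of the repelling fixed point at $0$.
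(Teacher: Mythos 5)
Your proof is correct but takes a genuinely different route from the paper. After invoking Lemma \ref{conv coord}, the paper argues topologically: from the modulus and argument expansions of $G(z) = r^2 z(1 + z^2 + \mathcal{O}(z^3))$ it shows the two wedge-shaped sectors $V^\pm_\epsilon$ around the $\pm i$ directions are forward-invariant under $G$, and concludes each must trap an attracting fixed point because $0$ is the only fixed point on their boundary and is repelling. You instead carry out the explicit period-doubling computation: you locate the nontrivial fixed points of $G$ via $z_\pm^2 = (1-r^2)/r^2 + \mathcal{O}((r^2-1)^{3/2})$ and differentiate the normal form to obtain their multiplier. This buys quantitative asymptotics for the location and multiplier of the new cycle, at the cost of tracking error terms -- which leads to one small imprecision worth fixing: you write $G'(z_\pm) = 3 - 2r^2 + o(1)$, but since the leading deviation of $3-2r^2$ from $1$ is itself only $-2(r^2-1) \to 0$, a bare $o(1)$ error does not suffice to conclude $|G'(z_\pm)| < 1$; you need the error to be $o(r^2-1)$, which it in fact is (your own substitution gives $\mathcal{O}((r^2-1)^{3/2})$), but this should be stated. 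Also, your final step -- that $\phi(z_\pm)$ form a genuine $2$-cycle rather than two fixed points of $F_{a,b}$ -- is sound, though it is cleaner to note that $0$ remains a \emph{simple} root of $F_{a,b}(z)-z$ for all $r$ near $1$ (the multiplier $-r$ stays bounded away from $1$), so the isolating neighborhood of $0$ can be chosen uniformly in $r$, which is what lets you say $\phi(z_\pm)\to 0$ eventually enter it.
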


\begin{proof}
Consider the map $G := \phi^{-1} \circ F_{a,b}^2 \circ \phi$ defined in lemma \ref{conv coord}. We prove that $G$ has two attracting fixed points near $0$.

Observe that $G$ satisfies
\begin{displaymath}
|G(z)| = r^2 |z| (1+ \text{Re}(z^2) + (\text{higher terms}))
\end{displaymath}
and
\begin{displaymath}
\text{arg}(G(z)) = \text{arg}(z) + \text{Im}(z^2) + (\text{higher terms}).
\end{displaymath}
Consider the wedge shaped regions
\begin{displaymath}
V^+_\epsilon := \{\rho e^{2\pi i t} \in \mathbb{C} \hspace{2mm} | \hspace{2mm} 0 \leq \rho \leq \epsilon, \frac{3}{16} \leq t \leq \frac{5}{16}\}
\end{displaymath}
and
\begin{displaymath}
V^-_\epsilon := - V^+_\epsilon.
\end{displaymath}
It is easily checked that $G(V^+_\epsilon) \subset V^+_\epsilon$ and $G(V^-_\epsilon) \subset V^-_\epsilon$. Since $0$ is the only fixed point on the boundary of these regions, and it is repelling, $V^+_\epsilon$ and $V^-_\epsilon$ must each contain an attracting fixed point for $G$.
\end{proof}

\begin{thm} \label{fnu}
Given any angle $\nu \in [0, 2 \pi)$, there exists a Blaschke product $F_\nu$ that satisfy the following three properties:
\begin{enumerate}[label=(\roman{*})]
\item $F_\nu$ has a superattracting 2-periodic orbit $\mathcal{O} = \{\infty, F_\nu(\infty)\}$ with $F_\nu'(\infty) = 0$.
\item $F_\nu$ has a double critical point at $1$.
\item The rotation number of the map $F_\nu |_{\partial \mathbb{D}}$ is equal to $\nu$.
\end{enumerate}
\end{thm}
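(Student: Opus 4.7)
The strategy is to construct $F_\nu$ by a symmetric quasi-conformal surgery applied to the family $F_{a(r,\theta), b(r,\theta)}$ of Lemma \ref{crit}, followed by a M\"obius normalization preserving $\partial \mathbb{D}$.

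First I would establish rotation-number surjectivity. For $r > 1$ sufficiently close to $1$, the restriction $F_{a(r,\theta),b(r,\theta)}|_{\partial \mathbb{D}}$ is a critical circle map of degree $3$ with a double critical point at $1$, varying continuously in $\theta$ via the explicit quadratic formula for $a(r,\theta)$ and $b(r,\theta)$ in the proof of Lemma \ref{crit}. The leading unit-modulus factor $-e^{-i\theta}$ produces a full winding of the lifted circle map as $\theta$ traverses $[0, 2\pi)$; together with continuity of the rotation number this forces the map $\theta \mapsto \rho(\theta)$ to cover $\mathbb{R}/\mathbb{Z}$, yielding a parameter $\theta_\nu$ with $\rho(\theta_\nu) = \nu/(2\pi)$.

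Next I would apply the symmetric quasi-conformal surgery. By Lemma \ref{attract} the map $F_{a,b}$ at $(r, \theta_\nu)$ has an attracting $2$-periodic orbit $\{\omega^+, \omega^-\} \subset \mathbb{D}$; the Blaschke involution $z \mapsto 1/\bar z$ commutes with $F_{a,b}$ and therefore yields a symmetric attracting $2$-orbit $\{1/\bar\omega^+, 1/\bar\omega^-\}$ in the exterior of $\mathbb{D}$. In linearizing coordinates about these four points, I would paste in the local superattracting model $w \mapsto w^2$ symmetrically under $z \mapsto 1/\bar z$, pull back along $F_{a,b}$ to produce an invariant Beltrami differential $\mu$ on the union of the two immediate basins, and set $\mu = 0$ elsewhere. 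By the Measurable Riemann Mapping Theorem there exists a quasi-conformal homeomorphism $\Phi : \hat{\mathbb{C}} \to \hat{\mathbb{C}}$ integrating $\mu$ and commuting with $z \mapsto 1/\bar z$. The conjugate $\tilde F := \Phi \circ F_{a,b} \circ \Phi^{-1}$ is then a degree-$3$ Blaschke product with superattracting $2$-orbits on both sides of $\partial \mathbb{D}$, a double critical point at $\Phi(1) \in \partial \mathbb{D}$ (the surgery is supported away from a neighborhood of $1$), and rotation number $\nu/(2\pi)$ on $\partial \mathbb{D}$ (rotation number is invariant under quasi-symmetric conjugacy).

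Finally I would normalize: let $M$ be the unique M\"obius transformation preserving $\partial \mathbb{D}$ with $M(\Phi(1/\bar \omega^+)) = \infty$ and $M(\Phi(1)) = 1$, and set $F_\nu := M \circ \tilde F \circ M^{-1}$. Then $F_\nu$ is a Blaschke product; by construction $\infty$ lies in a superattracting $2$-orbit with $F_\nu'(\infty) = 0$ (property (i)), $1$ is a double critical point (property (ii)), and the rotation number on $\partial \mathbb{D}$ equals $\nu$ (property (iii)). The main technical obstacle is arranging the surgery genuinely symmetrically under the Blaschke involution, so that $\Phi$ commutes with $z \mapsto 1/\bar z$ and $\tilde F$ is truly a Blaschke product rather than merely a degree-$3$ rational map; a secondary obstacle is the rotation-number surjectivity step, which requires careful tracking of the $\theta$-dependence of $a(r,\theta)$ and $b(r,\theta)$ together with the standard winding argument for one-parameter families of critical circle maps.
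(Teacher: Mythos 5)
Your proposal is correct and follows essentially the same three-step route as the paper: achieve the prescribed rotation number within the family $F_{a(r,\theta),b(r,\theta)}$ by a winding/homotopy argument in $\theta$, perform a circle-symmetric quasiconformal surgery to promote the attracting $2$-cycles to superattracting ones, and normalize by a M\"obius map (a Blaschke factor) preserving $\partial\mathbb{D}$. The only substantive difference is at the rotation-number surjectivity step, which you correctly flag as the real content: the paper disposes of it by citing \cite{YZ} for the fact that $\rho(1,\cdot)$ is not nullhomotopic and then invoking continuity in $r$, whereas you sketch (but do not carry out) a direct winding argument at $r>1$; you would also need to specify which point of the symmetric exterior $2$-cycle carries the critical point before sending it to $\infty$, so that $F_\nu'(\infty)=0$ as required.
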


\begin{proof}
The family of Blaschke products $\{F_{a,b}\}$ that satisfy lemma \ref{crit} and \ref{attract} are continuously parameterized by $r$ and $\theta$. Let $\rho(r, \theta)$ denote the rotation number of the map $F_{a,b}|_{\partial \mathbb{D}}$. In \cite{YZ}, it is proved that $\rho(1, \cdot)$ is not nullhomotopic. By continuity, $\rho(r, \cdot)$ is also not nullhomotopic. Thus, for any angle $\nu \in [0, 2 \pi)$, there exists $\theta$ such that $\rho(r, \theta) = \nu$.

So far, we have proved the existence of a Blaschke product $F_{a, b}$ that has an attracting 2-periodic orbit near zero, has a double critical point at $1$, and whose restriction to $\partial \mathbb{D}$ has rotation number equal to $\nu$. A standard application of quasiconformal surgery turns the attracting 2-periodic orbits of $F_{a,b}$ into superattracting orbits (the surgery must be symmetric with respect to the unit circle to ensure that the resulting map is also a Blaschke product). Then after conjugating by the appropriate Blaschke factor, we obtain the desired map $F_\nu$.
\end{proof}

\begin{thm} \label{mod blaschke}
Suppose $\nu$ is irrational and of bounded type. Let $F_\nu$ be the Blaschke product constructed in theorem \ref{fnu}. Then there exists a quadratic rational function $R_\nu$ and quasiconformal maps $\psi : \mathbb{D} \to \mathbb{D}$, and $\phi : \hat{\mathbb{C}} \to \hat{\mathbb{C}}$ such that $\psi$ fixes $0$ and $1$; $\phi$ fixes $0$, $1$ and $\infty$; and
\begin{displaymath}
   R_\nu(z) = \left\{
     \begin{array}{ll}
      \phi \circ \psi^{-1} \circ \emph{Rot}_\nu \circ \psi \circ \phi^{-1}(z) & : \text{if } z \in \phi(\mathbb{D})\\
       \phi \circ F_\nu \circ \phi^{-1}(z) & : \text{if } z \in \hat{\mathbb{C}} \setminus \phi(\mathbb{D}).
     \end{array}
   \right.
\end{displaymath}
\end{thm}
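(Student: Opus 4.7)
The plan is a cut-and-paste quasiconformal surgery of Douady--Ghys--Herman--Shishikura type: I will replace the dynamics of $F_\nu$ inside $\overline{\mathbb{D}}$ by the rotation $\mathrm{Rot}_\nu$, keep $F_\nu$ on the exterior, and integrate the resulting quasiregular branched covering by the Measurable Riemann Mapping Theorem. The bounded-type hypothesis on $\nu$ enters in exactly one crucial place, where it supplies a \emph{quasisymmetric} linearization of the critical circle map $F_\nu|_{\partial\mathbb{D}}$.

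First I produce a quasisymmetric conjugacy. Since $F_\nu|_{\partial\mathbb{D}}$ is an analytic critical circle map with combinatorial rotation number $\nu$ of bounded type and a single double critical point at $1$, the theory of critical circle mappings (Yoccoz--Swiatek in the bounded-type case) yields an orientation-preserving quasisymmetric homeomorphism $h:\partial\mathbb{D}\to\partial\mathbb{D}$, which I normalise by $h(1)=1$, satisfying $h\circ F_\nu|_{\partial\mathbb{D}}=\mathrm{Rot}_\nu\circ h$. I then extend $h$ to a quasiconformal homeomorphism $\psi:\mathbb{D}\to\mathbb{D}$ (for instance by the Douady--Earle extension), and postcompose by an automorphism of $\mathbb{D}$ to arrange $\psi(0)=0$, which preserves the identity $\psi(1)=1$.

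Next I form the quasiregular branched cover
\[
\tilde F(z):=\begin{cases}\psi^{-1}\circ\mathrm{Rot}_\nu\circ\psi(z),&z\in\overline{\mathbb{D}},\\ F_\nu(z),&z\in\hat{\mathbb{C}}\setminus\overline{\mathbb{D}},\end{cases}
\]
which is continuous because the two branches agree on $\partial\mathbb{D}$ by the conjugacy relation. Put on $\hat{\mathbb{C}}$ the Beltrami differential $\mu$ equal to the Beltrami coefficient of $\psi$ on $\mathbb{D}$ and to $0$ on $\hat{\mathbb{C}}\setminus\overline{\mathbb{D}}$. This $\mu$ is $\tilde F$-invariant: on the exterior $\tilde F$ is holomorphic and $\mu\equiv 0$, while on $\mathbb{D}$ the identity $\psi\circ\tilde F=\mathrm{Rot}_\nu\circ\psi$ gives $\tilde F^{\ast}(\psi^{\ast}\sigma_{\mathrm{std}})=(\mathrm{Rot}_\nu\circ\psi)^{\ast}\sigma_{\mathrm{std}}=\psi^{\ast}\sigma_{\mathrm{std}}$. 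Since $\|\mu\|_\infty<1$, the Measurable Riemann Mapping Theorem produces a unique quasiconformal $\phi:\hat{\mathbb{C}}\to\hat{\mathbb{C}}$ with Beltrami coefficient $\mu$, normalised so that $\phi$ fixes $0$, $1$ and $\infty$. The conjugate $R_\nu:=\phi\circ\tilde F\circ\phi^{-1}$ then preserves the standard complex structure, so it is holomorphic and hence rational; a critical-point count together with the superattracting $2$-cycle inherited from $F_\nu$ on the exterior identifies $R_\nu$ as a quadratic member of the basilica family. Splitting $\hat{\mathbb{C}}=\phi(\mathbb{D})\sqcup\phi(\partial\mathbb{D})\sqcup(\hat{\mathbb{C}}\setminus\phi(\overline{\mathbb{D}}))$ and unpacking the definition of $\tilde F$ yields the two piecewise formulas in the statement.

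The principal obstacle is Step 1. Once the quasisymmetric $h$ is in hand, everything that follows is routine surgery and bookkeeping; but producing a \emph{quasisymmetric} (as opposed to merely continuous) linearization of an analytic critical circle map is deep real-analytic input, and the bounded-type hypothesis is precisely what guarantees it. Without quasisymmetry, the extension $\psi$ would fail to be quasiconformal, the coefficient $\mu$ would hit $\|\mu\|_\infty=1$, and MRMT would be unavailable.
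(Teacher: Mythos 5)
Your overall strategy is the paper's (Douady--Ghys--Herman--Shishikura cut-and-paste surgery, with the bounded-type hypothesis entering exactly where you say, through quasisymmetry of the linearization), but there is a genuine gap in the invariance claim for $\mu$. You set $\mu$ equal to the Beltrami coefficient of $\psi$ on $\mathbb{D}$ and to $0$ on $\hat{\mathbb{C}}\setminus\overline{\mathbb{D}}$, and argue invariance on the exterior from the fact that $\tilde F=F_\nu$ is holomorphic there and $\mu\equiv 0$. That reasoning is only valid if $F_\nu$ maps $\hat{\mathbb{C}}\setminus\overline{\mathbb{D}}$ into itself, and it does not: $F_\nu$ is a degree-$3$ rational map symmetric about $\partial\mathbb{D}$ whose restriction to $\partial\mathbb{D}$ is a \emph{homeomorphism}. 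If $F_\nu$ preserved the exterior it would, by the symmetry, also preserve $\mathbb{D}$, making it a genuine $3$-to-$1$ Blaschke product, and then $F_\nu|_{\partial\mathbb{D}}$ would have degree $3$ rather than being a circle homeomorphism --- a contradiction. So the open set $V:=F_\nu^{-1}(\mathbb{D})\cap(\hat{\mathbb{C}}\setminus\overline{\mathbb{D}})$ is nonempty, and on $V$ the pullback $\tilde F^{\ast}\mu=F_\nu^{\ast}(\mu|_{\mathbb{D}})$ is nonzero on a set of positive measure (since $\psi$ is not conformal), whereas you declared $\mu\equiv 0$ there. Thus $\mu$ is not $\tilde F$-invariant, and the straightened map $\phi\circ\tilde F\circ\phi^{-1}$ fails to be holomorphic on $\phi(V)$, so it is not rational. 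The paper's proof avoids exactly this: after defining $\mu=\psi^{\ast}\sigma_0$ on $\mathbb{D}$, it spreads $\mu$ over $\bigcup_{n\ge 1} g^{-n}(\mathbb{D})$ by pulling back along iterates of $g$ --- every backward step taken off $\mathbb{D}$ is through the holomorphic branch $F_\nu$, so $\|\mu\|_\infty$ does not increase --- and only then extends by $\sigma_0$ on the complement. That is the step your argument is missing.

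A smaller issue: postcomposing the Douady--Earle extension of $h$ by an automorphism $A$ of $\mathbb{D}$ to force $\psi(0)=0$ changes the boundary values to $A\circ h$, and then $\psi^{-1}\circ\mathrm{Rot}_\nu\circ\psi$ restricted to $\partial\mathbb{D}$ equals $h^{-1}\circ A^{-1}\circ\mathrm{Rot}_\nu\circ A\circ h$, which agrees with $F_\nu|_{\partial\mathbb{D}}$ only if $A$ commutes with $\mathrm{Rot}_\nu$, i.e.\ is a rotation; but a rotation fixes $0$ and cannot perform the needed adjustment. To normalize without disturbing the boundary conjugacy, one should instead compose with a quasiconformal self-map of $\mathbb{D}$ that is the identity on $\partial\mathbb{D}$ and moves the appropriate interior point to $0$.
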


\begin{proof}
Since $\nu$ is of bounded type, there exists a unique homeomorphism $\psi : \partial \mathbb{D} \to \partial \mathbb{D}$ such that $\psi(1) = 1$, and
\begin{displaymath}
\psi^{-1} \circ \text{Rot}_\nu \circ \psi = F_\nu|_{\partial \mathbb{D}}.
\end{displaymath}
Moreover, $\psi$ extends to a quasiconformal map on $\mathbb{D}$.

Define
\begin{displaymath}
   g(z) = \left\{
     \begin{array}{ll}
      \psi^{-1} \circ \text{Rot}_\nu \circ \psi (z) & : \text{if } z \in \mathbb{D}\\
       F_\nu(z) & : \text{if } z \in \hat{\mathbb{C}} \setminus \mathbb{D}.
     \end{array}
   \right.
\end{displaymath}
By construction, $g$ is continuous.

To obtain a holomorphic map with the same dynamics as $g$, we define and integrate a new complex structure $\mu$ on $\hat{\mathbb{C}}$. Start by defining $\mu$ on $\mathbb{D}$ as the pull back of the standard complex structure $\sigma_0$ by $\psi$. Next, pull back $\mu$ on $\mathbb{D}$ by the iterates of $g$ to define $\mu$ on the iterated preimages of $\mathbb{D}$. Finally, extend $\mu$ to the rest of $\hat{\mathbb{C}}$ as the standard complex structure $\sigma_0$.

Let $\phi : \hat{\mathbb{C}} \to \hat{\mathbb{C}}$ be the unique solution of the Beltrami equation
\begin{displaymath}
\partial_{\overline{z}} \phi(z) = \mu(z) \partial_z \phi(z)
\end{displaymath}
for which $\phi$ is a quasiconformal map fixing the points $0$, $1$ and $\infty$. The map
\begin{displaymath}
R_\nu := \phi \circ g \circ \phi^{-1}
\end{displaymath}
gives us the desired quadratic rational function.
\end{proof}

\begin{figure}[h]
\centering
\includegraphics[scale=0.7]{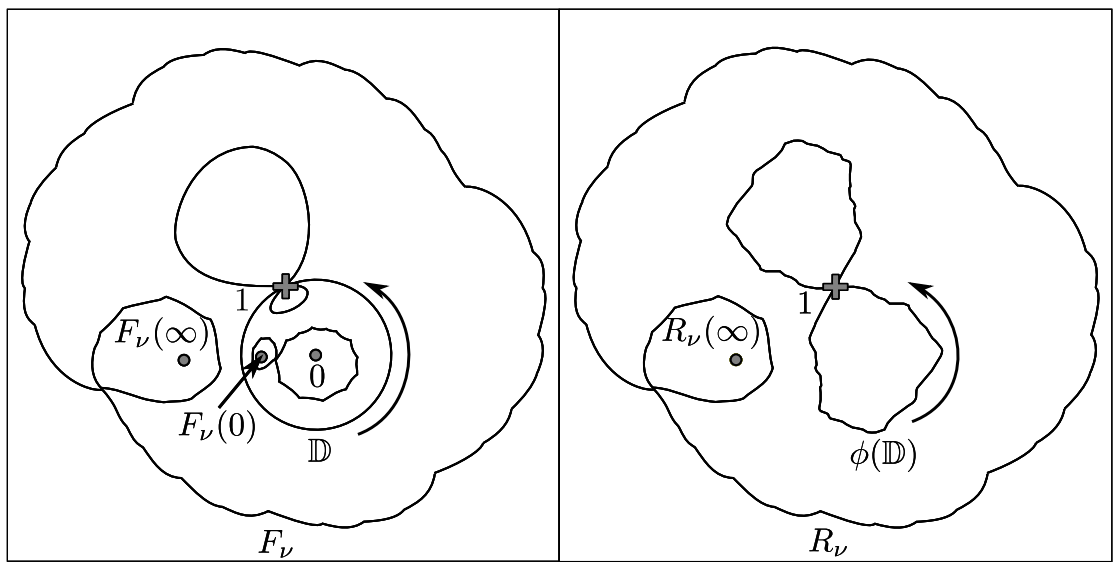}
\caption{Illustration of the quasiconformal surgery in theorem \ref{mod blaschke}.}
\label{fig:blaschkeconstruction}
\end{figure}

Theorem A stated in \ref{mainthms} now follows as a corollary of theorem \ref{mod blaschke}.

\newpage

\section{The Construction of Bubble Rays}

\subsection{For the basilica polynomial} \label{basilica bubble} \hspace{1mm}

\vspace{2.5mm}

Consider the basilica polynomial
\begin{displaymath}
f_\mathbf{B} := z^2 -1.
\end{displaymath}
$f_\mathbf{B}$ has a superattracting 2-periodic orbit $\{0, -1\}$, and hence, is hyperbolic. Denote the Julia set and the filled Julia set for $f_\mathbf{B}$ by $J_\mathbf{B}$ and $K_\mathbf{B}$ respectively. The following is a consequence of the hyperbolicity of $f_\mathbf{B}$ (see e.g. \cite{M1}).

\begin{prop}
The Julia set $J_\mathbf{B}$ for $f_\mathbf{B}$ is locally connected.
\end{prop}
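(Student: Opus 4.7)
The plan is to deduce local connectivity of $J_\mathbf{B}$ from the fact that $f_\mathbf{B}$ is hyperbolic with connected Julia set, using the standard strategy of showing that the B\"ottcher map at infinity extends continuously to the boundary. By Carath\'eodory's theorem, continuous extension of $\phi_\mathbf{B}^{-1}: \mathbb{D} \to \overline{\mathbf{A}^\infty_\mathbf{B}}$ to $\overline{\mathbb{D}}$ is equivalent to local connectivity of $J_\mathbf{B} = \partial \mathbf{A}^\infty_\mathbf{B}$, so this is what I aim to establish.

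First, I would record that both critical points of $f_\mathbf{B}$, namely $\infty$ and $0$, have orbits $\{\infty\}$ and $\{0,-1\}$ lying in superattracting cycles and hence in the Fatou set; this is precisely the hyperbolicity condition. Since $-1 \in \mathcal{M}$, the filled Julia set $K_\mathbf{B}$ is connected, so $\mathbf{A}^\infty_\mathbf{B}$ is simply connected and $\infty$ is the only critical point in $\mathbf{A}^\infty_\mathbf{B}$. The B\"ottcher coordinate at $\infty$ therefore extends to a conformal isomorphism $\phi_\mathbf{B}: \mathbf{A}^\infty_\mathbf{B} \to \mathbb{D}$ conjugating $f_\mathbf{B}$ to $z \mapsto z^2$, and the external rays $R_t := \phi_\mathbf{B}^{-1}(\{s e^{2\pi i t} : 0 < s < 1\})$ are globally defined smooth curves.

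Second, hyperbolicity provides a neighborhood $U$ of $J_\mathbf{B}$ together with a smooth conformal metric $\rho$ on $U$ for which $\|Df_\mathbf{B}\|_\rho \geq \lambda > 1$ on $U \cap f_\mathbf{B}^{-1}(U)$ (the standard Ma\~n\'e expanding metric, available once every critical orbit lies in the basin of an attracting cycle). From this I would deduce that for every $t$ the tail of $R_t$ near $J_\mathbf{B}$ has $\rho$-length tending to zero under pullback, so the radial limit $\lim_{s \to 1^-} \phi_\mathbf{B}^{-1}(s e^{2\pi i t})$ exists for every $t$ and depends continuously on $t$ by an equicontinuity argument. Consequently $\phi_\mathbf{B}^{-1}$ extends continuously to $\overline{\mathbb{D}}$, and Carath\'eodory's theorem yields the local connectivity of $J_\mathbf{B}$.

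The only technically substantive step is the shrinking-rays argument from the expanding metric, but this is entirely routine for hyperbolic rational maps and is treated in detail in \cite{M1}. For the paper's purposes one would simply cite the general theorem that a hyperbolic rational map with connected Julia set has a locally connected Julia set, which is exactly how the statement is framed.
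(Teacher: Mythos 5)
Your proof is correct and matches the paper's approach: the paper also derives this proposition directly from the hyperbolicity of $f_\mathbf{B}$ (superattracting $2$-cycle $\{0,-1\}$) together with connectivity of $K_\mathbf{B}$, citing Milnor for the standard fact that a hyperbolic polynomial with connected Julia set has a locally connected Julia set. You have simply unpacked the citation by sketching the usual expanding-metric/Carath\'eodory argument, which is fine but not a different strategy.
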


A connected component of $\mathbf{B} := \mathring{K_\mathbf{B}}$ is called a \ebf{bubble}. Let $\mathbf{B}_0$ be the bubble containing the critical point $0$. We have
\begin{displaymath}
\mathbf{B} = \bigcup_{n=0}^\infty f_\mathbf{B}^{-n}(\mathbf{B}_0).
\end{displaymath}
Let $B \subset \mathbf{B}$ be a bubble. The \ebf{generation of} $B$, denoted by gen$(B)$, is defined to be the smallest number $n \in \mathbb{N}$ such that $f_\mathbf{B}^n(B) = \mathbf{B}_0$.

\begin{prop} \label{b fixed}
There exists a unique repelling fixed point $\mathbf{b}$ contained in $\partial \mathbf{B}_0$.
\end{prop}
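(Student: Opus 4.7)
The plan is to transfer the fixed-point analysis from $\partial \mathbf{B}_0$ to the unit circle via a B\"ottcher coordinate for $f_\mathbf{B}^2|_{\mathbf{B}_0}$. Since $0$ is a superattracting fixed point of $f_\mathbf{B}^2$ and $\mathbf{B}_0$ is the simply connected Fatou component containing it, B\"ottcher's theorem provides a conformal isomorphism $\phi : \mathbf{B}_0 \to \mathbb{D}$ with $\phi(0) = 0$ that conjugates $f_\mathbf{B}^2|_{\mathbf{B}_0}$ to the squaring map $w \mapsto w^2$ on $\mathbb{D}$.

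The first step is to upgrade this conjugacy to the boundary. Combining the hyperbolicity of $f_\mathbf{B}$ with the local connectedness of $J_\mathbf{B}$ (just noted), the standard theory of hyperbolic quadratic polynomials implies that $\partial \mathbf{B}_0$ is a Jordan curve (in fact a quasicircle). Consequently $\phi$ extends to a homeomorphism $\overline{\mathbf{B}_0} \to \overline{\mathbb{D}}$, and the semi-conjugacy becomes an honest topological conjugacy on $\partial \mathbf{B}_0$.

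Next, the squaring map has exactly one fixed point on $\partial \mathbb{D}$, namely $w = 1$. Pulling back through $\phi^{-1}$ gives a unique fixed point $\mathbf{b} := \phi^{-1}(1)$ of $f_\mathbf{B}^2$ on $\partial \mathbf{B}_0$. To promote $\mathbf{b}$ to a fixed point of $f_\mathbf{B}$ itself, I would explicitly factor
\begin{displaymath}
f_\mathbf{B}^2(z) - z \;=\; z(z+1)(z^2 - z - 1),
\end{displaymath}
whose four roots are the critical $2$-cycle $\{0,-1\} \subset \mathbf{B}$ together with the two fixed points of $f_\mathbf{B}$. Since $\mathbf{b} \in J_\mathbf{B}$, it must coincide with one of the latter two; repellence then follows from the hyperbolicity of $f_\mathbf{B}$.

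The only nontrivial point is the Jordan-curve claim for $\partial \mathbf{B}_0$. Local connectedness of $J_\mathbf{B}$ by itself delivers only a continuous Carath\'eodory semiconjugacy $\tau : \partial \mathbb{D} \to \partial \mathbf{B}_0$, and a nontrivial identification $\tau(s_1) = \tau(s_2)$ with $s_1^2 = s_2^2$ would in principle produce spurious ``fixed points'' on $\partial \mathbf{B}_0$ beyond $\phi^{-1}(1)$. Eliminating this ambiguity requires the full hyperbolic expansion along $\partial \mathbf{B}_0$, which lets one quasiconformally extend $\phi^{-1}$ across $\partial \mathbb{D}$ and obtain genuine injectivity of $\tau$. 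I expect this to be the main (though standard) technical input.
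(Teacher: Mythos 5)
The paper does not actually supply a proof of this proposition: it states the result and immediately refers to Milnor \cite{M2}, remarking only that $\mathbf{b}$ is the $\alpha$-fixed point of $f_\mathbf{B}$. So there is nothing to compare line-by-line; what matters is whether your argument stands on its own, and it does.

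Your proof is correct. The chain of reasoning is clean: B\"ottcher coordinates conjugate $f_\mathbf{B}^2|_{\mathbf{B}_0}$ to $w \mapsto w^2$ on $\mathbb{D}$; hyperbolicity plus local connectedness makes $\partial\mathbf{B}_0$ a Jordan curve, so the conjugacy extends to a homeomorphism of closures; $w \mapsto w^2$ has the single boundary fixed point $w=1$; and the explicit factorization
$f_\mathbf{B}^2(z)-z = z(z+1)(z^2-z-1)$ (which I checked) shows that the resulting point $\mathbf{b}\in J_\mathbf{B}$ cannot be the superattracting $2$-cycle $\{0,-1\}$ and is therefore a genuine fixed point of $f_\mathbf{B}$, repelling by hyperbolicity. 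You are also right that the Jordan-curve step is where the substance lies: local connectedness of $J_\mathbf{B}$ alone only yields a continuous Carath\'eodory semiconjugacy, and one must invoke the hyperbolic expansion to rule out nontrivial identifications on $\partial\mathbf{B}_0$. That Fatou components of a hyperbolic rational map are Jordan (indeed quasi-) domains is standard, and citing it is appropriate here. The one cosmetic remark: once you know $\partial\mathbf{B}_0$ is a Jordan curve, uniqueness already follows for fixed points of $f_\mathbf{B}^2$, which automatically covers fixed points of $f_\mathbf{B}$; you may want to phrase the conclusion so this subsumption is explicit. Overall this is a complete and correct proof filling a gap the paper delegates to Milnor.
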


Note that the repelling fixed point $\mathbf{b}$ in proposition \ref{b fixed} is the $\alpha$-fixed point of $f_\mathbf{B}$ (see \cite{M2}).

Let $b \in J_\mathbf{B}$ be an iterated preimage of $\mathbf{b}$. The \ebf{generation of} $b$, denoted by gen$(b)$, is defined to be the smallest number $n \in \mathbb{N}$ such that $f_\mathbf{B}^n(b) = \mathbf{b}$. Suppose $b$ is contained in the boundary of some bubble $B$. If the generation of $b$ is the smallest among all iterated preimages of $\mathbf{b}$ that are contained in $\partial B$, then $b$ is called the \ebf{root of $B$}.

\begin{prop} \label{parentchild}
Let $b \in J_\mathbf{B}$ be an iterated preimage of $\mathbf{b}$. Then there are exactly two bubbles $B_1$ and $B_2$ in $\mathbf{B}$ which contain $b$ in their closures.
\end{prop}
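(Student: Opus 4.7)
The plan is a two-step bootstrap: reduce the claim from a general iterated preimage $b$ to the fixed point $\mathbf{b}$ via a local biholomorphism, and then verify the base case combinatorially. For the reduction, let $b$ have generation $n \geq 1$. The key observation is that the only finite critical point of $f_\mathbf{B}$ is $0 \in \mathbf{B}_0$, which lies in the Fatou set and hence never appears in the forward orbit $\{b, f_\mathbf{B}(b), \ldots, f_\mathbf{B}^{n-1}(b)\} \subset J_\mathbf{B}$. Consequently $f_\mathbf{B}^n$ restricts to a biholomorphism from a neighborhood $U$ of $b$ onto a neighborhood $V$ of $\mathbf{b}$. Since $f_\mathbf{B}$ carries bubbles to bubbles, the assignment $B \mapsto f_\mathbf{B}^n(B)$ injects $\{\text{bubbles } B \text{ with } b \in \overline{B}\}$ into $\{\text{bubbles } B' \text{ with } \mathbf{b} \in \overline{B'}\}$; surjectivity follows by pulling back each local wedge $B' \cap V$ via $(f_\mathbf{B}^n|_U)^{-1}$ to a connected piece of $\mathbf{B} \cap U$ contained in a unique bubble. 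This reduces matters to the case $b = \mathbf{b}$.

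For the base case I claim the two bubbles touching $\mathbf{b}$ are precisely $\mathbf{B}_0$ (by Proposition \ref{b fixed}) and $\mathbf{B}_{-1} := f_\mathbf{B}(\mathbf{B}_0)$, the latter following by applying $f_\mathbf{B}$ to the inclusion $\mathbf{b} \in \overline{\mathbf{B}_0}$ and using $f_\mathbf{B}(\mathbf{b}) = \mathbf{b}$. For uniqueness, I argue by contradiction: if $B^* \notin \{\mathbf{B}_0, \mathbf{B}_{-1}\}$ also touches $\mathbf{b}$, iterate forward to isolate the last pre-cyclic term $B'$, which satisfies $\mathbf{b} \in \overline{B'}$ and $f_\mathbf{B}(B') \in \{\mathbf{B}_0, \mathbf{B}_{-1}\}$ with $B' \notin \{\mathbf{B}_0, \mathbf{B}_{-1}\}$. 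A degree count using $\deg f_\mathbf{B}|_{\mathbf{B}_0} = 2$ and $\deg f_\mathbf{B}|_{\mathbf{B}_{-1}} = 1$ forces $f_\mathbf{B}^{-1}(\mathbf{B}_{-1}) = \mathbf{B}_0$ and $f_\mathbf{B}^{-1}(\mathbf{B}_0) = \mathbf{B}_{-1} \sqcup \mathbf{B}_1$, where $\mathbf{B}_1 \ni 1$ is the other preimage. Hence $B' = \mathbf{B}_1$, whose pinch point with $\mathbf{B}_0$ is the unique non-$\mathbf{b}$ preimage of $\mathbf{b}$, namely $-\mathbf{b}$.

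The main obstacle I foresee is the final geometric step: converting the algebraic computation into the topological statement $\mathbf{b} \notin \overline{\mathbf{B}_1}$. My plan is to combine the Jordan curve structure of $\partial\mathbf{B}_0$ (so that $\overline{\mathbf{B}_0}$ locally separates $\hat{\mathbb{C}}$ near $\mathbf{b}$, with $\mathbf{B}_1$ lying on the opposite side of $\mathbf{B}_0$ from its attaching point $-\mathbf{b}$) with the combinatorial fact that exactly two external rays, at angles $1/3$ and $2/3$ (permuted by doubling), land at $\mathbf{b}$. Together these force the local valence of $\mathbf{b}$ in $K_\mathbf{B}$ to equal two, ruling out any extra bubble and completing the proof.
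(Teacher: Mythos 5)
Your reduction step is the same as the paper's: the forward orbit $b, f_\mathbf{B}(b), \ldots, f_\mathbf{B}^{\mathrm{gen}(b)-1}(b)$ lies in $J_\mathbf{B}$ and so avoids the critical point $0 \in \mathbf{B}_0$, hence $f_\mathbf{B}^{\mathrm{gen}(b)}$ is a local biholomorphism near $b$ carrying the germ of $K_\mathbf{B}$ at $b$ to the germ at $\mathbf{b}$. The paper, in fact, stops there: it simply asserts the base-case count at $\mathbf{b}$ as known and lets the local conformality transport it to $b$.

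Where your proposal goes further — and where the gap is — is the base case itself. Your degree count correctly isolates $\mathbf{B}_1 \ni 1$ as the only possible third bubble, but the decisive topological step ``$\mathbf{b} \notin \overline{\mathbf{B}_1}$'' is only a plan, not a proof, and the phrasing ``the local valence of $\mathbf{b}$ in $K_\mathbf{B}$ equals two'' is close to restating what needs to be shown. Landing of exactly two external rays at $\mathbf{b}$ tells you $K_\mathbf{B}\setminus\{\mathbf{b}\}$ has two components; it does not by itself forbid a third bubble sitting on the same side of the wake as $\mathbf{B}_0$ and still touching $\mathbf{b}$. A clean way to close the gap without invoking rays: since $-\mathbf{b}$ is the root of $\mathbf{B}_1$ on $\partial\mathbf{B}_0$, assuming $\mathbf{b}\in\overline{\mathbf{B}_1}$ gives $\overline{\mathbf{B}_0}\cap\overline{\mathbf{B}_1}\supseteq\{\mathbf{b},-\mathbf{b}\}$. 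Two disjoint Jordan domains whose closures meet at two points enclose a bounded open region $U$; since $\mathbf{A}^\infty_\mathbf{B}$ is connected and unbounded, $U\subset\mathring{K}_\mathbf{B}$, so $U$ is (contained in, hence equal to) a single bubble. But then any point of $\partial U\setminus\{\mathbf{b},-\mathbf{b}\}$ has $\mathbf{B}_0$ (or $\mathbf{B}_1$) on one side and $U$ on the other, so it cannot be accumulated by $\mathbf{A}^\infty_\mathbf{B}$, contradicting $J_\mathbf{B}=\partial\mathbf{A}^\infty_\mathbf{B}$. (Alternatively, one can appeal to the pinched-disk model: the leaf $\{1/3,2/3\}$ of the basilica lamination bounds exactly two gaps.) With this filled in, your argument is a correct and somewhat more self-contained version of the paper's; without it, the base case remains asserted rather than proved.
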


\begin{proof}
There are exactly two bubbles, $\mathbf{B}_0$ and $f_\mathbf{B}(\mathbf{B}_0)$, that contain $\mathbf{b}$ in their closure. There exists a neighbourhood $N$ containing $b$ such that $N$ is mapped conformally onto a neighbourhood of $\mathbf{b}$ by $f_\mathbf{B}^{\text{gen}(b)}$. The result follows.
\end{proof}

Let $b \in J_\mathbf{B}$ be an iterated preimage of $\mathbf{b}$, and let $B_1$ and $B_2$ be the two bubbles that contain $b$ in their closures. Suppose gen$(B_1) > $ gen$(B_2)$. Then $B_1$ and $B_2$ are referred to as the \ebf{parent} and the \ebf{child at $b$} respectively. It is easy to see that $b$ must be the root of $B_2$.

Consider a set of bubbles $\{B_i\}_{i=0}^n$ in $\mathbf{B}$, and a set of iterated preimages $\{b_i\}_{i=1}^n$ of $\mathbf{b}$ such that the following properties are satisfied:
\begin{enumerate}[label=(\roman{*})]
\item $B_0 = \mathbf{B}_0$.
\item $B_i$ and $B_{i+1}$ are the parent and the child at $b_{i+1}$ respectively.
\end{enumerate}
The set
\begin{displaymath}
\mathcal{R}^\mathbf{B} := \overline{f_\mathbf{B}(\mathbf{B}_0)} \cup (\bigcup \limits_{i=0}^n \overline{B_i})
\end{displaymath}
is called a \ebf{bubble ray for $f_\mathbf{B}$} (the inclusion of $\overline{f_\mathbf{B}(\mathbf{B}_0)}$ is to ensure that a bubble ray is mapped to a bubble ray). For conciseness, we use the notation $\mathcal{R}^\mathbf{B} \sim \{B_i\}_{i=0}^n$. $\mathcal{R}^\mathbf{B}$ is said to be \ebf{finite} or \ebf{infinite} according to whether $n < \infty$ or $n = \infty$. Lastly, $\{b_i\}_{i=1}^n$ is called the \ebf{set of attachment points for $\mathcal{R}^\mathbf{B}$}.

Let $\mathcal{R}^{\mathbf{B}} \sim \{B_i\}_{i=0}^\infty$ be an infinite bubble ray. We say that \ebf{$\mathcal{R}^\mathbf{B}$ lands at $z \in J_\mathbf{B}$} if the sequence of bubbles $\{B_i\}_{i=0}^\infty$ converges to $z$ in the Hausdorff topology. The following result is a consequence of the hyperbolicity of $f_\mathbf{B}$ (see \cite{DH}).

\begin{prop} \label{basilica landing}
There exists $0 < s < 1$, and $C >0$ such that for every bubble $B \subset \mathbf{B}$, we have
\begin{displaymath}
\emph{diam}(B) < C s^{\emph{gen}(B)}.
\end{displaymath}
Consequently, every infinite bubble ray for $f_\mathbf{B}$ lands.
\end{prop}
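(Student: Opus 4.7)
My plan is to leverage hyperbolicity of $f_\mathbf{B}$ through a uniformly expanding conformal metric and then transfer the estimate back to the Euclidean diameter. Let $P = \{0, -1, \infty\}$ denote the postcritical set; since $f_\mathbf{B}$ is hyperbolic, $P$ is disjoint from $J_\mathbf{B}$. On the thrice-punctured sphere $W := \hat{\mathbb{C}} \setminus P$, consider the Poincar\'e metric $\rho_W$. A direct computation gives $f_\mathbf{B}^{-1}(W) = W \setminus \{1\}$, and since both critical points of $f_\mathbf{B}$ lie in $P$, the restriction $f_\mathbf{B} : W \setminus \{1\} \to W$ is an unramified degree-$2$ covering. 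Schwarz--Pick then yields $\rho_{W\setminus\{1\}} > \rho_W$ on $W \setminus \{1\}$, and compactness of $J_\mathbf{B} \subset W \setminus \{1\}$ upgrades this to uniform expansion: there exists $\lambda > 1$ with
\begin{displaymath}
\|Df_\mathbf{B}(z)\|_{\rho_W} \geq \lambda \qquad \text{for all } z \in J_\mathbf{B}.
\end{displaymath}

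With this in hand I establish the diameter bound. For a bubble $B$ with $\mathrm{gen}(B) = n$, the forward orbit $B, f_\mathbf{B}(B), \dots, f_\mathbf{B}^{n-1}(B)$ consists of bubbles distinct from $\mathbf{B}_0$ by minimality of $n$, hence avoids the only bounded critical point $0 \in \mathbf{B}_0$. It follows that $f_\mathbf{B}^n|_B : B \to \mathbf{B}_0$ is a conformal isomorphism, and (since $J_\mathbf{B}$ is locally connected and each bounded Fatou component of a hyperbolic polynomial is a Jordan domain) extends to a homeomorphism $\overline{B} \to \overline{\mathbf{B}_0}$. Picking any arc $\gamma \subset \partial B$, iterated expansion gives
\begin{displaymath}
\ell_{\rho_W}(\gamma) \leq \lambda^{-n}\, \ell_{\rho_W}(f_\mathbf{B}^n(\gamma)) \leq \lambda^{-n}\, \ell_{\rho_W}(\partial \mathbf{B}_0),
\end{displaymath}
and the right-hand side is finite because $\partial \mathbf{B}_0 \subset J_\mathbf{B}$ is compact in $W$. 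Since $\rho_W$ is comparable to the Euclidean metric on the compact subset $J_\mathbf{B} \subset W$, and $\mathrm{diam}(B) = \mathrm{diam}(\partial B)$ for the Jordan domain $B$, this yields $\mathrm{diam}(B) \leq C s^{\mathrm{gen}(B)}$ with $s := 1/\lambda \in (0,1)$.

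The landing statement is then a straightforward corollary. Given an infinite bubble ray $\mathcal{R}^\mathbf{B} \sim \{B_i\}_{i=0}^\infty$, the parent-child condition forces $\mathrm{gen}(B_{i+1}) > \mathrm{gen}(B_i)$, so $\mathrm{gen}(B_i) \geq i$ and $\mathrm{diam}(B_i) \leq C s^i$. Since consecutive bubbles $B_i$ and $B_{i+1}$ share the attachment point $b_{i+1}$, for $j > i$ one has
\begin{displaymath}
d_H(\overline{B_i}, \overline{B_j}) \leq \sum_{k=i}^{j-1} \max\bigl(\mathrm{diam}(B_k), \mathrm{diam}(B_{k+1})\bigr) \leq 2C \sum_{k \geq i} s^k,
\end{displaymath}
which tends to zero as $i \to \infty$ by geometric summability. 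Hence $\{\overline{B_i}\}$ is Cauchy in the Hausdorff metric and converges to a single limit set; because $\mathrm{diam}(B_i) \to 0$, this limit is a single point $z$, and $z \in J_\mathbf{B}$ since each $\partial B_i \subset J_\mathbf{B}$.

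The only delicate point in the argument is verifying that $f_\mathbf{B}^n|_B$ is genuinely a conformal isomorphism, which hinges entirely on the minimality built into the definition of generation and on $\mathbf{B}_0$ being the unique bubble containing a finite critical point; everything else is a mechanical application of Schwarz--Pick and geometric summation.
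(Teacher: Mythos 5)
Your overall strategy---uniform expansion of $f_\mathbf{B}$ in the Poincar\'e metric $\rho_W$ of the complement of the postcritical set---is the right one (the paper itself does not prove this proposition but cites Douady--Hubbard, where essentially this hyperbolic-expansion argument appears). However, there is a genuine gap in the middle of your diameter estimate. You write
\begin{displaymath}
\ell_{\rho_W}(\gamma) \leq \lambda^{-n}\,\ell_{\rho_W}(f_\mathbf{B}^n(\gamma)) \leq \lambda^{-n}\,\ell_{\rho_W}(\partial \mathbf{B}_0)
\end{displaymath}
and assert the right-hand side is finite ``because $\partial\mathbf{B}_0\subset J_\mathbf{B}$ is compact in $W$.'' Compactness of $\partial\mathbf{B}_0$ in $W$ gives it finite $\rho_W$-\emph{diameter}, not finite $\rho_W$-\emph{arc length}. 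The curve $\partial\mathbf{B}_0$ is an $f_\mathbf{B}^2$-invariant quasicircle, and for a hyperbolic polynomial whose Julia set is not a round circle or segment, such invariant quasicircles have Hausdorff dimension strictly greater than one and are therefore non-rectifiable. So $\ell_{\rho_W}(\partial\mathbf{B}_0)=\infty$, your chain of inequalities reads $\ell\leq\lambda^{-n}\cdot\infty$, and---since the arcs $\gamma\subset\partial B$ on the left-hand side are equally non-rectifiable---the estimate gives no control whatsoever on $\mathrm{diam}(B)$.

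The repair is to replace arc length by a Koebe-distortion argument. The inverse branch $g:=(f_\mathbf{B}^n|_B)^{-1}\colon\mathbf{B}_0\to B$ is univalent, and since the critical values of $f_\mathbf{B}^n$ all lie in $\{0,-1,\infty\}$ while $\overline{\mathbf{B}_0}$ misses $-1$ and $\infty$, $g$ extends univalently to a fixed topological disk $U\supset\overline{\mathbf{B}_0}$ that can be taken independent of $n$ (no monodromy arises around $0$ because $g$ is already single-valued there). Koebe distortion on the compact inclusion $\overline{\mathbf{B}_0}\Subset U$ then gives $\mathrm{diam}(B)=\mathrm{diam}(g(\overline{\mathbf{B}_0}))\leq C\,|g'(\mathbf{b})|$, and $|g'(\mathbf{b})|=|(f_\mathbf{B}^n)'(g(\mathbf{b}))|^{-1}\leq C'\lambda^{-n}$ follows from your uniform expansion estimate at the point $g(\mathbf{b})\in J_\mathbf{B}$ together with the comparability of $\rho_W$ and the Euclidean metric on the compact set $J_\mathbf{B}$. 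Everything else in your write-up---the observation that $f_\mathbf{B}^n|_B$ is a conformal isomorphism by minimality of the generation, the bound $\mathrm{gen}(B_i)\geq i$ along a bubble ray, and the geometric-series Cauchy estimate for landing---is correct once the exponential diameter bound is in hand.
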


Denote the attracting basin of infinity for $f_\mathbf{B}$ by $\mathbf{A}_\mathbf{B}^\infty$. Let
\begin{displaymath}
\phi_{\mathbf{A}_\mathbf{B}^\infty} : \mathbf{A}_\mathbf{B}^\infty \to \hat{\mathbb{C}} \setminus \overline{\mathbb{D}}
\end{displaymath}
and
\begin{displaymath}
\phi_{\mathbf{B}_0} : \mathbf{B}_0 \to \mathbb{D}
\end{displaymath}
be the B\"ottcher uniformization of $f_\mathbf{B}$ on $\mathbf{A}_\mathbf{B}^\infty$ and $\mathbf{B}_0$ respectively. Using $\phi_{\mathbf{A}_\mathbf{B}^\infty}$ and $\phi_{\mathbf{B}_0}$, we can encode the dynamics of bubble rays for $f_\mathbf{B}$ in two different ways: via external angles, and via bubble addresses.

Suppose that $\mathcal{R}^{\mathbf{B}}$ is an infinite bubble ray, and let $z \in J_\mathbf{B}$ be its landing point. Then there exists a unique external ray
\begin{displaymath}
\mathcal{R}^\infty_{-t} := \{\text{arg}(\phi_{\mathbf{A}_\mathbf{B}^\infty}) = -t\}
\end{displaymath}
which lands on $z$. The \ebf{external angle of $\mathcal{R}^\mathbf{B}$} is defined to be $t$. Henceforth, the infinite bubble ray with external angle $t$ will be denoted $\mathcal{R}^\mathbf{B}_t$.

Let $b \in \partial\mathbf{B}_0$ be an iterated preimage of $\mathbf{b}$. Define
\begin{displaymath}
\text{adr}(b) := \text{ arg}(\phi_{\mathbf{B}_0}(b)).
\end{displaymath}
If $b'$ is an interated preimage of $\mathbf{b}$, and $b' \notin \partial \mathbf{B}_0$, then there exists a unique bubble $B \subset \mathbf{B}$ such that $B$ is the parent at $b$. In this case, define
\begin{displaymath}
\text{adr}(b') := \text{ adr}(f_\mathbf{B}^{\text{gen}(B)}(b')).
\end{displaymath}

Let $\mathcal{R}^\mathbf{B}$ be a bubble ray and let $\{b_i\}_{i=0}^n$ be the set of attachment points for $\mathcal{R}^{\mathbf{B}}$. The \ebf{bubble address of} $\mathcal{R}^\mathbf{B}$ is defined to be
\begin{displaymath}
\text{adr}(\mathcal{R}^\mathbf{B}) := (\text{adr}(b_1), \text{ adr}(b_2), \text{ }\ldots{}, \text{ adr}(b_n)),
\end{displaymath}
where the tuple is interpreted to be infinite if $\mathcal{R}^\mathbf{B}$ is an infinite bubble ray.

If $B \subset \mathbf{B}$ is a bubble, then there exists a unique finite bubble ray $\mathcal{R}^\mathbf{B} \sim \{B_i\}_{i=0}^n$ such that $B = B_n$. The \ebf{bubble address of $B$} is defined to be
\begin{displaymath}
\text{adr}(B) := \text{ adr}(\mathcal{R}^\mathbf{B}).
\end{displaymath}

\begin{figure}[h]
\centering
\includegraphics[scale=0.6]{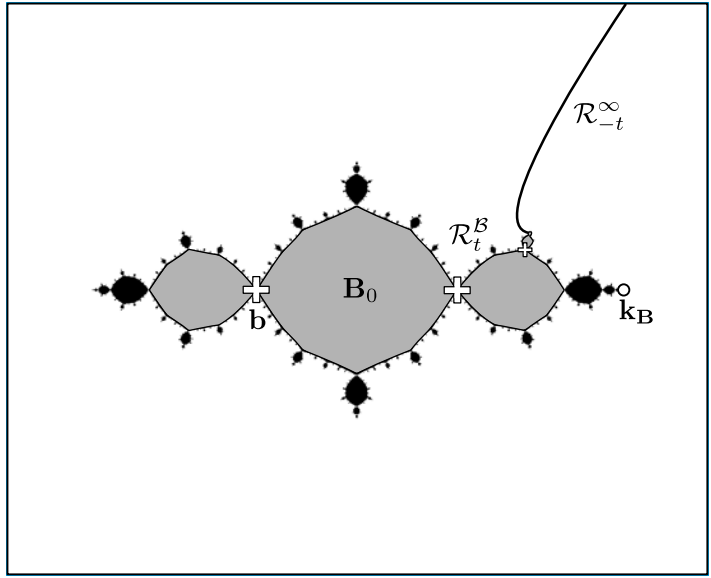}
\caption{The infinite bubble ray $\mathcal{R}^\mathbf{B}_t$, $t \approx -0.143$, for the basilica polynomial $f_\mathbf{B}$.}
\label{fig:basilicaextray}
\end{figure}

\subsection{For the Siegel polynomial} \label{siegel bubble} \hspace{1mm}

\vspace{2.5mm}

Suppose $\nu \in \mathbb{R} \setminus \mathbb{Q}$ is of bounded type, and let $f_\mathbf{S}$ be the unique member of the quadratic family that has a fixed Siegel disc $\mathbf{S}_0$ with rotation number $\nu$. Denote the Julia set and the filled Julia set for $f_\mathbf{S}$ by $J_\mathbf{S}$ and $K_\mathbf{S}$ respectively. By proposition \ref{bounded type lc}, $J_\mathbf{S}$ is locally connected. A quasiconformal surgery procedure due to Douady, Ghys, Herman, and Shishikura (see e.g. \cite{P}) implies the following:

\begin{thm}
The Siegel disc $\mathbf{S}_0$ is a quasidisc whose boundary contains the critical point $0$.
\end{thm}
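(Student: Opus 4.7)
The strategy is to mirror the Blaschke product surgery used for $R_\nu$ in Theorem \ref{mod blaschke}, but with a degree-$3$ Blaschke product that is set up to produce a quadratic polynomial rather than a quadratic rational map with a superattracting $2$-cycle. Specifically, I would work with the family
\begin{displaymath}
F_s(z) := e^{2\pi i s} \, z^2 \, \frac{z-3}{1-3z}, \qquad s \in \mathbb{R}/\mathbb{Z}.
\end{displaymath}
Each $F_s$ is a degree-$3$ Blaschke product preserving $\partial\mathbb{D}$, with a superattracting fixed point at $\infty$ of local degree $2$, a double critical point at $1 \in \partial\mathbb{D}$, and the remaining critical point inside $\mathbb{D}$. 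Its restriction $F_s|_{\partial\mathbb{D}}$ is a critical circle homeomorphism whose rotation number $\rho(s)$ depends continuously and monotonically on $s$ and sweeps out $\mathbb{R}/\mathbb{Z}$, so I can choose $s=s(\nu)$ with $\rho(s)=\nu$. Write $F := F_{s(\nu)}$.

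Next I would invoke the Herman--\'Swi\c{a}tek theorem: since $\nu$ is of bounded type, $F|_{\partial\mathbb{D}}$ is quasisymmetrically conjugate to the rigid rotation $\text{Rot}_\nu$ by a homeomorphism $\psi:\partial\mathbb{D}\to\partial\mathbb{D}$ fixing $1$. This $\psi$ extends to a quasiconformal homeomorphism $\psi:\overline{\mathbb{D}}\to\overline{\mathbb{D}}$ (e.g.\ via the Douady--Earle extension). Define a topological model
\begin{displaymath}
g(z) := \begin{cases} \psi^{-1} \circ \text{Rot}_\nu \circ \psi(z) & z \in \mathbb{D},\\ F(z) & z \in \hat{\mathbb{C}} \setminus \mathbb{D}, \end{cases}
\end{displaymath}
which is continuous by the conjugacy relation, holomorphic off $\overline{\mathbb{D}}$, and a quasiregular branched cover of $\hat{\mathbb{C}}$ of degree $2$ (the critical point of $F$ in $\mathbb{D}$ is discarded, leaving $\infty$ and the boundary critical point $1$).

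I would then run the standard invariant Beltrami coefficient construction exactly as in the proof of Theorem \ref{mod blaschke}: set $\mu := \psi^*\sigma_0$ on $\mathbb{D}$, spread it by pullback under the iterates of $g$ to the grand orbit of $\mathbb{D}$, and extend by $\sigma_0$ elsewhere. Because $g$ is holomorphic on $\hat{\mathbb{C}}\setminus\overline{\mathbb{D}}$, no dilatation is accumulated under iteration, so $\|\mu\|_\infty<1$. Solving $\partial_{\bar z}\phi = \mu\,\partial_z\phi$ for a quasiconformal $\phi:\hat{\mathbb{C}}\to\hat{\mathbb{C}}$ normalized to fix $\infty$, the map $f := \phi\circ g\circ\phi^{-1}$ is then holomorphic, hence rational of degree $2$. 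Since $\infty$ is a superattracting fixed point of local degree $2$, $f$ is in fact a quadratic polynomial, and it carries a fixed Siegel disc $\phi(\mathbb{D})$ of rotation number $\nu$. By the uniqueness statement in Proposition \ref{uniqueness mat}, after an affine normalization $f = f_\mathbf{S}$ and $\mathbf{S}_0 = \phi(\mathbb{D})$. Being the image of the round disc under a global quasiconformal map, $\mathbf{S}_0$ is a quasidisc; and since $1\in\partial\mathbb{D}$ is the unique critical point of $F$ on $\partial\mathbb{D}$, the point $\phi(1)=0$ is the critical point of $f_\mathbf{S}$ lying on $\partial\mathbf{S}_0$.

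The main obstacle is the quasisymmetric linearization of $F|_{\partial\mathbb{D}}$, i.e.\ the Herman--\'Swi\c{a}tek theorem for critical circle homeomorphisms of bounded type. Everything else (the existence of $F$ with the prescribed rotation number, the Beltrami surgery, and the identification with $f_\mathbf{S}$) is routine once that input is in hand; in the paper it is simply cited through the references to Petersen's work.
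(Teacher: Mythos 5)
Your proposal is correct and is essentially a spelled-out version of the Douady--Ghys--Herman--Shishikura surgery that the paper invokes by citation: a degree-$3$ modified Blaschke product of the form $e^{2\pi i s}z^2\frac{z-3}{1-3z}$ with the rotation number tuned to $\nu$, Herman--\'Swi\c{a}tek linearization on the circle, and an invariant Beltrami coefficient that is supported on the grand orbit of $\mathbb{D}$ and accumulates no dilatation. This is the same route the paper has in mind, adapted to parallel the construction of $R_\nu$ in Theorem \ref{mod blaschke}; no gap.
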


A connected component of $\mathbf{S} := \mathring{K_\mathbf{S}}$ is called a \ebf{bubble}. Note that
\begin{displaymath}
\mathbf{S} = \bigcup_{n=0}^\infty f_\mathcal{S}^{-n}(\mathbf{S}_0).
\end{displaymath}
Let $S \subset \mathbf{S}$ be a bubble. The \ebf{generation of} $S$, denoted by gen$(S)$, is defined to be the smallest number $n \in \mathbb{N}$ such that $f_\mathbf{S}^n(S) = \mathbf{S}_0$. Similarly, let $s \in J_\mathbf{S}$ be an iterated preimage of $0$. The \ebf{generation of} $s$, denoted by gen$(s)$, is defined to be the smallest number $n \in \mathbb{N}$ such that $f_\mathbf{S}^n(s) = 0$. 

\begin{prop} \label{siegel preimage 0}
Let $s \in J_\mathbf{S}$ be an iterated preimage of the critical point $0$. Then there are exactly two bubbles $S_1$ and $S_2$ in $\mathbf{S}$ which contain $s$ in their closure.
\end{prop}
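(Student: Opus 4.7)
My approach is to mirror the proof of the analogous Proposition \ref{parentchild} for the basilica. I first verify the claim at the base point $s = 0$; then, for an arbitrary iterated preimage $s$, I transport the local picture at $0$ to $s$ via the iterate $f_\mathbf{S}^{\text{gen}(s)}$, which I expect to be a local biholomorphism.

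For the base case, two bubbles visibly meet at $0$. The first is $\mathbf{S}_0$ itself, since the preceding theorem places the critical point on $\partial \mathbf{S}_0$. The second is $-\mathbf{S}_0$: the symmetry $f_\mathbf{S}(-z) = f_\mathbf{S}(z)$ identifies $-\mathbf{S}_0$ as a component of $f_\mathbf{S}^{-1}(\mathbf{S}_0)$, hence as a bubble, and $0 \in \partial(-\mathbf{S}_0)$ by symmetry. The two bubbles are distinct: if $-\mathbf{S}_0 = \mathbf{S}_0$, then for every $w \in \mathbf{S}_0$ both preimages $\pm w$ of $f_\mathbf{S}(w)$ would lie in $\mathbf{S}_0$, contradicting the fact that $f_\mathbf{S}|_{\mathbf{S}_0}$ is a degree-one conformal conjugate of an irrational rotation. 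To rule out a third bubble $S'$ with $0 \in \partial S'$, I would note that $f_\mathbf{S}(S')$ is a bubble whose boundary contains $c = f_\mathbf{S}(0)$; granting that $\mathbf{S}_0$ is the only such bubble, $S' \subset f_\mathbf{S}^{-1}(\mathbf{S}_0) = \mathbf{S}_0 \sqcup (-\mathbf{S}_0)$, so $S' \in \{\mathbf{S}_0, -\mathbf{S}_0\}$.

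For $s \neq 0$, the minimality of $n := \text{gen}(s)$ ensures $f_\mathbf{S}^k(s) \neq 0$ for $0 \le k < n$, so the partial orbit $s, f_\mathbf{S}(s), \ldots, f_\mathbf{S}^{n-1}(s)$ avoids the unique finite critical point of $f_\mathbf{S}$. Thus $(f_\mathbf{S}^n)'(s) \neq 0$, and $f_\mathbf{S}^n$ carries a neighborhood $N$ of $s$ conformally onto a neighborhood of $0$; pulling the two local bubbles at $0$ back by $(f_\mathbf{S}^n|_N)^{-1}$ yields exactly the two bubbles at $s$. The main obstacle is the uniqueness step in the base case—showing that $\mathbf{S}_0$ is the only bubble whose closure contains $c$. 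This is a structural property of the Siegel polynomial that must be argued separately, presumably using that $\partial \mathbf{S}_0$ is a quasicircle together with the 2-to-1 branched-cover structure of $f_\mathbf{S}$ at its critical point.
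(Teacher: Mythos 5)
Your structure mirrors the paper's proof of Proposition \ref{parentchild} (for the basilica), which is the natural route, and the paper itself gives no explicit proof of Proposition \ref{siegel preimage 0} — so there is nothing closer to compare against. Your reduction step from a general iterated preimage $s$ down to the base case $s=0$ via the local biholomorphism $f_\mathbf{S}^{\mathrm{gen}(s)}$ is correct, and your arguments that $\mathbf{S}_0$ and $-\mathbf{S}_0$ are distinct bubbles meeting at $0$ are fine.

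The gap you flag is real, but your detour through the critical value $c$ over-complicates it; a direct argument at $0$ is available and short. Suppose $S'$ is a bubble with $0 \in \overline{S'}$ and $n := \mathrm{gen}(S') \geq 2$. Since $-\mathbf{S}_0$ is the unique generation-one bubble, $f_\mathbf{S}^{n-1}(S') = -\mathbf{S}_0$, and continuity together with $0 \in J_\mathbf{S}$ gives $f_\mathbf{S}^{n-1}(0) \in \partial(-\mathbf{S}_0)$. On the other hand $0 \in \partial \mathbf{S}_0$ and $\partial \mathbf{S}_0$ is forward invariant, so $f_\mathbf{S}^{n-1}(0) \in \partial \mathbf{S}_0$ as well. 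But $\partial(-\mathbf{S}_0) \cap \partial \mathbf{S}_0 = \{0\}$: if $p$ and $-p$ both lie on $\partial \mathbf{S}_0$, injectivity of $f_\mathbf{S}|_{\partial \mathbf{S}_0}$ (it is topologically conjugate to the rigid rotation $\mathrm{Rot}_\nu$) forces $p = -p = 0$. Hence $f_\mathbf{S}^{n-1}(0) = 0$ with $n-1 \geq 1$, i.e.\ $0$ is periodic — impossible, since its orbit on $\partial \mathbf{S}_0$ is that of an irrational rotation and hence infinite. This rules out any third bubble at $0$, closing the gap without ever needing the uniqueness assertion at $c$ that you left as an obstacle.

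One further small point: your reduction step should note, as the paper does for $\mathbf{b}$, that a sufficiently small neighborhood of $s$ is carried conformally by $f_\mathbf{S}^{\mathrm{gen}(s)}$ onto a neighborhood of $0$ \emph{and} that the preimage of the two local bubbles at $0$ under that inverse branch exhausts all bubbles touching $s$; this follows because any bubble touching $s$ maps to a bubble touching $0$. You invoke the biholomorphism but do not quite spell out this exhaustion, though it is implicit in "yields exactly the two bubbles at $s$."
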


The construction of a \ebf{bubble ray $\mathcal{R}^\mathbf{S}$} for $f_\mathbf{S}$ is completely analogous to the construction of a bubble ray $\mathcal{R}^\mathbf{B}$ for $f_\mathbf{B}$. The following proposition is a consequence of complex a priori bounds due to Yampolsky (see \cite{Y1}). It is proved in the same way as proposition \ref{everylanding}.

\begin{prop} \label{siegel landing}
Every infinite bubble ray for $f_\mathbf{B}$ lands.
\end{prop}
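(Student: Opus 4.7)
The plan is to mirror the proof of proposition \ref{basilica landing}, but since $f_\mathbf{S}$ is not hyperbolic (the indifferent fixed point prevents any global expansion estimate), I replace the hyperbolic contraction argument with Yampolsky's complex a priori bounds for bounded-type Siegel quadratics. The goal is to show that for any infinite bubble ray $\mathcal{R}^\mathbf{S} \sim \{S_i\}_{i=0}^\infty$ with attachment points $\{s_i\}_{i=1}^\infty$, one has $\mathrm{diam}(S_i) \to 0$ summably; landing at a unique point $z \in J_\mathbf{S}$ then follows from Cauchy completeness in the Hausdorff metric, since consecutive closures $\overline{S_i}$ and $\overline{S_{i+1}}$ share the attachment point $s_{i+1}$.

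The main step is a geometric decay estimate of the form
\begin{equation*}
\mathrm{diam}(S_i) \leq C\, s^{\mathrm{gen}(S_i)}
\end{equation*}
for some constants $C>0$ and $0<s<1$. I would establish this by a dichotomy. First, if a bubble $S_i$ lies in a compact subset $K$ of $\hat{\mathbb{C}} \setminus (\overline{\mathbf{S}_0} \cup \{\infty\})$, then the inverse branches of the iterates of $f_\mathbf{S}$ act as uniform contractions in the hyperbolic metric of the complement of the postcritical set (which is contained in $\overline{\mathbf{S}_0}$ together with its preimage set), so the pullback bubbles shrink geometrically in this compact region, with rate depending only on $K$. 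Second, for bubbles attached arbitrarily close to $\partial \mathbf{S}_0$, especially near the critical point $0$, I invoke Yampolsky's complex a priori bounds: these provide uniform moduli between nested critical puzzle pieces bordering the Siegel disc, independent of renormalization depth. These moduli bounds translate via the Koebe distortion theorem into a uniform geometric shrinking of diameters for puzzle pieces approaching $\partial \mathbf{S}_0$, and each bubble $S_i$ sits inside such a puzzle piece at the appropriate scale.

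Combining the two regimes: the orbit of a given bubble under $f_\mathbf{S}$ either spends a definite proportion of time in the compact ``hyperbolic-like'' region (picking up uniform contraction from step one) or lies in the renormalization cascade near $\partial \mathbf{S}_0$ (where step two applies). Either way, pulling back $\mathbf{S}_0$ along the bubble ray produces a sequence of topological discs whose diameters decay at least as fast as $s^{\mathrm{gen}(S_i)}$ for a uniform $s<1$. Summing this geometric series gives finite total length, so $\{s_i\}$ is a Cauchy sequence in $J_\mathbf{S}$ and $\mathcal{R}^\mathbf{S}$ lands at a unique point.

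The principal obstacle is precisely the behavior of bubbles clustering near the critical point $0 \in \partial \mathbf{S}_0$, where the critical orbit is recurrent on the Siegel boundary and no naive Koebe-style distortion estimate survives. This is the place where bounded type is essential: Yampolsky's complex a priori bounds (which rely on bounded combinatorics of the rotation number $\nu$) are exactly the tool needed to dominate the renormalization cascade uniformly, converting geometric control at one scale into geometric control at all scales.
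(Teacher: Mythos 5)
Your approach is genuinely different from the paper's, and it has a gap. The paper proves this proposition in one line: the argument of proposition \ref{everylanding} applies verbatim — the accumulation set $\Omega$ of an infinite bubble ray sits inside every puzzle piece of a nested sequence $\{P^\mathbf{S}_{[s_k,t_k]}\}$, and by proposition \ref{shrink to ray 2} (which rests on local connectivity of $J_\mathbf{S}$, i.e.\ Petersen/Yampolsky) the nested intersection meets $J_\mathbf{S}$ in a single point. No rate of shrinkage is required; the landing is a purely qualitative consequence of puzzle pieces shrinking.

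You instead try to transplant the proof of proposition \ref{basilica landing} and assert the quantitative bound $\mathrm{diam}(S_i) \leq C\,s^{\mathrm{gen}(S_i)}$. This is where the argument breaks down. In the hyperbolic case the bound comes from uniform expansion on a neighbourhood of $J_\mathbf{B}$; for $f_\mathbf{S}$ there is no such expansion, and your proposed substitute — a ``dichotomy'' between a compact hyperbolic-like region and the renormalization cascade near $\partial\mathbf{S}_0$ — is a sketch of a hope, not an argument. The two regimes do not cleanly alternate along a bubble ray: the forward orbit of a bubble can re-enter arbitrarily small neighbourhoods of the critical point $0$ at unboundedly long closest-return times $q_n$, and Yampolsky's complex a priori bounds control the geometry of critical puzzle pieces at consecutive renormalization depths (commensurability of $P^{crit}_n$ with $A_{-n}$, bounded moduli), not exponential decay of diameter in the generation. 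Converting one into the other requires the kind of careful return-time bookkeeping the paper performs for $R_\nu$ in the proof of proposition \ref{candidate shrinkage}; you have not done this, and I am not aware that a uniform geometric rate in $\mathrm{gen}(S_i)$ is even known for bounded-type Siegel quadratics. The lesson is that the generation-geometric estimate is both unestablished and unnecessary: the puzzle machinery already reduces landing to local connectivity. (A small additional slip: the postcritical set of $f_\mathbf{S}$ is $\partial\mathbf{S}_0$ itself, since the critical orbit is dense in the Siegel boundary, not ``$\overline{\mathbf{S}_0}$ together with its preimage set.'')
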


Denote the attracting basin of infinity for $f_\mathbf{S}$ by $\mathbf{A}_\mathbf{S}^\infty$. Let
\begin{displaymath}
\phi_{\mathbf{A}_\mathbf{S}^\infty} : \mathbf{A}_\mathbf{S}^\infty \to \hat{\mathbb{C}} \setminus \overline{\mathbb{D}}
\end{displaymath}
be the B\"ottcher uniformization of $f_\mathbf{S}$ on $\mathbf{A}_\mathbf{S}^\infty$.

Suppose $\mathcal{R}^\mathbf{S}$ is an infinite bubble ray, and let $z \in J_\mathbf{S}$ be its landing point. Then there exists a unique external ray
\begin{displaymath}
\mathcal{R}^\infty_t := \{\text{arg}(\phi_{\mathbf{A}_\mathbf{S}^\infty}) = t\}
\end{displaymath}
which lands on $z$. The \ebf{external angle of $\mathcal{R}^\mathbf{S}$} is defined to be $t$. Henceforth, the infinite bubble ray with external angle $t$ will be denoted $\mathcal{R}^\mathbf{S}_t$.

Let $s \in \partial\mathbf{S}_0$ be an iterated preimage of $0$. Define
\begin{displaymath}
\text{adr}(s) := \text{ gen}(s).
\end{displaymath}
The \ebf{bubble address of a bubble $S \subset \mathbf{S}$} for $f_\mathbf{S}$ can now be defined in the same way as its counterpart for $f_\mathbf{B}$.

\begin{figure}[h]
\centering
\includegraphics[scale=0.6]{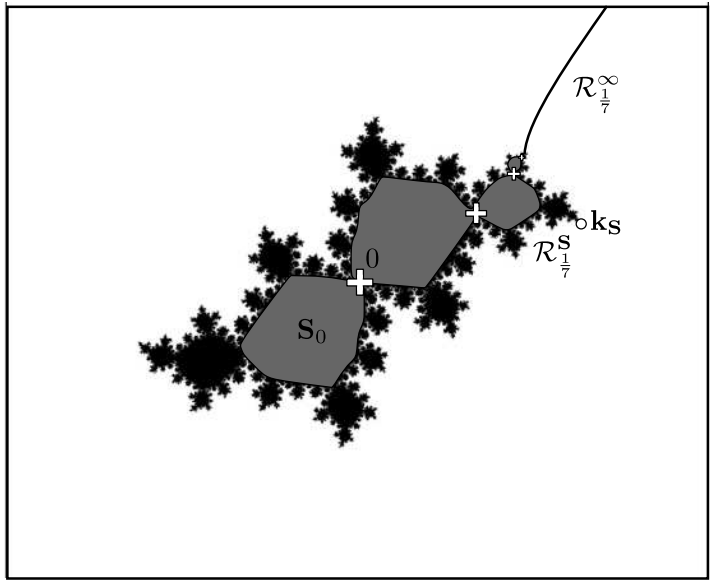}
\caption{The infinite bubble ray $\mathcal{R}^\mathbf{S}_{\frac{1}{7}}$ for the Siegel polynomial $f_\mathbf{S}$.}
\label{fig:siegelextray}
\end{figure}

\subsection{For the candidate mating} \label{candidate bubble} \hspace{1mm}

\vspace{2.5mm}

Consider the quadratic rational function $R_\nu$ constructed in theorem \ref{mod blaschke}. Denote the Fatou set and the Julia set for $R_\nu$ by $F(R_\nu)$ and $J(R_\nu)$ respectively. A connected component of $F(R_\nu)$ is called a \ebf{bubble}.

The critical points for $R_\nu$ are $\infty$ and $1$. $\{\infty, R_\nu(\infty)\}$ is a superattracting 2-periodic orbit, and thus is contained in $F(R_\nu)$. Let $\mathcal{B}_\infty$ be the bubble containing $\infty$. The set
\begin{displaymath}
\mathcal{B} := \bigcup_{n=0}^\infty R_\nu^{-n}(\mathcal{B}_\infty)
\end{displaymath}
is the basin of attraction for $\{\infty, R_\nu(\infty)\}$.

The critical point $1$ is contained in the boundary of the Siegel disc $\mathcal{S}_0$. Consider the set of iterated preimages of $\mathcal{S}_0$
\begin{displaymath}
\mathcal{S} := \bigcup_{n=0}^\infty R_\nu^{-n}(\mathcal{S}_0),
\end{displaymath}
It is easy to see that $F(R_\nu) = \mathcal{B} \cup \mathcal{S}$. 

\begin{prop} \label{loc con bubb}
Suppose $U \subset F(R_\nu)$ is a bubble. Then $\partial U$ is locally connected.
\end{prop}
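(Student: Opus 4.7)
Since $F(R_\nu) = \mathcal{B} \cup \mathcal{S}$, every bubble is either a component of the superattracting basin $\mathcal{B}$ or of the grand orbit $\mathcal{S}$ of the Siegel disc. The plan is to handle these two families of bubbles separately, relying on proposition \ref{basilica unif} for the first and on Theorem A plus a pullback induction for the second.

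For $U \subset \mathcal{B}$, I would directly invoke proposition \ref{basilica unif}. After normalizing $R_\nu$ as a member of the basilica family (valid because the critical point $1 \in \partial \mathcal{S}_0$ does not belong to the basin of infinity, so the parameter lies in $\mathcal{M}_\mathbf{B}$), there is a conformal uniformization $\psi : \mathcal{B} \to \mathring{K}_\mathbf{B}$ that extends to a homeomorphism $\overline{U} \to \overline{\psi(U)}$. Since $f_\mathbf{B}$ is hyperbolic, $J_\mathbf{B}$ is locally connected, so every bubble of $f_\mathbf{B}$ has locally connected boundary, and this property transports through the homeomorphism to $\partial U$.

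For $U \subset \mathcal{S}$, I would proceed by induction on the generation $n$ with $R_\nu^n(U) = \mathcal{S}_0$. The base case $U = \mathcal{S}_0$ is Theorem A: $\mathcal{S}_0$ is a quasidisc, so $\partial \mathcal{S}_0$ is a locally connected Jordan curve. For the inductive step, the critical points of $R_\nu$ are $\infty \in \mathcal{B}_\infty$ and $1 \in \partial \mathcal{S}_0 \cap \partial \mathcal{S}_1$, where $\mathcal{S}_1$ denotes the non-fixed component of $R_\nu^{-1}(\mathcal{S}_0)$. Hence for any Siegel bubble $U \neq \mathcal{S}_0$, the restriction $R_\nu|_U$ is an unbranched biholomorphism onto $R_\nu(U)$, and the only possible critical point on $\partial U$ is $1$, occurring only when $U = \mathcal{S}_1$. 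In either situation the map extends to a homeomorphism $\overline{U} \to \overline{R_\nu(U)}$, and local connectedness of $\partial R_\nu(U)$ (granted by the inductive hypothesis) transfers to $\partial U$.

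The main obstacle I anticipate is verifying that $R_\nu : \overline{\mathcal{S}_1} \to \overline{\mathcal{S}_0}$ really is a homeomorphism, rather than a genuinely $2$-to-$1$ branched cover that could pinch or fold $\partial \mathcal{S}_1$ at the critical point. This is settled by noting that $R_\nu|_{\partial \mathcal{S}_0}$ is a topological circle rotation of degree one, so $R_\nu^{-1}(R_\nu(1)) = \{1\}$ and the two local sheets of $R_\nu$ near the critical point correspond to the two distinct bubbles $\mathcal{S}_0$ and $\mathcal{S}_1$ meeting tangentially at $1$, not to a self-identification of $\partial \mathcal{S}_1$. Once this local picture at $1$ is secured, the inductive pullback through the remaining (unbranched) proper holomorphic maps is routine.
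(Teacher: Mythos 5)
Your decomposition of $F(R_\nu)$ into $\mathcal{B}$-bubbles, handled by proposition \ref{basilica unif}, and $\mathcal{S}$-bubbles, handled by Theorem A followed by a pullback induction on generation, is exactly what the paper's terse proof is invoking, and your treatment of the critical point $1 \in \partial\mathcal{S}_0 \cap \partial\mathcal{S}_1$ correctly addresses the one delicate spot. That step can in fact be bypassed: a proper holomorphic map of finite degree onto a simply connected domain with locally connected boundary always has a domain with locally connected boundary, so local connectedness pulls back through $R_\nu$ regardless of boundary branching, and there is no need to verify that $R_\nu|_{\overline{\mathcal{S}_1}}$ is a homeomorphism (note also that $R_\nu^{-1}(R_\nu(1))=\{1\}$ is best seen as a consequence of $1$ being a critical point of the degree-two map $R_\nu$, not of the degree-one circle dynamics on $\partial\mathcal{S}_0$).
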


\begin{proof}
The result follows immediately from proposition \ref{basilica unif} and theorem A.
\end{proof}

\begin{prop}
There exists a unique repelling fixed point $\beta$ contained in $\partial \mathcal{B}_\infty$.
\end{prop}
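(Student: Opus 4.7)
The plan is to transport the corresponding fact for the basilica polynomial (proposition \ref{b fixed}) through the dynamical uniformization of the superattracting basin. Since the remaining critical point $1$ of $R_\nu$ lies on $\partial \mathcal{S}_0$ and therefore does not escape, we have $R_\nu \in \mathcal{M}_\mathbf{B}$ after normalization, so proposition \ref{basilica unif} furnishes a conformal conjugacy $\psi_\nu : \mathcal{B} \to \mathring{K}_\mathbf{B}$ between $R_\nu|_\mathcal{B}$ and $f_\mathbf{B}|_{\mathring{K}_\mathbf{B}}$ which extends to a homeomorphism between the closures of corresponding bubbles. Up to labeling of the two bubbles in the superattracting cycle, $\psi_\nu$ restricts to a homeomorphism $\overline{\mathcal{B}_\infty} \to \overline{\mathbf{B}_0}$, and the conjugation identity $\psi_\nu \circ R_\nu = f_\mathbf{B} \circ \psi_\nu$ extends to $\partial \mathcal{B}_\infty$ by continuity.

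With the conjugacy in hand, I would let $\mathbf{b}$ denote the unique repelling fixed point of $f_\mathbf{B}$ on $\partial \mathbf{B}_0$ supplied by proposition \ref{b fixed}, and define $\beta := \psi_\nu^{-1}(\mathbf{b}) \in \partial \mathcal{B}_\infty$. The boundary conjugacy then immediately yields $R_\nu(\beta) = \psi_\nu^{-1}(f_\mathbf{B}(\mathbf{b})) = \psi_\nu^{-1}(\mathbf{b}) = \beta$, so $\beta$ is an $R_\nu$-fixed point on $\partial \mathcal{B}_\infty$. For uniqueness, any other such fixed point would be sent by $\psi_\nu$ to an $f_\mathbf{B}$-fixed point on $\partial \mathbf{B}_0$, which by proposition \ref{b fixed} must coincide with $\mathbf{b}$; injectivity of $\psi_\nu|_{\partial \mathcal{B}_\infty}$ then forces uniqueness of $\beta$.

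The only step that requires input beyond transport through $\psi_\nu$ is the verification that $\beta$ is repelling, since the boundary conjugacy is merely topological and does not a priori preserve multipliers. I would dispatch this via the Fatou--Shishikura inequality: a quadratic rational map admits at most $2 \cdot 2 - 2 = 2$ non-repelling cycles, and $R_\nu$ already saturates this count with its superattracting 2-cycle $\{\infty, R_\nu(\infty)\}$ and its Siegel fixed point $z_0$. Hence the fixed point $\beta$, being distinct from both, must be repelling. I expect this multiplier bookkeeping to be the only mildly delicate point; existence and uniqueness are direct transport arguments through $\psi_\nu$.
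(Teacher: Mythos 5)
The paper states this proposition without proof, leaving it to follow from the preceding machinery (just as it does for proposition \ref{loc con bubb}), so there is no explicit argument to compare against. Your proof is correct and is exactly the intended argument: since the critical point $1$ of $R_\nu$ lies in $J(R_\nu)$, the map is (up to the normalization of proposition \ref{super 2orbit}) a member of the basilica family with $a\in\mathcal{M}_\mathbf{B}$, so proposition \ref{basilica unif} gives the conformal conjugacy with boundary extensions, and proposition \ref{b fixed} transports across it. Existence and uniqueness follow from injectivity of the boundary homeomorphism, as you say. You are also right to flag that repelling does not come for free from a topological boundary conjugacy; the Fatou--Shishikura count (superattracting $2$-cycle plus Siegel fixed point exhausts the $2d-2=2$ allowed non-repelling cycles) settles it cleanly, and one could alternatively note that any non-repelling alternative (attracting, parabolic, Cremer, Siegel) is excluded because $\beta\in J(R_\nu)$ and both critical orbits are already accounted for. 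The one small point worth making explicit is that $\psi_\nu$ might identify $\mathcal{B}_\infty$ with $f_\mathbf{B}(\mathbf{B}_0)$ rather than $\mathbf{B}_0$, but since $\mathbf{b}\in\partial\mathbf{B}_0\cap\partial f_\mathbf{B}(\mathbf{B}_0)$ and $f_\mathbf{B}$ carries $\overline{\mathbf{B}_0}$ homeomorphically onto $\overline{f_\mathbf{B}(\mathbf{B}_0)}$ fixing $\mathbf{b}$, the uniqueness statement of proposition \ref{b fixed} holds verbatim for the other boundary too, so your labeling remark covers it.
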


\begin{prop} \label{candidate attach}
Let $u$ be an iterated preimage of $\beta$ (resp. of $1$). Then there are exactly two bubbles $U_1$ and $U_2$ in $\mathcal{B}$ (resp. $\mathcal{S}$) which contain $u$ in their closure.
\end{prop}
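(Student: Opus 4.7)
The plan is to adapt the proof of Proposition \ref{parentchild} to each of the two cases, reducing to a base-case statement and then transporting it by a local biholomorphism along the iterated preimage chain.

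For $u \in \mathcal{B}$, the argument follows exactly the template of Proposition \ref{parentchild}. Proposition \ref{basilica unif} supplies a conformal conjugacy $\psi$ between the dynamics of $R_\nu$ on $\mathcal{B}$ and of $f_\mathbf{B}$ on $\mathring{K}_\mathbf{B}$, extending to a homeomorphism on the closure of each Fatou component. Under this conjugacy, bubbles in $\mathcal{B}$ correspond bijectively to bubbles in $\mathbf{B}$, and $\beta$ corresponds to the $\alpha$-fixed point $\mathbf{b}$. Applying Proposition \ref{parentchild} on the $f_\mathbf{B}$ side and pulling back by $\psi$ yields the conclusion for every iterated preimage of $\beta$; in the base case, the two bubbles are $\mathcal{B}_\infty$ and $R_\nu(\mathcal{B}_\infty)$.

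For $u \in \mathcal{S}$, the heart of the matter is the base case: exactly two bubbles contain $1$ in their closures. One such bubble is $\mathcal{S}_0$, by Theorem A. To produce a second, I use that the surgery in Theorem \ref{mod blaschke} makes $R_\nu|_{\mathcal{S}_0}$ a degree-one biholomorphism (conjugate to $\text{Rot}_\nu$); since $R_\nu$ has global degree $2$, there must be a component $\mathcal{S}_{-1}$ of $R_\nu^{-1}(\mathcal{S}_0)\setminus\mathcal{S}_0$, mapping to $\mathcal{S}_0$ with degree one. Because $R_\nu$ is quadratic, the only critical point besides $\infty$ is the simple critical point $1$, where $R_\nu$ is a two-to-one branched cover onto a neighborhood of the critical value $c := R_\nu(1)\in\partial\mathcal{S}_0$. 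Pulling back the ``half-disk'' $V\cap\mathcal{S}_0$ for a small disk $V$ around $c$ yields two disjoint wedges at $1$: one lies in $\mathcal{S}_0$ and the other in $\mathcal{S}_{-1}$. The local degree count at $1$ then rules out any further bubble attaching there.

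The inductive step for iterated preimages $u$ of $1$ at generation $n\geq 1$ requires $u$ to be non-critical, so that $R_\nu^n$ is locally injective at $u$. The only critical points of $R_\nu$ are $\infty$ and $1$: the point $\infty$ is excluded because its grand orbit is the superattracting basin $\mathcal{B}$, disjoint from the grand orbit of $1$ in $\mathcal{S}$; and $1$ itself cannot appear as a nontrivial iterated preimage of $1$, since $R_\nu'(1)=0$ would force any periodic orbit through $1$ to be superattracting, contradicting $1\in J(R_\nu)$. Hence $R_\nu^n$ is a local biholomorphism at $u$, and the number of bubbles attaching at $u$ equals the number attaching at $1$, namely two. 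The main technical obstacle is the base case, specifically identifying the second Fatou component attaching at the critical point $1$; this hinges on Theorem A (placing $1$ on $\partial\mathcal{S}_0$ as a quasidisk boundary point) together with the degree-one restriction of $R_\nu$ to $\mathcal{S}_0$, which together force the ramification of $R_\nu$ at $1$ to introduce exactly one new bubble.
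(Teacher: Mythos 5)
The paper does not supply a proof of this proposition; it is stated as standard (like its analogues, Propositions \ref{parentchild} and \ref{siegel preimage 0}), so your reconstruction cannot be compared to an authorial argument. Judged on its own terms, your overall scheme is sound: the reduction for preimages of $\beta$ via the uniformization of Proposition \ref{basilica unif}, and the inductive transport along non-critical preimage chains, are both correct. The observation that $1$ cannot lie on a nontrivial periodic orbit (else it would be superattracting, contradicting $1\in J(R_\nu)$) correctly justifies local injectivity of $R_\nu^n$ at each iterated preimage of generation $n\geq 1$.

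The gap is in the \emph{upper} bound of the base case at $1$. Your local degree count shows that the number of bubbles in $\mathcal{S}$ touching $1$ equals twice the number touching $c := R_\nu(1)$; it establishes a lower bound of two (via $\mathcal{S}_0$ and $\mathcal{S}_{-1}$), but writing ``rules out any further bubble'' silently assumes that $\mathcal{S}_0$ is the \emph{only} bubble of $\mathcal{S}$ whose closure contains $c$. That needs a separate argument. One that works with the tools available at this point in the paper: suppose $U\in\mathcal{S}$, $U\neq\mathcal{S}_0$, and $c\in\overline{U}$; set $n=\mathrm{gen}(U)\geq 1$. The orbit $c,R_\nu(c),\dots,R_\nu^{n-1}(c)$ lies on $\partial\mathcal{S}_0$ and avoids both critical points (it avoids $1$ because $1$ is not periodic, and avoids $\infty$ which lies in $\mathcal{B}$), so $R_\nu^n$ is a biholomorphism from a small disc $V$ about $c$ onto a neighbourhood $V'$ of $c':=R_\nu^n(c)$. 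Since $R_\nu$ fixes $\partial\mathcal{S}_0$ and $\mathcal{S}_0$, this biholomorphism carries $\partial\mathcal{S}_0\cap V$ onto $\partial\mathcal{S}_0\cap V'$ and the $\mathcal{S}_0$-side of $\partial\mathcal{S}_0$ in $V$ onto the $\mathcal{S}_0$-side in $V'$; because $\partial\mathcal{S}_0$ is a Jordan curve (Theorem A) it separates $V$ and $V'$ locally into exactly two sides, so it must carry the non-$\mathcal{S}_0$-side, which contains $U\cap V$, onto the non-$\mathcal{S}_0$-side in $V'$. But $R_\nu^n(U)=\mathcal{S}_0$, so $R_\nu^n(U\cap V)\subset\mathcal{S}_0$, landing on the $\mathcal{S}_0$-side --- a contradiction. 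This is the missing step; with it, your ``exactly one new bubble'' claim, and hence the base case, is complete.
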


A \ebf{bubble ray} for $R_\nu$ can be constructed using bubbles in either $\mathcal{B}$ or $\mathcal{S}$. In the former case, the bubble ray is denoted $\mathcal{R}^\mathcal{B}$, and in the latter case, it is denoted $\mathcal{R}^\mathcal{S}$. The details of the construction will be omitted as it is very similar to the construction of a bubble ray $\mathcal{R}^\mathbf{B}$ for $f_\mathbf{B}$ or $\mathcal{R}^\mathbf{S}$ for $f_\mathbf{S}$.

The \ebf{bubble address of a bubble $U \subset F(R_\nu)$} for $R_\nu$ is defined in the same way as its counterpart for $f_\mathbf{B}$ or $f_\mathbf{S}$. However, since $R_\nu$ is not a polynomial, the external angle of a bubble ray $\mathcal{R}^\mathcal{B}$ or $\mathcal{R}^\mathcal{S}$ cannot be defined using external rays. To circumvent this problem, we need the following theorem.

\begin{thm} \label{interior maps}
There exists a unique conformal map $\Phi_\mathbf{B} : \mathbf{B} \to \mathcal{B}$ such that the bubble addresses are preserved, and the following diagram commutes:
\comdia{\mathbf{B}}{\mathbf{B}}{\mathcal{B}}{\mathcal{B}}{f_{\mathbf{B}}}{R_\nu}{\Phi_{\mathbf{B}}}{\Phi_{\mathbf{B}}}
Likewise, there exists a unique conformal map $\Phi_{\mathbf{S}} : \mathbf{S} \to \mathcal{S}$ such that the bubble addresses are preserved, and the following diagram commutes:
\comdia{\mathbf{S}}{\mathbf{S}}{\mathcal{S}}{\mathcal{S}}{f_{\mathbf{S}}}{R_\nu}{\Phi_{\mathcal{S}}}{\Phi_{\mathcal{S}}}
Furthermore, if $B \subset \mathbf{B}$ (resp. $S \subset \mathbf{S}$) is a bubble, then $\Phi_\mathbf{B}$ (resp. $\Phi_\mathbf{S}$) extends to a homeomorphism between $\overline{B}$ and $\overline{\Phi_\mathbf{B}(B)}$ (resp. $\overline{S}$ and $\overline{\Phi_\mathbf{S}(S)}$).
\end{thm}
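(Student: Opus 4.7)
The plan is to handle $\Phi_\mathbf{B}$ and $\Phi_\mathbf{S}$ separately. For $\Phi_\mathbf{B}$ there is essentially nothing new to do: Proposition \ref{basilica unif} applied to the parameter $a$ with $R_a = R_\nu$ produces a conformal isomorphism $\psi_a : \mathcal{A}^\infty_a \to \mathring{K}_\mathbf{B}$ conjugating $R_\nu$ to $f_\mathbf{B}$, and I define $\Phi_\mathbf{B} := \psi_a^{-1} : \mathbf{B} \to \mathcal{B}$. The same proposition provides the homeomorphic extension to each bubble closure. Preservation of bubble addresses is automatic once I verify the only nontrivial identification at the base: $\psi_a$ must send the critical bubble $\mathcal{B}_\infty$ to the critical bubble $\mathbf{B}_0$, and therefore the unique repelling fixed point $\beta \in \partial \mathcal{B}_\infty$ to the unique repelling fixed point $\mathbf{b} \in \partial \mathbf{B}_0$. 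Equivariance with the dynamics then forces the entire tree of iterated-preimage attachments, and hence the bubble-address labeling, to be respected.

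For $\Phi_\mathbf{S}$ I build the conjugacy by hand, starting on the critical Siegel disk. Let $\ell_\mathbf{S} : \mathbf{S}_0 \to \mathbb{D}$ and $\ell_\mathcal{S} : \mathcal{S}_0 \to \mathbb{D}$ be the Koenigs--Siegel linearizations conjugating $f_\mathbf{S}|_{\mathbf{S}_0}$ and $R_\nu|_{\mathcal{S}_0}$ to the rigid rotation $z\mapsto e^{2\pi i \nu}z$. Because $\mathbf{S}_0$ and $\mathcal{S}_0$ are quasidiscs (by the Douady--Ghys--Herman--Shishikura surgery and Theorem A respectively), both linearizations extend homeomorphically to the boundary. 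I normalize so that the boundary critical points $0 \in \partial\mathbf{S}_0$ and $1 \in \partial\mathcal{S}_0$ correspond to the same boundary point of $\mathbb{D}$; this pins them down uniquely. Setting $\Phi_\mathbf{S}|_{\mathbf{S}_0} := \ell_\mathcal{S}^{-1}\circ\ell_\mathbf{S}$ yields a conformal conjugacy on the critical Siegel disk that sends critical point to critical point. To extend, I proceed inductively on generation: for a bubble $S$ of generation $n$, the forward iterate $f_\mathbf{S}^n|_S : S \to \mathbf{S}_0$ is a conformal isomorphism (the critical orbit never leaves $\mathbf{S}_0$), and the analogous statement holds on the $R_\nu$ side for the bubble $\Phi_\mathbf{S}(S)$ prescribed by matching bubble addresses. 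I define $\Phi_\mathbf{S}|_S$ as the unique lift of $\Phi_\mathbf{S}|_{\mathbf{S}_0}$ through these two coverings.

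The main obstacle is showing that the pieces produced by this lifting agree at their common boundary points, i.e.\ that $\Phi_\mathbf{S}$ is well-defined at the attachment points of $\mathbf{S}$. If $s \in J_\mathbf{S}$ is an iterated preimage of $0$, Propositions \ref{siegel preimage 0} and \ref{candidate attach} say that exactly two bubbles meet at $s$ on each side. To reduce to the base case I apply $f_\mathbf{S}^{\text{gen}(s)}$: the two bubbles in question become $\mathbf{S}_0$ and an adjacent bubble $S'$ whose closure meets $\overline{\mathbf{S}_0}$ only at $0$, mapped to the pair $(\mathcal{S}_0, \Phi_\mathbf{S}(S'))$ which likewise meet only at $1$. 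The normalization pinning down the critical point forces the boundary extensions of $\Phi_\mathbf{S}|_{\mathbf{S}_0}$ and $\Phi_\mathbf{S}|_{S'}$ to agree at $0$, and compatibility at the remaining preimages propagates through the commutative squares. The continuous extension to bubble closures then comes from local connectedness of bubble boundaries (Proposition \ref{loc con bubb} on the $R_\nu$ side; the quasidisc property pulled back by $f_\mathbf{S}$ on the Siegel side) together with Carath\'eodory's theorem. Uniqueness is built in: a conformal conjugacy $\mathbf{B}_0 \to \mathcal{B}_\infty$ is rigid by B\"ottcher's theorem; a conformal conjugacy $\mathbf{S}_0 \to \mathcal{S}_0$ matching critical points is rigid by uniqueness of the Koenigs--Siegel linearization up to rotation; and in both cases equivariance propagates uniqueness to all remaining bubbles.
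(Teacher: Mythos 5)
Your construction of $\Phi_\mathbf{B}$ takes a genuinely shorter route than the paper's: the paper builds the conjugacy one bubble at a time, sending each $B$ to the bubble with matching address via the ``unique conformal map sending root to root'' (with the base case given by matching B\"ottcher coordinates on $\mathbf{B}_0$ and $\mathcal{B}_\infty$), whereas you observe that Proposition~\ref{basilica unif}, applied to the basilica-family normalization of $R_\nu$, already produces the global conjugacy $\psi_a : \mathcal{A}^\infty_a \to \mathring{K}_\mathbf{B}$, and that $\mathcal{A}^\infty_a = \mathcal{B}$, $\mathring{K}_\mathbf{B} = \mathbf{B}$, so $\Phi_\mathbf{B} := \psi_a^{-1}$ works at once. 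This is cleaner and makes conformality and the per-bubble closure extension manifest; the cost is that you must separately verify that $\psi_a^{-1}$ is address-preserving (it sends $\mathbf{B}_0 \to \mathcal{B}_\infty$, $\mathbf{b}\to\beta$, and equivariance does the rest), which you do. Your $\Phi_\mathbf{S}$ construction is essentially the paper's: linearize $\mathbf{S}_0$ and $\mathcal{S}_0$, use the quasidisc property and Carath\'eodory extension to normalize at the boundary critical points $0 \mapsto 1$, lift through the (degree-one) iterates to bubbles of higher generation, and extend to bubble closures via Proposition~\ref{loc con bubb}. Two small remarks. First, the compatibility of closure extensions at shared attachment points is more than the theorem actually asserts (it only claims a homeomorphic extension on each individual $\overline{B}$ or $\overline{S}$, not a global continuous map on $K_\mathbf{B}$ or $K_\mathbf{S}$); that gluing is established later in Section~\ref{final sec} via the shrinking theorem. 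Second, ``rigid by B\"ottcher's theorem'' should be stated as rigidity up to the $\pm 1$ ambiguity in the B\"ottcher coordinate, with the sign pinned down by the root (or, in your $\psi_a$ formulation, inherited from the uniqueness clause of Proposition~\ref{basilica unif}). Neither of these affects the correctness of your argument.
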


\begin{proof}
For each bubble $B \subset \mathbf{B}$, there exists a unique bubble $B' \subset \mathcal{B}$ such that
\begin{displaymath}
\text{adr}(B) = \text{ adr}(B').
\end{displaymath}
Define $\Phi_\mathbf{B}|_B$ to be the unique conformal map between $B$ and $B'$ which sends the root of $B$ to the root of $B'$. Then by construction, $\Phi_\mathbf{B}$ conjugates the dynamics of $f_\mathbf{B}$ and $R_\nu$ restricted to $\mathbf{B}$ and $\mathcal{B}$ respectively. Moreover, $\Phi_\mathbf{B}$ extends continuously to boundary of bubbles by proposition \ref{loc con bubb}.

The map $\Phi_\mathbf{S}$ is similarly defined.
\end{proof}

Let $\mathcal{R}^\mathcal{B} \sim \{B_i\}_{i=0}^\infty$ (resp. $\mathcal{R}^\mathcal{S} \sim \{S_i\}_{i=0}^\infty$) be an infinite bubble ray for $R_\nu$. The \ebf{external angle of $\mathcal{R}^\mathcal{B}$} (resp. \ebf{of $\mathcal{R}^\mathcal{S}$}) is defined to be the external angle of the infinite bubble ray $\mathcal{R}^\mathbf{B} \sim \{\Phi_\mathbf{B}^{-1}(B_i)\}_{i=0}^\infty$ (resp. $\mathcal{R}^\mathbf{S} \sim \{\Phi_\mathbf{S}^{-1}(S_i)\}_{i=0}^\infty$) for $f_\mathbf{B}$ (resp. $f_\mathbf{S}$).  Henceforth, the infinite bubble rays for $R_\nu$ with external angle $t$ will be denoted $\mathcal{R}^\mathcal{B}_t$ and $\mathcal{R}^\mathcal{S}_t$.

\begin{figure}[h]
\centering
\includegraphics[scale=0.6]{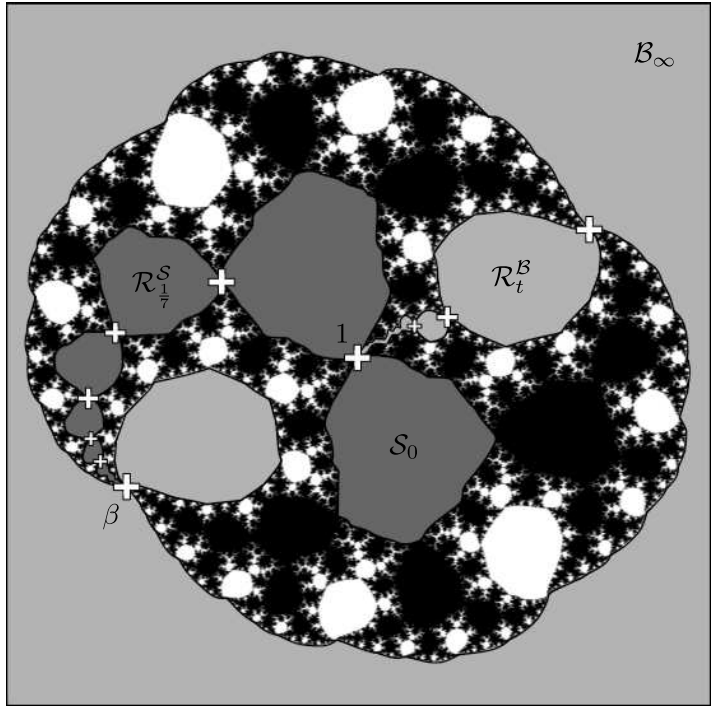}
\caption{The infinite bubble rays $\mathcal{R}^\mathcal{B}_t$, $t \approx -0.143$, and $\mathcal{R}^\mathcal{S}_{\frac{1}{7}}$ for $R_\nu$.}
\label{fig:matedextray}
\end{figure}

\newpage

\section{The Construction of Puzzle Partitions} \label{puzzle part}

\subsection{For the basilica polynomial} \label{basilica puzzle} \hspace{1mm}

\vspace{2.5mm}

Consider the basilica polynomial $f_\mathbf{B}$ discussed in section \ref{basilica bubble}. By definition, the bubble ray $\mathcal{R}^\mathbf{B}_0$ and the external ray $\mathcal{R}^\infty_0$ both land at the same repelling fixed point $\mathbf{k}_\mathbf{B} \in \mathbb{C}$. The \ebf{puzzle partition of level $n$} for $f_\mathbf{B}$ is defined as
\begin{displaymath}
\mathcal{P}^\mathbf{B}_n := \overline{f_\mathbf{B}^{-n}(\mathcal{R}^\mathbf{B}_0 \cup \mathcal{R}^\infty_0)}.
\end{displaymath}
Note that the puzzle partitions form a nested sequence: $\mathcal{P}^\mathbf{B}_0 \subsetneq \mathcal{P}^\mathbf{B}_1 \subsetneq \mathcal{P}^\mathbf{B}_2 \ldots{}$. A \ebf{puzzle piece of level $n$} for $f_\mathbf{B}$ is the closure of a connected component of $\mathbb{C} \setminus \mathcal{P}^\mathbf{B}_n$. By construction, a puzzle piece of level $n$ is mapped homeomorphically onto a puzzle piece of level $n-1$ by $f_\mathbf{B}$.

Let $P^\mathbf{B}$ be a puzzle piece of level $n$. Then $P^\mathbf{B}$ is bounded by two bubble rays $\mathcal{R}^\mathbf{B}_{t_1}$ and $\mathcal{R}^\mathbf{B}_{t_2}$, and two external rays $\mathcal{R}^\infty_{-t_1}$ and $\mathcal{R}^\infty_{-t_2}$, where $t_1 = \frac{i}{2^n}$ and $t_2 = \frac{i+1}{2^n}$ for some $i \in \{0, 1, \ldots{}, 2^n-1\}$. The closed interval $[\frac{i}{2^n}, \frac{i+1}{2^n}] \subset \mathbb{R} / \mathbb{Z}$ is referred to as the \ebf{angular span of $P^\mathbf{B}$}. Henceforth, the puzzle piece for $f_\mathbf{B}$ with angular span $[t_1, t_2]$ will be denoted $P^\mathbf{B}_{[t_1, t_2]}$.

\begin{figure}[h]
\centering
\includegraphics[scale=0.7]{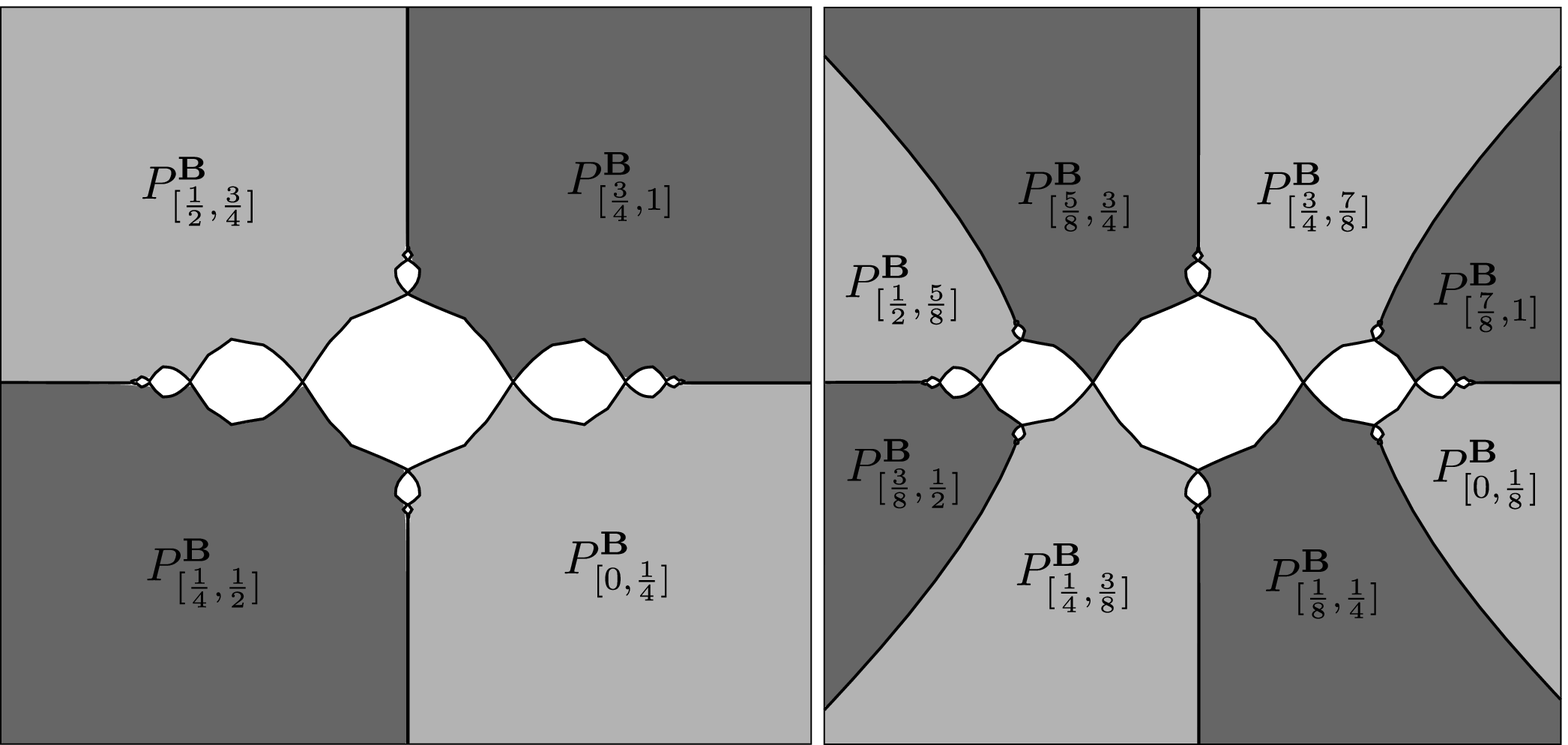}
\caption{The puzzle partition of level $2$ and $3$ for $f_\mathbf{B}$.}
\label{fig:basilicapuzzle}
\end{figure}

\begin{prop} \label{ext ray puzzle seq}
Let $P^\mathbf{B}_{[t_1, t_2]}$ be a puzzle piece. If $t \in [t_1, t_2]$, then $\mathcal{R}^\infty_{-t} \subset P^\mathbf{B}_{[t_1, t_2]}$.
\end{prop}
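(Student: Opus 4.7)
My plan is to pass through the Böttcher uniformization $\phi_{\mathbf{A}_\mathbf{B}^\infty}$, where the statement reduces to a transparent fact about radial sectors of $\hat{\mathbb{C}}\setminus\overline{\mathbb{D}}$.

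First I would identify the part of $\mathcal{P}^\mathbf{B}_n$ visible from infinity. Since every bubble is contained in $K_\mathbf{B}$, bubble rays are disjoint from $\mathbf{A}_\mathbf{B}^\infty$; combining this with the fact that $\phi_{\mathbf{A}_\mathbf{B}^\infty}$ conjugates $f_\mathbf{B}$ to the squaring map, I obtain
\begin{displaymath}
\mathcal{P}^\mathbf{B}_n \cap \mathbf{A}_\mathbf{B}^\infty \;=\; f_\mathbf{B}^{-n}(\mathcal{R}^\infty_0) \;=\; \bigcup_{k=0}^{2^n - 1} \mathcal{R}^\infty_{-k/2^n}.
\end{displaymath}
Under $\phi_{\mathbf{A}_\mathbf{B}^\infty}$, these $2^n$ external rays correspond to the $2^n$ radial arcs at the $2^n$-th roots of unity, cutting $\hat{\mathbb{C}}\setminus\overline{\mathbb{D}}$ into $2^n$ open sectors.

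Next I would introduce the sector
\begin{displaymath}
\Sigma_{[t_1, t_2]} \;:=\; \phi_{\mathbf{A}_\mathbf{B}^\infty}^{-1}\bigl(\{re^{-2\pi i s}\ :\ r>1,\ s \in (t_1, t_2)\}\bigr),
\end{displaymath}
which is an open connected subset of $\mathbf{A}_\mathbf{B}^\infty$ disjoint from $\mathcal{P}^\mathbf{B}_n$, hence contained in a single connected component of $\mathbb{C}\setminus\mathcal{P}^\mathbf{B}_n$. Comparing the bounding external rays $\mathcal{R}^\infty_{-t_1}$ and $\mathcal{R}^\infty_{-t_2}$ of $\Sigma_{[t_1,t_2]}$ with those of the puzzle piece identifies the enclosing piece as $P^\mathbf{B}_{[t_1, t_2]}$ itself.

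To conclude, for any $t \in (t_1, t_2)$ the ray $\mathcal{R}^\infty_{-t} = \phi_{\mathbf{A}_\mathbf{B}^\infty}^{-1}(\{re^{-2\pi i t} : r > 1\})$ is a radial arc strictly inside $\Sigma_{[t_1, t_2]}$, so it lies in the interior of $P^\mathbf{B}_{[t_1, t_2]}$. The endpoint cases $t \in \{t_1, t_2\}$ are immediate since $\mathcal{R}^\infty_{-t_1}$ and $\mathcal{R}^\infty_{-t_2}$ sit on $\partial P^\mathbf{B}_{[t_1, t_2]}$ by definition, and the puzzle piece is closed. The only step that requires any real care — the main ``obstacle,'' such as it is — is the orientation bookkeeping that confirms the Böttcher sector of angular range $(-2\pi t_2, -2\pi t_1)$ matches the piece labeled by $[t_1,t_2]$ rather than its complement; this is a routine matching of the sign conventions dictated by the gluing relation $\tau_{c_1}(t)\sim \tau_{c_2}(-t)$ and the definition of $\mathcal{R}^\infty_{-t}$.
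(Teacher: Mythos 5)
The paper states this proposition without proof, treating it as an immediate consequence of the definitions; there is thus no argument of the paper's to compare against. Your Böttcher-coordinate verification is correct and supplies exactly the content that the paper leaves implicit, so this counts as a valid elaboration rather than a different route.

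Two small points worth tightening. First, the identity $\mathcal{P}^\mathbf{B}_n \cap \mathbf{A}_\mathbf{B}^\infty = f_\mathbf{B}^{-n}(\mathcal{R}^\infty_0)$ is slightly off: the closure in the definition of $\mathcal{P}^\mathbf{B}_n$ adds $\infty$ (as well as landing points on $J_\mathbf{B}$), so the intersection with $\mathbf{A}_\mathbf{B}^\infty$ is $f_\mathbf{B}^{-n}(\mathcal{R}^\infty_0) \cup \{\infty\}$; this does not affect the argument since $\Sigma_{[t_1,t_2]}$ avoids $\infty$. Second, the ``orientation bookkeeping'' you defer can be dispatched without appealing to the mating gluing convention: the complement $\mathbf{A}_\mathbf{B}^\infty \setminus \mathcal{P}^\mathbf{B}_n$ decomposes into exactly $2^n$ open sectors, each of angular width $1/2^n$, and each puzzle piece meets $\mathbf{A}_\mathbf{B}^\infty$ in precisely one of them. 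The piece $P^\mathbf{B}_{[t_1,t_2]}$ has the external rays $\mathcal{R}^\infty_{-t_1}$ and $\mathcal{R}^\infty_{-t_2}$ on its boundary by definition; of the two arcs of $\partial\mathbb{D}$ these rays bound, only $(-t_2,-t_1)$ has length $1/2^n$ (for $n \geq 2$ the other has length $1 - 1/2^n$), so the external sector of $P^\mathbf{B}_{[t_1,t_2]}$ is forced to be $\Sigma_{[t_1,t_2]}$. The case $n=1$, where both arcs have length $1/2$, is resolved by consistency with the nesting $P^\mathbf{B}_{[t_1,t_2]} \supset P^\mathbf{B}_{[t_1, t_1 + 1/4]}$. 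This closes the one gap you flagged and keeps the argument self-contained within the paper's own definitions.
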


A \ebf{nested puzzle sequence at $x$} is a collection of puzzle pieces
\begin{displaymath}
\Pi^\mathbf{B} = \{P^\mathbf{B}_{[s_k, t_k]}\}_{k=0}^\infty
\end{displaymath}
such that for all $k \geq 0$, $P^\mathbf{B}_{[s_{k+1}, t_{k+1}]} \subset P^\mathbf{B}_{[s_k, t_k]}$. Note that this is equivalent to the condition that $[s_{k+1}, t_{k+1}] \subset [s_k, t_k]$. The set
\begin{displaymath}
L(\Pi^\mathbf{B}) := \bigcap_{k=0}^\infty P^\mathbf{B}_{[s_k, t_k]}
\end{displaymath}
is called the \ebf{limit of $\Pi^\mathbf{B}$}.

The \ebf{external angle $t \in \mathbb{R} / \mathbb{Z}$ of $\Pi^\mathbf{B}$} is defined by
\begin{displaymath}
\{t\} = \bigcap_{k=0}^\infty [s_k, t_k].
\end{displaymath}
Henceforth, a nested puzzle sequence for $f_\mathbf{B}$ with external angle $t \in \mathbb{R} / \mathbb{Z}$ will be denoted by $\Pi^\mathbf{B}_t$.

A nested puzzle sequence $\Pi^\mathbf{B}_t$ is said to be \ebf{maximal} if there is no nested puzzle sequence which contains $\Pi^\mathbf{B}_t$ as a proper subset. If two nested puzzle sequences are contained in the same maximal nested puzzle sequence, they are said to be \ebf{equivalent}.

\begin{prop}
Suppose $\Pi^\mathbf{B}_s$ and $\Pi^\mathbf{B}_t$ are two equivalent nested puzzle sequences. Then $s=t$, and $L(\Pi^\mathbf{E}_s) = L(\Pi^\mathbf{E}_t)$. 
\end{prop}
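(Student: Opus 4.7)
The plan is to read both parts of the conclusion off the same structural observation: by definition of equivalence, $\Pi^\mathbf{B}_s$ and $\Pi^\mathbf{B}_t$ lie inside a common maximal nested puzzle sequence $M$, and the puzzle pieces of $M$ are totally ordered by inclusion. I will first pin down the external angle, then deduce the limit statement by passing to the equivalence class indexing.

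For $s = t$, write $\Pi^\mathbf{B}_s = \{P^\mathbf{B}_{[s_k,t_k]}\}_{k \geq 0}$ and $\Pi^\mathbf{B}_t = \{P^\mathbf{B}_{[s'_k,t'_k]}\}_{k \geq 0}$. Every angular span appearing in either sequence is the angular span of some piece of $M$, so all these intervals lie in a single totally nested family of dyadic intervals in $\mathbb{R}/\mathbb{Z}$. Each such interval has length $2^{-n}$ for its level $n$, and an infinite nested sequence can force $\bigcap_k [s_k, t_k]$ to be a singleton only if the levels tend to infinity; otherwise the intersection would be a non-degenerate dyadic interval, contradicting the definition of external angle. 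Hence both intersections collapse to the same unique limit point of the angular spans of $M$, forcing $s = t$.

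For $L(\Pi^\mathbf{E}_s) = L(\Pi^\mathbf{E}_t)$, I interpret $\Pi^\mathbf{E}_u$ as the equivalence class of a nested puzzle sequence with external angle $u$, represented by the unique maximal sequence $M_u$ containing it, and extend $L$ from sequences to classes by setting $L(\Pi^\mathbf{E}_u) := \bigcap \{P : P \in M_u\}$. The equivalence hypothesis gives $M_s = M_t = M$, so $L(\Pi^\mathbf{E}_s) = L(M) = L(\Pi^\mathbf{E}_t)$ on the nose. The only point requiring verification is that this class-level $L$ agrees with the sequence-level $L$ from the preceding definitions, so that the conclusion indeed produces a meaningful equality: one must check that $L(\Pi^\mathbf{B}_u) = L(M_u)$ for any representative.

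The latter check is the lone substantive step, and it is immediate from the cofinality observation from the first part. Since levels of any representative tend to infinity, the representative contains a piece at arbitrarily high level of $M_u$, and by the nesting of pieces in $M_u$ the intersection over the representative collapses onto the intersection over all of $M_u$. The main obstacle, such as it is, is purely notational: reconciling the $\Pi^\mathbf{E}$ equivalence-class symbol with the $\Pi^\mathbf{B}$ sequence-level symbol. No dynamics of $f_\mathbf{B}$ enters — puzzle partitions here behave as a combinatorial dyadic tree, and the proposition is a statement about nested intersections in that tree.
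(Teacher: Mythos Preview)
Your argument is correct and follows essentially the same route as the paper: both $\Pi^\mathbf{B}_s$ and $\Pi^\mathbf{B}_t$ sit inside a common maximal sequence $M$, and one shows $L(\Pi^\mathbf{B}_s) = L(M) = L(\Pi^\mathbf{B}_t)$ (and likewise $s = u = t$ for the external angle $u$ of $M$). The paper does the double inclusion slightly differently---it observes that the $k$th piece of a subsequence is contained in the $k$th piece of the maximal sequence, rather than invoking cofinality of levels---but this is the same content.

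One remark: you spent effort interpreting $\Pi^\mathbf{E}$ as a new ``equivalence-class'' object and then reconciling class-level and sequence-level limits. In fact $\mathbf{E}$ is simply a typographical error for $\mathbf{B}$; the intended conclusion is $L(\Pi^\mathbf{B}_s) = L(\Pi^\mathbf{B}_t)$, which your cofinality argument already establishes directly. The extra interpretive layer is harmless but unnecessary.
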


\begin{proof}
Let $\Pi^\mathbf{B}_s = \{P^\mathbf{B}_{[s_k, t_k]}\}_{k=0}^\infty$, and let $\hat{\Pi}^\mathbf{B}_u = \{P^\mathbf{B}_{[r_k, u_k]}\}_{k=0}^\infty$ be the maximal nested puzzle sequence containing $\Pi^\mathbf{B}_s$. Since $P^\mathbf{B}_{[s_k, t_k]} \subseteq P^\mathbf{B}_{[r_k, u_k]}$ for all $k \in \mathbb{N}$, we have
\begin{displaymath}
L(\Pi^\mathbf{B}_s) \subset L(\Pi^\mathbf{B}_u).
\end{displaymath}
On the other hand, since $\Pi^\mathbf{B}_s \subset \hat{\Pi}^\mathbf{B}_u$, we have
\begin{displaymath}
L(\hat{\Pi}^\mathbf{B}_u) \subset L(\Pi^\mathbf{B}_s).
\end{displaymath}
The proof that $s=t$ is similar.
\end{proof}

\begin{prop} \label{shrink to ray}
Let $\Pi^\mathbf{B}_t := \{P^\mathbf{B}_{[s_k, t_k]}\}_{k=0}^\infty$ be a nested puzzle sequence. Then
\begin{displaymath}
L(\Pi^\mathbf{B}_t) = \mathcal{R}^\infty_{-t} \cup \{x\},
\end{displaymath}
where $x \in J_\mathbf{B}$ is the landing point of $\mathcal{R}^\infty_{-t}$.
\end{prop}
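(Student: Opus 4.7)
The plan is to prove both containments of Proposition \ref{shrink to ray} separately. For $\supseteq$, since $t \in [s_k, t_k]$ for every $k$, Proposition \ref{ext ray puzzle seq} gives $\mathcal{R}^\infty_{-t} \subset P^\mathbf{B}_{[s_k, t_k]}$ for every $k$; and closedness of each puzzle piece forces the landing point $x$ to lie in every $P^\mathbf{B}_{[s_k, t_k]}$ as well, yielding $\mathcal{R}^\infty_{-t} \cup \{x\} \subseteq L(\Pi^\mathbf{B}_t)$.

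For $\subseteq$, take $y \in L(\Pi^\mathbf{B}_t)$ and trichotomize based on its location. If $y$ lies in the interior of a Fatou bubble $B$ of generation $g$, the openness of $B$ together with $B \subset f_\mathbf{B}^{-g}(\mathbf{B}_0) \subset \mathcal{P}^\mathbf{B}_g$ gives $y$ a neighborhood inside $\mathcal{P}^\mathbf{B}_g$; hence $y$ cannot lie in the closure of any connected component of $\mathbb{C} \setminus \mathcal{P}^\mathbf{B}_g$, contradicting $y \in P^\mathbf{B}_{[s_g, t_g]}$. If $y \in \mathbf{A}_\mathbf{B}^\infty$, I would pass to the B\"ottcher coordinate: since the puzzle partition inside $\mathbf{A}_\mathbf{B}^\infty$ is exactly the union of external rays at dyadic angles, $P^\mathbf{B}_{[s_k, t_k]} \cap \mathbf{A}_\mathbf{B}^\infty$ transports to the closed angular sector in $\hat{\mathbb{C}} \setminus \overline{\mathbb{D}}$ with argument in $[-t_k, -s_k]$. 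Hence $\arg \phi_{\mathbf{A}_\mathbf{B}^\infty}(y) \in \bigcap_k [-t_k, -s_k] = \{-t\}$, placing $y$ on $\mathcal{R}^\infty_{-t}$. Finally, if $y \in J_\mathbf{B}$, I would use local connectivity of $J_\mathbf{B}$ to parametrize it continuously by the circle of external angles, and identify $P^\mathbf{B}_{[s_k, t_k]} \cap J_\mathbf{B}$ with the closed sub-arc whose angles lie in $[-t_k, -s_k]$. Uniform continuity of this parametrization then collapses the sub-arc to the landing point $x$ of $\mathcal{R}^\infty_{-t}$ as $k \to \infty$.

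The main obstacle is the Julia set case: rigorously identifying $P^\mathbf{B}_{[s_k, t_k]} \cap J_\mathbf{B}$ with an arc of prescribed angular width requires showing that the bubble rays $\mathcal{R}^\mathbf{B}_{s_k}$ and $\mathcal{R}^\mathbf{B}_{t_k}$ land at the same Julia points as the external rays $\mathcal{R}^\infty_{-s_k}$ and $\mathcal{R}^\infty_{-t_k}$ (so that each puzzle piece closes up into a topological disk), and properly handling branch points of $J_\mathbf{B}$ landed by multiple external rays. Both are addressed by the construction of the puzzle partition from preimages of $\mathcal{R}^\mathbf{B}_0 \cup \mathcal{R}^\infty_0$, whose common landing at the $\alpha$-fixed point $\mathbf{b}$ (Proposition \ref{b fixed}) propagates to all dyadic angles under $f_\mathbf{B}^{-k}$.
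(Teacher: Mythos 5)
Your argument is correct and follows essentially the same route as the paper's proof: express $P^\mathbf{B}_{[s_k,t_k]}\cap\mathbf{A}^\infty_\mathbf{B}$ as the closed angular sector $\bigcup_{s\in[s_k,t_k]}\mathcal{R}^\infty_{-s}$ in B\"ottcher coordinates, and use local connectivity of $J_\mathbf{B}$ to identify $P^\mathbf{B}_{[s_k,t_k]}\cap J_\mathbf{B}$ with the Carath\'eodory image $\tau_\mathbf{B}([-t_k,-s_k])$, which collapses to $\{x\}$ as the angular intervals shrink; you are in fact slightly more complete than the paper in explicitly ruling out the possibility that $L(\Pi^\mathbf{B}_t)$ meets the interior of a bubble, a case the paper leaves tacit. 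One small correction in your final paragraph: the common landing point of $\mathcal{R}^\mathbf{B}_0$ and $\mathcal{R}^\infty_0$ that seeds the puzzle partition is the $\beta$-type fixed point $\mathbf{k}_\mathbf{B}$, not the $\alpha$-fixed point $\mathbf{b}$ of Proposition \ref{b fixed}.
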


\begin{proof}
Observe that for each $k$, we have
\begin{displaymath}
\mathbf{A}^\infty_\mathbf{B} \cap P^\mathbf{B}_{[s_k, t_k]} = \bigcup_{s \in [s_k, t_k]} \mathcal{R}^\infty_{-s}. 
\end{displaymath}
Since
\begin{displaymath}
J_\mathbf{B} \cap P^\mathbf{B}_{[s_k, t_k]} = K_\mathbf{B} \cap \partial (P^\mathbf{B}_{[s_k, t_k]} \cap \mathbf{A}^\infty_\mathbf{B}),
\end{displaymath}
we see that $J_\mathbf{B} \cap P^\mathbf{B}_{[s_k, t_k]}$ consists of landing points of $\mathcal{R}^\infty_{-s}$, $s \in [s_k, t_k]$.

If $s \neq t$, then for $k$ sufficiently large, $s \not\in [s_k, t_k]$, which means the landing point of $\mathcal{R}^\infty_{-s}$ is not included in $L(\Pi^\mathbf{B}_t)$. The result follows.
\end{proof}

\begin{prop} \label{proof puzzle count}
Let $x \in J_\mathbf{B}$. If $x$ is an iterated preimage of $\mathbf{k}_\mathbf{B}$ or $\mathbf{b}$, then for all sufficiently large $n$, $x$ is contained in exactly two puzzle pieces of level $n$. Otherwise, $x$ is contained in a unique puzzle piece of level $n$.
\end{prop}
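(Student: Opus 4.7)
The plan is to localize the question: the number of puzzle pieces of level $n$ containing $x \in J_\mathbf{B}$ equals the number of connected components of $\mathbb{C} \setminus \mathcal{P}^\mathbf{B}_n$ accumulating at $x$, so everything reduces to a local analysis of $\mathcal{P}^\mathbf{B}_n$ at $x$. A key preparatory observation I would record first is that $\mathcal{P}^\mathbf{B}_n$ contains every bubble of generation at most $n$: since $\overline{\mathbf{B}_0} \cup \overline{f_\mathbf{B}(\mathbf{B}_0)} \subset \mathcal{R}^\mathbf{B}_0$ and $\{\mathbf{B}_0, f_\mathbf{B}(\mathbf{B}_0)\}$ is a superattracting $2$-cycle for the induced action of $f_\mathbf{B}$ on bubbles, every bubble $B$ with $\text{gen}(B) \le n$ lies in $f_\mathbf{B}^{-n}(\mathcal{R}^\mathbf{B}_0) \subset \mathcal{P}^\mathbf{B}_n$; similarly, every iterated preimage of $\mathbf{k}_\mathbf{B}$ of order at most $n$ is the common landing point of some pair $\mathcal{R}^\infty_{-i/2^n}$, $\mathcal{R}^\mathbf{B}_{i/2^n}$ in $\mathcal{P}^\mathbf{B}_n$.

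The argument then splits into three cases. If $x$ is an iterated preimage of $\mathbf{k}_\mathbf{B}$ of minimal order $n_0$, then for every $n \ge n_0$ the rays $\mathcal{R}^\infty_{-i/2^n}$ and $\mathcal{R}^\mathbf{B}_{i/2^n}$ both land at $x$, approaching from $\mathbf{A}^\infty_\mathbf{B}$ and from $\mathbf{B}$ respectively, and together they form a local cross-cut through $x$ separating a small disk around $x$ into exactly two components. If $x$ is an iterated preimage of $\mathbf{b}$, then by Proposition \ref{parentchild} there are exactly two bubbles $B_1, B_2$ with $\{x\} = \overline{B_1} \cap \overline{B_2}$; once $n \ge \max(\text{gen}(B_1), \text{gen}(B_2))$ both lie in $\mathcal{P}^\mathbf{B}_n$, so $\mathcal{P}^\mathbf{B}_n$ locally contains the bowtie $\overline{B_1} \cup \overline{B_2}$, whose complement again has exactly two local components at $x$. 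In the remaining case, $x$ either avoids $\mathcal{P}^\mathbf{B}_n$ entirely (and hence lies in the interior of a unique puzzle piece), or lies on $\partial B$ for a single bubble $B \subset \mathcal{P}^\mathbf{B}_n$, away from any ray landing or attachment point; then $\partial B$ is locally a Jordan arc with $\overline{B}$ on one side, so exactly one puzzle piece accumulates at $x$.

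The main obstacle is the local topology in the first two cases, which requires ruling out any extra strand of $\mathcal{P}^\mathbf{B}_n$ meeting $x$. Via the local-homeomorphism property of $f_\mathbf{B}$ away from the critical point $0$, this reduces to the corresponding statements for $\mathbf{k}_\mathbf{B}$ (only $\mathcal{R}^\infty_0$ and $\mathcal{R}^\mathbf{B}_0$ land there, and $\mathbf{k}_\mathbf{B}$ lies on no bubble boundary) and for $\mathbf{b}$ (on the closure of exactly two bubbles, and not a landing point of any dyadic external ray, since the external rays landing at $\mathbf{b}$ carry angles $1/3$ and $2/3$). One must also verify that preimages of $\mathbf{k}_\mathbf{B}$ and preimages of $\mathbf{b}$ never coincide, which is immediate from the fact that their forward orbits are distinct fixed points. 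Combined with each bubble being a Jordan domain, these inputs pin down the local picture of $\mathcal{P}^\mathbf{B}_n$ at $x$ and yield the claimed counts.
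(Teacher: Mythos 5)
Your proof takes essentially the same route as the paper's: case analysis on whether $x$ is a preimage of $\mathbf{b}$, a preimage of $\mathbf{k}_\mathbf{B}$, or neither, using Proposition~\ref{parentchild} for the $\mathbf{b}$-preimages and the landing structure of bubble rays and external rays for the $\mathbf{k}_\mathbf{B}$-preimages, and observing that $\mathcal{P}^\mathbf{B}_n$ locally looks like a bowtie, a cross-cut, or a single Jordan arc near $x$. The only cosmetic differences are that you merge the paper's cases ii) and iv) into one, and in the $\mathbf{k}_\mathbf{B}$ case you argue topologically via a cross-cut whereas the paper pins down the two puzzle pieces explicitly by their dyadic angular spans — both valid, and your version helpfully makes explicit the need to rule out extra strands of $\mathcal{P}^\mathbf{B}_n$ meeting $x$, which the paper leaves implicit in its choice of a small disk.
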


\begin{proof}
We consider the following four cases:
\begin{enumerate}[label=\roman*)]
\item $x$ is an iterated preimage of $\mathbf{b}$.
\item There exists a unique bubble $B \subset \mathbf{B}$ such that $x \in \partial B$.
\item $x$ is an iterated preimage of $\mathbf{k}_\mathbf{B}$.
\item Otherwise.
\end{enumerate}

\vspace{2.5mm}

\noindent \emph{Case i)} Suppose that $x$ is an iterated preimage of $\mathbf{b}$. By proposition \ref{parentchild}, there exist exactly two bubbles $B_1$ and $B_2$ which contain $x$ in their boundary. Moreover, we have $\{x\} = \overline{B_1} \cap \overline{B_2}$. Note that $B_1$ and $B_2$ are eventually mapped to $\mathbf{B}_0 \subset \mathcal{R}^\mathbf{B}_0$ under $f_\mathbf{B}$. Hence, there exists $m \in \mathbb{N}$ such that for all $n > m$, $B_1 \cup B_2 \subset \mathcal{P}^\mathbf{B}_n$.

Let $n > m$. $\mathcal{P}^\mathbf{B}_n$ contains finitely many bubble rays whose landing points are all distinct from $x$. Thus, we can choose a sufficiently small disc $D$ centered at $x$ such that $D \cap \mathcal{P}^\mathbf{B}_n \subset (\overline{B_1} \cup \overline{B_2})$. This implies that $D \cap (\mathbb{C} \setminus \mathcal{P}^\mathbf{B}_n)$ has two connected components. Observe that every puzzle piece of level $n$ that contains $x$ must contain exactly one of the components of $D \cap (\mathbb{C} \setminus \mathcal{P}^\mathbf{B}_n)$. The result follows.

\vspace{2.5mm}

\noindent \emph{Case ii)} The proof is completely analogous to Case i).

\vspace{2.5mm}

\noindent \emph{Case iii)} Suppose $x$ is an iterated preimage of $\mathbf{k}_\mathbf{B}$. Note that $\mathcal{R}^\mathbf{B}_0$ is the only bubble ray which lands on $\mathbf{k}_\mathbf{B}$. Hence, $x$ is the landing point of a single bubble ray $\mathcal{R}^\mathbf{B}_t$ where $t = \frac{i}{2^m}$ for some $m \in \mathbb{N}$ and $i \in \{0, 1, \ldots{}, 2^m-1\}$.

Observe that for all $n \geq m$, $x$ is contained in the puzzle partition $\mathcal{P}^\mathbf{B}_n$ of level $n$. This implies that if $x$ is contained in some puzzle piece $P^\mathbf{B}_{[t_1, t_2]}$ of level $n$, then $x$ is contained in its boundary. The boundary of $P^\mathbf{B}_{[t_1, t_2]}$ is a subset of $\mathcal{R}^\mathbf{B}_{t_1} \cup \mathcal{R}^\infty_{t_1} \cup \mathcal{R}^\mathbf{B}_{t_2} \cup \mathcal{R}^\infty_{t_2}$. It follows from our previous remark that $t_1 = \frac{i}{2^m}$ or $t_2 = \frac{i}{2^m}$. Therefore, $P^\mathbf{B}_{[\frac{2^{n-m}i-1}{2^n}, \frac{i}{2^m}]}$ and $P^\mathbf{B}_{[\frac{i}{2^m}, \frac{2^{n-m}i+1}{2^n}]}$ are the only two puzzle pieces of level $n$ which contain $x$.

\vspace{2.5mm}

\noindent \emph{Case iv)} If $x$ is not contained in the boundary of any bubble, and $x$ is not an iterated preimage of $\mathbf{k}_\mathbf{B}$, then $x$ is disjoint from every puzzle partition. Hence, $x$ must be contained in a unique component of its complement.
\end{proof}

\begin{cor} \label{proof seq count}
Let $x \in J_\mathbf{B}$. If $x$ is an iterated preimage of $\mathbf{k}_\mathbf{B}$ or $\mathbf{b}$, then there are exactly two maximal nested puzzle sequences whose limit is equal to $\{x\}$. Otherwise, there is a unique maximal nested puzzle sequence whose limit is equal to $\{x\}$.
\end{cor}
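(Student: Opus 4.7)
The plan is to deduce this corollary directly from Proposition \ref{proof puzzle count} by reducing the count of maximal nested puzzle sequences landing at $x$ to a count of nested choices of puzzle pieces containing $x$ at each level. First I would observe that any nested puzzle sequence $\Pi = \{P_{[s_k, t_k]}^\mathbf{B}\}_{k=0}^\infty$ with $x \in L(\Pi)$ must have $x \in P_{[s_k, t_k]}^\mathbf{B}$ for every $k$; conversely, any such nested selection gives rise to a nested puzzle sequence, and two such sequences belong to the same maximal equivalence class precisely when they eventually coincide. Thus the corollary reduces to counting the maximal nested families of puzzle pieces containing $x$.

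In the generic case, when $x$ is not an iterated preimage of $\mathbf{k}_\mathbf{B}$ or $\mathbf{b}$, Proposition \ref{proof puzzle count} supplies a unique puzzle piece $P_n$ of level $n$ containing $x$, for every $n \geq 0$. Nesting is automatic: $P_{n+1}$ sits inside some level-$n$ puzzle piece $Q$ containing $x$, and uniqueness at level $n$ forces $Q = P_n$. This yields the one and only maximal nested sequence whose limit contains $x$.

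In the special case, when $x$ is an iterated preimage of $\mathbf{k}_\mathbf{B}$ or $\mathbf{b}$, choose $N$ large enough that for every $n \geq N$ the point $x$ lies in exactly two puzzle pieces $P_n^+$ and $P_n^-$ of level $n$, while for $n < N$ a unique piece $Q_n$ contains $x$. The crux of the argument is to show that the containment relation induces a bijection $\{P_{n+1}^+, P_{n+1}^-\} \to \{P_n^+, P_n^-\}$ for every $n \geq N$. Granting this, precisely two maximal nested sequences arise: both share the unique chain $\{Q_n\}_{n<N}$ at low levels, then bifurcate at level $N$ into the ``$+$'' branch or the ``$-$'' branch, each uniquely determined thereafter by the bijection.

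To establish the bijectivity claim I would reuse the local disc argument from the proof of Proposition \ref{proof puzzle count}. For $n \geq N$, pick a small disc $D$ centered at $x$ so that $D \cap \mathcal{P}_n^\mathbf{B}$ consists of the two local arcs dividing $D$ into the two sectors inherited by $P_n^+$ and $P_n^-$; shrink $D$ further if necessary so that $D \cap \mathcal{P}_{n+1}^\mathbf{B}$ is contained in these same two arcs (this is possible because the only elements of $\mathcal{P}_{n+1}^\mathbf{B}$ accumulating at $x$ are the bubble closures and/or external rays already identified in Proposition \ref{proof puzzle count}). Then each of $P_n^+$ and $P_n^-$ still contains $x$ in its closure when refined by $\mathcal{P}_{n+1}^\mathbf{B}$, so each contains at least one of $P_{n+1}^+, P_{n+1}^-$; since the total is two, the assignment is a bijection. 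The main obstacle is precisely this persistence-of-local-structure step: one must verify that no new partition arcs of level $n+1$ suddenly split a single side of $x$ into two, which ultimately rests on the fact that the local topology at $x$ is already stabilized at level $N$.
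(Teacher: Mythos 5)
Your proof is correct and supplies the deduction the paper treats as immediate: the corollary is stated without a separate argument as a direct consequence of Proposition \ref{proof puzzle count}, and your reduction to counting nested chains of puzzle pieces (one per level) containing $x$, together with the bijection between the two pieces at consecutive levels, is exactly what is intended. One small simplification worth noting is that the bijectivity step follows purely combinatorially from the level-by-level counts, without needing the local disc picture: since $x$ lies in exactly two pieces at each level $n \geq N$, and each of $P_n^+$, $P_n^-$ must contain at least one level-$(n+1)$ piece through $x$ (every point of $J_\mathbf{B}$ is covered by puzzle pieces of every level, and each level-$(n+1)$ piece sits in a unique level-$n$ piece), the pigeonhole principle already forces the assignment to be a bijection.
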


\begin{prop} \label{B access}
Let $x \in J_\mathbf{B}$. If $x$ is an iterated preimage of $\mathbf{b}$, then $x$ is biaccessible. Otherwise, $x$ is uniaccessible.
\end{prop}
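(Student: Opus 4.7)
My plan is to reduce biaccessibility of $x$ to a count of distinct external angles of nested puzzle sequences through $x$. By Proposition \ref{shrink to ray}, any nested puzzle sequence $\Pi^\mathbf{B}_t$ has limit $\mathcal{R}^\infty_{-t} \cup \{x_t\}$, where $x_t$ is the landing point of $\mathcal{R}^\infty_{-t}$ (which exists since $J_\mathbf{B}$ is locally connected). Consequently, an external ray $\mathcal{R}^\infty_{-t}$ lands at $x \in J_\mathbf{B}$ if and only if there is a nested puzzle sequence $\Pi^\mathbf{B}_t$ with external angle $t$ through $x$, and the number of external rays landing at $x$ is exactly the cardinality of the set of such external angles.

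I would then proceed case by case following the proof of Proposition \ref{proof puzzle count}. In Cases (ii) and (iv) of that proof ($x$ lying on the boundary of a unique bubble but not mapped to $\mathbf{b}$, or $x$ disjoint from every bubble and not mapped to $\mathbf{k}_\mathbf{B}$), a unique puzzle piece contains $x$ at each sufficiently high level, so a unique nested puzzle sequence and hence a unique external ray lands at $x$; such points are uniaccessible. In Case (iii), when $x$ is an iterated preimage of $\mathbf{k}_\mathbf{B}$, the two puzzle pieces $P^\mathbf{B}_{[\frac{2^{n-m}i-1}{2^n}, \frac{i}{2^m}]}$ and $P^\mathbf{B}_{[\frac{i}{2^m}, \frac{2^{n-m}i+1}{2^n}]}$ containing $x$ both have angular spans collapsing to the single dyadic angle $\frac{i}{2^m}$ as $n \to \infty$. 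The two nested puzzle sequences through $x$ therefore carry the same external angle, so the unique external ray landing at $x$ is $\mathcal{R}^\infty_{-i/2^m}$, and $x$ is uniaccessible.

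The substantive case is Case (i), where $x$ is an iterated preimage of $\mathbf{b}$. For $n$ large, two puzzle pieces with disjoint angular spans $[s_n, t_n]$ and $[s'_n, t'_n]$, satisfying $t_n < s'_n$, contain $x$; locally these pieces are separated by the two bubbles $B_1$ and $B_2$ through $x$, which extend to two infinite bubble rays $\mathcal{R}^\mathbf{B}_{t_\infty}$ and $\mathcal{R}^\mathbf{B}_{t'_\infty}$ landing at $x$. The main obstacle and the crux of the proposition is verifying that $t_\infty \neq t'_\infty$, so that the angular spans converge to genuinely distinct limits. By equivariance under iteration of $f_\mathbf{B}$, this reduces to the analogous statement at $\mathbf{b}$ itself, where the two bubble rays pass through the distinct bubbles $\mathbf{B}_0$ and $f_\mathbf{B}(\mathbf{B}_0)$; this is the classical fact that the $\alpha$-fixed point of $f_\mathbf{B}$ is the common landing point of the two external rays at angles $1/3$ and $2/3$. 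Once distinctness of $t_\infty$ and $t'_\infty$ is established, the two nested puzzle sequences at $x$ produce two external rays landing at $x$, so iterated preimages of $\mathbf{b}$ are biaccessible, completing the dichotomy.
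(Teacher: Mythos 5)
Your proof is correct and uses the same machinery the paper points to (Proposition \ref{shrink to ray} and Corollary \ref{proof seq count}), but you make explicit a step the paper treats as immediate, and you close that step by a route slightly different from the most economical one internal to the paper. The reduction you perform is sound: biaccessibility of $x$ is equivalent to there being two distinct external angles among the maximal nested puzzle sequences whose limits contain $x$, and Cases (ii), (iii), (iv) of Proposition \ref{proof puzzle count} dispose of all points except iterated preimages of $\mathbf{b}$. The case analysis is the right one, and your treatment of the iterated preimages of $\mathbf{k}_\mathbf{B}$ (both angular spans collapse to the single dyadic $\tfrac{i}{2^m}$) is exactly correct.

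Two remarks on Case (i). First, the phrase that $B_1$ and $B_2$ ``extend to two infinite bubble rays $\mathcal{R}^\mathbf{B}_{t_\infty}$ and $\mathcal{R}^\mathbf{B}_{t'_\infty}$ landing at $x$'' is imprecise: both bubble rays landing at $x$ pass through the lower-generation bubble at $x$ (and all bubble rays pass through $\mathbf{B}_0$), so neither ray is naturally an ``extension'' of one of $B_1$, $B_2$ individually. What is true, and what your argument actually uses, is simply that the two maximal nested puzzle sequences at $x$ have external angles $t_\infty$ and $t'_\infty$, and that these must be shown distinct. Second, your reduction to $\mathbf{b}$ via equivariance is valid (since $x$ and its forward orbit lie in $J_\mathbf{B}$, away from the critical point $0$, so $f_\mathbf{B}^m$ sends the two distinct puzzle pieces at $x$ to the two distinct puzzle pieces at $\mathbf{b}$ and doubles angular spans $m$ times), and the classical fact that the $\alpha$-fixed point of $f_\mathbf{B}$ is the landing point of $\mathcal{R}^\infty_{-1/3}$ and $\mathcal{R}^\infty_{-2/3}$ finishes it. However, there is a cleaner argument entirely internal to the paper: two distinct maximal nested puzzle sequences can share an external angle $t$ only if $t$ is dyadic (this is exactly the Case (iii) configuration), and by Proposition \ref{shrink to ray} a dyadic external angle would make $x$ an iterated preimage of $\mathbf{k}_\mathbf{B}$, the $\beta$-fixed point. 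Since $\mathbf{b}$ and $\mathbf{k}_\mathbf{B}$ are distinct fixed points, $x$ cannot be an iterated preimage of both, so $t_\infty \neq t'_\infty$. This avoids invoking the $1/3$--$2/3$ landing fact as an external ingredient; your route buys the use of a well-known classical statement at the cost of a small amount of logical self-containment.
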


\begin{proof}
This follows immediately from proposition \ref{shrink to ray} and corollary \ref{proof seq count}.
\end{proof}

\subsection{For the Siegel polynomial} \label{siegel puzzle} \hspace{1mm}

\vspace{2.5mm}

Consider the Siegel polynomial $f_\mathbf{S}$ discussed in section \ref{siegel bubble}. By definition, the bubble ray $\mathcal{R}^\mathbf{B}_0$ and the external ray $\mathcal{R}^\infty_0$ both land at the same point $\mathbf{k}_\mathbf{S} \in \mathbb{C}$. A \ebf{puzzle partition $\mathcal{P}^\mathbf{S}_n$}, a \ebf{puzzle piece $P^\mathbf{S}_{[t_1, t_2]}$}, and a \ebf{nested puzzle sequence $\Pi^\mathbf{S}_t$} for $f_\mathbf{S}$ are defined in the same way as their counterparts for $f_\mathbf{B}$.

\begin{figure}[h]
\centering
\includegraphics[scale=0.7]{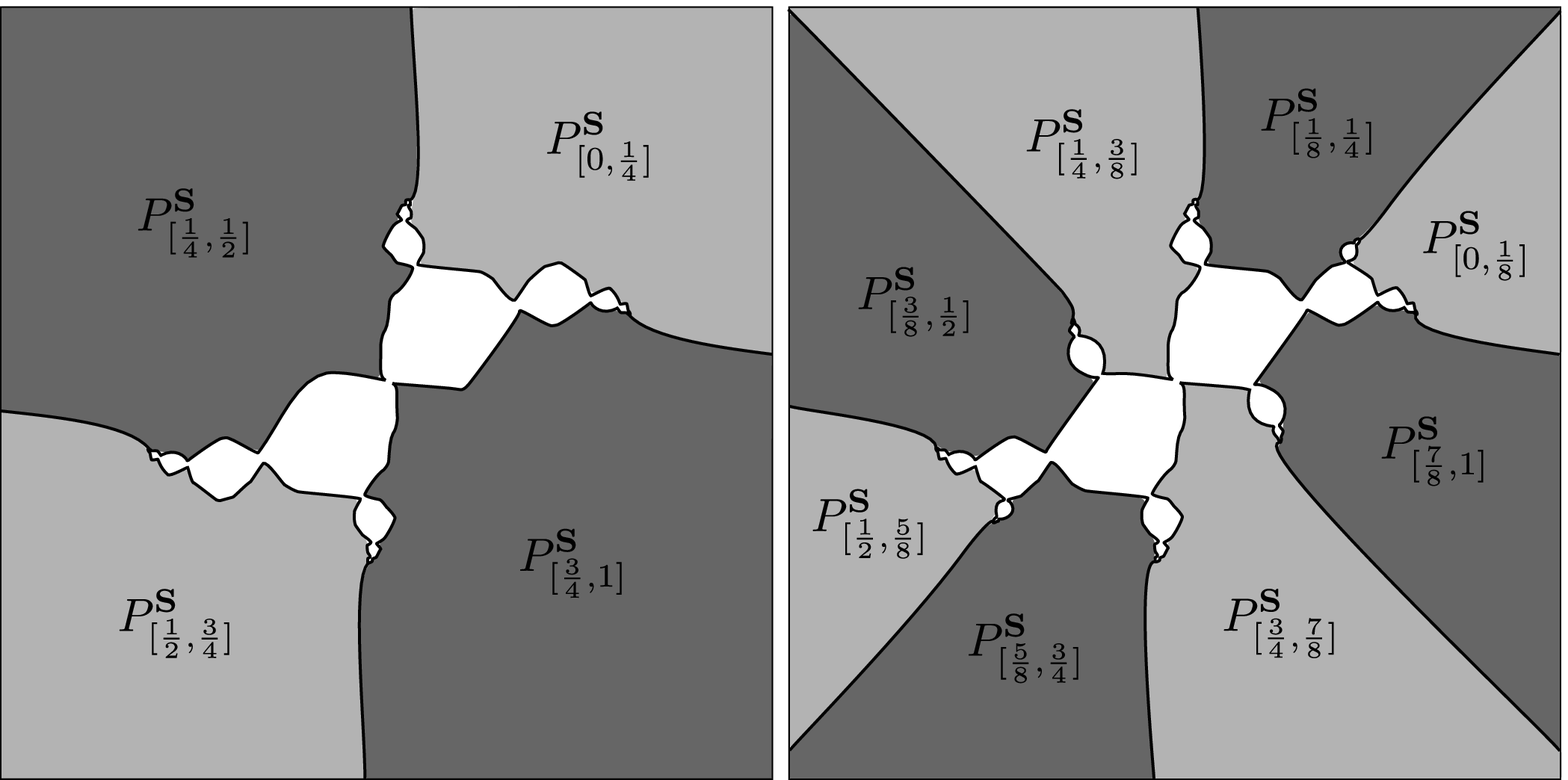}
\caption{The puzzle partition of level $2$ and $3$ for $f_\mathbf{S}$.}
\label{fig:siegelpuzzle}
\end{figure}

The following two results are analogs of proposition \ref{shrink to ray} and \ref{proof puzzle count}. The proofs are identical, and hence, they will be omitted here.

\begin{prop} \label{shrink to ray 2}
Let $\Pi^\mathbf{S}_t := \{P^\mathbf{S}_{[s_k, t_k]}\}_{k=0}^\infty$ be a nested puzzle sequence. Then
\begin{displaymath}
L(\Pi^\mathbf{S}_t) = \mathcal{R}^\infty_t \cup \{x\},
\end{displaymath}
where $x \in J_\mathbf{S}$ is the landing point of $\mathcal{R}^\infty_t$.
\end{prop}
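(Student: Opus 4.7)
The plan is to mirror the proof of Proposition \ref{shrink to ray} line by line, substituting $f_\mathbf{B}$ and its attracting basin $\mathbf{A}^\infty_\mathbf{B}$ with $f_\mathbf{S}$ and $\mathbf{A}^\infty_\mathbf{S}$, and using the sign convention $\mathcal{R}^\infty_s$ rather than $\mathcal{R}^\infty_{-s}$ that was established in Section \ref{siegel bubble}. The key geometric fact is that a puzzle piece $P^\mathbf{S}_{[s_k,t_k]}$ of level $n$ is bounded by the two bubble rays $\mathcal{R}^\mathbf{S}_{s_k}$, $\mathcal{R}^\mathbf{S}_{t_k}$ together with the two external rays $\mathcal{R}^\infty_{s_k}$, $\mathcal{R}^\infty_{t_k}$; so its intersection with the attracting basin of infinity is the external-ray saturation
\[
\mathbf{A}^\infty_\mathbf{S} \cap P^\mathbf{S}_{[s_k,t_k]} \;=\; \bigcup_{s \in [s_k,t_k]} \mathcal{R}^\infty_s.
\]

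From here I would argue as in the basilica case. Since $J_\mathbf{S}$ is locally connected (Theorem \ref{bounded type lc}), every external ray $\mathcal{R}^\infty_s$ lands at a well-defined point $x_s \in J_\mathbf{S}$, and the identity
\[
J_\mathbf{S} \cap P^\mathbf{S}_{[s_k,t_k]} \;=\; K_\mathbf{S} \cap \partial\bigl( P^\mathbf{S}_{[s_k,t_k]} \cap \mathbf{A}^\infty_\mathbf{S} \bigr)
\]
shows that the intersection of the puzzle piece with the Julia set consists precisely of the landing points $x_s$ for $s \in [s_k, t_k]$. Combining, $P^\mathbf{S}_{[s_k,t_k]}$ meets $J_\mathbf{S} \cup \mathbf{A}^\infty_\mathbf{S}$ in $\bigcup_{s \in [s_k,t_k]}(\mathcal{R}^\infty_s \cup \{x_s\})$, modulo points in the Siegel bubble set $\mathbf{S}$ that are squeezed out in the intersection over $k$.

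To pass to $L(\Pi^\mathbf{S}_t)$, I would note that by the defining property $\{t\} = \bigcap_{k}[s_k,t_k]$ of the external angle of a nested puzzle sequence, any $s \neq t$ is eventually excluded from $[s_k,t_k]$; hence, by the Siegel analog of Proposition \ref{ext ray puzzle seq}, neither the ray $\mathcal{R}^\infty_s$ nor its landing point $x_s$ survives in the intersection. The bubble-ray part of $\partial P^\mathbf{S}_{[s_k,t_k]}$ also shrinks to the landing point of $\mathcal{R}^\mathbf{S}_t = \mathcal{R}^\mathbf{S}_{s_k} = \mathcal{R}^\mathbf{S}_{t_k}$ in the limit, which by the definition of external angle coincides with $x_t$. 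This yields $L(\Pi^\mathbf{S}_t) = \mathcal{R}^\infty_t \cup \{x_t\}$.

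The main prerequisite, rather than a real obstacle, is that every external ray lands; in the Siegel setting this is not automatic and depends on Petersen's theorem (Theorem \ref{bounded type lc}) guaranteeing local connectedness of $J_\mathbf{S}$. One also uses that infinite bubble rays land (Proposition \ref{siegel landing}), which in the Siegel case requires Yampolsky's complex a priori bounds rather than the elementary hyperbolicity estimate available for $f_\mathbf{B}$. Once these two landing statements are in hand, the remaining argument is purely topological and is formally identical to the basilica version.
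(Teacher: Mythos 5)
Your proposal follows exactly the route the paper intends: after Figure~\ref{fig:siegelpuzzle} the paper declares the proof of Proposition~\ref{shrink to ray 2} \emph{identical} to that of Proposition~\ref{shrink to ray} and omits it, and you reproduce that argument with the correct sign convention, correctly flagging Petersen's theorem (so external rays land) and Yampolsky's a priori bounds (behind Proposition~\ref{siegel landing}, so bubble rays land) as the two inputs that replace the hyperbolicity available for $f_\mathbf{B}$. One minor imprecision worth noting: the chain $\mathcal{R}^\mathbf{S}_t = \mathcal{R}^\mathbf{S}_{s_k} = \mathcal{R}^\mathbf{S}_{t_k}$ in your final paragraph is literally false for non-dyadic $t$ (those are three distinct bubble rays, and the first two $\mathcal{R}^\mathbf{S}_{s_k},\mathcal{R}^\mathbf{S}_{t_k}$ do not even shrink); the correct reason no interior point of $\mathbf{S}$ survives in $L(\Pi^\mathbf{S}_t)$ is simply that any bubble $S$ of generation $m$ has $\overline{S}\subset\mathcal{P}^\mathbf{S}_k$ for all $k\ge m$, so the open component of its complement, and hence its closure $P^\mathbf{S}_{[s_k,t_k]}$, is disjoint from the open set $S$.
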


\begin{prop}
Let $x \in J_\mathbf{S}$. If $x$ is an iterated preimage of $\mathbf{k}_\mathbf{S}$ or $0$, then for all sufficiently large $n$, $x$ is contained in exactly two puzzle pieces of level $n$. Otherwise, $x$ is contained in a unique puzzle piece of level $n$.
\end{prop}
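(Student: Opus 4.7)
The plan is to mirror the four-case analysis carried out in proposition \ref{proof puzzle count} for the basilica polynomial, substituting the critical point $0$ (whose orbit lands on the Siegel fixed point and whose iterated preimages sit on the boundaries of two bubbles by proposition \ref{siegel preimage 0}) for the $\alpha$-fixed point $\mathbf{b}$, and the point $\mathbf{k}_\mathbf{S}$ for $\mathbf{k}_\mathbf{B}$. Accordingly, I split $J_\mathbf{S}$ into four (exhaustive) classes: (i) $x$ is an iterated preimage of $0$; (ii) $x$ lies in the boundary of a unique bubble $S\subset\mathbf{S}$; (iii) $x$ is an iterated preimage of $\mathbf{k}_\mathbf{S}$; (iv) none of the above.

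In Case (i), proposition \ref{siegel preimage 0} produces exactly two bubbles $S_1,S_2\subset\mathbf{S}$ with $\{x\}=\overline{S_1}\cap\overline{S_2}$. Since $S_1$ and $S_2$ are eventually mapped onto $\mathbf{S}_0\subset\mathcal{R}^\mathbf{S}_0$, there is $m\in\mathbb{N}$ with $\overline{S_1}\cup\overline{S_2}\subset\mathcal{P}^\mathbf{S}_n$ for all $n\geq m$. For such $n$, the finitely many bubble rays in $\mathcal{P}^\mathbf{S}_n$ other than those bounding $S_1,S_2$ land away from $x$, so one can choose a round disc $D$ about $x$ with $D\cap\mathcal{P}^\mathbf{S}_n\subset\overline{S_1}\cup\overline{S_2}$; then $D\setminus\mathcal{P}^\mathbf{S}_n$ has exactly two components, giving two puzzle pieces of level $n$ at $x$. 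Case (ii) is handled identically, except that only one bubble meets $x$, so $D\setminus\mathcal{P}^\mathbf{S}_n$ is connected and $x$ belongs to a single puzzle piece of level $n$ for all $n$ large.

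In Case (iii), I use the fact that $\mathcal{R}^\mathbf{S}_0$ is the unique bubble ray landing on $\mathbf{k}_\mathbf{S}$ (the argument for $f_\mathbf{B}$ carries over since it only uses that $\mathbf{k}_\mathbf{S}$ is a repelling fixed point where the zero bubble ray and the zero external ray co-land). Consequently, an iterated preimage $x$ is the landing point of a unique bubble ray $\mathcal{R}^\mathbf{S}_t$ with dyadic angle $t=i/2^m$, and $x\in\mathcal{P}^\mathbf{S}_n$ for every $n\geq m$. Any puzzle piece $P^\mathbf{S}_{[t_1,t_2]}$ of level $n\geq m$ containing $x$ must contain $x$ on its boundary, forcing $t_1=i/2^m$ or $t_2=i/2^m$; this yields exactly the two puzzle pieces
\begin{displaymath}
P^\mathbf{S}_{[\tfrac{2^{n-m}i-1}{2^n},\tfrac{i}{2^m}]}\quad\text{and}\quad P^\mathbf{S}_{[\tfrac{i}{2^m},\tfrac{2^{n-m}i+1}{2^n}]}.
\end{displaymath}
Case (iv) is immediate: such an $x$ is disjoint from every $\mathcal{P}^\mathbf{S}_n$, hence lies in a single complementary component for each $n$.

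The only substantive ingredient beyond the basilica argument is the verification that $\mathcal{R}^\mathbf{S}_0$ is the unique bubble ray landing at $\mathbf{k}_\mathbf{S}$ and that the landing/co-landing structure at $\mathbf{k}_\mathbf{S}$ is the one used implicitly above; this is where Case (iii) could become subtle, since unlike $f_\mathbf{B}$, landing of bubble rays for $f_\mathbf{S}$ relies on complex a priori bounds (proposition \ref{siegel landing}) rather than hyperbolicity. Once this landing fact is in hand, every step transcribes verbatim from the basilica case, so I expect the main (and essentially the only) technical obstacle to be confirming the uniqueness of the bubble ray landing at $\mathbf{k}_\mathbf{S}$; the rest is a local topological argument on small discs about $x$.
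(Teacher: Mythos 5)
Your proof is correct and is exactly what the paper intends: the text explicitly states that this proposition is an analog of proposition \ref{proof puzzle count} whose ``proof is identical,'' and you have transcribed that four-case argument faithfully, with the right substitutions ($0$ for $\mathbf{b}$, $\mathbf{k}_\mathbf{S}$ for $\mathbf{k}_\mathbf{B}$, proposition \ref{siegel preimage 0} for \ref{parentchild}, proposition \ref{siegel landing} supplying landing in place of hyperbolicity). Your caveat about the uniqueness of the bubble ray landing at $\mathbf{k}_\mathbf{S}$ is fair to flag, but the paper leaves the same point unargued in the basilica case, so you are not missing anything the author supplies.
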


\begin{cor} \label{num seq siegel}
Let $x \in J_\mathbf{S}$. If $x$ is an iterated preimage of $\mathbf{k}_\mathbf{S}$ or $0$, then there are exactly two maximal nested puzzle sequences whose limit is equal to $\{x\}$. Otherwise, there exists a unique maximal nested puzzle sequence whose limit is equal to $\{x\}$.
\end{cor}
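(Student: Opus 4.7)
The plan is to derive this corollary as an immediate consequence of the preceding proposition, mirroring exactly the derivation of Corollary \ref{proof seq count} from Proposition \ref{proof puzzle count} in the basilica case.

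First, I would record the basic correspondence between maximal nested puzzle sequences attached to $x$ and infinite sequences $\{Q_k\}_{k=0}^\infty$ where $Q_k$ is a puzzle piece of level $k$ containing $x$ and $Q_{k+1} \subset Q_k$. In one direction, any maximal nested puzzle sequence $\Pi^\mathbf{S}$ must include a puzzle piece at every level, for otherwise one could refine it by inserting an intermediate-level puzzle piece containing the next element of $\Pi^\mathbf{S}$, contradicting maximality. In the other direction, any such sequence $\{Q_k\}$ is automatically a nested puzzle sequence; it is maximal since there is no finer piece available to insert at a given level; and by Proposition \ref{shrink to ray 2} its limit meets $J_\mathbf{S}$ only at the landing point of the corresponding external ray, which equals $x$.

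Second, I would apply the preceding proposition to enumerate the sequences $\{Q_k\}$. If $x$ is neither an iterated preimage of $\mathbf{k}_\mathbf{S}$ nor of $0$, then at every level the containing puzzle piece is unique, so $\{Q_k\}$ is uniquely determined. If $x$ is an iterated preimage of $\mathbf{k}_\mathbf{S}$ or of $0$, then for all sufficiently large $n$ there are exactly two puzzle pieces of level $n$ containing $x$. The nested refinement of puzzle partitions then forces each of the two level-$(n+1)$ pieces containing $x$ to lie in a distinct level-$n$ piece, since otherwise one of the two level-$n$ pieces would contain no level-$(n+1)$ piece of $x$, contradicting the fact that the puzzle piece refinement maps $x$ into some descendant of every piece containing it. Thus the two branches persist through every level, yielding exactly two distinct maximal nested puzzle sequences.

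The only point requiring care is this non-merging argument, which amounts to the local observation that near $x$ the high-level puzzle partition consists of (iterated preimages of) one or two ray pairs landing at $x$, dividing a small neighborhood of $x$ into one or two sectors; each sector determines exactly one branch of the nested sequence. This local picture is an immediate analog of the basilica case, and no new substantive obstacle arises; the corollary is essentially a bookkeeping consequence of the preceding proposition.
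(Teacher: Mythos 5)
Your proposal is correct and follows the same route the paper implicitly takes: the paper states this corollary (like its basilica twin, Corollary \ref{proof seq count}) without proof as an immediate consequence of the preceding proposition on the number of puzzle pieces of each level containing $x$, and your argument simply supplies the bookkeeping — a maximal nested puzzle sequence must have exactly one piece per level, and the non-merging of the two branches when $x$ lies in two pieces at each high level. One small point worth flagging: as written, the corollary says the limit ``is equal to $\{x\}$,'' but by Proposition \ref{shrink to ray 2} the limit is actually $\mathcal{R}^\infty_t \cup \{x\}$; your reading (the limit meets $J_\mathbf{S}$ exactly in $\{x\}$) is the intended one, and you handle it correctly.
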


\begin{prop} \label{S access}
Let $x \in J_\mathbf{S}$. If $x$ is an iterated preimage of $0$, then $x$ is biaccessible. Otherwise, $x$ is uniaccessible.
\end{prop}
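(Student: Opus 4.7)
The proof is a direct adaptation of Proposition \ref{B access}, combining Proposition \ref{shrink to ray 2} with Corollary \ref{num seq siegel}. First I would translate biaccessibility into puzzle data: by Proposition \ref{shrink to ray 2}, every nested puzzle sequence $\Pi^\mathbf{S}_t$ has limit $\mathcal{R}^\infty_t \cup \{x\}$, where $x \in J_\mathbf{S}$ is the landing point of $\mathcal{R}^\infty_t$. Consequently, the external rays landing at a given $x \in J_\mathbf{S}$ are in bijection with the \emph{distinct external angles} realized by the maximal nested puzzle sequences whose limits contain $x$, and $x$ is biaccessible precisely when two such sequences carry distinct external angles.

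Next I would invoke the combinatorial count of Corollary \ref{num seq siegel} and split into three cases. If $x$ is neither an iterated preimage of $\mathbf{k}_\mathbf{S}$ nor of $0$, there is a unique maximal nested puzzle sequence whose limit contains $x$, hence a single external ray lands at $x$, and $x$ is uniaccessible. If $x$ is an iterated preimage of $\mathbf{k}_\mathbf{S}$, the two maximal nested puzzle sequences are built (as in Case iii of the proof of Proposition \ref{proof puzzle count}) by approaching the unique bubble ray $\mathcal{R}^\mathbf{S}_s$ landing at $x$ from its two sides; they share the common external angle $s$, so only one external ray lands at $x$ and $x$ is again uniaccessible. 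Finally, if $x$ is an iterated preimage of $0$, Proposition \ref{siegel preimage 0} provides two distinct bubbles $S_1, S_2 \subset \mathbf{S}$ meeting only at $x$. Mimicking Case i of the proof of Proposition \ref{proof puzzle count}, these two bubbles belong to every sufficiently high-level puzzle partition and locally separate a small disc around $x$, forcing the two puzzle pieces of level $n$ that contain $x$ to lie in different components of $\mathbb{C} \setminus \mathcal{P}^\mathbf{S}_n$. The resulting two maximal nested puzzle sequences therefore have distinct external angles, two external rays land at $x$, and $x$ is biaccessible.

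The only non-routine step — and the main (mild) obstacle — is the last assertion, namely that the two maximal puzzle sequences at an iterated preimage of $0$ genuinely carry \emph{different} external angles (as opposed to coming from the two sides of a single angle, which is what happens at iterated preimages of $\mathbf{k}_\mathbf{S}$). This is resolved by exactly the same topological disc-separation argument already used in the basilica setting, with the bubbles $S_1, S_2$ in the role of $B_1, B_2$; no new analytic input beyond the machinery of section \ref{puzzle part} is needed.
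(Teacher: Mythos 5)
Your proposal is correct and follows the same route as the paper, which only says the statement follows from Proposition \ref{shrink to ray 2} and Corollary \ref{num seq siegel}. You correctly fill in the detail the paper glosses over: the count in Corollary \ref{num seq siegel} alone would also flag iterated preimages of $\mathbf{k}_\mathbf{S}$ as biaccessible, so one must observe separately that at those points the two maximal nested puzzle sequences carry the \emph{same} external angle (a dyadic rational), while at iterated preimages of $0$ they carry distinct ones.

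The one place your write-up is a bit loose is the final assertion, where you claim the disc-separation argument from Case i of Proposition \ref{proof puzzle count} by itself forces the two sequences at an iterated preimage of $0$ to have distinct external angles. That argument shows there are two distinct puzzle pieces at each level, but the same local separation also occurs at iterated preimages of $\mathbf{k}_\mathbf{S}$ (via Case iii), where the angles nevertheless coincide; so something more is needed. The cleanest way to finish is: if both maximal sequences $\{P^\mathbf{S}_{[s_k,t_k]}\}$ and $\{Q^\mathbf{S}_{[u_k,v_k]}\}$ had the same external angle $t$, then by Proposition \ref{shrink to ray 2} the ray $\mathcal{R}^\infty_t$ would lie in every $P^\mathbf{S}_{[s_k,t_k]}$ and every $Q^\mathbf{S}_{[u_k,v_k]}$, hence in $P^\mathbf{S}_{[s_k,t_k]} \cap Q^\mathbf{S}_{[u_k,v_k]} \subset \mathcal{P}^\mathbf{S}_k$ for all large $k$. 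But the only external rays contained in $\mathcal{P}^\mathbf{S}_k$ are those with dyadic angles, so $t$ would be dyadic and its landing point $x$ would be an iterated preimage of $\mathbf{k}_\mathbf{S}$, which has a forward orbit disjoint from that of $0$ --- a contradiction. With this substitution, your proof is complete and correct.
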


\begin{proof}
This follows immediately from proposition \ref{shrink to ray 2} and corollary \ref{num seq siegel}.
\end{proof}

\subsection{For the candidate mating} \label{candidate puzzle} \hspace{1mm}

\vspace{2.5mm}

Consider the quadratic rational function $R_\nu$ constructed in theorem \ref{mod blaschke}.

\begin{prop} \label{periodic landing}
Let $\mathcal{R}_t = \mathcal{R}^\mathcal{B}_t$ or $\mathcal{R}^\mathcal{S}_t$ be an infinite bubble ray. If $t$ is rational, then $\mathcal{R}_t$ lands. If $t$ is $p$-periodic, then $\mathcal{R}_t$ lands at a repelling $p$-periodic point.
\end{prop}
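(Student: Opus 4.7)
The plan is to prove the $p$-periodic case first and then reduce the rational (pre-periodic) case to it by pulling back. Write $\mathcal{R}_t \sim \{B_i\}_{i=0}^\infty$ for either $\mathcal{R}^\mathcal{B}_t$ or $\mathcal{R}^\mathcal{S}_t$, and let $\Lambda := \bigcap_{n \geq 0} \overline{\bigcup_{i \geq n} B_i} \subseteq J(R_\nu)$ denote the accumulation set of the ray.

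For $t$ that is $p$-periodic under doubling, $R_\nu^p(\mathcal{R}_t) = \mathcal{R}_t$: concretely, for $i$ larger than some fixed initial index, $R_\nu^p$ restricts to a homeomorphism $B_{i+p} \to B_i$, realizing a $p$-shift on the bubble sequence. Since consecutive bubbles share an attachment point, each set $\overline{\bigcup_{i \geq n} B_i}$ is connected, making $\Lambda$ a nonempty compact connected $R_\nu^p$-invariant subset of $J(R_\nu)$. I would then establish $\text{diam}(B_i) \to 0$: for the $\mathcal{B}$-side, the hyperbolic contraction argument of proposition \ref{basilica landing} applies on the superattracting basin $\mathcal{B}$; for the $\mathcal{S}$-side, the shrinkage follows from proposition \ref{siegel landing} transported through the conformal conjugacy $\Phi_\mathbf{S}$, ultimately relying on Yampolsky's complex a priori bounds.

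To extract a periodic point inside $\Lambda$, consider the holomorphic inverse branch $g$ of $R_\nu^p$ defined along the tail so that $g(B_i) = B_{i+p}$ for sufficiently large $i$. The post-critical set of $R_\nu$ is the union of the superattracting $2$-cycle and the closure of the forward orbit of $1 \in \partial \mathcal{S}_0$; neither component meets $\Lambda$. Hence $g$ extends, by the monodromy theorem, to a univalent self-map of a simply connected neighborhood $\mathcal{N}$ of the tail disjoint from the post-critical set, and by Schwarz-Pick $g$ is a strict contraction in the hyperbolic metric of $\mathcal{N}$. The contraction mapping theorem yields a unique fixed point $y \in \Lambda$ of $g$, which is a $p$-periodic point of $R_\nu$. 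Linearizing $R_\nu^p$ near $y$, the tails $B_{N+kp} \subset g^k(B_N)$ shrink exponentially to $y$, and combined with bubble shrinkage the intermediate bubbles $B_{N+kp+j}$, $0 \leq j < p$, also converge to $y$, giving $\Lambda = \{y\}$. By Fatou-Shishikura, the two critical points of $R_\nu$ are already consumed by the superattracting $2$-cycle and the Siegel fixed point at $0$; moreover $R_\nu^p|_{\partial \mathcal{S}_0}$ is a nontrivial irrational rotation, so no $p$-periodic point can lie on $\partial \mathcal{S}_0$. Therefore $y$ is repelling.

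For rational $t$ with first periodic iterate $2^q t$, the periodic case gives $\mathcal{R}_{2^q t} = R_\nu^q(\mathcal{R}_t)$ landing at a repelling $y^*$; since $y^*$ is not a critical value of $R_\nu^q$, a local holomorphic inverse branch sends $y^*$ to the appropriate preimage in the accumulation set of $\mathcal{R}_t$, and continuity gives landing of $\mathcal{R}_t$. The principal obstacle is the construction and analysis of the contracting inverse branch $g$: on the Siegel side the absence of global hyperbolicity forces the contraction estimates to be extracted from complex a priori bounds rather than from direct expansion, and one must verify that $g$ is defined on a simply connected region large enough and disjoint from the post-critical set for the contraction mapping argument to produce a fixed point inside $\Lambda$.
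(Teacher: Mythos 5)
Your overall strategy (exhibit a contracting inverse branch of $R_\nu^p$ along the tail of the ray and extract a repelling periodic point) is in the same spirit as the paper's proof, but two of your intermediate steps do not hold up, and a third is circular relative to the paper's logical order.

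First, you cannot obtain $\operatorname{diam}(B_i)\to 0$ on the Siegel side by ``transporting'' Proposition \ref{siegel landing} through $\Phi_\mathbf{S}$. That proposition concerns $f_\mathbf{S}$, not $R_\nu$, and $\Phi_\mathbf{S}$ is only known at this stage to be conformal bubble-by-bubble with homeomorphic boundary extensions on each bubble closure; it carries no global modulus of continuity and therefore transfers no diameter estimates. In fact, shrinkage of the $\mathcal{S}$-side bubbles of $R_\nu$ is essentially what the Shrinking Theorem \ref{total shrinkage} establishes, and that theorem (through the puzzle machinery) \emph{uses} the present proposition \ref{periodic landing}. Relying on bubble shrinkage here makes the argument circular. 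Fortunately, as your own contraction step suggests, an independent bubble-shrinkage input is unnecessary: once you have a contracting inverse branch $g_j$ for each residue class $j\bmod p$, the sets $g_j^k(B_{N+j})$ shrink to the corresponding fixed point $y_j$, and the shared attachment points between consecutive bubbles force $y_0=\cdots=y_{p-1}$. So step (2) of your plan should simply be deleted.

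Second, you assert without proof that the accumulation set $\Lambda$ meets neither component of the post-critical set, and you use this to place $\Lambda$ inside a simply connected hyperbolic domain $\mathcal{N}$ on which $g$ extends. The disjointness $\Lambda\cap\partial\mathcal{S}_0=\varnothing$ is not obvious: $\partial\mathcal{S}_0$ is $R_\nu$-invariant and, under $R_\nu^p$, minimal (an irrational rotation), so if $\Lambda$ met it at all, minimality would force $\partial\mathcal{S}_0\subset\Lambda$. Ruling this out is genuinely part of what must be proved, not an input. This is precisely where the paper's argument diverges from yours and avoids the difficulty: instead of trying to extend $g$ across $\Omega$, it works in the hyperbolic domain $\hat{\mathbb{C}}\setminus(\Lambda\cup\Omega)$, where $\Omega$ is deleted. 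The inclusion $\iota:\hat{\mathbb{C}}\setminus R_\nu^{-p}(\Lambda\cup\Omega)\hookrightarrow\hat{\mathbb{C}}\setminus(\Lambda\cup\Omega)$ is a strict hyperbolic contraction, the hyperbolic lengths of the connecting arcs $\gamma_n\subset\mathcal{R}_t$ tend to zero, and from this one deduces that \emph{every} cluster point is fixed by $R_\nu^p$; connectedness of $\Omega$ and discreteness of the fixed-point set then give $\Omega=\{z\}$. Because irrational rotations have no periodic points, this simultaneously shows $z\notin\partial\mathcal{S}_0$, so the disjointness is a conclusion rather than a hypothesis. Relatedly, your appeal to the contraction mapping theorem on the incomplete metric space $(\mathcal{N},\text{hyp})$ needs care: one must ensure the Picard iterates do not escape to $\partial\mathcal{N}$, which again comes down to the same unresolved disjointness.

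Finally, you invoke Fatou--Shishikura to conclude $y$ is repelling; the paper uses the Snail Lemma, which is the more standard tool here because it directly excludes indifferent landing points of invariant rays without a global count of non-repelling cycles. Your Fatou--Shishikura argument is not wrong in outcome, but you would still need to rule out a parabolic landing point, which the Snail Lemma handles directly.
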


\begin{proof}
Let $\Lambda$ be the post critical set for $R_\nu$, and let $\Omega$ be the set of cluster points for $\mathcal{R}_t$. Observe that
\begin{displaymath}
R_\nu^p : \hat{\mathbb{C}} \setminus R_\nu^{-p}(\Lambda \cup \Omega) \to \hat{\mathbb{C}} \setminus (\Lambda \cup \Omega)
\end{displaymath}
is a covering of hyperbolic spaces. Moreover, since $\Omega \cup \Lambda \subsetneq R_\nu^{-p}(\Omega \cup \Lambda)$, the inclusion map
\begin{displaymath}
\iota : \hat{\mathbb{C}} \setminus R_\nu^{-p}(\Lambda \cup \Omega) \to \hat{\mathbb{C}} \setminus (\Lambda \cup \Omega)
\end{displaymath} 
is a strict contraction in the hyperbolic metric. Hence, the map $\iota \circ R_\nu^{-p}$ lifts to the universal cover $\mathbb{D}$ of $\hat{\mathbb{C}} \setminus (\Lambda \cup \Omega)$ to a map
\begin{displaymath}
\hat{R}_\nu^{-p} : \mathbb{D} \to \mathbb{D}
\end{displaymath}
which is also a strict contraction in the hyperbolic metric.

Now, choose a bubble $U \subset \mathcal{R}_t$ such that $\text{gen}(U) > 1$, and let $x_0$ be a point contained in $U$. For every $k \geq 1$, there exists a unique point $x_k \in \mathcal{R}_t$ such that $R_\nu^{kp}(x_k) = x_0$. Let $\gamma_0 \subset \mathcal{R}_t$ be a curve from $x_0$ to $x_1$, and let $\gamma_k$ be the unique component of $R_\nu^{-kp}(\gamma_0)$ whose end points are $x_k$ and $x_{k+1}$.

By the strict contraction property of $\hat{R}_\nu^{p}$, the hyperbolic lengths of $\gamma_n$ must go to zero as $n$ goes to infinity. Hence, if $z \in \Omega$, then for any neighbourhood $N$ of $z$, there exists a smaller neighbourhood $N' \subset N$ such that if $\gamma_n \cap N' \neq \varnothing$, then $\gamma_n \subset N$. In other words, $R_\nu^p(N) \cap N \neq \varnothing$. Since this is true for all neighbourhood of $z$, $z$ must be a fixed point for $R_\nu^p$.

The set of fixed points for $R_\nu^p$ is discrete. Since $\Omega$ is connected, this implies that $\Omega$ must be equal to the single point set $\{z\}$. By Snail lemma (see e.g. [M1]), we conclude that $z$ is a repelling fixed point.

If $t$ is strictly preperiodic, then $\mathcal{R}_t$ is the preimage of some periodic infinite bubble ray. The result follows.
\end{proof}

\begin{prop} \label{base}
The bubble rays $\mathcal{R}^\mathcal{B}_0$ and $\mathcal{R}^\mathcal{S}_0$ for $R_\nu$ both land at the same repelling fixed point $\kappa \in \mathbb{C}$.
\end{prop}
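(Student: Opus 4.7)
The plan is to apply Proposition \ref{periodic landing} with $p=1$ --- since $0$ is a fixed point of the doubling map on $\mathbb{R}/\mathbb{Z}$ --- and combine the conclusion with a count of the fixed points of $R_\nu$. The proposition immediately yields that $\mathcal{R}^\mathcal{B}_0$ and $\mathcal{R}^\mathcal{S}_0$ land at repelling fixed points $\kappa_\mathcal{B}, \kappa_\mathcal{S} \in \mathbb{C}$, where the landings lie in $\mathbb{C}$ because $\infty$ is periodic but not fixed for $R_\nu$. Since $R_\nu$ is a quadratic rational map, it has exactly three fixed points in $\hat{\mathbb{C}}$ counted with multiplicity: the indifferent Siegel point $z_0 \in \mathcal{S}_0$ (Theorem A), the repelling point $\beta \in \partial\mathcal{B}_\infty$, and one further fixed point $\kappa$ which, if distinct from the previous two, must be repelling. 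Consequently $\{\kappa_\mathcal{B}, \kappa_\mathcal{S}\} \subseteq \{\beta, \kappa\}$, and the proof reduces to excluding the possibilities $\kappa_\mathcal{B} = \beta$ and $\kappa_\mathcal{S} = \beta$.

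To rule out $\kappa_\mathcal{B} = \beta$, I would transfer the analogous fact from the basilica polynomial via the conjugacy $\Phi_\mathbf{B} : \mathbf{B} \to \mathcal{B}$ of Theorem \ref{interior maps}. This conjugacy preserves bubble addresses and, since it sends $\mathbf{B}_0$ to $\mathcal{B}_\infty$ and extends to their boundaries, it must send the unique repelling fixed point $\mathbf{b} \in \partial \mathbf{B}_0$ of $f_\mathbf{B}$ to the unique repelling fixed point $\beta \in \partial \mathcal{B}_\infty$ of $R_\nu$. By Proposition \ref{basilica landing} the bubbles of $\mathcal{R}^\mathbf{B}_0$ shrink in diameter, so $\mathcal{R}^\mathbf{B}_0$ lands at $\mathbf{k}_\mathbf{B}$, which differs from $\mathbf{b}$. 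An analogous shrinkage should hold for $\mathcal{R}^\mathcal{B}_0$ by the hyperbolicity of $R_\nu$ on $\mathcal{B}$, and a standard compactness argument then allows $\Phi_\mathbf{B}$ to be extended continuously at $\mathbf{k}_\mathbf{B}$ with $\Phi_\mathbf{B}(\mathbf{k}_\mathbf{B}) = \kappa_\mathcal{B}$. Since this extension sends distinct repelling fixed points of $f_\mathbf{B}$ to distinct repelling fixed points of $R_\nu$, it follows that $\kappa_\mathcal{B} \neq \beta$.

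The case $\kappa_\mathcal{S} \neq \beta$ is handled by an analogous use of $\Phi_\mathbf{S} : \mathbf{S} \to \mathcal{S}$, with bubble shrinkage supplied by Yampolsky's complex a priori bounds (as in Proposition \ref{siegel landing}), augmented by the observation that $\kappa_\mathcal{S} = \beta$ would force some sequence of Siegel bubbles to accumulate at $\beta \in \overline{\mathcal{S}}$. Such accumulation contradicts the local picture at $\beta$ deduced from $\Phi_\mathbf{B}$: by the analog of the two-bubbles-per-preimage statement, the only Fatou components whose closures contain $\beta$ are $\mathcal{B}_\infty$ and $R_\nu(\mathcal{B}_\infty)$, both lying in $\mathcal{B}$. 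With $\kappa_\mathcal{B}, \kappa_\mathcal{S} \neq \beta$, the count of fixed points forces $\kappa_\mathcal{B} = \kappa_\mathcal{S} = \kappa$, and this common value is the $\kappa$ claimed by the proposition. The main obstacle throughout is rigorously extending $\Phi_\mathbf{B}$ and $\Phi_\mathbf{S}$ to the landing points of infinite bubble rays while intertwining the Hausdorff limits on the two sides; this requires uniform control of bubble diameters coming from hyperbolicity on $\mathcal{B}$ and from the complex a priori bounds on $\mathcal{S}$.
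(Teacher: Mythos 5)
Your first step is sound: Proposition~\ref{periodic landing} with $p=1$, $t=0$ does give that both bubble rays land at repelling fixed points in $\mathbb{C}$, and the three-fixed-point count correctly reduces the problem to excluding $\beta$ as a landing point. After that, however, your argument departs from the paper and develops gaps at both of its key exclusion steps.

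For the $\mathcal{B}$-case you assert that the continuous extension of $\Phi_\mathbf{B}$ to $\mathbf{k}_\mathbf{B}$ ``sends distinct repelling fixed points of $f_\mathbf{B}$ to distinct repelling fixed points of $R_\nu$,'' and conclude $\kappa_\mathcal{B}\neq\beta$. But injectivity of the extended map at the limit point $\mathbf{k}_\mathbf{B}$ is not given by anything proved so far --- $\Phi_\mathbf{B}(\mathbf{k}_\mathbf{B})$ is defined as a Hausdorff limit of images of bubbles, and nothing yet rules out that limit coinciding with $\Phi_\mathbf{B}(\mathbf{b})=\beta$. Asserting injectivity here is essentially restating the proposition to be proved. (Worse, the full continuous extension of $\Phi_\mathbf{B}$ and $\Phi_\mathbf{S}$ to the Julia sets is established in the paper only in Section~\ref{final sec}, \emph{after} puzzle pieces are built --- and the puzzle construction itself rests on Proposition~\ref{base} via the common landing point $\kappa$, so invoking that machinery here is circular.) For the $\mathcal{S}$-case you argue that landing at $\beta$ ``would force some sequence of Siegel bubbles to accumulate at $\beta$,'' and rule this out because the only Fatou components with $\beta$ in their closure are $\mathcal{B}_\infty$ and $R_\nu(\mathcal{B}_\infty)$. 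This conflates accumulation with containment: a bubble ray landing at $\beta$ does not require any individual bubble closure to contain $\beta$, only that the bubbles shrink toward it, so Proposition~\ref{candidate attach} does not give a contradiction.

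The paper avoids all of this with a short, purely local argument at $\beta$: in a small disc $D$ around $\beta$, the set $D\setminus\overline{\mathcal{B}_\infty\cup R_\nu(\mathcal{B}_\infty)}$ has two components $D_1,D_2$, and $R_\nu$ swaps them. The tail of a bubble ray landing at $\beta$ is connected and lies in one of $D_1,D_2$, hence meets only one sector --- but a fixed bubble ray is $R_\nu$-invariant and must therefore meet both. This handles both $\mathcal{R}^\mathcal{B}_0$ and $\mathcal{R}^\mathcal{S}_0$ at once, uses only the local topology near $\beta$ (no conjugacy maps, no a priori bounds), and is in the correct logical position in the paper. You correctly diagnosed the central difficulty of your own approach --- rigorously extending $\Phi_\mathbf{B}$ and $\Phi_\mathbf{S}$ to landing points --- but the remedy is not to push that extension through here; it is to replace the whole transfer strategy with the local sector-swapping argument.
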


\begin{proof}
The quadratic rational map $R_\nu$ has exactly three fixed points, two of which must be the Siegel fixed point $0$ and the repelling fixed point $\beta$. Clearly, a bubble ray cannot land on $0$, so it suffices to prove that a fixed bubble ray cannot land on $\beta$.

Let $D$ be a sufficiently small disc centered at $\beta$ such that $R_\nu$ is conformal on $D$. The set $D \cap (\hat{\mathbb{C}} \setminus \overline{\mathcal{B}_\infty \cup R_\nu(\mathcal{B}_\infty)})$ has two connected components $D_1$ and $D_2$ such that $D_1 \subset R_\nu(D_2)$ and $D_2 \subset R_\nu(D_1)$. Suppose $\mathcal{R}$ is a bubble ray that lands on $\beta$. Then $\mathcal{R}$ must be disjoint from either $D_1$ or $D_2$. Hence, $\mathcal{R}$ cannot be fixed.
\end{proof}

Define the \ebf{puzzle partition of level $n$} for $R_\nu$ by
\begin{displaymath}
\mathcal{P}_n := \overline{R_\nu^{-n}(\mathcal{R}^\mathcal{B}_0 \cup \mathcal{R}^\mathcal{S}_0)}.
\end{displaymath}
Note that the puzzle partitions form a nested sequence: $\mathcal{P}_0 \subsetneq \mathcal{P}_1 \subsetneq \mathcal{P}_2 \ldots{}$. A \ebf{puzzle piece of level $n$} is the closure of a connected component of $\hat{\mathbb{C}} \setminus \mathcal{P}_n$. By construction, a puzzle piece of level $n$ is mapped homeomorphically onto a puzzle piece of level $n-1$ by $R_\nu$.

Let $P$ be a puzzle piece of level $n$. Then $P$ is bounded by two pairs of bubble rays: $\mathcal{R}^\mathbf{B}_{s_1}$ and $\mathcal{R}^\mathbf{B}_{s_2}$, and $\mathcal{R}^\mathcal{S}_{t_1}$ and $\mathcal{R}^\mathcal{S}_{t_2}$, where $s_1 = \frac{i}{2^n}$, $s_2 = \frac{i+1}{2^n}$, $t_1 = \frac{j}{2^n}$ and $t_2 = \frac{j+1}{2^n}$ for some $i, j \in \{0, 1, \ldots{}, 2^n-1\}$. The closed intervals $[\frac{i}{2^n}, \frac{i+1}{2^n}]$ and $[\frac{j}{2^n}, \frac{j+1}{2^n}]$ are referred to as the \ebf{angular span of $P$ with respect to $\mathcal{B}$} and $\mathcal{S}$ respectively.

\begin{prop} \label{puzzle cor}
Consider the maps $\Phi_\mathbf{B} : \mathbf{B} \to \mathcal{B}$ and $\Phi_\mathbf{S} : \mathbf{S} \to \mathcal{S}$ defined in proposition \ref{interior maps}. Let $P$ be a puzzle piece of level $n$ for $R_\nu$ whose angular span with respect to $\mathcal{B}$ and $\mathcal{S}$ are equal to $[s_1, s_2]$ and $[t_1, t_2]$ respectively. Then $\Phi_\mathbf{B}$ restricts to a map between $\mathbf{B} \cap P^\mathbf{B}_{[s_1, s_2]}$ and $\mathcal{B} \cap P$. Likewise, $\Phi_\mathbf{S}$ restricts to a map between $\mathbf{S} \cap P^\mathbf{S}_{[t_1, t_2]}$ and $\mathcal{S} \cap P$.
\end{prop}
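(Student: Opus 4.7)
The plan is to exploit the fact that $\Phi_\mathbf{B} : \mathbf{B} \to \mathcal{B}$ is a conformal dynamical conjugacy, together with the definition of external angles for bubble rays of $R_\nu$ given in Subsection~\ref{candidate bubble}. The core preliminary observation is the identity
\[
\Phi_\mathbf{B}(\mathcal{R}^\mathbf{B}_t \cap \mathbf{B}) = \mathcal{R}^\mathcal{B}_t \cap \mathcal{B}
\]
valid for every dyadic angle $t = i/2^n$. For $t = 0$ this is essentially the definition of the external angle of $\mathcal{R}^\mathcal{B}_0$; the general dyadic case is then obtained from the conjugacy relation $R_\nu^n \circ \Phi_\mathbf{B} = \Phi_\mathbf{B} \circ f_\mathbf{B}^n$ by matching bubble addresses of $f_\mathbf{B}^n$-preimages with those of $R_\nu^n$-preimages. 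In particular, this gives $\Phi_\mathbf{B}(\mathbf{B} \cap \mathcal{P}^\mathbf{B}_n) = \mathcal{B} \cap \mathcal{P}_n$ for every $n$.

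I then proceed by induction on the level $n$. The case $n = 0$ is trivial, since both relevant puzzle pieces are all of $\hat{\mathbb{C}}$ and the claim collapses to $\Phi_\mathbf{B}(\mathbf{B}) = \mathcal{B}$. For the inductive step, fix a level-$(n+1)$ puzzle piece $P$ of $R_\nu$ with $\mathcal{B}$-angular span $[s_1, s_2]$. Since $R_\nu$ maps level-$(n+1)$ puzzle pieces homeomorphically onto level-$n$ puzzle pieces, $R_\nu(P)$ has $\mathcal{B}$-angular span $[2s_1, 2s_2] \pmod{1}$, and by the inductive hypothesis
\[
\Phi_\mathbf{B}\bigl(\mathbf{B} \cap P^\mathbf{B}_{[2s_1,\, 2s_2]}\bigr) = \mathcal{B} \cap R_\nu(P).
\]
The conjugacy then produces a bijection between the two $f_\mathbf{B}$-preimages of $\mathbf{B} \cap P^\mathbf{B}_{[2s_1, 2s_2]}$ inside $\mathbf{B}$ and the two $R_\nu$-preimages of $\mathcal{B} \cap R_\nu(P)$ inside $\mathcal{B}$; these correspond to $\mathcal{B}$-angular spans $[s_1, s_2]$ and $[s_1 + \tfrac{1}{2}, s_2 + \tfrac{1}{2}]$. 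The correct pairing is determined by which preimage component of the bounding bubble ray $\mathcal{R}^\mathcal{B}_{2s_1}$ lies in its boundary, and the preliminary identity applied with $t = s_1$ guarantees $\Phi_\mathbf{B}(\mathcal{R}^\mathbf{B}_{s_1} \cap \mathbf{B}) = \mathcal{R}^\mathcal{B}_{s_1} \cap \mathcal{B}$. Hence the piece with span $[s_1, s_2]$ on the $\mathbf{B}$ side is paired with the one of span $[s_1, s_2]$ on the $\mathcal{B}$ side, yielding $\Phi_\mathbf{B}(\mathbf{B} \cap P^\mathbf{B}_{[s_1, s_2]}) = \mathcal{B} \cap P$.

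The argument for $\Phi_\mathbf{S}$ is formally identical, with $(\mathbf{B}, \mathcal{B}, f_\mathbf{B})$ replaced by $(\mathbf{S}, \mathcal{S}, f_\mathbf{S})$. The only place where something genuinely has to be checked is the inductive pairing of preimages, which is the expected obstacle; however, it reduces entirely to the preliminary identity on bubble rays of dyadic external angle, and this is built in by construction of the external angles for $R_\nu$. No further technical difficulty is anticipated.
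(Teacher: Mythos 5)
Your overall strategy — induct on puzzle level, using the conjugacy to reduce to level $n$ and then pin down the pairing of preimages — is genuinely different in shape from the paper's argument, which is a direct (non-inductive) pass over each bubble. Your preliminary identity $\Phi_\mathbf{B}(\mathcal{R}^\mathbf{B}_t \cap \mathbf{B}) = \mathcal{R}^\mathcal{B}_t \cap \mathcal{B}$ for dyadic $t$ is indeed essentially built into the definition of external angles of bubble rays for $R_\nu$, and I have no quarrel with that step, nor with the setup of the inductive step.

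The gap is in the ``correct pairing'' sentence. After invoking the inductive hypothesis you know that $\Phi_\mathbf{B}$ carries $\mathbf{B} \cap f_\mathbf{B}^{-1}(P^\mathbf{B}_{[2s_1,2s_2]})$ bijectively onto $\mathcal{B} \cap R_\nu^{-1}(R_\nu(P))$, and the boundary identity tells you that the bubbles of $\mathcal{R}^\mathbf{B}_{s_1}$ land in $\mathcal{R}^\mathcal{B}_{s_1}$, which indeed borders $P$ and not its companion $P'$. But $\mathbf{B} \cap P^\mathbf{B}_{[s_1,s_2]}$ is a disconnected union of bubbles, and nothing you have said rules out the a priori possibility that a bubble $B$ buried deep inside $P^\mathbf{B}_{[s_1,s_2]}$, not lying on any partition bubble ray, gets sent to a bubble in $P'$ rather than $P$. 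The dichotomy ``$\Phi_\mathbf{B}(\mathbf{B} \cap P^\mathbf{B}_{[s_1,s_2]})$ equals either $\mathcal{B}\cap P$ or $\mathcal{B}\cap P'$'' is exactly the content being proved, so it cannot be assumed when determining the pairing. To rule out such a defection you have to follow the chain of bubbles from $\mathbf{B}_0$ to $B$, locate the last bubble $B_k$ that lies in the partition $\mathcal{P}^\mathbf{B}_{n+1}$, and argue that the root at which $B_{k+1}$ attaches is sent to a root on the correct side of the partition bubble rays emanating from $\partial \Phi_\mathbf{B}(B_k)$. This is precisely where the paper's proof invokes the fact (from Theorem~\ref{interior maps}) that $\Phi_\mathbf{B}$ extends to a homeomorphism on $\overline{B_k}$ and hence preserves the cyclic order of roots on $\partial B_k$; that cyclic-order preservation is what propagates the boundary-ray information to all interior bubbles. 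Your proof needs this (or an equivalent connectivity argument) inserted at the pairing step; as written, that step is asserted rather than established.
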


\begin{proof}
Let $B \not\subset \mathcal{P}^\mathbf{B}_n$ be a bubble in $\mathbf{B}$ and let $\mathcal{R}^\mathbf{B} \sim \{B_i\}_{i=0}^m$ be the unique finite bubble ray for $f_\mathbf{B}$ such that $B_m = B$. The corresponding finite bubble ray for $R_\nu$ is $\mathcal{R}^\mathcal{B} \sim \{\Phi_\mathbf{B}(B_i)\}_{i=0}^m$. Suppose $k$ is the largest value of $i$ such that $B_i \subset \mathcal{P}^\mathbf{B}_n$. Since $\Phi_\mathbf{B}$ extends to a homeomorphism between $\overline{B_k}$ and $\overline{\Phi_\mathbf{B}(B_k)}$, it must preserve the cyclic order of the roots of bubbles contained in $\partial B$. Thus, we see that $\Phi_\mathbf{B}(B_{k+1}) \subset P_{[s_1, s_2]}$ if and only if $B_{k+1} \subset P^\mathbf{B}_{[s_1, s_2]}$. This readily implies that $\bigcup_{i=k+1}^m \Phi_\mathbf{B}(B_i) \subset P_{[s_1, s_2]}$ if and only if $\bigcup_{i=k+1}^m B_i \subset P^\mathbf{B}_{[s_1,s_2]}$.

The proof is completely analogous for bubbles in $\mathbf{S}$.
\end{proof}

\begin{cor} \label{ray in puzzle}
Let $P$ be a puzzle piece of level $n$, whose angular span with respect to $\mathcal{B}$ and $\mathbf{S}$ is equal to $[s_1, s_2]$ and $[t_1, t_2]$ respectively. If $s \in [s_1, s_2]$, then the accumulation set of $\mathcal{R}^\mathcal{B}_s$ is contained in $P$. Likewise, if $t \in [t_1, t_2]$, then the accumulation set of $\mathcal{R}^\mathbf{S}_t$ is contained in $P$.
\end{cor}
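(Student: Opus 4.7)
The plan is to derive the corollary from Proposition \ref{puzzle cor} by first establishing the analogous (easier) statement for the polynomial models and then transporting it to $R_\nu$ via the conjugating maps $\Phi_\mathbf{B}$ and $\Phi_\mathbf{S}$. The two halves of the corollary are symmetric, so I would write out only the $\mathcal{B}$-case and remark that the $\mathcal{S}$-case is identical.

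First I would verify the basilica analog: if $s \in [s_1, s_2]$, then $\mathcal{R}^\mathbf{B}_s \subset P^\mathbf{B}_{[s_1, s_2]}$. This is essentially built into the construction of the puzzle partition. The bounding bubble rays $\mathcal{R}^\mathbf{B}_{s_1}$, $\mathcal{R}^\mathbf{B}_{s_2}$ and external rays $\mathcal{R}^\infty_{-s_1}$, $\mathcal{R}^\infty_{-s_2}$ have dyadic external angles $i/2^n$, $(i{+}1)/2^n$ and so lie in $\mathcal{P}^\mathbf{B}_n$. The base bubbles $\mathbf{B}_0$ and $f_\mathbf{B}(\mathbf{B}_0)$, which are shared by every bubble ray, lie on $\partial P^\mathbf{B}_{[s_1, s_2]}$, while the subsequent bubbles of $\mathcal{R}^\mathbf{B}_s$ are disjoint from $\mathcal{P}^\mathbf{B}_n$ (distinct bubble rays share only finitely many initial bubbles, and the addresses force divergence before crossing the boundary). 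Together with Proposition \ref{ext ray puzzle seq}, this confirms the inclusion.

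Next I would apply Proposition \ref{puzzle cor}: since $\Phi_\mathbf{B}$ maps $\mathbf{B} \cap P^\mathbf{B}_{[s_1, s_2]}$ onto $\mathcal{B} \cap P$, preserves bubble addresses, and (by Theorem \ref{interior maps}) extends to a homeomorphism on the closure of each bubble, the image of the bubble ray $\mathcal{R}^\mathbf{B}_s$ under $\Phi_\mathbf{B}$ is precisely $\mathcal{R}^\mathcal{B}_s$, and this image lies in $\mathcal{B} \cap P$ together with the boundaries of the constituent bubbles. Thus $\mathcal{R}^\mathcal{B}_s \subset P$.

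Finally, since $P$ is by definition the closure of a connected component of $\hat{\mathbb{C}} \setminus \mathcal{P}_n$, it is closed, so the accumulation set of $\mathcal{R}^\mathcal{B}_s$ is contained in $\overline{\mathcal{R}^\mathcal{B}_s} \subset P$. The Siegel case is treated identically using $\Phi_\mathbf{S}$ in place of $\Phi_\mathbf{B}$ and the puzzle-piece inclusion $\mathcal{R}^\mathbf{S}_t \subset P^\mathbf{S}_{[t_1, t_2]}$ established from the analogs of Propositions \ref{ext ray puzzle seq} and \ref{shrink to ray 2}. The only part that takes a moment of thought is the basilica inclusion in the first paragraph, but this is genuinely a matter of unwinding definitions rather than a serious obstacle, since Proposition \ref{puzzle cor} already carries all the combinatorial content.
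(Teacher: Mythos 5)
Your proof is correct and follows the route the paper intends: it presents the statement as a corollary of Proposition \ref{puzzle cor} without written proof, and your argument — establish the inclusion $\mathcal{R}^\mathbf{B}_s \subset P^\mathbf{B}_{[s_1,s_2]}$ for the polynomial model, transport it via $\Phi_\mathbf{B}$ using Proposition \ref{puzzle cor}, and then appeal to closedness of $P$ to capture the accumulation set — is the expected unwinding. The Siegel half is indeed symmetric.
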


\begin{prop} \label{angular span}
Let $P$ be a puzzle piece of level $n$ for $R_\nu$ whose angular span with respect to $\mathcal{B}$ and $\mathcal{S}$ are equal to $[s_1, s_2]$ and $[t_1, t_2]$ respectively. Then $[s_1, s_2] = [t_1, t_2]$.
\end{prop}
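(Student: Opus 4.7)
The plan is to prove the proposition by induction on $n$ in a slightly strengthened form: for every $k \in \{0, 1, \ldots, 2^n-1\}$, the bubble rays $\mathcal{R}^\mathcal{B}_{k/2^n}$ and $\mathcal{R}^\mathcal{S}_{k/2^n}$ land at a common iterated preimage $q_{k,n}$ of the fixed point $\kappa$. Proposition \ref{angular span} follows immediately, since the two corners of any level-$n$ puzzle piece are then co-landing points of its four bounding rays, forcing the $\mathcal{B}$- and $\mathcal{S}$-spans to share the same pair of dyadic endpoints. The base case $n=0$ is exactly Proposition \ref{base}. For $n=1$, since $\kappa$ is a repelling fixed point disjoint from the critical points of $R_\nu$, the map $R_\nu$ is a local biholomorphism at $\kappa$; hence the two preimages of $\mathcal{R}^\mathcal{B}_0$ (namely $\mathcal{R}^\mathcal{B}_0$ itself and $\mathcal{R}^\mathcal{B}_{1/2}$) must land at distinct preimages of $\kappa$, so $\mathcal{R}^\mathcal{B}_{1/2}$ lands at the unique other preimage $\kappa'$. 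The analogous statement for $\mathcal{S}$ yields the pairing $\mathcal{R}^\mathcal{B}_{1/2} \leftrightarrow \mathcal{R}^\mathcal{S}_{1/2}$ at $\kappa'$.

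For the inductive step, fix an odd $m \in \{1, \ldots, 2^{n+1}-1\}$ (the even case reducing to the previous level) and set $k' = m \bmod 2^n$. Under $R_\nu$, the rays $\mathcal{R}^\mathcal{B}_{m/2^{n+1}}$ and $\mathcal{R}^\mathcal{S}_{m/2^{n+1}}$ map to $\mathcal{R}^\mathcal{B}_{k'/2^n}$ and $\mathcal{R}^\mathcal{S}_{k'/2^n}$, which by the inductive hypothesis co-land at $q_{k',n}$. Since both critical points $\infty$ and $1$ of $R_\nu$ lie in $\mathcal{P}_0$, the point $q_{k',n}$ is not a critical value, so $R_\nu$ is a local biholomorphism at each of the two preimages of $q_{k',n}$. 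The same injectivity argument as in the base case then forces the two $\mathcal{B}$-preimages $\mathcal{R}^\mathcal{B}_{k'/2^{n+1}}$ and $\mathcal{R}^\mathcal{B}_{(k'+2^n)/2^{n+1}}$ to land at distinct preimages of $q_{k',n}$, and analogously for the two $\mathcal{S}$-preimages.

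The main obstacle is to verify that $\mathcal{R}^\mathcal{B}_{m/2^{n+1}}$ and $\mathcal{R}^\mathcal{S}_{m/2^{n+1}}$ land at the \emph{same} preimage of $q_{k',n}$, rather than at opposite ones. For this I would exploit the topological structure of the two components $P_\alpha, P_\beta$ of $R_\nu^{-1}(P')$, where $P'$ is the level-$n$ piece corner-bounded by $q_{k',n}$ and $q_{k'+1,n}$. Each $P_\alpha$ is a closed Jordan domain whose boundary is assembled from one preimage of each of the four boundary rays of $P'$, so the requirement that $\partial P_\alpha$ be a simple closed curve forces the $\mathcal{B}$- and $\mathcal{S}$-boundary preimages emanating from the same corner to co-land at a single preimage of $q_{k',n}$, and likewise at a single preimage of $q_{k'+1,n}$. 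Combined with the labeling conventions inherited via the conjugacies $\Phi_\mathbf{B}$ and $\Phi_\mathbf{S}$ from Theorem \ref{interior maps}, whose base compatibility is ensured by Proposition \ref{base}, this propagates to give the desired matching at level $n+1$ and completes the induction.
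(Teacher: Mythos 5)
Your proposal follows essentially the same inductive route as the paper: base case from Proposition~\ref{base}, and an inductive step that establishes co-landing of the two new dyadic bubble rays inside each level-$n$ piece. You recast the inductive step as pulling back a known co-landing configuration along local inverse branches of $R_\nu$, while the paper argues directly inside a fixed level-$n$ piece $P$ that $\mathcal R^\mathcal B_s$ and $\mathcal R^\mathcal S_s$ (with $s$ the dyadic midpoint) are the only new rays from $\mathcal P_{n+1}$ whose landing points fall in $\mathring P$. These two phrasings are equivalent in substance and both reduce to the same sticking point: matching up the $\mathcal B$-preimage with the \emph{correct} $\mathcal S$-preimage.

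That is where your sketch has a genuine gap. Your Jordan-curve argument for the two components $P_\alpha, P_\beta$ of $R_\nu^{-1}(P')$ shows, correctly, that at each corner of $P_\alpha$ \emph{some} $\mathcal B$-preimage ray co-lands with \emph{some} $\mathcal S$-preimage ray. But it does not by itself rule out the ``crossed'' pairing in which $\mathcal R^\mathcal B_{k'/2^{n+1}}$ co-lands with $\mathcal R^\mathcal S_{(k'+2^n)/2^{n+1}}$ rather than with $\mathcal R^\mathcal S_{k'/2^{n+1}}$. Ruling this out is the entire content of the proposition at level $n+1$, and it requires knowing that $\Phi_\mathbf B$ and $\Phi_\mathbf S$ carry the polynomial puzzle piece with a given angular span into the \emph{same} $R_\nu$-puzzle piece; that is exactly Proposition~\ref{puzzle cor}, which the paper invokes by name at this point. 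You gesture at ``the labeling conventions inherited via the conjugacies $\Phi_\mathbf B$ and $\Phi_\mathbf S$,'' which is the right idea, but it is asserted rather than carried out, so the argument is incomplete precisely where the work is.

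A smaller point: the justification that $q_{k',n}$ is not a critical value (``since both critical points lie in $\mathcal P_0$'') is not quite the right reason as stated. What one actually needs is that the critical value $R_\nu(\infty)$ lies in the Fatou set, and that the critical value $R_\nu(1)$ lies on $\partial\mathcal S_0$, which carries an irrational rotation and therefore contains no eventually periodic points; hence neither critical value is an iterated preimage of the repelling fixed point $\kappa$. The invariance of $\mathcal P_0$ does give that the critical values lie in $\mathcal P_0$, but that alone does not separate them from the iterated preimages of $\kappa$, some of which also lie in $\mathcal P_0$.
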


\begin{proof}
We proceed by induction on $n$. The case $n=0$ follows from proposition \ref{base}. Assume that the statement is true for $n$. We need to check that it is also true for $n+1$.

We have $s_1 = \frac{i}{2^n}$ and $s_2 = \frac{i+1}{2^n}$ for some $i \in \{0, 1, \ldots{}, 2^n-1\}$. Let $s = \frac{2i+1}{2^{n+1}}$. Consider the puzzle pieces $P^\mathbf{B}_{[s_1, s_2]}$ and $P^\mathbf{S}_{[s_1, s_2]}$ for $f_\mathbf{B}$ and $f_\mathbf{S}$ respectively. Among all the bubble rays contained in the puzzle partition $\mathcal{P}^\mathbf{B}_{n+1}$ for $f_\mathbf{B}$, only $\mathcal{R}^\mathbf{B}_s$ lands in the interior of $P^\mathbf{B}_{[s_1, s_2]}$. Similarly, among all the bubble rays contained in the puzzle partition $\mathcal{P}^\mathbf{S}_{n+1}$ for $f_\mathbf{S}$, only $\mathcal{R}^\mathbf{S}_s$ lands in the interior of $P^\mathbf{S}_{[s_1, s_2]}$. It follows from lemma \ref{puzzle cor} that $\mathcal{R}^\mathcal{B}_s = \Phi_\mathbf{B}^{-1}(\mathcal{R}^\mathbf{B}_s)$ and $\mathcal{R}^{\mathcal{S}}_s = \Phi_\mathbf{S}^{-1}(\mathcal{R}^\mathbf{S}_s)$ are the only two bubble rays contained in the puzzle partition $\mathcal{P}_{n+1}$ for $R_\nu$ that land in the interior of $P$. This implies that $\mathcal{R}^\mathcal{B}_s$ and $\mathcal{R}^\mathcal{S}_s$ must land at the same point. It is not difficult to see from this that the claim must be true for puzzle pieces of level $n+1$.
\end{proof}

By virtue of proposition \ref{angular span}, the angular span of a puzzle piece $P$ with respect to $\mathcal{B}$ or $\mathcal{S}$ will henceforth be referred to as simply the \ebf{angular span of $P$}. Furthermore, a puzzle piece for $R_\nu$ with angular span $[t_1, t_2]$ will be denoted $P_{[t_1, t_2]}$.

\begin{figure}[h]
\centering
\includegraphics[scale=0.7]{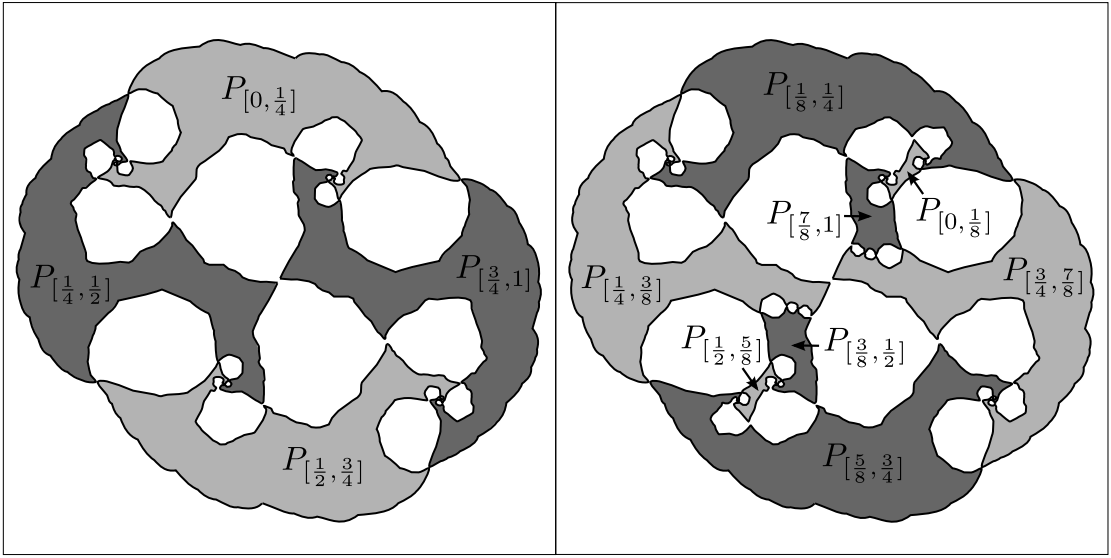}
\caption{The puzzle partition of level $2$ and $3$ for $R_\nu$.}
\label{fig:matedpuzzles}
\end{figure}

A \ebf{nested puzzle sequence $\Pi_t$} for $R_\nu$ is defined in the same way as its counterpart for $f_\mathbf{B}$. We say that \ebf{$\Pi_t$ shrinks to $x$} if its limit $L(\Pi_t)$ is equal to $\{x\}$.

\begin{prop} \label{maximal shrink}
Let $\Pi_t = \{P_{[s_k, t_k]}\}_{k=0}^\infty$ be a nested puzzle sequence, and let $\hat{\Pi}_t = \{P_{[r_k, u_k]}\}_{k=0}^\infty$ be the unique maximal nested puzzle sequence containing $\Pi_t$. Then $\Pi_t$ shrinks to a point $x \in J(R_\nu)$ if and only if $\hat{\Pi}_t$ does.
\end{prop}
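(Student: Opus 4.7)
The plan is to establish the stronger statement $L(\Pi_t) = L(\hat{\Pi}_t)$; the claimed equivalence then follows immediately. The containment $L(\hat{\Pi}_t) \subseteq L(\Pi_t)$ is automatic, since $\Pi_t \subseteq \hat{\Pi}_t$ contributes strictly more factors to the defining intersection.

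First I would exploit the existence of a well-defined external angle. Because $\{t\} = \bigcap_k [s_k, t_k]$ is a single point and the angular span of a level-$n$ piece equals $2^{-n}$, the levels $n_k$ of $P_{[s_k, t_k]}$ must be unbounded. Dropping repeated consecutive pieces does not alter $L(\Pi_t)$, so I may assume $n_0 < n_1 < n_2 < \cdots \to \infty$.

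Next I would unpack what ``maximal'' means combinatorially. Since $\mathcal{P}_n \subsetneq \mathcal{P}_{n+1}$ and each level-$n$ puzzle piece is cut by the new bubble rays appearing in $\mathcal{P}_{n+1}$ into exactly two level-$(n+1)$ pieces (an observation already implicit in the proof of proposition \ref{angular span}), any nested sequence that skips a level can be extended by inserting the missing piece, so a maximal nested sequence must contain exactly one piece $Q_m$ of every level $m \geq 0$, with $Q_{m+1} \subsetneq Q_m$. Since $P_{[s_k, t_k]} \in \hat{\Pi}_t$ is a piece of level $n_k$, this uniqueness forces $P_{[s_k, t_k]} = Q_{n_k}$.

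The proof then concludes with the computation
\begin{displaymath}
L(\Pi_t) = \bigcap_{k} P_{[s_k, t_k]} = \bigcap_{k} Q_{n_k} = \bigcap_{m \geq 0} Q_m = L(\hat{\Pi}_t),
\end{displaymath}
where the middle equality uses that the $Q_m$ are nested and $n_k \to \infty$: for each fixed $m$ one has $Q_{n_k} \subseteq Q_m$ as soon as $n_k \geq m$, so any point of $\bigcap_k Q_{n_k}$ lies in every $Q_m$. The only real work is the combinatorial step in the third paragraph, verifying that maximal nested sequences hit every level; this is a routine consequence of the subdivision structure of the puzzle partitions and is the mildest of obstacles.
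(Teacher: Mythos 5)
Your proof is correct and takes essentially the same route as the paper's (the paper omits a proof here but gives the identical argument for the $f_\mathbf{B}$ analogue in section \ref{basilica puzzle}): establish the stronger fact $L(\Pi_t) = L(\hat{\Pi}_t)$ by double containment, with the nontrivial inclusion coming from the nesting of puzzle pieces. Your re-indexing argument—showing that a maximal sequence contains exactly one piece per level while the levels in $\Pi_t$ are cofinal in $\mathbb{N}$—is a cleaner and more explicit version of the inclusion $P_{[s_k,t_k]} \subseteq P_{[r_k,u_k]}$ that the paper asserts by an implicit choice of indexing.
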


The following result can be proved the same way as proposition \ref{proof puzzle count}.

\begin{prop} \label{num puzzle}
Let $x \in J(R_\nu)$. If $x$ is an iterated preimage of $\kappa$, $\beta$ or $1$, then for all sufficiently large $n$, $x$ is contained in exactly two puzzle pieces of level $n$. Otherwise, $x$ is contained in a unique puzzle piece of level $n$.
\end{prop}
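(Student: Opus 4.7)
The plan is to mirror the four-case analysis used in the proof of Proposition \ref{proof puzzle count}, with the three distinguished points $\kappa$, $\beta$, and $1$ jointly playing the roles that $\mathbf{k}_\mathbf{B}$ and $\mathbf{b}$ play there. Accordingly, I split into four cases: (i) $x$ is an iterated preimage of $\beta$; (ii) $x$ is an iterated preimage of $1$; (iii) $x$ is an iterated preimage of $\kappa$; (iv) $x$ is none of the above, so that $x$ lies on the boundary of at most one bubble.

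In cases (i) and (ii), Proposition \ref{candidate attach} furnishes exactly two bubbles $U_1, U_2$ (inside $\mathcal{B}$ and inside $\mathcal{S}$ respectively) whose closures meet precisely at $x$. Since $\beta \in \partial \mathcal{B}_\infty$ and $1 \in \partial \mathcal{S}_0$, both $U_1$ and $U_2$ are iterated preimages of bubbles already appearing in $\mathcal{P}_0$, hence $\overline{U_1} \cup \overline{U_2} \subset \mathcal{P}_n$ for all $n$ exceeding some threshold $m$. For such $n$ the remainder of $\mathcal{P}_n$ is a finite union of bubble rays whose landing points are distinct from $x$, so we may shrink a disk $D$ about $x$ until $D \cap \mathcal{P}_n \subset \overline{U_1} \cup \overline{U_2}$. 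The complement $D \setminus \mathcal{P}_n$ then has exactly two components, one on each side of the bow-tie formed by $\overline{U_1}$ and $\overline{U_2}$, so $x$ belongs to exactly two puzzle pieces of level $n$.

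Case (iii) proceeds in the same spirit: by Proposition \ref{base} the only bubble rays of external angle $0$ are $\mathcal{R}^\mathcal{B}_0$ and $\mathcal{R}^\mathcal{S}_0$, both landing at $\kappa$, so by pulling back one sees that every iterated preimage $x$ of $\kappa$ is the common landing point of a unique pair $\mathcal{R}^\mathcal{B}_t, \mathcal{R}^\mathcal{S}_t$ with $t$ a dyadic rational of some depth $m$. Once $n > m$, both rays lie in $\mathcal{P}_n$ and approach $x$ from opposite directions, so near $x$ the partition consists of two arcs meeting at $x$, splitting a small disk about $x$ into two components. In case (iv), either $x$ lies on no bubble at all (so $x \notin \mathcal{P}_n$, and hence $x$ lies in the interior of a unique puzzle piece) or $x$ lies on the boundary of a unique bubble $U$; in the latter subcase, $\mathcal{P}_n$ locally reduces to the closed bubble $\overline{U}$, whose boundary is a Jordan arc through $x$, so $D \setminus \mathcal{P}_n$ is connected near $x$ and $x$ belongs to a unique puzzle piece.

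The genuinely delicate step, shared with Proposition \ref{proof puzzle count}, is verifying in cases (i)--(iii) that no extraneous bubble ray in $\mathcal{P}_n$ accidentally lands at $x$, so that the disk $D$ can be chosen as claimed. This is precisely the kind of control provided by the landing statements in Propositions \ref{basilica landing}, \ref{siegel landing}, and \ref{periodic landing}, transferred to the dynamics of $R_\nu$ through the conformal conjugacies $\Phi_\mathbf{B}$ and $\Phi_\mathbf{S}$ of Theorem \ref{interior maps}. Once this local finiteness is in place, the rest of the argument is a purely topological bookkeeping that copies the proof of Proposition \ref{proof puzzle count}.
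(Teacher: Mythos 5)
Your proposal is essentially the approach the paper intends: the paper disposes of this proposition with the single sentence ``can be proved the same way as proposition~\ref{proof puzzle count},'' and you reconstruct that proof, mirroring its four-case decomposition with $\beta$ and $1$ jointly playing the role of $\mathbf{b}$ and $\kappa$ playing the role of $\mathbf{k}_\mathbf{B}$. You also correctly isolate the point that makes the disk-shrinking step work in the node cases: the bubble rays comprising $\mathcal{P}_n$ land at iterated preimages of $\kappa$, which are disjoint from iterated preimages of $\beta$ and $1$, so after Proposition~\ref{periodic landing} the disk $D$ can be chosen as claimed.

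The one place you depart in flavor from the template is case (iii). In the proof of Proposition~\ref{proof puzzle count}, the $\mathbf{k}_\mathbf{B}$-preimage case is handled combinatorially: $x$ lies on $\mathcal{P}^\mathbf{B}_n$, hence on the boundary of any level-$n$ piece containing it, and since that boundary is a union of four rays with dyadic angles $t_1,t_2$, the landing angle of $x$ must coincide with $t_1$ or $t_2$, which pins down exactly two pieces $P^\mathbf{B}_{[\cdot, i/2^m]}$ and $P^\mathbf{B}_{[i/2^m,\cdot]}$. You instead gesture at a local topological count (``two arcs meeting at $x$ split a small disk''). That picture is harder to make rigorous than it looks, because a bubble ray near its landing point is an infinite chain of shrinking quasidiscs rather than a literal arc, and the disk-complement componentry needs justification. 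The combinatorial version sidesteps this entirely, and since you already have the $R_\nu$-analogue of the needed bookkeeping (Proposition~\ref{angular span} identifies the two angular spans of a puzzle piece; the boundary of $P_{[t_1,t_2]}$ is a union of the four rays $\mathcal{R}^\mathcal{B}_{t_1},\mathcal{R}^\mathcal{S}_{t_1},\mathcal{R}^\mathcal{B}_{t_2},\mathcal{R}^\mathcal{S}_{t_2}$), I would recommend carrying over the combinatorial argument verbatim rather than replacing it.
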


\begin{cor} \label{num sequence}
Let $x \in J(R_\nu)$. If $x$ is an iterated preimage of $\kappa$, $\beta$ or $1$, then there are exactly two maximal nested puzzle sequences whose limit contains $x$. Otherwise, there is exactly one maximal nested puzzle sequence whose limit contains $x$.
\end{cor}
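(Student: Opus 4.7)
The plan is to deduce the count of maximal nested puzzle sequences directly from Proposition \ref{num puzzle}, by setting up a natural bijection between such sequences and coherent choices of puzzle pieces at each level.

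Fix $x \in J(R_\nu)$ and, by Proposition \ref{num puzzle}, choose $N$ so that for all $n \geq N$, $x$ lies in exactly $k$ puzzle pieces of level $n$, where $k=2$ if $x$ is an iterated preimage of $\kappa$, $\beta$, or $1$, and $k=1$ otherwise. Since $\mathcal{P}_{n+1}$ refines $\mathcal{P}_n$, each level-$(n+1)$ puzzle piece is contained in a unique level-$n$ puzzle piece, so containment induces a map $\Psi_n$ from the level-$(n+1)$ pieces containing $x$ to the level-$n$ pieces containing $x$. I would verify that $\Psi_n$ is surjective: any level-$n$ piece $P$ containing $x$ is covered by its refinement into level-$(n+1)$ pieces, so $x$ must lie in at least one level-$(n+1)$ piece contained in $P$. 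Since both domain and codomain have cardinality $k$, $\Psi_n$ is a bijection.

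This bijection lets me organize the level-$n$ pieces containing $x$, for $n \geq N$, into exactly $k$ coherent nested chains $\{P_{n,i}\}_{n \geq N}$, indexed by a choice of level-$N$ puzzle piece containing $x$; each chain extends uniquely to all $n \geq 0$ by taking, for $n < N$, the unique level-$n$ piece containing $P_{N,i}$. Enlarging each chain by every puzzle piece compatible with it produces a maximal nested puzzle sequence whose limit contains $x$. Conversely, a maximal nested puzzle sequence whose limit contains $x$ must, by maximality, include at every level $n$ some level-$n$ puzzle piece containing $x$, and so must coincide with one of these $k$ sequences. The number of maximal nested puzzle sequences whose limit contains $x$ is therefore exactly $k$.

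The only real content beyond bookkeeping is the surjectivity (hence bijectivity) of $\Psi_n$, which I expect to be the main step but is already implicit in Proposition \ref{num puzzle}; the rest is the same organizational argument that yields Corollary \ref{proof seq count} from Proposition \ref{proof puzzle count} in the basilica setting.
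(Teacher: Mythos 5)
Your overall strategy is the right one, and it matches what the paper evidently has in mind: Corollary \ref{num sequence} is stated without proof, just as Corollary \ref{proof seq count} and Corollary \ref{num seq siegel} are, because the paper treats the passage from the puzzle-piece count to the nested-sequence count as immediate. You correctly identify that the one non-trivial point is the surjectivity (hence bijectivity) of the containment map $\Psi_n$ from level-$(n+1)$ pieces containing $x$ to level-$n$ pieces containing $x$, and your organizational argument from there is sound.

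However, your justification for surjectivity contains a genuine imprecision. You assert that a level-$n$ piece $P$ "is covered by its refinement into level-$(n+1)$ pieces." This is false as stated: $\mathcal{P}_{n+1} \setminus \mathcal{P}_n$ consists of new bubble rays, which include closed bubbles with non-empty interior, so the level-$(n+1)$ pieces inside $P$ (closures of components of $\mathring{P} \setminus \mathcal{P}_{n+1}$) omit all of $\mathring{P} \cap \mathring{\mathcal{P}}_{n+1}$. What is actually needed, and what is true, is the weaker statement that the \emph{Julia set} portion of $P$ is covered. If $x$ lies in the interior of a unique level-$n$ piece $P$, this is easy: Proposition \ref{num puzzle} gives a level-$(n+1)$ piece $P'$ containing $x$, and since $\mathring{P}' \subset \hat{\mathbb{C}} \setminus \mathcal{P}_n$, the piece $P'$ lands inside the unique level-$n$ piece whose interior it meets, which must be $P$. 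The subtle case is when $x$ lies in $\partial P$ and is shared by two level-$n$ pieces $P_1, P_2$: there you must rule out the possibility that \emph{both} level-$(n+1)$ pieces containing $x$ sit inside $P_1$, leaving $P_2$ outside the image of $\Psi_n$. This does not follow from any covering statement; it requires the local description established in the proof of Proposition \ref{num puzzle} (modeled on Case (i) of Proposition \ref{proof puzzle count}), namely that in a small disc $D$ centered at $x$ the sets $D \cap \mathcal{P}_n$ and $D \cap \mathcal{P}_{n+1}$ coincide — both equal $D$ intersected with the union of the two bubble closures attached at $x$ — so that the two complementary wedges, and hence the two puzzle pieces, match up level to level. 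With that local input supplied, your argument closes the gap and the rest of the bookkeeping is correct.
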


\begin{prop} \label{no connection}
Suppose $X \subset \hat{\mathbb{C}} \setminus (\mathcal{B}_\infty \cup R_\nu(\mathcal{B}_\infty) \cup \mathcal{S}_0)$ is a non-recurring closed set (that is, for all $n \in \mathbb{N}$, $R_\nu^n(X) \cap X = \varnothing$). Then the set $\mathcal{B}_\infty \cup R_\nu(\mathcal{B}_\infty) \cup \mathcal{S}_0 \cup X$ is disconnected.
\end{prop}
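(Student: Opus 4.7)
I would argue by contradiction, closely following the hyperbolic-contraction scheme used in the proof of Proposition \ref{periodic landing}. Set $A_1 := \mathcal{B}_\infty \cup R_\nu(\mathcal{B}_\infty)$ and $A_2 := \mathcal{S}_0$, so $A := A_1 \cup A_2$ has $\overline{A_1}$ and $\overline{A_2}$ disjoint and is therefore already disconnected. Assuming toward contradiction that $A \cup X$ is connected, a standard continuum-theory argument in the compact sphere $\hat{\mathbb{C}}$ (e.g.\ Boundary Bumping) produces a subcontinuum $Y \subset X$ with $Y \cap \overline{A_1} \neq \varnothing$ and $Y \cap \overline{A_2} \neq \varnothing$. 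The entire burden is to rule out such a bridging $Y$.

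First, the hypothesis $R_\nu^n(X) \cap X = \varnothing$ is equivalent to $X \cap R_\nu^{-n}(X) = \varnothing$; combining different values of $n$ shows that the preimages $\{R_\nu^{-n}(X)\}_{n \geq 0}$ are pairwise disjoint, so $X$ (and hence $Y$) contains no periodic points of $R_\nu$. Second, the post-critical set $\Lambda = \{0, \infty\} \cup \partial \mathcal{S}_0$ is contained in $\overline{A}$, so $R_\nu : \hat{\mathbb{C}} \setminus R_\nu^{-1}(\Lambda) \to \hat{\mathbb{C}} \setminus \Lambda$ is a covering of hyperbolic Riemann surfaces. Since the inclusion $\Lambda \subsetneq R_\nu^{-1}(\Lambda)$ is strict (for instance, $-2 \in R_\nu^{-1}(\infty) \setminus \Lambda$), every inverse branch of $R_\nu$ strictly contracts the hyperbolic metric on $\hat{\mathbb{C}} \setminus \Lambda$.

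Using these inverse branches, I would pull $Y$ back along a chosen orbit to obtain continua $Y = Y^{(0)}, Y^{(1)}, Y^{(2)}, \ldots$ with $R_\nu(Y^{(n+1)}) = Y^{(n)}$ and $Y^{(n)} \subset R_\nu^{-n}(X)$. By the first observation the $Y^{(n)}$ are pairwise disjoint, and by the strict contraction their hyperbolic diameters in $\hat{\mathbb{C}} \setminus \Lambda$ decay to zero. Exactly as in the proof of Proposition \ref{periodic landing}, the Snail lemma then forces the Hausdorff accumulation set of $\{Y^{(n)}\}$ to be a single repelling fixed point $y^*$ of $R_\nu$, which by Proposition \ref{base} must be either $\kappa$ or the other fixed point $\beta \in \partial \mathcal{B}_\infty$.

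The main obstacle will be the final contradiction. Each $Y^{(n)}$ connects the nested backward-invariant sets $R_\nu^{-n}(\overline{A_1})$ and $R_\nu^{-n}(\overline{A_2})$, so their collapse onto $y^*$ forces both families of preimages to accumulate at $y^*$. I would then combine the Koenigs linearization of $R_\nu$ at $y^*$ with the local puzzle structure of Section \ref{candidate puzzle} (Proposition \ref{num puzzle} gives exactly two puzzle pieces of each sufficiently large level meeting at $y^*$) to show that a small linearizing disc around $y^*$ admits only finitely many pairwise disjoint continua that join adjacent puzzle pieces. This contradicts the existence of the infinite disjoint family $\{Y^{(n)}\}_{n \geq 0}$, completing the proof.
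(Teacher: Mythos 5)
Your proposal takes a completely different route from the paper, and unfortunately the central mechanism does not work. The paper's proof is a short, purely topological argument: assuming connectedness, it builds a component $P$ of $\hat{\mathbb{C}} \setminus (\mathcal{B}_\infty \cup \mathcal{S}_0 \cup X \cup R_\nu^2(X))$ touching $\partial\mathcal{S}_0$ but not $\partial R_\nu(\mathcal{B}_\infty)$, observes that non-recurrence forbids the odd iterates $R_\nu^{2n+1}(X)$ from entering $P$, and then derives a contradiction from the fact that the orbit of a point of $R_\nu(X) \cap \overline{\mathcal{S}_0}$ under $R_\nu^2$ is dense in $\partial\mathcal{S}_0$ and so must eventually land in $\partial\mathcal{S}_0 \cap P$. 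No hyperbolic contraction is used at all, and that choice is essential.

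The first and most serious gap in your plan is at the contraction step. Your bridging continuum $Y$ meets $\overline{\mathcal{A}_2} = \overline{\mathcal{S}_0}$, and $\partial\mathcal{S}_0$ is contained in the post-critical set $\Lambda$ (it is the closure of the orbit of the critical point $1$). Hence $Y$ is not contained in $\hat{\mathbb{C}} \setminus \Lambda$, its hyperbolic diameter there is infinite, and the Schwarz--Pick contraction of inverse branches tells you nothing about the sizes of the pullbacks $Y^{(n)}$. In Proposition \ref{periodic landing} this is sidestepped by measuring only the lengths of the inner curves $\gamma_n$, which are chosen inside a bubble of generation $> 1$ and in particular stay away from $\Lambda$; that option is not available here because the whole point of $Y$ is that it reaches $\overline{\mathcal{S}_0}$.

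Second, even if the diameters of $Y^{(n)}$ tended to zero, the Snail-lemma step does not transfer. In Proposition \ref{periodic landing} the curves $\gamma_n$ form a chain ($\gamma_n$ and $\gamma_{n+1}$ share the endpoint $x_{n+1}$), so the accumulation set is connected and one can compare $\gamma_n$ with $R_\nu^p(\gamma_n)=\gamma_{n-1}$ near a common accumulation point to conclude it is fixed. Your $Y^{(n)}$ are pairwise disjoint and share no endpoints; nothing prevents different subsequences from accumulating at wildly different points of $J(R_\nu)$, and there is no reason for any accumulation point to be periodic. Two smaller issues: (a) the existence of a \emph{subcontinuum} $Y \subset X$ joining $\overline{A_1}$ to $\overline{A_2}$ is not automatic for an arbitrary non-recurring closed set $X$ (the proposition is stated for general closed $X$, not only for continua), and (b) the disjointness of $\overline{\mathcal{B}_\infty \cup R_\nu(\mathcal{B}_\infty)}$ and $\overline{\mathcal{S}_0}$, which you take for granted, is in fact downstream of this very proposition via Proposition \ref{disjointness for rnu} and Lemma \ref{disjointness}, so assuming it risks circularity. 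Finally, the concluding puzzle-piece contradiction is only sketched. The paper's density-of-orbit argument is the mechanism that makes the non-recurrence hypothesis bite, and I would recommend building the proof around it.
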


\begin{proof}
Suppose towards a contradiction that $\mathcal{B}_\infty \cup R_\nu(\mathcal{B}_\infty) \cup \mathcal{S}_0 \cup X$ is connected. Without loss of generality, we may assume that $\mathcal{B}_\infty \cup \mathcal{S}_0 \cup X$ is connected.

Observe that $\hat{\mathbb{C}} \setminus (\mathcal{B}_\infty \cup \mathcal{S}_0 \cup X \cup R_\nu^2(X))$ is disconnected, and that at least one of its components intersects $\partial \mathcal{S}_0$ but does not intersect $\partial R_\nu(\mathcal{B}_\infty)$. Denote this component by $P$.

Since $X$ is non-recurring, observe that $R_\nu^{2n+1}(X) \cap P = \varnothing$ for all $n \geq 0$. Choose a point $x_1 \in \overline{\mathcal{S}_0} \cap R_\nu(X)$. Since the orbit of $x_1$ under $R_\nu^2$ is dense in $\partial \mathcal{S}_0$, there exists $N \geq 0$ such that $R_\nu^{2N+1}(x_1) \in \partial \mathcal{S}_0 \cap P$. This is a contradiction.
\end{proof}

\begin{prop} \label{disjointness for rnu}
Let $\Pi_t = \{P_{[s_k, t_k]}\}_{k=0}^\infty$ be a nested puzzle sequence for $R_\nu$. Its limit $L(\Pi_t)$ cannot intersect the boundary of bubbles from both $\mathcal{B}$ and $\mathcal{S}$.
\end{prop}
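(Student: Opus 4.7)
The plan is to derive a contradiction using proposition \ref{no connection}. Suppose $L(\Pi_t)$ meets $\partial U_1$ for some bubble $U_1 \in \mathcal{B}$ and $\partial U_2$ for some bubble $U_2 \in \mathcal{S}$. The first preliminary observation I would establish is that if an open bubble $U$ lies inside $\mathcal{P}_k$ for some $k$ (in particular, this holds for the base bubbles $\mathcal{B}_\infty, R_\nu(\mathcal{B}_\infty), \mathcal{S}_0$), then by openness no point of $U$ can lie on the boundary of the disjoint open component of $\hat{\mathbb{C}} \setminus \mathcal{P}_k$, hence $U \cap P_{[s_k,t_k]} = \varnothing$ and consequently $U \cap L(\Pi_t) = \varnothing$. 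Next, by iterating $R_\nu$ sufficiently many times and replacing $\Pi_t$ with the forward-shifted nested puzzle sequence (using that $R_\nu^m(L(\Pi_t))$ is contained in the limit of the shifted sequence), I may reduce to the case $U_1 = \mathcal{B}_\infty$ (absorbing parity into $g$ or $g+1$ iterations) and $U_2 = \mathcal{S}_0$.

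Pick $p \in L(\Pi_t) \cap \partial \mathcal{B}_\infty$ and $q \in L(\Pi_t) \cap \partial \mathcal{S}_0$, and define
\[
X := L(\Pi_t) \cup R_\nu(L(\Pi_t)).
\]
This is a compact subset of $\hat{\mathbb{C}} \setminus (\mathcal{B}_\infty \cup R_\nu(\mathcal{B}_\infty) \cup \mathcal{S}_0)$, being the union of two connected sets (continuous images of the connected set $L(\Pi_t)$). Since $\mathcal{S}_0$ is fixed by $R_\nu$, both $L(\Pi_t)$ and $R_\nu(L(\Pi_t))$ meet $\partial \mathcal{S}_0$ (at $q$ and $R_\nu(q)$ respectively), so both pieces of $X$ attach to $\mathcal{S}_0$, and hence $X \cup \mathcal{S}_0$ is connected. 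Moreover, $L(\Pi_t)$ attaches to $\mathcal{B}_\infty$ through $p$, while $R_\nu(L(\Pi_t))$ attaches to $R_\nu(\mathcal{B}_\infty)$ through $R_\nu(p) \in \partial R_\nu(\mathcal{B}_\infty)$. Hence the full union
\[
\mathcal{B}_\infty \cup R_\nu(\mathcal{B}_\infty) \cup \mathcal{S}_0 \cup X
\]
is connected.

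To invoke proposition \ref{no connection}, it remains to verify that $X$ is non-recurring under $R_\nu$, i.e., $R_\nu^n(X) \cap X = \varnothing$ for every $n \geq 1$. For $t$ not pre-periodic under the doubling map, the angular positions $\{2^n t \bmod 1 : n \geq 0\}$ are pairwise distinct, and for large $k$ the shifted angular intervals $[2^n s_k, 2^n t_k]$ are disjoint from $[s_k, t_k]$; this forces $R_\nu^n(L(\Pi_t))$ to lie in a puzzle piece disjoint from $P_{[s_k,t_k]}$, yielding the required disjointness. The pre-periodic case of $t$ is the main technical obstacle: here $2^p t \equiv t \pmod 1$ for some $p$, so a naive disjointness argument fails at the level of angular positions. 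One resolves this using proposition \ref{periodic landing}, which identifies the landing points of $\mathcal{R}^\mathcal{B}_t, \mathcal{R}^\mathcal{S}_t$ as isolated repelling periodic points; local expansion at these points shows that forward iterates of the tails of $L(\Pi_t)$ escape any sufficiently small neighborhood, and this gives the required disjointness away from the landing set. With non-recurrence established, proposition \ref{no connection} forces the union above to be disconnected, contradicting the preceding paragraph and completing the proof.
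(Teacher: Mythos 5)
Your choice of the set $X := L(\Pi_t) \cup R_\nu(L(\Pi_t))$ is fatally flawed for the purpose of invoking Proposition~\ref{no connection}. That proposition requires $X$ to be non-recurring, i.e.\ $R_\nu^n(X) \cap X = \varnothing$ for all $n \geq 1$. But with your definition,
\begin{displaymath}
R_\nu(X) \cap X \;=\; \bigl(R_\nu(L(\Pi_t)) \cup R_\nu^2(L(\Pi_t))\bigr) \cap \bigl(L(\Pi_t) \cup R_\nu(L(\Pi_t))\bigr) \;\supseteq\; R_\nu(L(\Pi_t)) \neq \varnothing,
\end{displaymath}
so $X$ is automatically recurring. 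No further analysis of rational vs.\ irrational $t$ can salvage this: the recurrence is built into the definition of $X$ itself, independent of any dynamical properties of $L(\Pi_t)$. You adopted this $X$ in order to make the set touch all three of $\mathcal{B}_\infty$, $R_\nu(\mathcal{B}_\infty)$, and $\mathcal{S}_0$, but in doing so you destroyed the hypothesis you were trying to satisfy.

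The paper applies Proposition~\ref{no connection} directly with $X = L(\Pi_t)$. The fact that $L(\Pi_t)$ need only touch $\mathcal{B}_\infty$ and $\mathcal{S}_0$ (and not necessarily $R_\nu(\mathcal{B}_\infty)$) is handled inside the proof of Proposition~\ref{no connection} itself, where the argument reduces ``without loss of generality'' to the connectedness of $\mathcal{B}_\infty \cup \mathcal{S}_0 \cup X$; the full union with $R_\nu(\mathcal{B}_\infty)$ is not needed. The paper's verification of non-recurrence for $L(\Pi_t)$ is also substantially cleaner than your case split: it notes that the limit of a nested puzzle sequence is either pre-periodic or non-recurring, and rules out pre-periodicity in one stroke from the fact that $L(\Pi_t)$ meets $\partial \mathcal{S}_0$ --- since a forward-invariant iterate of $L(\Pi_t)$ would then have to contain a dense (hence all of) $\partial \mathcal{S}_0$, which is incompatible with the shrinking angular spans. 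This bypasses the separate treatment of pre-periodic $t$ via Proposition~\ref{periodic landing} that you proposed, which in any event you did not carry out in detail.
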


\begin{proof}
Suppose that $L(\Pi_t)$ intersects the boundary of bubbles from both $\mathcal{B}$ and $\mathcal{S}$. By considering its image under a large enough iterate of $R_\nu$, we may assume that $L(\Pi_t)$ intersects the boundary of $\mathcal{B}_\infty$ and $\mathcal{S}_0$.

Observe that the limit set of any nested puzzle sequence is either pre-periodic or non-recurrent. Since $L(\Pi_t)$ contains a point in $\partial \mathcal{S}_0$, it must be non-recurrent. It is also easy to see that $L(\Pi_t)$ must be closed, connected and contained in $\hat{\mathbb{C}} \setminus (\mathcal{B}_\infty \cup R_\nu(\mathcal{B}_\infty) \cup \mathcal{S}_0)$. This contradicts proposition \ref{no connection}.
\end{proof}

The following result is proved in the next two sections.

\begin{thm}[the Shrinking Theorem] \label{total shrinkage}
Every nested puzzle sequence for $R_\nu$ shrinks to a point.
\end{thm}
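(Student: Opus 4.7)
Fix a nested puzzle sequence $\Pi_t = \{P_{[s_k,t_k]}\}_{k=0}^\infty$ and let $L := L(\Pi_t)$. The plan is to analyze separately the intersection of $L$ with the Fatou set $F(R_\nu) = \mathcal{B} \cup \mathcal{S}$ and with the Julia set $J(R_\nu)$, and to show that each contribution collapses to the same single point.

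For the Fatou portion, I would use the conformal conjugacies $\Phi_\mathbf{B}$ and $\Phi_\mathbf{S}$ from theorem \ref{interior maps} together with the puzzle-compatibility stated in proposition \ref{puzzle cor}. This transfers $L \cap \overline{\mathcal{B}}$ and $L \cap \overline{\mathcal{S}}$ to the limits of the corresponding nested puzzle sequences for $f_\mathbf{B}$ and $f_\mathbf{S}$. For $f_\mathbf{B}$, proposition \ref{basilica landing} gives exponential decay of bubble diameters with generation, forcing $L \cap \overline{\mathcal{B}}$ to lie in the closure of a single bubble ray $\mathcal{R}^\mathcal{B}_t$; for $f_\mathbf{S}$, the analogous landing result (proposition \ref{siegel landing}) gives the same conclusion for $\mathcal{R}^\mathcal{S}_t$. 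By proposition \ref{disjointness for rnu}, at most one of these two sides contributes nontrivially to $L$, and in either case the Fatou portion of $L$ collapses onto a single bubble ray together with its landing point.

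For the Julia portion, I would run the hyperbolic contraction machinery already used in the proof of proposition \ref{periodic landing}. The post-critical set of $R_\nu$ consists of the superattracting $2$-cycle $\{\infty, R_\nu(\infty)\}$ together with the closure of the Siegel critical orbit on $\partial \mathcal{S}_0$. Pullbacks of puzzle pieces under $R_\nu$ are strict contractions in the hyperbolic metric on the complement of any neighbourhood of the post-critical set, so whenever the nest $\{P_{[s_k,t_k]}\}$ avoids such a neighbourhood, their hyperbolic and hence Euclidean diameters decay geometrically and $L \cap J(R_\nu)$ reduces to the single landing point identified in the previous paragraph.

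The main obstacle is controlling the pieces that accumulate on the Siegel critical orbit on $\partial \mathcal{S}_0$, where hyperbolic expansion degenerates because the critical point sits on this invariant set. To close this gap, I would invoke the complex a priori bounds for critical circle maps due to Yampolsky \cite{Y1}, which furnish a uniformly bounded geometry for the nest of puzzle pieces adjacent to $\partial \mathcal{S}_0$. These bounded-geometry estimates substitute for hyperbolic expansion in a definite collar of $\partial \mathcal{S}_0$; combined with the contraction argument away from this collar they yield $\operatorname{diam}(P_{[s_k,t_k]}) \to 0$, proving that $\Pi_t$ shrinks to a point.
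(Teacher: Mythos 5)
Your decomposition into the Fatou and Julia portions of $L(\Pi_t)$ is different from the paper's organization (the paper runs a case analysis on whether $L(\Pi_t)$ meets an iterated preimage of $\kappa$, $\beta$, or the critical point $1$, splitting into propositions \ref{alpha shrinkage}, \ref{siegel shrinkage}, \ref{candidate shrinkage}), and the high-level ingredients you name — hyperbolic contraction away from the post-critical set, complex a priori bounds near $\partial\mathcal S_0$ — are indeed the right ones. But there are two genuine gaps.

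First, the ``Fatou portion'' paragraph is circular or at least unfounded as stated. You assert that $L\cap\overline{\mathcal B}$ and $L\cap\overline{\mathcal S}$ collapse onto a bubble ray $\mathcal R^{\mathcal B}_t$ or $\mathcal R^{\mathcal S}_t$ ``together with its landing point.'' But landing of infinite bubble rays for $R_\nu$ is proposition \ref{everylanding}, which the paper deduces \emph{from} the Shrinking Theorem; propositions \ref{basilica landing} and \ref{siegel landing} give landing only for $f_\mathbf{B}$ and $f_\mathbf{S}$, and the conjugacies $\Phi_\mathbf B,\Phi_\mathbf S$ of theorem \ref{interior maps} are conformal only on the interiors of the filled Julia sets (extending merely bubble-by-bubble to closures), so you cannot transport those landing statements across to $R_\nu$. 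You need to prove shrinking without assuming landing of $R_\nu$'s bubble rays.

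Second, the ``Julia portion'' argument does not close as written. Saying the hyperbolic contraction applies ``whenever the nest avoids a neighbourhood of the post-critical set'' misstates the hypothesis: what matters is whether the \emph{forward orbit} of a Julia point $z_0\in L(\Pi_t)$ stays away from the critical point $1$, not whether the puzzle pieces themselves avoid a collar of $\partial\mathcal S_0$. The paper's proposition \ref{candidate shrinkage} handles this by dichotomy on the orbit of $z_0$. And in the recurrent case, corollary \ref{crit shrink} only controls the specific critical nest $P^{crit}_n$; passing from that to the puzzle pieces around $z_0$ requires pulling back $P^{crit}_{n_k}$ along the orbit $z_0\mapsto\cdots\mapsto z_{m_k}$ to the first visit to $P^{crit}_{n_k}$, and the crucial technical point is that this pullback encounters the critical point at most once (argued via the combinatorics of first returns and proposition \ref{num sequence}), so the inverse branch factors as bounded-distortion maps composed with a single cube root, and lemma \ref{comm disc} then gives commensurable inscribed discs. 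Your appeal to ``uniformly bounded geometry in a collar of $\partial\mathcal S_0$'' skips exactly this degree-control step, which is where the argument lives or dies.
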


\newpage

\section{A Priori Bounds for Critical Circle Maps} \label{secapriori}

A $C^2$ homeomorphism $f : S^1 \to S^1$ is called a \ebf{critical circle map} if it has a unique critical point $c \in S^1$ of cubic type. Let $\rho = \rho(f)$ be the rotation number of $f$. In this section, $f$ will be analytic, and $\rho$ will be irrational.

The rotation number $\rho$ can be represented as an infinite continued fraction:
\begin{displaymath}
\rho = [a_1 : a_2 : a_3 : \ldots{}] = \frac{1}{a_1+\frac{1}{a_2+\frac{1}{a_3 + \ldots{}}}}.
\end{displaymath}
The \ebf{$n$th partial convergent of $\rho$} is the rational number
\begin{displaymath}
\frac{p_n}{q_n} = [a_1: \ldots : a_n].
\end{displaymath}
The sequence of denominators $\{q_n\}_{n=1}^\infty$ represent the \ebf{closest return times} of the orbit of any point to itself. It satisfies the following inductive relation:
\begin{displaymath}
q_{n+1} = a_n q_n + q_{n-1}.
\end{displaymath}

Let $\Delta_n \subset S^1$ be the closed arc containing $c$ with end points at $f^{q_n}(c)$ and $f^{q_{n+1}}(c)$. $\Delta_n$ can be expressed as the union of two closed arcs $A_n$ and $A_{n+1}$, where $A_n$ is the closed arc with end points at $c$ and $f^{q_n}(c)$. $A_n$ is called the \ebf{$n$th critical arc}. The $q_n$th iterated preimage of $A_n$ under $f$ is denoted by $A_{-n}$. The set of closed arcs
\begin{displaymath}
\mathcal{P}^{S^1}_n = \{A_n, f(A_n), \ldots, f^{q_{n+1}-1}(A_n)\} \cup \{A_{n+1}, f(A_{n+1}), \ldots, f^{q_n-1}(A_{n+1})\},
\end{displaymath}
which are disjoint except at the end points, is a partition of $S^1$. $\mathcal{P}^{S^1}_n$ is called the \ebf{dynamical partition of level $n$}. The following is an important estimate regarding dynamical partitions due to Swi\c{a}tek and Herman (see \cite{Sw}):

\begin{thm} [Real \emph{a priori} bounds]\label{real bound}
Let $f : S^1 \to S^1$ be a critical circle map with an irrational rotation number $\rho$. Then for all $n$ sufficiently large, every pair of adjacent atoms in $\mathcal{P}^{S^1}_n$ have $K$-commensurate diameters for some universal constant $K > 1$.
\end{thm}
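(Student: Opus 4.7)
The plan is to establish the real \emph{a priori} bounds by adapting the cross-ratio distortion estimates originally due to Swi\c{a}tek and Herman. The central analytic input is a distortion inequality for the cross-ratio
\begin{displaymath}
[a,b,c,d] := \frac{(b-a)(d-c)}{(c-a)(d-b)}
\end{displaymath}
of four ordered points on $S^1$: if $h$ is a diffeomorphism onto its image with negative Schwarzian derivative (as holds for an analytic critical circle map on any interval disjoint from the critical point $c$, in suitable local coordinates), then $[h(a),h(b),h(c),h(d)]$ differs from $[a,b,c,d]$ by a factor controlled by the total length of the image chain. This yields the Sullivan-type cross-ratio inequality that is the workhorse of the argument.

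The first reduction is combinatorial. Because $\rho$ is irrational, the order of the atoms of $\mathcal{P}^{S^1}_n$ is identical to that of a rigid rotation, so every pair of adjacent atoms is (up to iterating $f$) of the form $(f^j(A_n), f^j(A_{n+1}))$ or $(f^j(A_{n+1}), f^{j'}(A_n))$ inside some $f^j(\Delta_n)$. Applying the cross-ratio inequality along the orbits of $A_n$ and $A_{n+1}$ then reduces the general comparison of adjacent atoms to a single comparison of $|A_n|$ and $|A_{n+1}|$ in a controlled neighborhood of $c$, \emph{provided} one can show that the iterates $f^j$ with $0 \le j < q_{n+1}$ act with bounded distortion on appropriate pull-back neighborhoods.

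To obtain this bounded distortion, the idea is to pull back a definite two-sided neighborhood of $f^j(A_n \cup A_{n+1})$ by the $j$ inverse branches of $f$ that follow the orbit in reverse. The key combinatorial fact is that the orbit of $A_n \cup A_{n+1}$ of length $q_{n+1}$ visits the critical region $\Delta_{n-1}$ only at its last step: at every earlier instant the pulled-back neighborhood avoids a definite sector around $c$, so the corresponding inverse branch is a univalent composition of maps with negative Schwarzian. The Koebe principle for such compositions then bounds the distortion by a universal constant, and so transports any bound obtained near the critical point to every pair of adjacent atoms in the partition.

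The last and hardest step is to bound the ratio $|A_n|/|A_{n+1}|$ itself away from $0$ and $\infty$. Here one cannot appeal to any linear distortion tool, because $c$ itself lies in the interval under consideration. The argument exploits the cubic nature of the critical point: in normalizing coordinates $f^{q_n}$ restricted to a neighborhood of $c$ is an almost parabolic map of the form $w \mapsto w + \alpha w^3 + \ldots$, and Yoccoz's almost-parabolic geometry yields a universal comparison $|A_{n+1}| \asymp |A_n|$ (as well as a comparison between $A_n$ and the gap between $A_n$ and $A_{n+1}$). Feeding this single near-critical estimate into the cross-ratio transport of the previous paragraph produces the uniform constant $K$ of the statement. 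The principal obstacle is precisely this last step: every other ingredient is a clean distortion estimate, but the near-critical bound requires a genuinely nonlinear analysis of the cubic normal form, which is where the hypothesis that $c$ is of cubic type is actually used.
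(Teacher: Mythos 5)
The paper does not prove this theorem: it states it as a classical result of Swi\c{a}tek and Herman and cites \cite{Sw}, so there is no in-paper argument to compare your sketch against. Evaluating the sketch on its own terms, the first three paragraphs correctly identify the standard machinery: the Sullivan/Swi\c{a}tek cross-ratio distortion inequality, the combinatorial reduction (the cyclic order of atoms in $\mathcal{P}^{S^1}_n$ is that of a rigid rotation, so one transports a single near-critical comparison), and the bounded-distortion pull-back argument supplied by Koebe space away from the critical point.

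The final paragraph, however, misidentifies the tool for the near-critical step, and this is a genuine gap. You describe $f^{q_n}$ restricted to a neighborhood of $c$ as an almost-parabolic map $w \mapsto w + \alpha w^3 + \ldots$ and invoke Yoccoz's almost-parabolic lemma. But since $c$ is a cubic critical point of $f$, we have $(f^{q_n})'(c) = 0$; after translating $c$ to the origin, $f^{q_n}$ near $c$ has the form $w \mapsto (f^{q_n}(c)-c) + \beta w^3 + \ldots$, that is, a genuine cubic critical point plus a small constant shift, not a near-identity with a cubic correction. Yoccoz's lemma treats long passages past a near-parabolic fixed point (derivative close to $1$) and would be relevant to the cascade $f^{q_n}, f^{2q_n}, \ldots, f^{a_n q_n}$ when $a_n$ is large, not to the single comparison of $A_n$ with $A_{n+1}$. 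In the Swi\c{a}tek--Herman argument this comparability is produced by the very cross-ratio inequality you set up in your first paragraph, applied \emph{across} the critical point: because the critical point is non-flat of power-law type, the cross-ratio tool remains valid when the bounding interval straddles $c$ (the Schwarzian stays negative there, and $|f(x)-f(c)| \asymp |x-c|^3$ propagates through the cross-ratio in a controlled way). That is where the cubic hypothesis enters, not through an almost-parabolic normal form. As written, your last step would not close the argument.
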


Below, we present an adaptation of complex a priori bounds of \cite{Y1} (see also \cite{YZ}) to our setting.

Consider the quadratic rational function $R_\nu$ discussed in section \ref{candidate bubble} and \ref{puzzle part}. Denote the Siegel disc for $R_\nu$ by $\mathcal{S}_0$. By theorem \ref{mod blaschke}, there exist a Blaschke product $F_\nu$ and a quasiconformal map $\phi : \hat{\mathbb{C}} \setminus \mathbb{D} \to \hat{\mathbb{C}} \setminus \mathcal{S}_0$ such that
\begin{displaymath}
R_\nu(z) = \phi \circ F_\nu \circ \phi^{-1}(z)
\end{displaymath}
for all $z \in \hat{\mathbb{C}} \setminus \mathcal{S}_0$.

Since $\{\infty, R_\nu(\infty)\}$ is a superattracting 2-periodic orbit for $R_\nu$, $\{\infty, F_\nu(\infty)\}$ and $\{0, F_\nu(0)\}$ are superattracting 2-periodic orbits for $F_\nu$. Denote the bubble (connected component of the Fatou set) for $F_\nu$ containing $0$ and $\infty$ by $\mathcal{A}_0$ and $\mathcal{A}_\infty$ respectively. Note that by theorem \ref{fnu}, the restriction of $F_\nu$ to $S^1$ is a critical circle map.

A \ebf{puzzle piece of level $n$ for $F_\nu$} is the image of a puzzle piece of level $n$ for $R_\nu$ under $\phi^{-1}$. The \ebf{$n$th critical puzzle piece}, denoted $P^{crit}_n$, is defined inductively as follows:
\begin{enumerate}[label=(\roman{*})]
\item $P^{crit}_0$ is the puzzle piece of level $1$ which contains the first critical arc $A_1$.
\item $P^{crit}_n$ is the puzzle piece which contains the preimage arc $A_{-n}$, and is mapped homeomorphically onto $P^{crit}_{n-1}$ by $F_\nu^{q_n}$.
\end{enumerate}
Observe that $\Pi_{\text{even}} := \{P^{crit}_{2n}\}_{n=0}^\infty$ and $\Pi_{\text{odd}}:=\{P^{crit}_{2n+1}\}_{n=0}^\infty$ form two disjoint nested puzzle sequences for $F_\nu$ at the critical point $1$.

\begin{figure}[h]
\centering
\includegraphics[scale=0.6]{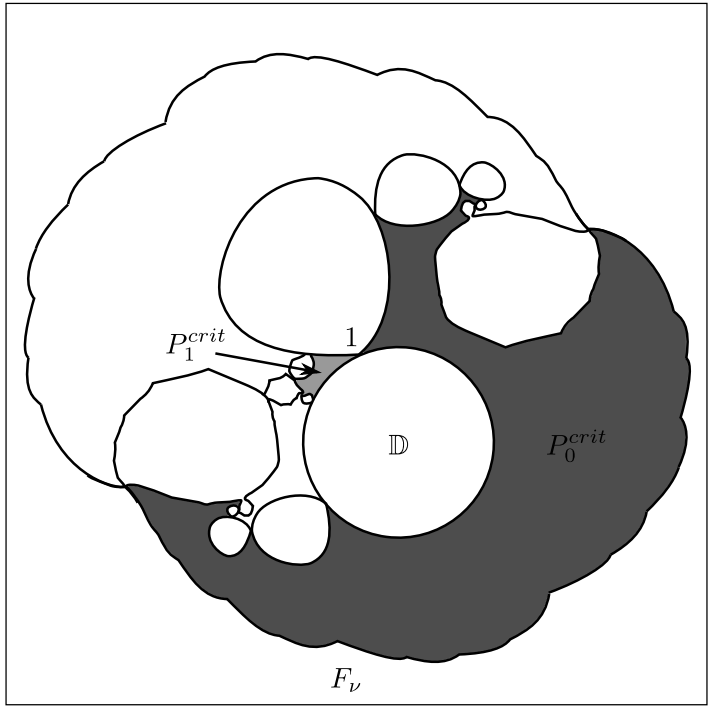}
\caption{The $0$th and $1$st critical puzzle piece for $F_\nu$.}
\label{fig:critpuzzles}
\end{figure}

\begin{lem} \label{disjointness}
Let $\mathcal{A}_\infty \cup F_\nu(\mathcal{A}_\infty)$ be the immediate attracting basin of the superattracting 2-periodic orbit $\{\infty, F_\nu(\infty)\}$ for $F_\nu$. Then there exists $N \in \mathbb{N}$ such that for all $n \geq N$, the $n$th critical puzzle piece $P^{crit}_n$ is disjoint from the closure of $\mathcal{A}_\infty \cup F_\nu(\mathcal{A}_\infty)$.
\end{lem}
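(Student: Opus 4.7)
The plan is to transfer the problem from $F_\nu$ to $R_\nu$ via the quasiconformal conjugacy $\phi$, apply Proposition \ref{disjointness for rnu} separately to each of the two nested critical puzzle subsequences $\Pi_{\mathrm{even}}$ and $\Pi_{\mathrm{odd}}$, and finish with a standard nested-compactness argument on the sphere.

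Concretely, set $\tilde P_n := \phi(P^{crit}_n)$. By the definition of Blaschke puzzle pieces, each $\tilde P_n$ is a puzzle piece of the same level for $R_\nu$, and it contains the arc $\phi(A_{-n}) \subset \partial \mathcal{S}_0$. The even and odd subfamilies $\tilde \Pi_{\mathrm{even}} := \{\tilde P_{2n}\}_{n=0}^\infty$ and $\tilde \Pi_{\mathrm{odd}} := \{\tilde P_{2n+1}\}_{n=0}^\infty$ are genuine nested puzzle sequences for $R_\nu$ in the sense of Section \ref{candidate puzzle}. The limits
\[
L_{\mathrm{even}} := \bigcap_n \tilde P_{2n}, \qquad L_{\mathrm{odd}} := \bigcap_n \tilde P_{2n+1}
\]
are closed and connected subsets of $\hat{\mathbb{C}}$, and each contains the critical point $c := \phi(1) \in \partial \mathcal{S}_0$ of $R_\nu$. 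In particular each limit intersects the boundary of the $\mathcal{S}$-bubble $\mathcal{S}_0$.

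Proposition \ref{disjointness for rnu} now does the essential work: applied to $L_{\mathrm{even}}$ and to $L_{\mathrm{odd}}$, it forbids either from meeting the boundary of any $\mathcal{B}$-bubble, so in particular both are disjoint from $\partial \mathcal{B}_\infty \cup \partial R_\nu(\mathcal{B}_\infty)$. To upgrade this to disjointness from the full closure $\overline{\mathcal{B}_\infty \cup R_\nu(\mathcal{B}_\infty)}$, note that $\mathcal{B}_\infty \cup R_\nu(\mathcal{B}_\infty) \subset \mathcal{R}^{\mathcal{B}}_0 \subset \mathcal{P}_n$ for every $n$: the open bubbles are absorbed into the puzzle partition itself, so every point of either has a neighborhood entirely in $\mathcal{P}_n$ and therefore cannot lie in the closure of any component of $\hat{\mathbb{C}} \setminus \mathcal{P}_n$. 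Combining these two facts gives $L_{\mathrm{even}}, L_{\mathrm{odd}} \cap \overline{\mathcal{B}_\infty \cup R_\nu(\mathcal{B}_\infty)} = \varnothing$.

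A nested-compactness argument then closes the proof: for each parity, the decreasing sequence of compact sets $\tilde P_{2n} \cap \overline{\mathcal{B}_\infty \cup R_\nu(\mathcal{B}_\infty)}$ (respectively for odd indices) in $\hat{\mathbb{C}}$ has empty intersection, so some member is already empty; choose such indices $N_{\mathrm{even}}, N_{\mathrm{odd}}$ and set $N := \max(N_{\mathrm{even}}, N_{\mathrm{odd}})$. Pulling back by the homeomorphism $\phi^{-1}$, which identifies the two basin pairs, yields $P^{crit}_n \cap \overline{\mathcal{A}_\infty \cup F_\nu(\mathcal{A}_\infty)} = \varnothing$ for every $n \geq N$. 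The only non-routine input is Proposition \ref{disjointness for rnu}; the main potential obstacle is verifying that its hypothesis is genuinely satisfied by $L_{\mathrm{even}}$ and $L_{\mathrm{odd}}$, but this is immediate from the fact that the critical point $c$ sits on $\partial \mathcal{S}_0$ and is trapped in every critical puzzle piece.
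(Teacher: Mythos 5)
Your argument is correct and follows exactly the route the paper intends: the paper's own proof is the one-liner that the result ``follows immediately from Proposition \ref{disjointness for rnu}.'' You have simply supplied the implicit details — transferring to $R_\nu$ via $\phi$, applying Proposition \ref{disjointness for rnu} to the even and odd critical nested sequences (whose limits contain $1\in\partial\mathcal{S}_0$), noting that the open bubbles $\mathcal{B}_\infty\cup R_\nu(\mathcal{B}_\infty)$ are absorbed into every puzzle partition so only the boundary case needs the proposition, and finishing with the finite-intersection property of nested compacta to extract a single $N$ — and each of these steps is sound.
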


\begin{proof}
The result follows immediately from proposition \ref{disjointness for rnu}.
\end{proof}

\begin{thm} \label{yampolthm}
For all $n$ sufficiently larger than the constant $N$ in lemma \ref{disjointness}, we have the following inequality:
\begin{displaymath}
\frac{\emph{diam}(P^{crit}_n)}{\emph{diam}(A_{-n})} \leq C_1 \sqrt[3]{\frac{\emph{diam}(P^{crit}_{n-1})}{\emph{diam}(A_{-(n-1)})}} + C_2,
\end{displaymath}
where $C_1$ and $C_2$ are universal constants.
\end{thm}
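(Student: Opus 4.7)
The plan is to adapt Yampolsky's proof of complex a priori bounds \cite{Y1} for Siegel quadratic polynomials to the Blaschke product model $F_\nu$. Write $\rho_n := \mathrm{diam}(P^{crit}_n)/\mathrm{diam}(A_{-n})$, so that the desired inequality reads $\rho_n \le C_1 \rho_{n-1}^{1/3} + C_2$. The proof proceeds by pulling $P^{crit}_{n-1}$ back under the unique branch of $F_\nu^{-q_n}$ landing in $P^{crit}_n$, and decomposing this pull-back into a univalent part and a cubic part.

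For $n \ge N$, Lemma~\ref{disjointness} places both puzzle pieces inside the hyperbolic surface $\Omega := \hat{\mathbb{C}} \setminus \overline{\mathcal{A}_\infty \cup F_\nu(\mathcal{A}_\infty)}$, on which branches of $F_\nu^{-1}$ are strict contractions in the hyperbolic metric away from critical points. The double critical point $1 \in \partial \mathbb{D}$ is the common endpoint of $A_{-n}$ and $A_{-(n-1)}$ and lies on $\partial P^{crit}_n \cap \partial P^{crit}_{n-1}$; near it, $F_\nu(z) - F_\nu(1) = A(z-1)^3(1 + o(1))$. The first step is to use the real a priori bounds of Theorem~\ref{real bound} to enlarge $P^{crit}_{n-1}$ slightly to a domain $\widetilde{P}^{crit}_{n-1}$ bounded by adjacent partition atoms, so that $P^{crit}_{n-1}$ sits inside $\widetilde{P}^{crit}_{n-1}$ with a Euclidean collar of bounded modulus. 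Then $F_\nu^{q_n}$ can be viewed as a degree-three branched cover from an enlargement of $P^{crit}_n$ onto $\widetilde{P}^{crit}_{n-1}$, ramified only over the critical value $v := F_\nu^{q_n}(1)$, and its inverse factors as a univalent map $U^{-1}$ composed with a local cube-root $\sigma^{-1} : w \mapsto (w-v)^{1/3}$ near $v$.

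The estimate now splits into two parts. Outside a Euclidean disk $D$ of radius $r_0 \asymp \mathrm{diam}(A_n)$ around $v$, the inverse is univalent on a domain of bounded modulus, so Koebe's distortion theorem bounds the Euclidean diameter of the pull-back by a universal multiple of $\mathrm{diam}(A_{-n})$, yielding the additive $C_2$ contribution. Inside $D$, the cubic pull-back compresses Euclidean radii by an exponent of $1/3$: the whole of $P^{crit}_{n-1}$ sits within a disk of radius $\asymp \mathrm{diam}(P^{crit}_{n-1}) = \rho_{n-1} \cdot \mathrm{diam}(A_{-(n-1)})$ around $v$, which pulls back under $\sigma^{-1}$ -- with the scaling fixed by the real a priori bounds relating $\mathrm{diam}(A_n)$, $\mathrm{diam}(A_{-n})$ and the cubic coefficient $A$ at the critical point -- to a region of Euclidean diameter $\asymp \rho_{n-1}^{1/3} \cdot \mathrm{diam}(A_{-n})$, giving the $C_1 \rho_{n-1}^{1/3}$ term. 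Summing the two contributions yields the claimed inequality.

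The principal obstacle is controlling the univalent factor $U$ uniformly up to the boundary point $1$, where the cubic branch is attached. Yampolsky's argument handles this by combining the Schwarz--Pick inequality on $\Omega$ -- here with the superattracting basin $\mathcal{A}_\infty \cup F_\nu(\mathcal{A}_\infty)$ playing the role of the attracting basin at infinity in \cite{Y1} -- with the real a priori bounds on $\partial \mathbb{D}$, which provide uniform control on the shape of the partition atoms $A_{-n}$ and their neighbours. Once the distortion of $U$ is uniformly bounded, the cubic and Koebe contributions combine to produce the claimed recursion.
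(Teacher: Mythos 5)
Your proposal captures the correct high-level ingredients of Yampolsky's complex \emph{a priori} bounds (real bounds from Theorem~\ref{real bound}, Koebe distortion, and a cube-root scaling coming from the cubic critical point on $\partial\mathbb{D}$), but it takes a genuinely different route from the paper's proof, and the shortcut contains a gap.

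The paper's proof does not treat $F_\nu^{q_n}$ as a single degree-three branched cover. Instead it lifts everything to logarithmic coordinates: it passes through the exponential map to the universal cover $S$ of $\hat{\mathbb{C}}\setminus\mathcal{A}$, where $\mathcal{A} = \overline{\mathcal{A}_0\cup F_\nu(\mathcal{A}_0)\cup\mathcal{A}_\infty\cup F_\nu(\mathcal{A}_\infty)}$ (note: both superattracting cycles, not only the one you remove to form $\Omega$). It then constructs the lift $\phi$ of $(F_\nu|_{\partial\mathbb{D}})^{-1}$ and tracks the whole inverse orbit $z_0, z_{-1}, \dots, z_{-q_n}$ one step at a time, using three technical lemmas (Lemmas~\ref{lem6.1}--\ref{lem6.3}, adapted from \cite{Y1} and \cite{YZ}) which control whether the orbit point $\zeta$ stays inside the hyperbolic neighbourhoods $G^m_\theta$ of the intervals $\operatorname{Log}(E_m)$ as the orbit descends to the scales $I_1 \supset I_2 \supset \cdots \supset I_n$. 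The cube root and the additive constant are extracted from this iterative bookkeeping of returns, not from one global factorization of $F_\nu^{q_n}$.

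Here is the concrete gap in your argument. You propose to thicken $P^{crit}_{n-1}$ to $\widetilde{P}^{crit}_{n-1}$ using the real bounds, assert that $P^{crit}_{n-1}$ then sits inside $\widetilde{P}^{crit}_{n-1}$ with a ``Euclidean collar of bounded modulus,'' and then apply Koebe to the univalent part of $F_\nu^{-q_n}$. But the real \emph{a priori} bounds of Theorem~\ref{real bound} control only the one-dimensional geometry of the dynamical partition on $\partial\mathbb{D}$ — commensurability of adjacent arc lengths. They say nothing \emph{a priori} about the two-dimensional shape of the puzzle pieces, and the modulus of $\widetilde{P}^{crit}_{n-1}\setminus P^{crit}_{n-1}$ depends precisely on that shape. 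Obtaining uniform two-dimensional geometric control of the critical puzzle pieces is essentially the content of the complex \emph{a priori} bounds you are trying to prove, so invoking a ``bounded modulus collar'' at the start of the argument is circular. There is also a subsidiary issue: for $F_\nu^{q_n}$ to extend to a degree-three branched cover of the enlarged domain ramified only over $v$, you must show that the first $q_n - 1$ forward iterates of the enlarged preimage avoid all critical points of $F_\nu$; since those iterates return close to the critical circle point at every closest return time $q_m$, $m < n$, this needs the same step-by-step control you are trying to avoid. The inductive orbit-tracking in \cite{Y1} and \cite{YZ}, which the paper adapts, is designed precisely to build up the two-dimensional control gradually from the one-dimensional real bounds plus the hyperbolic contraction on the slit plane $S$.
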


\begin{proof}
Similarly to [YZ], we first lift a suitable inverse branch of $F_\nu$ to the universal covering space.

Define the exponential map $\text{Exp}: \mathbb{C} \to \mathbb{C}$ by
\begin{displaymath}
\text{Exp}(z) := e^{2\pi i z}.
\end{displaymath}
Let $I = (\tau - 1, \tau) \subset \mathbb{R}$ be an open interval such that $0 \in I$, and
\begin{displaymath}
\text{Exp}(\tau) = \text{Exp}(\tau - 1) = F_\nu(1).
\end{displaymath}
Let
\begin{displaymath}
\text{Log} : S^1 \setminus \{F_\nu(1)\} \to I
\end{displaymath}
be the inverse of Exp restricted to $I$. The $n$th critical interval is defined as
\begin{displaymath}
I_n := \text{Log}(A_n).
\end{displaymath}
Denote the component of $\text{Exp}^{-1}(P^{crit}_n)$ intersecting $I$ by $\hat{P}^{crit}_n$. 

Define
\begin{displaymath}
\mathcal{A} := \overline{\mathcal{A}_0 \cup F_\nu(\mathcal{A}_0) \cup \mathcal{A}_\infty \cup F_\nu(\mathcal{A}_\infty)},
\end{displaymath}
and let $S \subset \mathbb{C}$ be the universal covering space of $\hat{\mathbb{C}} \setminus \mathcal{A}$ with the covering map $\text{Exp}|_S : S \to \hat{\mathbb{C}} \setminus \mathcal{A}$. For any given interval $J \subset \mathbb{R}$, we denote
\begin{displaymath}
S_J := (S \setminus \mathbb{R}) \cup J.
\end{displaymath}

The restriction of the map $F_\nu$ to $S^1$ is a homeomorphism, and hence, has an inverse. We define a lift $\phi : I \to I$ of $(F_\nu|_{\partial{\mathbb{D}}})^{-1}$ by
\begin{displaymath}
\phi(x) := \text{Log} \circ F_\nu^{-1} \circ \text{Exp}(x).
\end{displaymath}
Note that $\phi$ is discontinuous at $\text{Log}(F_\nu^2(1))$, which is mapped to $\tau-1$ and $\tau$ by $\phi$. Let $n \in \mathbb{N}$. By the combinatorics of critical circle maps, the $k$th iterate of $\phi$ on $I_n$ is continuous for all $1 \leq k \leq q_n$. By monodromy theorem, $\phi^k$ extends to a conformal map on $S_{I_n}$.

For $z \in S_J$, let $l_z$ and $r_z$ be the line segment connecting $z$ to $\tau-1$ and $z$ to $\tau$ respectively. The smaller of the outer angles formed between $l_z$ and $(-\infty, \tau-1)$, and $r_z$ and $(\tau, +\infty)$ is denoted $\widehat{(z, J)}$. 

\begin{figure}[h]
\centering
\includegraphics[scale=0.6]{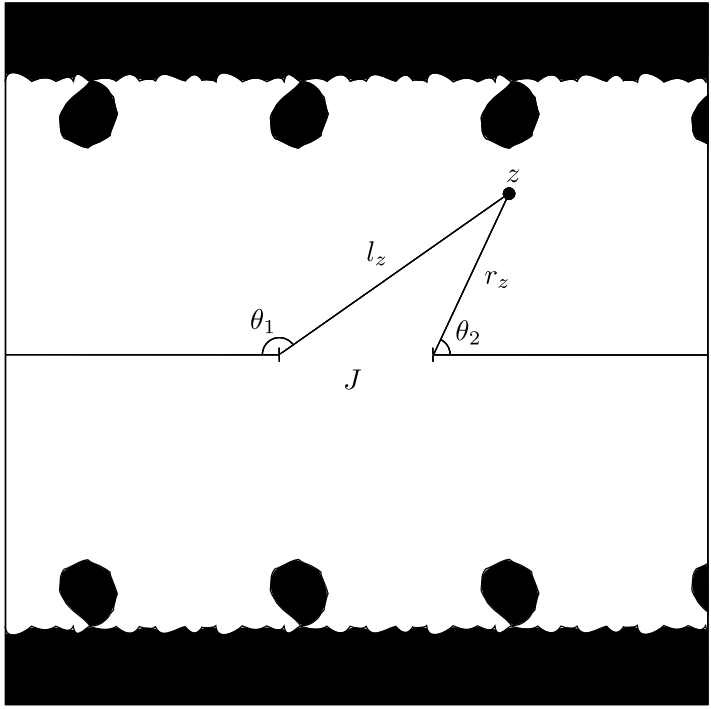}
\caption{Illustration of $\widehat{(z, J)} = \text{min}(\theta_1, \theta_2)$.}
\label{fig:Sangle}
\end{figure}

Denote the hyperbolic distance in $S_J$ by $\text{dist}_{S_J}$. A hyperbolic neighbourhood $\{z \in S_J \hspace{2mm} | \hspace{2mm} \text{dist}_{S_J}(z, J)\}$ of $J$ forms an angle $\theta \in (0, \pi)$ with $\mathbb{R}$. Denote this neighbourhood by $G_\theta(J)$. Observe that $G_\theta(J) \subset \{z \in S_J \hspace{2mm} | \hspace{2mm} \widehat{(z, J)} > \theta\}$.

For $n \in \mathbb{N}$, define $E_n \subset S^1$ as the open arc containing $1$ with end points at $F_\nu^{q_{n+1}}(1)$, and $F_\nu^{q_n - q_{n+1}}(1)$. Observe that $E_n$ contains the critical arcs $A_n$ and $A_{n+1}$. Define
\begin{displaymath}
G^n_\theta := G_\theta(\text{Log}(E_n)).
\end{displaymath}

\begin{figure}[h]
\centering
\includegraphics[scale=0.6]{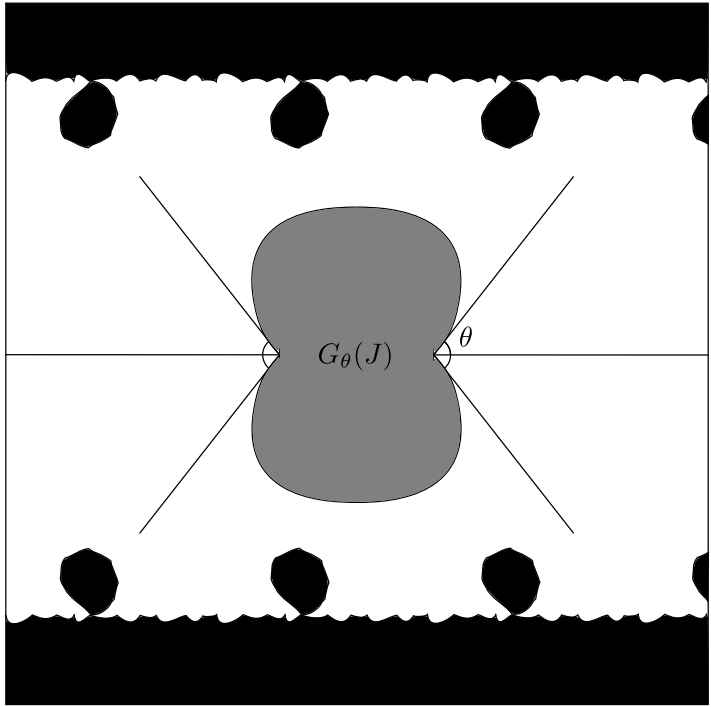}
\caption{Illustration of the hyperbolic neighbourhood $G_\theta(J)$.}
\label{fig:Sneighbourhood}
\end{figure}

Consider the constant $N$ in lemma \ref{disjointness}. Since $P^{crit}_N \cup P^{crit}_{N+1}$ is disjoint from the closure of $\mathcal{A}$, it is contained in some annulus $E \Subset \hat{\mathbb{C}} \setminus \mathcal{A}$. Let $\breve{S} \Subset S$ be the universal cover of $E$ with the covering map $\text{Exp}|_{\breve{S}}$. Choose $\theta$ such that $\hat{P}^{crit}_{N+2} \cup \hat{P}^{crit}_{N+3} \subset G^{N+1}_\theta$. Then we have $\hat{P}^{crit}_n \subset G^{N+1}_\theta$ for all $n \geq N+3$.

Now, suppose we are given $n \geq N+3$. Let
\begin{equation} \label{intorbit}
J_0 := I_n, J_{-1} := \phi(J_0), \ldots, J_{-q_n} := \phi^{q_n}(I_n),
\end{equation}
be the orbit of $I_n$ under $\phi$. Given any point $z_0 \in S_{J_0}$, let
\begin{equation} \label{ptorbit}
z_0, z_{-1} := \phi(z_0), \ldots, z_{-q_n} := \phi^{q_n}(z_0),
\end{equation}
be the orbit of $z_0$ under $\phi$.

The following three lemmas are adaptations of lemma 2.1, 4.2 and 4.4 in \cite{Y1} and lemma 6.1, 6.2 and 6.3 in \cite{YZ}:

\begin{lem} \label{lem6.1}
Consider the orbit \eqref{ptorbit}. Let $k \leq q_n-1$. Assume that for some $i$ between $0$ and $k$, $z_i \in \breve{S}$, and $\widehat{(z_{-i}, J_{-i})} > \epsilon$. Then we have
\begin{displaymath}
\frac{\text{dist}(z_{-k}, J_{-k})}{|J_{-k}|} \leq C \frac{\text{dist}(z_{-i}, J_{-i})}{|J_{-i}|}
\end{displaymath}
for some constant $C = C(\epsilon, \breve{S})>0$.
\end{lem}

\begin{lem} \label{lem6.2}
Let $J$ and $J'$ be two consecutive returns of the orbit \eqref{intorbit} of $J_0$ to $I_m$ for $1 < m < n$, and let $\zeta$ and $\zeta'$ be the corresponding points of the inverse orbit \eqref{ptorbit}. If $\zeta \in G^m_\theta$, then either $\zeta' \in G^m_\theta$ or $\widehat{(\zeta', J')}>\epsilon$ and $\text{dist}(\zeta', J')< C|I_m|$, where the constants $\epsilon$ and $C$ are independent of $m$.
\end{lem}

\begin{lem} \label{lem6.3}
Let $J$ be the last return of the orbit \eqref{intorbit} to the interval $I_m$ preceding the first return to $I_{m+1}$ for $1 \leq m \leq n-1$, and let $J'$ and $J''$ be the first two returns to $I_{m+1}$. Let $\zeta$, $\zeta'$ and $\zeta''$ be the corresponding points in the inverse orbit \eqref{ptorbit}, so that $\zeta' = \phi^{q_m}(\zeta)$ and $\zeta'' = \phi^{q_{m+2}}(\zeta')$. Suppose that $\zeta \in G^m_\theta$. Then either $\widehat{(\zeta'', I_{m+1})}>\epsilon$ and $\text{dist}(\zeta'', J'') < C |I_{m+1}|$, or $\zeta'' \in G^{m+1}_\theta$, where the constants $\epsilon$ and $C$ are independent of $m$. 
\end{lem}

The interested reader can follow the proofs of lemma \ref{lem6.1}, \ref{lem6.2} and \ref{lem6.3}, and the rest of the proof of theorem \ref{yampolthm} in \cite{YZ} \emph{mutatis mutandis}.
\end{proof}

\begin{cor} \label{crit shrink}
For all $n$ sufficiently larger than the constant $N$ in lemma \ref{disjointness}, $\emph{diam}(P^{crit}_n)$ is $K$-commensurate to $\emph{diam}(A_{-n})$ for some universal constant $K>1$. Consequently, $\emph{diam}(P^{crit}_n) \to 0$ as $n \to \infty$.
\end{cor}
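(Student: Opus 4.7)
The plan is to derive the corollary directly from Theorem~\ref{yampolthm} by a one-dimensional fixed-point analysis, combine this with the trivial inclusion $A_{-n} \subset P^{crit}_n$, and then invoke the real \emph{a priori} bounds (Theorem~\ref{real bound}) for the shrinking statement.

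Set $x_n := \text{diam}(P^{crit}_n)/\text{diam}(A_{-n})$ and $g(y) := C_1 y^{1/3} + C_2$, so that Theorem~\ref{yampolthm} reads $x_n \leq g(x_{n-1})$ for all large enough $n$. Since $g$ is concave and increasing with $g(y)/y \to 0$ as $y \to \infty$ and $g(0) = C_2 > 0$, it admits a unique positive fixed point $y^*$, and one has $g(y) < y$ for $y > y^*$ while $g(y) \leq y^*$ for $y \leq y^*$. A short induction (the sequence $x_n$ either already lies in $(0, y^*]$ and stays trapped, or is forced to strictly decrease until it first enters $(0, y^*]$) then shows $x_n \leq y^*$ for all sufficiently large $n$. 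Combining this with the trivial bound $\text{diam}(A_{-n}) \leq \text{diam}(P^{crit}_n)$, which holds because $A_{-n} \subset P^{crit}_n$ by the definition of the $n$th critical puzzle piece, yields the claimed commensurability with $K := \max(1, y^*)$.

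For the final assertion that $\text{diam}(P^{crit}_n) \to 0$, the commensurability reduces matters to showing $\text{diam}(A_{-n}) \to 0$. Under the semiconjugacy furnished by Theorem~\ref{mod blaschke} and Theorem~\ref{fnu}, $A_{-n}$ is an atom of the dynamical partition $\mathcal{P}^{S^1}_n$ of the critical circle map $F_\nu|_{S^1}$, which has irrational rotation number $\nu$ of bounded type. Theorem~\ref{real bound} guarantees that adjacent atoms of $\mathcal{P}^{S^1}_n$ are uniformly commensurate; since $\mathcal{P}^{S^1}_n$ consists of $q_n + q_{n+1} \to \infty$ atoms tiling $S^1$, a standard chaining argument along the partition (each atom at level $n$ is strictly subdivided by a finer partition at a uniformly bounded later level, forcing a definite proportional shrinkage) drives every atom, in particular $A_{-n}$, to zero. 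Since $A_{-n}$ is an arc on $S^1$, its Euclidean diameter is comparable to its arc length for $n$ large, so the shrinkage transfers to the Euclidean metric used in Theorem~\ref{yampolthm}. The substantive work lies entirely in Theorem~\ref{yampolthm} itself; conditional on that estimate, the corollary is little more than a one-line contraction argument plus a routine appeal to real \emph{a priori} bounds, and I do not expect a serious obstacle in either step.
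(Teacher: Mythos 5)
Your proposal is correct, and the core idea is the same as the paper's: both reduce the corollary to the purely one-dimensional fact that any sequence $a_n > 0$ satisfying $a_n \leq C_1 a_{n-1}^{1/3} + C_2$ for all large $n$ is bounded, and then combine the resulting upper bound on $\text{diam}(P^{crit}_n)/\text{diam}(A_{-n})$ with the trivial lower bound $1$ coming from $A_{-n} \subset P^{crit}_n$. Where you differ is in how the one-dimensional boundedness is argued. The paper dominates $a_n$ by the supersequence $b_0 = \max(1, a_0)$, $b_n = C\, b_{n-1}^{1/3}$ (with $C$ chosen so $C k^{1/3} \geq C_1 k^{1/3} + C_2$ for $k \geq 1$), which is exactly solvable and converges to $C^{3/2}$; you instead analyze the map $g(y) = C_1 y^{1/3} + C_2$ directly via its unique positive fixed point $y^*$. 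Both are elementary contraction arguments of the same strength. One small inaccuracy in your version: a sequence above $y^*$ need not ``enter $(0, y^*]$'' in finite time (it could decrease to $y^*$ from above), but this does not affect the conclusion since $x_n \leq \max(x_{N_0}, y^*)$ in all cases, which is all you need. Finally, you spell out the ``consequently'' clause by appealing to Theorem~\ref{real bound} to show $\text{diam}(A_{-n}) \to 0$; the paper's proof leaves this last step implicit, so your treatment is slightly more complete here, and the chaining argument you sketch (bounded-type rotation number forces a definite proportional shrinkage of dynamical-partition atoms at each level) is indeed the standard one.
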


\begin{proof}
It suffices to show that any sequence of positive numbers $\{a_n\}_{n=0}^\infty$ satisfying the relation
\begin{displaymath}
a_n \leq C_1 \sqrt[3]{a_{n-1}} + C_2
\end{displaymath}
for all $n$ is bounded.

Consider the sequence $\{b_n\}_{n=0}^\infty$ defined inductively by
\begin{enumerate} [label = \roman*)]
\item $b_0 =$ max$(1, a_0)$,
\item $b_n = C \sqrt[3]{b_{n-1}}$,
\end{enumerate}
where $C$ is chosen so that
\begin{displaymath}
C \sqrt[3]{k} \geq C_1 \sqrt[3]{k} + C_2
\end{displaymath}
for all $k \geq 1$. It is easy to see that $b_n \geq a_n$ for all $n$.

A straightforward computation shows that
\begin{displaymath}
b_n = C^{1 + \frac{1}{3} + \ldots + \frac{1}{3^{n-1}}} \sqrt[3^{n-1}]{b_0} \xrightarrow{n \to \infty} C^{\frac{3}{2}}.
\end{displaymath}
Hence, $\{b_n\}_{n=0}^\infty$ and therefore, $\{a_n\}_{n=0}^\infty$ are bounded.
\end{proof}

The following result we record for later use:

\begin{lem} \label{comm disc}
For all $n$ sufficiently large, the $n$th critical puzzle piece $P^{crit}_n$ contains a Euclidean disc $D_n$ such that \emph{diam}$(D_n)$ is $K$-commensurate to \emph{diam}$(P^{crit}_n)$ for some universal constant $K>1$.
\end{lem}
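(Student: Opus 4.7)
The plan is to propagate the inscribed-disc property inductively along the two parity-nested subsequences $\{P^{crit}_{2n}\}$ and $\{P^{crit}_{2n+1}\}$, using the univalence of the first-return maps together with Koebe distortion. As the base case, I would take $N$ as in Lemma \ref{disjointness} and observe that $P^{crit}_N$ and $P^{crit}_{N+1}$ are fixed Jordan domains compactly contained in $\hat{\mathbb C}\setminus\mathcal A$, so each trivially contains a Euclidean disc of commensurate diameter, centered at a point lying at definite hyperbolic distance from its boundary.

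The key analytic input is a universal lower bound
\[
\mathrm{mod}\bigl(P^{crit}_{n-2}\setminus\overline{P^{crit}_n}\bigr)\geq\mu>0
\]
for all sufficiently large $n$. I would extract this from the proof of Theorem \ref{yampolthm}: the lifted critical puzzle pieces $\hat P^{crit}_n\subset \hat P^{crit}_{n-2}$ both live inside $G^{N+1}_\theta$ in the universal cover $S$, and because their diameters shrink by a definite factor at every other step (Corollary \ref{crit shrink} combined with the real bounds of Theorem \ref{real bound}), the hyperbolic shell separating them in $S$ is forced to have thickness bounded below by a universal constant. Since $\mathrm{Exp}$ is univalent on the relevant compact set, this hyperbolic thickness descends to a Euclidean modulus bound in the plane.

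The inductive step then runs as follows. Let $g_n:=(F_\nu^{q_{n-1}+q_n})^{-1}:P^{crit}_{n-2}\to P^{crit}_n$ be the conformal inverse of the first-return map, which is a homeomorphism by construction. Assuming inductively that $P^{crit}_{n-2}$ contains a Euclidean disc $D_{n-2}$ centered at a point $z_{n-2}$ of radius commensurate to $\mathrm{diam}(P^{crit}_{n-2})$ and sitting at bounded hyperbolic distance from $\partial P^{crit}_{n-2}$, Koebe's distortion theorem gives that $g_n$ has universally bounded distortion on $D_{n-2}$. Its image $g_n(D_{n-2})\subset P^{crit}_n$ is then a near-round Jordan region in which a round Euclidean disc $D_n$ can be inscribed; moreover the modulus bound guarantees that $z_n:=g_n(z_{n-2})$ sits at bounded hyperbolic distance from $\partial P^{crit}_n$, reproducing the inductive hypothesis.

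The main obstacle is to verify that $\mathrm{diam}(D_n)\asymp\mathrm{diam}(P^{crit}_n)$, and not merely that $\mathrm{diam}(D_n)\asymp|g_n'(z_{n-2})|\cdot\mathrm{diam}(D_{n-2})$. This amounts to showing that the conformal radius of $P^{crit}_n$ at $z_n$ is commensurate to $\mathrm{diam}(P^{crit}_n)$, which is precisely the shape control we are inducting on. The apparent circularity is resolved by combining the modulus bound above with Koebe's $1/4$ theorem: the bounded hyperbolic displacement of $z_n$ inside $P^{crit}_n$ forces $\mathrm{dist}(z_n,\partial P^{crit}_n)$ to capture a definite fraction of $\mathrm{diam}(P^{crit}_n)$, thereby closing the induction.
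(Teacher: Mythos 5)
Your proposal takes a genuinely different, inductive route from the paper, but it has two gaps that I do not think can be repaired as stated.

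First, the claimed modulus bound
\[
\mathrm{mod}\bigl(P^{crit}_{n-2}\setminus\overline{P^{crit}_n}\bigr)\geq\mu>0
\]
is not available. The critical puzzle pieces all contain an arc of $\partial\mathbb{D}$ through the critical point $1$ as part of their boundary, because $\partial\mathbb{D}$ (the image of $\partial\mathcal{S}_0$ under $\phi^{-1}$) lies inside every puzzle partition $\mathcal{P}_n$. In particular $1\in\partial P^{crit}_n\cap\partial P^{crit}_{n-2}$ for every large $n$, so $\overline{P^{crit}_n}$ and $\hat{\mathbb C}\setminus P^{crit}_{n-2}$ meet at $1$ and their union is connected; the region $P^{crit}_{n-2}\setminus\overline{P^{crit}_n}$ is therefore simply connected (a pinched, degenerate ``annulus''), and its modulus carries no information about how far $P^{crit}_n$ sits from $\partial P^{crit}_{n-2}$. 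The ``hyperbolic shell thickness'' heuristic in the cover fails precisely at the lift of $1$, where the nested pieces touch. Diameter decay from Corollary \ref{crit shrink} plus nesting is not enough to manufacture an essential separating annulus.

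Second, even granting a modulus bound, the step you use to break the circularity is false in general. A point $z_n$ at bounded hyperbolic distance from a reference center does not satisfy $\mathrm{dist}(z_n,\partial P^{crit}_n)\asymp\mathrm{diam}(P^{crit}_n)$ unless $P^{crit}_n$ has bounded eccentricity; in a $1\times L$ rectangle the central point is at Euclidean distance about $1/2$ from the boundary while the diameter is about $L$. Bounded eccentricity of $P^{crit}_n$ is exactly the conclusion of the lemma, so the induction has not actually closed.

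The paper sidesteps both issues by not inducting at all: it constructs the inverse branch $\psi_n$ of $F_\nu^{q_n}$ on a single concrete disc $D_1$ centered at $1$ of radius $|A_n|$, restricts to a fixed-shape subdisc $D_2\subset D_1\cap U_{\mathrm{out}}$ bounded away from the cubic singularity, and applies Koebe distortion once. Real a priori bounds control $|\psi_n'(1)|$, so $\psi_n(D_2)$ is a nearly round region inside $P^{crit}_n$ of Euclidean size $\asymp|A_{-n}|$, which is $\asymp\mathrm{diam}(P^{crit}_n)$ by Corollary \ref{crit shrink}. That argument uses a single uniformly controlled inverse branch rather than a chain of pullbacks, and so never needs a modulus bound between consecutive critical pieces or an a priori shape hypothesis.
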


\begin{proof}
Let $D_1$ be a disc centered at $1$ such that $F_\nu^{q_n}(1) \in \partial D_1$. The map $F_\nu^{q_n}|_{A_n}$ has a well defined inverse branch which extends to $D_1$. Denote this inverse branch by $\psi_n$. As a consequence of real a priori bounds, we have the following estimate:
\begin{displaymath}
\frac{1}{|K_1|} \leq |\psi_n'(1)| \leq |K_1|,
\end{displaymath}
where $K_1$ is some universal constant independent of $n$.

Observe that the preimage of $\mathbb{D}$ under $F_\nu$ consists of two connected components $U_{\text{in}} \subset \mathbb{D}$ and $U_{\text{out}} \subset \mathbb{C} \setminus \overline{\mathbb{D}}$. Moreover, $\overline{U_{\text{in}}} \cap \overline{U_{\text{out}}} = \{1\}$. It is not difficult to see that $\psi_n$ extends to $U_{\text{out}}$, and that $\psi_n(U_{\text{out}}) \subset P^{crit}_n$.

Now, choose a subdisc $D_2 \subset D_1 \cap U_{\text{out}}$ such that the annulus $A = D_1 \setminus \overline{D_2}$ satisfies the following estimate
\begin{displaymath}
\frac{1}{|K_2|} \leq \text{mod}(A) \leq |K_2|,
\end{displaymath}
for some universal constant $K_2$ independent of $n$. By Koebe distortion theorem, $\psi_n$ has uniformly bounded distortion on $D_2$. Since $\psi_n(D_2) \subset \psi_N(U_\text{out}) \subset P^{crit}_n$, the result follows.
\end{proof}

\newpage

\section{The Proof of the Shrinking Theorem} \label{shrink proofs}

We are ready to prove the shrinking theorem stated at the end of section \ref{puzzle part}. The proof will be split into three propositions.

\begin{prop} \label{alpha shrinkage}
If $\Pi_t$ is a nested puzzle sequence such that $L(\Pi_t)$ contains $\beta$ or $\kappa$, then $\Pi_t$ shrink to a point.
\end{prop}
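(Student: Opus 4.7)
The plan is to use Koenigs linearization at the repelling fixed point, treating $\beta$ and $\kappa$ analogously since both are repelling fixed points of $R_\nu$; I focus on $\kappa$. By Proposition \ref{maximal shrink} it suffices to handle a maximal nested puzzle sequence $\Pi_t$ whose limit contains $\kappa$. By Proposition \ref{num puzzle} and Corollary \ref{num sequence}, any such sequence is, up to finite truncation, one of the two canonical nested sequences $\{P_n^+\}_n$, $\{P_n^-\}_n$ of puzzle pieces containing $\kappa$ (for sufficiently large $n$ there are exactly two such puzzle pieces at each level).

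Let $\lambda := R_\nu'(\kappa)$, which satisfies $|\lambda| > 1$, and let $\psi : U \to V$ be a Koenigs linearization with $\psi(\kappa)=0$ and $\psi \circ R_\nu = \lambda\,\psi$ on $R_\nu^{-1}(U) \cap U$. Since $R_\nu$ restricts to a homeomorphism from a level-$n$ puzzle piece onto a level-$(n-1)$ one and fixes $\kappa$, we have $R_\nu(P_n^\pm) = P_{n-1}^{\sigma(\pm)}$ for some sign $\sigma$, so each $P_n^\pm$ is obtained from a level-$0$ puzzle piece by $n$ iterations of the local inverse branch of $R_\nu$ that fixes $\kappa$. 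Granting that $P_{n_0}^\pm \subset U$ for some $n_0$, the functional equation iterates to give $\psi(P_n^\pm) = \lambda^{-(n-n_0)}\,\psi(P_{n_0}^{\sigma^{n-n_0}(\pm)})$ for all $n \geq n_0$; since $|\lambda|>1$ this forces $\text{diam}(\psi(P_n^\pm)) \to 0$ exponentially, hence $\text{diam}(P_n^\pm) \to 0$ by conformality of $\psi^{-1}$, which proves $\Pi_t$ shrinks to $\{\kappa\}$.

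The main obstacle is therefore to establish the containment $P_{n_0}^\pm \subset U$ for some $n_0$, equivalently that the nested intersection $X^\pm := \bigcap_n P_n^\pm$ equals the singleton $\{\kappa\}$. The set $X^\pm$ is closed, connected, and contains $\kappa$; a pullback argument using that each $R_\nu|_{P_n^\pm}$ is a homeomorphism onto $P_{n-1}^{\sigma(\pm)}$ yields $R_\nu(X^\pm) = X^\pm$. To rule out a non-trivial invariant continuum at $\kappa$, I plan to combine Proposition \ref{disjointness for rnu} (which prevents $X^\pm$ from meeting bubble boundaries in both families $\mathcal{B}$ and $\mathcal{S}$) with the fact that the local inverse branch of $R_\nu$ at $\kappa$ is a strict hyperbolic contraction on a neighborhood of $\kappa$ in the complement of the post-critical set; together these should force $X^\pm = \{\kappa\}$, at which point the linearization argument above completes the proof.
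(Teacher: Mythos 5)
Your underlying strategy---view $L(\Pi_0)$ as an $R_\nu$-invariant continuum containing $\kappa$, show it avoids the post-critical set, then collapse it to $\{\kappa\}$ by iterating the local inverse branch (a contraction in the hyperbolic metric of the complement of the post-critical set)---is the same as the paper's. The Koenigs linearization wrapper, however, is circular: you yourself note that the containment $P_{n_0}^\pm \subset U$ is equivalent to $X^\pm=\{\kappa\}$, which is precisely what the proposition asserts. Once your ``main obstacle'' is resolved the proof is already complete, so the linearization layer contributes nothing and should be discarded; present the contraction argument directly.

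There is also one substantive gap. You invoke Proposition \ref{disjointness for rnu} to keep $X^\pm$ away from $\partial\mathcal{S}_0$, but that proposition prevents a limit set from meeting bubble boundaries from \emph{both} families, and $\kappa$ is merely the landing point of the infinite bubble rays $\mathcal{R}^{\mathcal{B}}_0$ and $\mathcal{R}^{\mathcal{S}}_0$; a landing point of an infinite bubble ray need not lie on the closure of any bubble, so it is not clear that the hypothesis of that proposition is triggered. The paper gets disjointness from $\partial\mathcal{S}_0$ a different way: $L(\Pi_0)$ is $R_\nu$-invariant, $R_\nu|_{\partial\mathcal{S}_0}$ is minimal, so a nonempty invariant intersection with $\partial\mathcal{S}_0$ would be all of $\partial\mathcal{S}_0$, which is impossible (for instance $1\notin L(\Pi_0)$, since its two maximal nested puzzle sequences have external angles different from $0$). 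With that disjointness established, your plan (``these should force $X^\pm=\{\kappa\}$'') should be carried out concretely as in the paper: take a neighbourhood $N$ of $L(\Pi_0)$ avoiding the closure of the post-critical set, extend the inverse branch $g$ fixing $\kappa$ to $N$, observe $g(L(\Pi_0))=L(\Pi_0)$, and conclude $L(\Pi_0)=\{\kappa\}$ from the strict contraction.
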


\begin{proof}
We prove the result in the case where $L(\Pi_t)$ contains $\kappa$. The proof of the other case is similar.

Since $L(\Pi_t)$ contains $\kappa$, it follows that $t = 0$. Observe that $L(\Pi_0)$ is invariant under $R_\nu$. Hence, $L(\Pi_0) \cap \partial \mathcal{S}_0 = \varnothing$.

Let $D_r$ be a disc of radius $r>0$ centered at $\kappa$. Since $\kappa$ is a repelling fixed point, if $r$ is sufficiently small, then $D_r$ is mapped into itself by an appropriate inverse branch of $R_\nu$. This inverse branch extends to a map $g : N \to N$, where $N$ is a neighbourhood of $L(\Pi_0)$ which is disjoint from $\partial \mathcal{S}_0$, and therefore the closure of the post critical set for $R_\nu$.

Any set compactly contained within $N$ converges to $\kappa$ under iteration of $R_\nu$. It follows that $L(\Pi_0) = \{\kappa\}$.
\end{proof}

For the proof of the remaining two propositions, it will be more convenient for us to work with the Blaschke product $F_\nu$ rather than $R_\nu$ itself. It is clear from the definition that a nested puzzle sequence for $R_\nu$ shrinks if and only if the corresponding nested puzzle sequence for $F_\nu$ shrinks.

\begin{prop} \label{siegel shrinkage}
If $\Pi_t$ is a nested puzzle sequence such that $1 \in L(\Pi_t)$, then $\Pi_t$ shrink to $1$
\end{prop}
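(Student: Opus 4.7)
The key input is Corollary \ref{crit shrink}, which gives $\text{diam}(P^{crit}_n) \to 0$, combined with the two disjoint nested puzzle sequences $\Pi_{\text{even}} = \{P^{crit}_{2n}\}_{n=0}^\infty$ and $\Pi_{\text{odd}} = \{P^{crit}_{2n+1}\}_{n=0}^\infty$ at the critical point $1$. The overall strategy is first to show that these two sequences each shrink to $\{1\}$ and together realize the two maximal nested puzzle sequences through $1$ furnished by Corollary \ref{num sequence}, and then to observe that any $\Pi_t$ with $1 \in L(\Pi_t)$ is trapped inside one of these maximal sequences.

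First, each critical puzzle piece $P^{crit}_n$ contains the arc $A_{-n}$, which has $1$ as one of its endpoints, so $1 \in \overline{P^{crit}_n}$ for every $n$. Together with Corollary \ref{crit shrink}, this immediately yields $L(\Pi_{\text{even}}) = L(\Pi_{\text{odd}}) = \{1\}$. By the standard combinatorics of closest returns for critical circle maps, the arcs $A_{-n}$ alternate sides of $1$ on $S^1$ as the parity of $n$ flips; consequently, for large $n$ and $m$, $P^{crit}_{2n}$ and $P^{crit}_{2m+1}$ have disjoint interiors, so $\Pi_{\text{even}}$ and $\Pi_{\text{odd}}$ cannot be contained in any common nested puzzle sequence.

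Now, by Proposition \ref{num puzzle} applied to $x = 1$, the point $1$ lies in exactly two puzzle pieces of each sufficiently high level; equivalently, by Corollary \ref{num sequence}, there are exactly two maximal nested puzzle sequences $\hat\Pi^{(1)}$ and $\hat\Pi^{(2)}$ whose limits contain $1$. By the previous paragraph, after relabeling we have $\Pi_{\text{even}} \subset \hat\Pi^{(1)}$ and $\Pi_{\text{odd}} \subset \hat\Pi^{(2)}$, and Proposition \ref{maximal shrink} then forces both $\hat\Pi^{(1)}$ and $\hat\Pi^{(2)}$ to shrink to $\{1\}$. Finally, any nested puzzle sequence $\Pi_t$ with $1 \in L(\Pi_t)$ consists entirely of puzzle pieces containing $1$, hence is contained in one of $\hat\Pi^{(1)}, \hat\Pi^{(2)}$, giving $L(\Pi_t) \subseteq \{1\}$; since $1 \in L(\Pi_t)$ by hypothesis, we conclude $L(\Pi_t) = \{1\}$.

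The genuine analytic work has already been absorbed into Corollary \ref{crit shrink} (and hence Theorem \ref{yampolthm}); from there the argument is essentially combinatorial bookkeeping of puzzle pieces through the critical point. The one sub-point that deserves care is the alternation of $A_{-n}$ across $1$, but this is a standard consequence of the fact that $F_\nu|_{S^1}$ is topologically conjugate to the rotation by $\nu$, so the closest returns $F_\nu^{q_n}(1)$ straddle $1$ alternately as $n$ varies, and the same alternation is inherited by the preimage arcs $A_{-n}$ under the homeomorphism $F_\nu^{q_n}|_{S^1}$.
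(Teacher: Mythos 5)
Your proof is correct and follows essentially the same route as the paper: identify the two critical puzzle sequences $\Pi_{\text{even}}$ and $\Pi_{\text{odd}}$ at $1$, use Corollary \ref{crit shrink} together with Proposition \ref{maximal shrink} to show the maximal sequences containing them shrink to $\{1\}$, and invoke Corollary \ref{num sequence} to conclude these are the only two maximal nested puzzle sequences through $1$. The alternation argument you supply to show that $\Pi_{\text{even}}$ and $\Pi_{\text{odd}}$ lie in distinct maximal sequences is the content the paper records (without proof) at the end of Section \ref{secapriori} when it asserts the two sequences are disjoint; spelling it out is a welcome clarification but not a genuinely different argument.
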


\begin{proof}
Recall the definition of critical puzzle pieces $\{P^{crit}_n\}_{n=0}^\infty$ for $F_\nu$ in section \ref{secapriori}. Let $\hat{\Pi}_{\text{even}}$ and $\hat{\Pi}_{\text{odd}}$ be the maximal nested puzzle sequence containing $\{P^{crit}_{2n}\}_{n=0}^\infty$ and $\{P^{crit}_{2n+1}\}_{n=0}^\infty$ respectively. Corollary \ref{crit shrink} and proposition \ref{maximal shrink} imply that $\hat{\Pi}_{\text{even}}$ and $\hat{\Pi}_{\text{odd}}$ both shrink to $1$. By proposition \ref{num sequence}, there is no other maximal nested puzzle sequence at $1$.
\end{proof}

For the proof of the final proposition, we need the following lemma.

\begin{lem} \label{shrinking lemma}
Let $f : \hat{\mathbb{C}} \to \mathbb{C}$ be a rational map of degree $d > 1$. Let $\{(f|_{U})^{-n}\}_{n=0}^\infty$ be a family of univalent inverse branches of $f$ restricted to a domain $U$. Suppose $U \cap J(f) \neq \varnothing$. If $V \Subset U$, then
\begin{displaymath}
\emph{diam}((f|_U)^{-n}(V)) \to 0
\end{displaymath}
as $n \to \infty$.
\end{lem}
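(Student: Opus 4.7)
The plan is to use a normal families argument. Set $g_n := (f|_U)^{-n}$, a sequence of univalent meromorphic functions on $U$. The family $\{g_n\}$ is automatically normal in the spherical metric, since any family of univalent meromorphic functions on a domain is normal; moreover, by a Hurwitz-type theorem, every subsequential limit is either univalent on $U$ or a constant map. So it suffices to show that every subsequential limit of $\{g_n\}$ is constant, and then the diameter claim will follow by a contradiction argument.

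The core step is thus to rule out a univalent subsequential limit. Suppose towards a contradiction that $g_{n_k} \to h$ locally uniformly on $U$, with $h$ univalent, and fix $z_0 \in U \cap J(f)$. Since $J(f)$ is closed and totally invariant, $g_{n_k}(z_0) \in J(f)$ for every $k$, and therefore $w_0 := h(z_0) \in J(f)$. Because $h$ is univalent, $h(U)$ is an open neighborhood of $w_0$; a Hurwitz-type argument then shows that for every compact set $K \Subset h(U)$, one has $K \subset g_{n_k}(U)$ for all sufficiently large $k$, together with
\[
f^{n_k}|_K \;=\; (g_{n_k})^{-1}|_K \;\longrightarrow\; h^{-1}|_K
\]
uniformly. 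This says that $\{f^n\}$ is a normal sequence in a neighborhood of $w_0$, placing $w_0$ in the Fatou set of $f$ and contradicting $w_0 \in J(f)$.

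The conclusion is now immediate. If $\mathrm{diam}(g_n(V)) \not\to 0$, then some subsequence $g_{n_k}$ satisfies $\mathrm{diam}(g_{n_k}(V)) \geq \varepsilon$ for a fixed $\varepsilon > 0$; extracting a further subsequence converging locally uniformly to some $h$ yields $\mathrm{diam}(h(V)) \geq \varepsilon$, so $h$ is non-constant on $V$, hence univalent on $U$ by Hurwitz. This contradicts the previous paragraph, and the lemma follows.

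The main obstacle I foresee is the Hurwitz step in the second paragraph: rigorously establishing that the image domains $g_{n_k}(U)$ eventually absorb any given compact subset of $h(U)$, and that the associated inverse maps $f^{n_k}$ converge to $h^{-1}$ there. This is a standard fact about locally uniformly convergent sequences of univalent meromorphic functions, but it requires some care in the spherical setting because values may accumulate near $\infty$; one typically proves it by applying the argument principle to $g_{n_k} - w$ for a generic $w \in K$, noting that the number of preimages stabilizes to exactly one.
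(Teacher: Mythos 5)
Your overall strategy --- normality, Hurwitz to classify subsequential limits, and ruling out a univalent limit by showing it would force a Julia point into the Fatou set --- is sound, and the second and third paragraphs of your proposal are correct as written. The gap is the very first assertion: the claim that ``any family of univalent meromorphic functions on a domain is normal'' is false. Take $g_n(z) = nz$ on $\mathbb{D}$: each $g_n$ is univalent and holomorphic (hence meromorphic into $\hat{\mathbb{C}}$), but the spherical derivative at the origin is $\frac{2|g_n'(0)|}{1+|g_n(0)|^2} = 2n \to \infty$, so Marty's criterion fails and the family is not normal. (Equivalently, $g_n \to \infty$ locally uniformly away from $0$ while $g_n(0) \equiv 0$, so no subsequence converges locally uniformly on any neighborhood of $0$.) Univalence alone is not enough; one needs the maps to omit two or three common values, or the complements $\hat{\mathbb{C}}\setminus g_n(U)$ to contain a spherical disk of fixed radius, or some other uniform constraint.

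For the family $\{(f|_U)^{-n}\}$, normality can be established, but it requires using the dynamics and is precisely the part your proposal skips. One standard route: first reduce to the case where $U$ is small, by covering $\overline{V}$ together with a chosen point $z_0\in U\cap J(f)$ by a finite connected chain of disks $D_i \Subset D_i' \Subset U$; bounded Koebe distortion then lets you propagate the shrinking from any one $D_i$ to the rest, so it suffices to treat the disk containing $z_0$. Once $U$ is a small disk, shrink it further so that it is disjoint from some repelling periodic orbit $\mathcal{O}$ of $f$ with $|\mathcal{O}|\ge 3$ (such an orbit exists for every rational map of degree $>1$, since the periodic points of period $k$ number $d^k+1$ and all but finitely many are repelling). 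Because $\mathcal{O}$ is forward-invariant and disjoint from $U$, if some $p\in\mathcal{O}$ lay in $g_n(U)$ we would have $f^n(p)\in U\cap\mathcal{O}=\varnothing$, a contradiction; hence every $g_n(U)$ omits the three or more points of $\mathcal{O}$, and Montel gives normality. With that in place, your Hurwitz/Carath\'eodory step and the contradiction $f^{n_k}\to h^{-1}$ near $w_0\in J(f)$ go through exactly as you wrote them.
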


\begin{prop} \label{candidate shrinkage}
Let $z_0$ be a point in the Julia set $J(R_\nu)$ which is not an iterated preimage of $\kappa$, $\beta$ or $1$. If $\Pi_t$ is a nested puzzle sequence such that $z_0 \in L(\Pi_t)$, then $\Pi_t$ shrinks to $z_0$.
\end{prop}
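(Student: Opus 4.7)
The plan is to apply the Shrinking Lemma (Lemma \ref{shrinking lemma}) to a subsequence of univalent inverse branches of iterates of $R_\nu$ produced by a pigeonhole argument. By Proposition \ref{num sequence}, the hypothesis on $z_0$ guarantees a unique maximal nested puzzle sequence at $z_0$, so I may assume $\Pi_t$ is this maximal sequence; write $P^{(k)}$ for its puzzle piece of level $k$. By the construction of puzzle partitions, $R_\nu^k$ maps $P^{(k)}$ homeomorphically onto a puzzle piece of level $0$. Since there are only finitely many level-$0$ pieces, pigeonhole yields a subsequence $k_j \to \infty$ and a fixed level-$0$ piece $Q$ such that $R_\nu^{k_j}(P^{(k_j)}) = Q$ for every $j$. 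Let $\phi_j : Q \to P^{(k_j)}$ denote the resulting univalent inverse branches of $R_\nu^{k_j}$.

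The target reduction is the following: produce an open simply connected set $U$ with $\overline{Q} \subset U$ on which every $\phi_j$ admits a univalent extension. Given such a $U$, Lemma \ref{shrinking lemma} applied with $V := Q \Subset U$ (note that $Q$ meets $J(R_\nu)$ since its boundary contains points of the Julia set) yields $\mathrm{diam}(P^{(k_j)}) = \mathrm{diam}(\phi_j(Q)) \to 0$ as $j \to \infty$. Combined with the nesting $P^{(k_{j+1})} \subset P^{(k_j)}$ and the fact that $z_0 \in \bigcap_k P^{(k)}$, this immediately forces $L(\Pi_t) = \{z_0\}$.

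The construction of $U$ is the principal obstacle. A univalent extension of $\phi_j$ to a simply connected $U \supset \overline{Q}$ exists precisely when $U$ avoids the critical values of $R_\nu^{k_j}$; for one $U$ to work for all $j$, it must miss both forward orbits of the critical points $1$ and $\infty$. The orbit of $\infty$ is the superattracting $2$-cycle $\{\infty, R_\nu(\infty)\}$ and is easy to avoid, but the orbit of $1$ is dense in $\partial\mathcal{S}_0$, which meets $\overline{Q}$ along an arc; hence no fixed Euclidean thickening of $Q$ can work uniformly. The key tool is Proposition \ref{disjointness for rnu}: $L(\Pi_t)$ cannot touch bubble boundaries of both $\mathcal{B}$- and $\mathcal{S}$-types. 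Consequently all but finitely many $P^{(k_j)}$ lie to one side of the partition, and one can thicken $Q$ across the opposite side only. The uniform decay of $\mathcal{B}$-bubble diameters from Proposition \ref{basilica landing}, together with the critical puzzle shrinkage of Corollary \ref{crit shrink}, provides the geometric control needed to produce a thickening $U$ which simultaneously contains $\overline{Q}$ and admits univalent pullbacks under every $R_\nu^{k_j}$. Once $U$ is in hand, Lemma \ref{shrinking lemma} closes the argument.
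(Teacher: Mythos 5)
Your pigeonhole reduction to a single level-$0$ piece $Q$ is appealing, but the "principal obstacle" you correctly identify is in fact fatal to this route, and the proposed repair does not close it. Every level-$0$ piece $Q$ has an arc of $\partial\mathcal{S}_0$ on its boundary, and the forward orbit of the critical point $1$ is dense on $\partial\mathcal{S}_0$. Consequently any open $U$ with $\overline{Q}\subset U$ contains infinitely many critical values $R_\nu^m(1)$, so the inverse branches $\phi_j$ of $R_\nu^{k_j}$ cannot all extend univalently to such a $U$ once $k_j$ is large. Your attempted fix — invoking Proposition \ref{disjointness for rnu} to say "all but finitely many $P^{(k_j)}$ lie to one side of the partition, thicken across the opposite side only" — misreads that proposition: it constrains the limit set $L(\Pi_t)$, not the puzzle pieces $P^{(k_j)}$, whose boundaries always consist of bubble rays of both $\mathcal{B}$- and $\mathcal{S}$-type. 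And even if one could thicken only part of $\partial Q$, the resulting $U$ would no longer satisfy $\overline{Q}\Subset U$, which Lemma \ref{shrinking lemma} requires for the conclusion to give $\mathrm{diam}(P^{(k_j)})\to 0$ rather than shrinking of a proper subset.

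The paper's proof circumvents this by a case distinction on the recurrence of the orbit $\mathcal{O}$ of $z_0$ to the critical point. If $\mathcal{O}$ avoids some critical puzzle piece $P^{crit}_M$, one picks an accumulation point $z_\infty$ of $\mathcal{O}$ and the level-$M$ piece $P^\infty$ around it; because $\mathcal{O}$ never enters $P^{crit}_M$, $P^\infty$ is \emph{disjoint} from $\partial\mathbb{D}$, so a genuine open $U\Supset P^\infty$ avoiding the critical orbit exists and the Shrinking Lemma applies. This is the correct analogue of your step, but it uses a deeper-level piece precisely because a level-$0$ piece cannot be separated from $\partial\mathcal{S}_0$. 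When instead $\mathcal{O}$ accumulates on $1$, the Shrinking Lemma alone is insufficient: pulling back along an orbit that returns arbitrarily close to the critical point destroys any uniform extension domain. The paper handles this via the pullbacks $P^{-m_k}$ of the critical puzzle pieces $P^{crit}_{n_k}$ along the first-return orbit, controlling their geometry with the real a priori bounds (Theorem \ref{real bound}), the combinatorics of critical circle maps, Koebe distortion, the shrinkage $\mathrm{diam}(P^{crit}_n)\to 0$ (Corollary \ref{crit shrink}), and the commensurate-disc estimate (Lemma \ref{comm disc}). Your proposal has no analogue of this recurrent case, which is the substantive content of the proposition; the appeal to Propositions \ref{basilica landing}, \ref{disjointness for rnu} and Corollary \ref{crit shrink} as providing "the geometric control needed" is a gesture, not an argument.
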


\begin{proof}
Let
\begin{displaymath}
\mathcal{O} = \{z_n\}_{n=0}^\infty
\end{displaymath}
be the forward orbit of $z_0$ under $F_\nu$. The proof splits into two cases.

\vspace{2.5mm}

\noindent \emph{Case 1.} Suppose there exists some critical puzzle piece $P^{crit}_M$ such that
\begin{displaymath}
\mathcal{O} \cap P^{crit}_M = \varnothing.
\end{displaymath}
Let $z_\infty$ be an accumulation point of $\mathcal{O}$, and let $P^\infty$ be the puzzle piece of level $M$ containing $z_\infty$. Observe that the orbit of the critical point $1$ is dense in $\partial \mathbb{D}$. Hence, $P^\infty$ must be disjoint from $\partial \mathbb{D}$, since otherwise, $P^\infty$ would map into $P^{crit}_M$ by some appropriate inverse branch of $F_\nu$.

Let $U \subset \mathbb{C} \setminus \overline{\mathbb{D}}$ be a neighbourhood of $P^\infty$, and choose a subsequence of orbit points $\{z_{n_k}\}_{k=0}^\infty$ from $\mathcal{O}$ such that $z_{n_k} \in P^\infty$. For each $k$, let
\begin{displaymath}
g_k : U \to \mathbb{C}
\end{displaymath}
be the inverse branch of $F_\nu^{n_k}$ that maps $z_{n_k}$ to $z_0$. Since $P^\infty$ intersects the Julia set for $F_\nu$, the nested puzzle sequence
\begin{displaymath}
\Pi := \{g_k(P^\infty)\}_{k=0}^\infty
\end{displaymath}
must shrink to $z_0$ by lemma \ref{shrinking lemma}.

\vspace{2.5mm}

\noindent \emph{Case 2.} Suppose the critical point $1$ is an accumulation point of $\mathcal{O}$. Then there exists an increasing sequence of numbers $\{n_k\}_{k=0}^\infty$ such that
\begin{displaymath}
\mathcal{O} \cap P^{crit}_{n_k} \neq \varnothing.
\end{displaymath}
Fix $k$, and let $z_{m_k}$ be the first orbit point that enters the critical puzzle piece $P^{crit}_{n_k}$. Let
\begin{equation}
P^{-n} \subset F_\nu^{-n}(P^{crit}_{n_k})
\end{equation}
be the $n$th pull back of $P^{crit}_{n_k}$ along the orbit
\begin{equation} \label{orbit}
z_0 \mapsto z_1 \mapsto \ldots \mapsto z_{m_k}.
\end{equation}

Suppose that $P^{-n}$ intersects $1$ for some $n>0$. Then for all $m \leq n$, $P^{-m}$ must intersect $\partial \mathbb{D}$. Recall that $P^{crit}_{n_k}$ contains the the preimage arc $A_{-n_k}$. Hence, for every $m \leq n$, $P^{-m}$ contains the $m$th preimage of $A_{-n_k}$ under $F_\nu|_{\partial \mathbb{D}}$. By the combinatorics of critical circle maps, it follows that $P^{-q_{n_k}}$ must be the first puzzle piece in the backward orbit $\{P^{-1}, P^{-2}, \ldots, P^{-m_k}\}$ to intersect $1$.

Since there are exactly two maximal nested puzzle sequences whose limit contains $1$, all puzzle pieces of level $n > n_k + q_{n_k}$ which intersect $1$ must be contained in either $P^{crit}_{n_k}$ or $P^{-q_{n_k}}$. Either case would contradict the fact that $z_{m_k}$ is the first orbit point to enter $P^{crit}_{n_k}$. Therefore, $P^{-n}$ does not intersect $1$ for all $n \geq q_{n_k}$.

Let $m \leq m_k$ be the last moment when the backward orbit of $P^0 = P^{crit}_{n_k}$ intersect $\partial \mathbb{D}$. By theorem \ref{real bound}, corollary \ref{crit shrink} and combinatorics of critical circle maps, the distance between $P^{-m}$ and $F_\nu(1)$ is commensurate to diam$(P^{-m})$. Hence, the distance between $P^{-m-1}$ and $1$ is commensurate to diam$(P^{-m-1})$. Therefore, by theorem \ref{real bound} and Koebe distortion theorem, the inverse branch of $F_\nu^{m_k}$ along the orbit \eqref{orbit} can be expressed as either
\begin{displaymath}
F_\nu^{-m_k}|_{P^{crit}_{n_k}} = \eta
\end{displaymath}
if $1 \notin P_n$ for all $n > 0$, or
\begin{displaymath}
F_\nu^{-m_k}|_{P^{crit}_{n_k}} = \zeta_1 \circ Q \circ \zeta_2
\end{displaymath}
if $1 \in P^{-q_{n_k}}$, where $\eta$, $\zeta_1$ and $\zeta_2$ are conformal maps with bounded distortion, and $Q$ is a branch of the cubic root.

Now, by lemma \ref{comm disc}, $P^{crit}_{n_k}$ contains a Euclidean disc $D_{n_k}$ such that diam$(D_{n_k})$ is commensurate to diam$(P^{crit}_{n_k})$. The above argument implies that the puzzle piece $P^{-m_k}$ must also contain a Euclidean disc $D$ such that diam$(D)$ is commensurate to  diam$(P^{-m_k})$. Hence, $\text{diam}(P^{-m_k}) \to 0$ as $k \to \infty$, and the nested puzzle sequence
\begin{displaymath}
\Pi := \{P^{-m_k}\}_{k=0}^\infty
\end{displaymath}
must shrink to $z_0$.
\end{proof}

As an application of the shrinking theorem, we prove that every infinite bubble ray for $R_\nu$ lands.

\begin{prop} \label{everylanding}
Every infinite bubble ray for $R_\nu$ lands.
\end{prop}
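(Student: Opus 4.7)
The plan is to use the Shrinking Theorem together with Corollary \ref{ray in puzzle} to deduce that every infinite bubble ray has a single accumulation point.

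Let $\mathcal{R}$ be an infinite bubble ray for $R_\nu$, which we may take to be $\mathcal{R}^\mathcal{B}_t$ or $\mathcal{R}^\mathcal{S}_t$ with external angle $t \in \mathbb{R}/\mathbb{Z}$. First I would choose the sequence of dyadic intervals $[s_k, t_k]$ of length $2^{-k}$ containing $t$, and consider the associated nested puzzle sequence $\Pi_t := \{P_{[s_k,t_k]}\}_{k=0}^\infty$ for $R_\nu$. By Corollary \ref{ray in puzzle}, the accumulation set of $\mathcal{R}$ is contained in $P_{[s_k,t_k]}$ for every $k$, and hence lies inside the limit $L(\Pi_t) = \bigcap_{k=0}^\infty P_{[s_k,t_k]}$.

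Next, I would invoke the Shrinking Theorem (theorem \ref{total shrinkage}) to conclude that $L(\Pi_t) = \{x\}$ for some single point $x \in J(R_\nu)$. Since the accumulation set of $\mathcal{R}$ is a nonempty subset of $\{x\}$, it equals $\{x\}$.

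Finally, to upgrade this to Hausdorff convergence of the sequence of bubbles $\{U_i\}_{i=0}^\infty$ comprising $\mathcal{R}$, I would argue that for each $k$, all but finitely many bubbles $U_i$ are contained in $P_{[s_k,t_k]}$. This is because each bubble $U_i$ of sufficiently large generation is disjoint from the finite union of bubble rays bounding $P_{[s_k,t_k]}$, so by connectedness of $\mathcal{R}$ and its convergence toward $L(\Pi_t)$, the tail of the bubble sequence lies inside $P_{[s_k,t_k]}$. Combined with $\mathrm{diam}(P_{[s_k,t_k]}) \to 0$, this forces $\mathrm{diam}(U_i) \to 0$ and $U_i \to \{x\}$ in the Hausdorff topology. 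Thus $\mathcal{R}$ lands at $x$.

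The main conceptual obstacle is confirming that the accumulation set of the bubble ray really does sit in every $P_{[s_k,t_k]}$, which is precisely the content of Corollary \ref{ray in puzzle}, itself a consequence of proposition \ref{puzzle cor} pulled back through the conjugacies $\Phi_\mathbf{B}$ and $\Phi_\mathbf{S}$. Once that is in hand, the shrinking theorem does essentially all of the work, and there is no delicate estimate left to carry out.
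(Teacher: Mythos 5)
Your proof is correct and takes essentially the same approach as the paper: apply Corollary \ref{ray in puzzle} to trap the accumulation set of the bubble ray inside the limit of a nested puzzle sequence, then invoke the Shrinking Theorem to collapse that limit to a point. The only cosmetic difference is that the paper splits off the dyadic case $t = i/2^m$ (observing directly that $\mathcal{R}_t$ is an iterated preimage of $\mathcal{R}_0$ and so lands on an iterated preimage of $\kappa$), whereas you handle all angles uniformly through the shrinking theorem; both routes work, since for dyadic $t$ the accumulation set still sits in a (non-unique) nested puzzle sequence whose limit the shrinking theorem controls. Your extra paragraph upgrading ``single-point accumulation set'' to Hausdorff convergence is a detail the paper elides; it is harmless and, with the natural definition $\Omega = \bigcap_n \overline{\bigcup_{i\geq n} U_i}$, follows immediately.
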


\begin{proof}
Let $\mathcal{R}_t$ be an infinite bubble ray, and let $\Omega$ be its accumulation set. If $t$ is a dyadic rational, then $\mathcal{R}_t$ lands on an iterated preimage of $\kappa$. Otherwise, there exists a unique nested maximal puzzle sequence $\Pi_t = \{P_{[s_k, t_k]}\}_{k=0}^\infty$ with external angle equal to $t$. By corollary \ref{ray in puzzle}, $\Omega$ must be contained in $P_{[s_k, t_k]}$ for all $k \in \mathbb{N}$. The result now follows from the shrinking theorem.
\end{proof}

\newpage

\section{The Proof of Conformal Mateability (Theorem B)} \label{final sec}

We are ready to prove that $R_\nu$ is a conformal mating of $f_\mathbf{B}$ and $f_\mathbf{S}$. Recall the maps $\Phi_\mathbf{B}$ and $\Phi_\mathbf{S}$ in theorem \ref{interior maps} defined on the union of the closure of every bubble in $\mathbf{B}$ and $\mathbf{S}$ respectively. Our first task is to continuously extend $\Phi_\mathbf{B}$ and $\Phi_\mathbf{S}$ to the filled Julia sets $K_\mathbf{B} = \overline{\mathbf{B}}$ and $K_\mathbf{S} = \overline{\mathbf{S}}$. For brevity, we will limit our discussion to $\Phi_\mathbf{S}$. The map $\Phi_\mathbf{B}$ can be extended in a completely analogous way.

Let $\tilde{\Phi}_\mathbf{S} : J_\mathbf{S} \to J(R_\nu)$ be the map defined as follows. For $x \in J_\mathbf{S}$, let $\Pi^\mathbf{S}_t = \{P^\mathbf{S}_{[s_k, t_k]}\}_{k=0}^\infty$ be a maximal nested puzzle sequence whose limit contains $x$. By the shrinking theorem, the corresponding maximal nested puzzle sequence $\Pi_t = \{P_{[s_k, t_k]}\}_{k=0}^\infty$ for $R_\nu$ must shrink to a single point, say $y \in J(R_\nu)$. Define $\tilde{\Phi}_\mathbf{S}(x) := y$. We claim that $\tilde{\Phi}_\mathbf{S}$ is a continuous extension of $\Phi_\mathbf{S}$ on $J_\mathbf{S}$.

\begin{prop} \label{cont ext}
Let $S \subset \mathbf{S}$ be a bubble. If $x \in \partial S$, then $\tilde{\Phi}_\mathbf{S}(x) = \Phi_\mathbf{S}(x)$.
\end{prop}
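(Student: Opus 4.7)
The plan is to demonstrate that $\Phi_{\mathbf S}(x)$ lies in every puzzle piece $\overline{P_{[s_n, t_n]}}$ of the nested sequence $\Pi_t = \{P_{[s_n, t_n]}\}_{n=0}^\infty$ for $R_\nu$ corresponding to a maximal nested puzzle sequence $\Pi^{\mathbf S}_t = \{P^{\mathbf S}_{[s_n, t_n]}\}_{n=0}^\infty$ for $f_{\mathbf S}$ whose limit contains $x$ and approaches $x$ from the side of $S$. Combined with the Shrinking Theorem \ref{total shrinkage}, which asserts $L(\Pi_t) = \{\tilde{\Phi}_{\mathbf S}(x)\}$, this will force $\Phi_{\mathbf S}(x) = \tilde{\Phi}_{\mathbf S}(x)$.

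Let $N_0$ be the smallest integer for which $\overline{S} \subset \mathcal{P}^{\mathbf S}_{N_0}$. For $n < N_0$, the bubble $S$ lies in the interior of a unique puzzle piece of level $n$; since $x \in \partial S$ is in the closure of this puzzle piece, it must coincide with $P^{\mathbf S}_{[s_n, t_n]}$ (on the side of $S$). By Proposition \ref{puzzle cor} we then have $\Phi_{\mathbf S}(S) \subset P_{[s_n, t_n]}$; using the continuous extension of $\Phi_{\mathbf S}$ to $\overline{S}$ from Theorem \ref{interior maps}, we obtain $\Phi_{\mathbf S}(x) \in \overline{\Phi_{\mathbf S}(S)} \subset \overline{P_{[s_n, t_n]}}$.

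For $n \geq N_0$, the bubble $S$ is itself a constituent of $\mathcal{P}^{\mathbf S}_n$, so $\partial S$ bounds several puzzle pieces of level $n$. The attachment points at which sub-bubbles of $\mathcal{P}^{\mathbf S}_n$ meet $\partial S$ subdivide $\partial S$ into finitely many arcs, each adjacent on the exterior of $S$ to a single puzzle piece of level $n$. Let $A_n$ denote the arc containing (or abutting) $x$ on the side of $P^{\mathbf S}_{[s_n, t_n]}$. Since $\Phi_{\mathbf S}|_{\overline{S}}$ is a homeomorphism carrying attachment points to attachment points bijectively and in cyclic order (bubble addresses are preserved by Theorem \ref{interior maps}), it sends $A_n$ to the analogous arc $\Phi_{\mathbf S}(A_n)$ on $\partial \Phi_{\mathbf S}(S)$. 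Applying Proposition \ref{puzzle cor} to the sub-bubbles attached at the endpoints of $A_n$, which lie in $P^{\mathbf S}_{[s_n, t_n]}$ and therefore map into $P_{[s_n, t_n]}$, identifies the puzzle piece of $R_\nu$ adjacent to $\Phi_{\mathbf S}(A_n)$ as $P_{[s_n, t_n]}$. Continuity of $\Phi_{\mathbf S}$ on $\overline{S}$ then yields $\Phi_{\mathbf S}(x) \in \overline{P_{[s_n, t_n]}}$.

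Combining both cases, $\Phi_{\mathbf S}(x) \in \bigcap_{n \geq 0} \overline{P_{[s_n, t_n]}} = \{\tilde{\Phi}_{\mathbf S}(x)\}$ by Theorem \ref{total shrinkage}, establishing the proposition. The main obstacle is the combinatorial identification in the regime $n \geq N_0$: one must verify that $\Phi_{\mathbf S}|_{\overline{S}}$ matches the puzzle subdivision of the exterior of $S$ with that of the exterior of $\Phi_{\mathbf S}(S)$ arc by arc. Once this matching is set up carefully via Proposition \ref{puzzle cor} and bubble-address preservation, the conclusion is an immediate application of the Shrinking Theorem.
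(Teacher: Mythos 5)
Your proposal is correct and takes essentially the same approach as the paper: show that $\Phi_{\mathbf S}(x)$ lies in $P_{[s_n,t_n]}$ for every $n$ and then invoke the Shrinking Theorem. The paper compresses the key step into the phrase ``by the definition of puzzle partitions, $z\in P_{[s_k,t_k]}$ for all $k$''; your case split on whether $n<N_0$ or $n\geq N_0$, together with Proposition \ref{puzzle cor} and the boundary extension from Theorem \ref{interior maps}, is simply a careful unpacking of that sentence. One small wording caveat: the qualifier ``approaches $x$ from the side of $S$'' is unnecessary and slightly misleading --- for a bubble-attachment point $x$ both maximal nested puzzle sequences approach $x$ from the exterior of $S$, and since Proposition \ref{well defined} relies on this result to establish well-definedness of $\tilde\Phi_{\mathbf S}$, the argument must (and, as you have written it, does) apply uniformly to whichever maximal nested sequence is chosen.
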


\begin{proof}
Let $z := \Phi_\mathbf{S}(x)$. By the definition of puzzle partitions, $z \in P_{[s_k, t_k]}$ for all $k \geq 0$. The result follows.
\end{proof}

\begin{prop} \label{well defined}
The map $\tilde{\Phi}_\mathbf{S} : J_\mathbf{S} \to J(R_\nu)$ is well defined.
\end{prop}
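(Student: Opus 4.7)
The plan is to reduce the claim to the two exceptional cases identified by corollary \ref{num seq siegel}. Whenever $x \in J_\mathbf{S}$ admits a unique maximal nested puzzle sequence, well-definedness is immediate. Two distinct maximal sequences $\Pi^\mathbf{S}_{t_1}, \Pi^\mathbf{S}_{t_2}$ at $x$ can only occur when $x$ is an iterated preimage of $0$ or of $\mathbf{k}_\mathbf{S}$; in either such case, the shrinking theorem already forces both corresponding $R_\nu$-side sequences $\Pi_{t_1}, \Pi_{t_2}$ to shrink to singletons, so it suffices to exhibit a single point of $J(R_\nu)$ lying in both $L(\Pi_{t_1})$ and $L(\Pi_{t_2})$.

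For $x$ an iterated preimage of $0$, proposition \ref{siegel preimage 0} supplies two distinct bubbles $S_1, S_2 \subset \mathbf{S}$ with $x \in \partial S_1 \cap \partial S_2$, and $\Pi^\mathbf{S}_{t_i}$ is the maximal sequence in which $S_i$ eventually lies inside every puzzle piece. Each sequence individually satisfies the hypothesis of proposition \ref{cont ext}, whose proof yields $L(\Pi_{t_i}) = \{\Phi_\mathbf{S}(x)\}$ for each $i$, where $\Phi_\mathbf{S}(x)$ is the boundary extension of $\Phi_\mathbf{S}|_{S_i}$. The two extensions agree at $x$ because $\Phi_\mathbf{S}(x)$ is an iterated preimage of the critical point $1$, hence coincides with the unique common boundary point of $\overline{\Phi_\mathbf{S}(S_1)}$ and $\overline{\Phi_\mathbf{S}(S_2)}$ supplied by proposition \ref{candidate attach}.

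For $x$ an iterated preimage of $\mathbf{k}_\mathbf{S}$, write $x$ as the landing point of a dyadic bubble ray $\mathcal{R}^\mathbf{S}_{i/2^m}$; the two maximal sequences at $x$ approach the external angle $i/2^m$ from opposite sides. The proof of proposition \ref{angular span}, bootstrapping from proposition \ref{base}, yields that the matched pair $\mathcal{R}^\mathcal{B}_{i/2^m}, \mathcal{R}^\mathcal{S}_{i/2^m}$ co-lands at a common iterated preimage $z$ of $\kappa$. For every $k \geq m$, the point $z$ sits on the shared bubble-ray boundary of the adjacent puzzle pieces $P_{[s_k, i/2^m]}$ and $P_{[i/2^m, t_k]}$, whence $z \in L(\Pi_{t_1}) \cap L(\Pi_{t_2})$, and the shrinking theorem forces both limits to equal $\{z\}$.

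The only substantive input beyond corollary \ref{num seq siegel}, proposition \ref{cont ext}, and the shrinking theorem is precisely this co-landing of $\mathcal{B}$- and $\mathcal{S}$-rays of matching dyadic angle; this is the main obstacle, and it is already built into the construction of the puzzle partitions via proposition \ref{angular span}. Once that input is in place, the rest is simply a verification that each pair of candidate limit sets shares a common point, after which the shrinking theorem completes the argument.
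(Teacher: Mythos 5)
Your proposal is correct and follows essentially the same route as the paper: reduce to the two exceptional cases via corollary \ref{num seq siegel}, handle preimages of $0$ through proposition \ref{cont ext} (the boundary extension of $\Phi_\mathbf{S}$ on adjacent bubbles supplies a point common to both limits), and handle preimages of $\mathbf{k}_\mathbf{S}$ by observing that the dyadic bubble ray $\mathcal{R}^\mathcal{S}_{i/2^m}$ lands at a point lying in every puzzle piece of both sequences — the paper cites corollary \ref{ray in puzzle} for this last step, whereas you trace it back through proposition \ref{angular span} and the co-landing of $\mathcal{R}^\mathcal{B}_{i/2^m}$ and $\mathcal{R}^\mathcal{S}_{i/2^m}$, but these are two phrasings of the same underlying fact.
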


\begin{proof}
Suppose there are two maximal nested puzzle sequences at $x \in J_\mathbf{S}$. By proposition \ref{num seq siegel}, $x$ is either an iterated preimage of $\mathbf{k}_\mathbf{S}$ or $0$. The first case follows from corollary \ref{ray in puzzle}. The second case follows from proposition \ref{cont ext}.
\end{proof}

\begin{prop}
Define $\Phi_\mathbf{S}(x) := \tilde{\Phi}_\mathbf{S}(x)$ for all $x \in J_\mathbf{S}$. The extended map $\Phi_\mathbf{S} : K_\mathbf{S} \to \hat{\mathbb{C}}$ is continuous.
\end{prop}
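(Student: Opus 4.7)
The plan is to establish sequential continuity of the extended map $\Phi_\mathbf{S} : K_\mathbf{S} \to \hat{\mathbb{C}}$ at each $x \in K_\mathbf{S}$. Continuity is automatic at points $x \in \mathbf{S}$ since $\Phi_\mathbf{S}$ is conformal there (theorem \ref{interior maps}), so the real work is at $x \in J_\mathbf{S}$. The strategy is to use the puzzle partitions as a uniform system of neighborhoods: on the $R_\nu$ side, the puzzle pieces through $\Phi_\mathbf{S}(x)$ collapse to a point by the shrinking theorem, while on the $f_\mathbf{S}$ side, the corresponding pieces jointly form a definite Euclidean neighborhood of $x$. Proposition \ref{puzzle cor}, together with the definition of $\tilde{\Phi}_\mathbf{S}$, then forces $\Phi_\mathbf{S}$ to map each $f_\mathbf{S}$-puzzle piece into its $R_\nu$-counterpart.

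Fix $x \in J_\mathbf{S}$ and $\epsilon > 0$. By corollary \ref{num seq siegel} there are either one or two maximal nested puzzle sequences at $x$; for each sufficiently large level $k$, let $\{P^\mathbf{S}_{[s_i, t_i]}\}$ be the corresponding (one or two) level-$k$ $f_\mathbf{S}$-puzzle pieces containing $x$, and let $\{P_{[s_i, t_i]}\}$ be the associated $R_\nu$-pieces. By proposition \ref{well defined}, each $P_{[s_i, t_i]}$ contains the common shrinking point $\Phi_\mathbf{S}(x)$. Since nested intersections of compact sets in $\hat{\mathbb{C}}$ collapsing to a point force the spherical diameters to zero, the shrinking theorem yields $\mathrm{diam}(P_{[s_i, t_i]}) \to 0$ as $k \to \infty$; I choose $k$ so that every $P_{[s_i, t_i]}$ has spherical diameter less than $\epsilon$.

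Next, I set $V_k := \bigcup_i P^\mathbf{S}_{[s_i, t_i]}$ and pick $\delta > 0$ so that the Euclidean ball $B(x, \delta)$ lies inside $V_k$. Because the level-$k$ puzzle pieces are finite in number and cover $\mathbb{C}$, and those not containing $x$ form a closed set disjoint from $x$, the positive number $\delta := \mathrm{dist}\bigl(x, \bigcup_{Q \not\ni x} Q\bigr)$ does the job. Now, for any $z \in K_\mathbf{S}$ with $|z - x| < \delta$, the point $z$ lies in some $P^\mathbf{S}_{[s_i, t_i]} \subset V_k$. If $z \in \mathbf{S}$, proposition \ref{puzzle cor} together with the continuous extension of $\Phi_\mathbf{S}$ to bubble closures (theorem \ref{interior maps}) gives $\Phi_\mathbf{S}(z) \in P_{[s_i, t_i]}$. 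If $z \in J_\mathbf{S}$, the piece $P^\mathbf{S}_{[s_i, t_i]}$ is the level-$k$ term of some maximal nested puzzle sequence through $z$, and the definition of $\tilde{\Phi}_\mathbf{S}$ places $\tilde{\Phi}_\mathbf{S}(z) \in P_{[s_i, t_i]}$ (this being independent of the choice of sequence by proposition \ref{well defined}). In either case, $\Phi_\mathbf{S}(z)$ and $\Phi_\mathbf{S}(x)$ lie in a common $R_\nu$-puzzle piece of diameter less than $\epsilon$, proving continuity at $x$.

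The main obstacle I anticipate is in the second step: showing that $V_k$ contains a full Euclidean neighborhood of $x$. When $x$ lies in the interior of a single level-$k$ piece this is automatic, but when $x$ is an iterated preimage of $0$ or $\mathbf{k}_\mathbf{S}$ the two puzzle pieces containing $x$ meet along a shared boundary through $x$, and one must confirm that their union covers a neighborhood of $x$ in $\mathbb{C}$. This reduces to verifying the local combinatorial picture: near such $x$, $\mathcal{P}^\mathbf{S}_k$ consists of a pair of arcs (bubble-boundary arcs and/or external-ray segments) meeting at $x$ and locally dividing the plane into exactly two sectors, one contained in each of the two puzzle pieces at $x$.
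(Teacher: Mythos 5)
Your overall plan — use the shrinking theorem to collapse the $R_\nu$-puzzle pieces through $\Phi_\mathbf{S}(x)$, use proposition \ref{puzzle cor} and the definition of $\tilde\Phi_\mathbf{S}$ to push neighborhoods of $x$ into those pieces — is the same engine the paper uses, just phrased as $\epsilon$-$\delta$ rather than sequential continuity with a four-way case split. However, there is a genuine gap at the step where you assert $B(x,\delta)\subset V_k:=\bigcup_i P^\mathbf{S}_{[s_i,t_i]}$, and the justification offered (``the level-$k$ puzzle pieces \ldots cover $\mathbb{C}$'') is simply false. Puzzle pieces are closures of complementary components of the partition $\mathcal{P}^\mathbf{S}_k$, and their union omits the open bubbles contained in $\mathcal{P}^\mathbf{S}_k$. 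When $x$ is an iterated preimage of $0$, for all large $k$ both bubbles $S_1,S_2$ with $x\in\partial S_j$ lie inside $\mathcal{P}^\mathbf{S}_k$, so $S_1\cup S_2$ is disjoint from $V_k$; locally the plane near $x$ splits into \emph{four} alternating sectors — $S_1$, $P^\mathbf{S}_{[s_1,t_1]}$, $S_2$, $P^\mathbf{S}_{[s_2,t_2]}$ — and $B(x,\delta)$ meets all four. Your closing paragraph compounds this by asserting that near such an $x$ the partition is a pair of arcs cutting the plane into exactly two sectors, one per puzzle piece; that local picture is incorrect. A similar problem occurs when $x$ is an iterated preimage of $\mathbf{k}_\mathbf{S}$: the bubble ray $\mathcal{R}^\mathbf{S}_t$ landing at $x$ contributes bubbles in $\mathcal{P}^\mathbf{S}_k$ accumulating on $x$, again outside $V_k$. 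As a result, for $z\in B(x,\delta)$ lying in one of these bubbles your appeal to proposition \ref{puzzle cor} does not yield $\Phi_\mathbf{S}(z)\in P_{[s_i,t_i]}$, since $z\notin \mathbf{S}\cap P^\mathbf{S}_{[s_i,t_i]}$.

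The repair is to abandon the clean inclusion $B(x,\delta)\subset V_k$ and instead arrange, as the paper does, that $D_{r_k}(x)\cap\mathcal{P}^\mathbf{S}_k = D_{r_k}(x)\cap(\overline{S_1}\cup\overline{S_2})$ (respectively, $D_{r_k}(x)\cap\mathcal{R}^\mathbf{S}_t$ in the preimage-of-$\mathbf{k}_\mathbf{S}$ case). Then for $z\in D_{r_k}(x)\cap K_\mathbf{S}$ you have a dichotomy: either $z\in\overline{S_1}\cup\overline{S_2}$, where continuity of $\Phi_\mathbf{S}$ on bubble closures (proposition \ref{cont ext}, via theorem \ref{interior maps}) gives $\Phi_\mathbf{S}(z)\to\Phi_\mathbf{S}(x)$; or $z$ lies in one of the two puzzle pieces at $x$, where your shrinking-theorem argument applies. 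With that two-branch step inserted, and the analogous split via corollary \ref{ray in puzzle} in the $\mathbf{k}_\mathbf{S}$-preimage case, your $\epsilon$-$\delta$ write-up becomes an acceptable repackaging of the paper's case-by-case proof.
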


\begin{proof}
It suffices to show that if $\{x_i\}_{i=0}^\infty \subset K_\mathbf{S}$ is a sequence converging to $x \in J_\mathbf{S}$, then the sequence of image points $\{y_i = \Phi_\mathbf{S}(x_i)\}_{i=0}^\infty$ converges to $y = \Phi_\mathbf{S}(x)$. The proof splits into four cases:
\begin{enumerate}[label=\roman*)]
\item $x$ is an iterated preimage of $0$.
\item There exists a unique bubble $S \subset \mathbf{S}$ such that $x \in \partial S$.
\item $x$ is an iterated preimage of $\mathbf{k}_\mathbf{S}$.
\item Otherwise.
\end{enumerate}
\vspace{2.5mm}

\noindent \emph{Case i)} By proposition \ref{siegel preimage 0}, there exist exactly two bubbles $S_1$ and $S_2$ which contain $x$ in their boundary. Moreover, we have $\{x\} = \overline{S_1} \cap \overline{S_2}$. By proposition \ref{cont ext}, any subsequence of $\{x_i\}_{i=0}^\infty$ contained in $\overline{S_1} \cup \overline{S_2}$ is mapped under $\Phi_\mathbf{S}$ to a sequence which converges to $y$. Hence, we may assume that $x_i$ is not contained $\overline{S_1} \cup \overline{S_2}$ for all $i \geq 0$. 

By proposition \ref{num seq siegel}, there are exactly two maximal nested puzzle sequences $\Pi^\mathbf{S}_t = \{P^\mathbf{S}_{[s_k, t_k]}\}_{k=0}^\infty$ and $\Pi^\mathbf{S}_v = \{P^\mathbf{S}_{[u_k, v_k]}\}_{k=0}^\infty$ whose limit contains $x$. Let $D_r(x)$ be a disc of radius $r > 0$ centered at $x$. For every $k$, we can choose $r_k >0$ sufficiently small such that $D_{r_k}(x) \cap \mathcal{P}^\mathbf{S}_k = D_{r_k}(x) \cap (\overline{S_1} \cup \overline{S_2})$. Let $N_k \geq 0$ be large enough such that $\{x_i\}_{i=N_k}^\infty$ is contained in $D_{r_k}(x)$. This implies that $\{x_i\}_{i=N_k}^\infty \subset P^\mathbf{S}_{[s_k, t_k]} \cup P^\mathbf{S}_{[u_k, v_k]}$. It is easy to see that the sequence of image points $\{y_i = \Phi_\mathbf{S}(x_i)\}_{i=N_k}^\infty$ must be contained $P_{[s_k, t_k]} \cup P_{[u_k, v_k]}$. By proposition \ref{well defined}, $\Pi_t = \{P_{[s_k, t_k]}\}_{k=0}^\infty$ and $\Pi_v = \{P_{[u_k, v_k]}\}_{k=0}^\infty$ both converge to $y$, and the result follows.

\vspace{2.5mm}

\noindent \emph{Case ii)} The proof is very similar to Case i), and hence, it will be omitted.

\vspace{2.5mm}

\noindent \emph{Case iii)} Since $x$ is an iterated preimage of $\mathbf{k}_\mathbf{S}$, it must be the landing point of some bubble ray $\mathcal{R}^\mathbf{S}_t$, where $t \in \mathbb{R} /\mathbb{Z}$ is a dyadic rational. By corollary \ref{ray in puzzle}, $y$ is the landing point of the corresponding bubble ray $\mathcal{R}^\mathcal{S}_t$. Any subsequence of $\{x_i\}_{i=0}^\infty$ contained in $\mathcal{R}^\mathbf{S}_t$ is mapped under $\Phi_\mathbf{S}$ to a sequence in $\mathcal{R}^\mathcal{S}_t$ which converges to $y$. Hence, we may assume that $x_i$ is not contained $\mathcal{R}^\mathbf{S}_t$ for all $i \geq 0$.

The remainder of the proof is very similar to Case i), and hence, it will be omitted.

\vspace{2.5mm}

\noindent \emph{Case iv)} By proposition \ref{num seq siegel}, there exists a unique maximal nested puzzle sequences $\Pi^\mathbf{S}_t = \{P^\mathbf{S}_{[s_k, t_k]}\}_{k=0}^\infty$ whose limit contains $x$. Let $D_r(x)$ be a disc of radius $r > 0$ centered at $x$. Since $x$ is not contained the puzzle partition $\mathcal{P}^\mathbf{S}_n$ of any level $n \geq 0$, it follows that for every $k \geq 0$, there exists $r_k >0$ sufficiently small such that $D_r(x) \subset P^\mathbf{S}_{[s_k, t_k]}$. Thus, there exists $N_k \geq 0$ such that $\{x_i\}_{i=N_k}^\infty$ is contained in $P^\mathbf{S}_{[s_k, t_k]}$. It is easy to see that the sequence of image points $\{y_i = \Phi_\mathbf{S}(x_i)\}_{i=N_k}^\infty$ must be contained in the corresponding puzzle piece $P_{[s_k, t_k]}$ for $R_\nu$. Since the nested puzzle sequence $\Pi_t = \{P_{[s_k, t_k]}\}_{k=0}^\infty$ must shrink to $y$, the result follows.
\end{proof}

\begin{prop} \label{ext angle match}
Let $t \in \mathbb{R} /\mathbb{Z}$, and let $x \in J_\mathbf{B}$ and $y \in J_\mathbf{S}$ be the landing point of the external ray for $f_\mathbf{B}$ and $f_\mathbf{S}$ with external angle $-t$ and $t$ respectively. Then $\Phi_\mathbf{B}(x) = \Phi_\mathbf{S}(y)$.
\end{prop}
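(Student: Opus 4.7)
The plan is to identify $\Phi_\mathbf{B}(x)$ and $\Phi_\mathbf{S}(y)$ as the shrink point of a common nested puzzle sequence for $R_\nu$, using the angular-span matching of proposition \ref{angular span} to bridge the two sides.

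First I would unravel the conventions. By proposition \ref{shrink to ray}, the landing point $x$ of the external ray $\mathcal{R}^\infty_{-t}$ lies in $L(\Pi^\mathbf{B}_t)$, where $\Pi^\mathbf{B}_t = \{P^\mathbf{B}_{[s_k,t_k]}\}_{k=0}^\infty$ is a maximal nested puzzle sequence for $f_\mathbf{B}$ of external angle $t$ (so $t = \bigcap_k [s_k,t_k]$). By the construction of the extension, $\Phi_\mathbf{B}(x)$ is the shrink point of the corresponding nested puzzle sequence $\Pi^{(\mathcal{B})}_t = \{P_{[s_k,t_k]}\}_{k=0}^\infty$ for $R_\nu$, where each $P_{[s_k,t_k]}$ is the puzzle piece of $R_\nu$ whose angular span \emph{with respect to $\mathcal{B}$} equals $[s_k,t_k]$. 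Symmetrically, proposition \ref{shrink to ray 2} places $y$ in $L(\Pi^\mathbf{S}_t)$ where $\Pi^\mathbf{S}_t = \{P^\mathbf{S}_{[s'_k,t'_k]}\}_{k=0}^\infty$ has external angle $t$, and $\Phi_\mathbf{S}(y)$ is the shrink point of the sequence $\Pi^{(\mathcal{S})}_t = \{P_{[s'_k,t'_k]}\}_{k=0}^\infty$ whose pieces have angular span $[s'_k, t'_k]$ \emph{with respect to $\mathcal{S}$}.

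Now I would apply proposition \ref{angular span}: for every puzzle piece of $R_\nu$, its angular span with respect to $\mathcal{B}$ coincides with its angular span with respect to $\mathcal{S}$. Since both $[s_k,t_k]$ and $[s'_k,t'_k]$ are the unique dyadic interval of length $2^{-k}$ containing $t$, they agree, and hence $\Pi^{(\mathcal{B})}_t$ and $\Pi^{(\mathcal{S})}_t$ are literally the same nested sequence $\Pi_t$ of puzzle pieces of $R_\nu$. By the shrinking theorem (theorem \ref{total shrinkage}) together with proposition \ref{maximal shrink}, $\Pi_t$ shrinks to a single point, so
\begin{displaymath}
\Phi_\mathbf{B}(x) \;=\; \bigcap_{k=0}^\infty P_{[s_k,t_k]} \;=\; \Phi_\mathbf{S}(y).
\end{displaymath}

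The only mild subtlety is the case when $t$ is a dyadic rational, so that $x$ is an iterated preimage of $\mathbf{k}_\mathbf{B}$ (and $y$ of $\mathbf{k}_\mathbf{S}$) and there are two maximal nested puzzle sequences through each of $x$ and $y$. In this case both choices give nested $R_\nu$-sequences converging to the common landing point of $\mathcal{R}^\mathcal{B}_t$ and $\mathcal{R}^\mathcal{S}_t$, which by proposition \ref{base} (together with its preimages under iteration) is a single point; proposition \ref{well defined} then ensures $\Phi_\mathbf{B}$ and $\Phi_\mathbf{S}$ are unambiguously defined, and the argument above again delivers the equality. I do not expect any real obstacle beyond bookkeeping; the entire content of the proposition is the angular-span matching result already proved, once the shrinking theorem is available.
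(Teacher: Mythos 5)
Your proof is correct and follows essentially the same route as the paper's: identify $x$ and $y$ via propositions \ref{shrink to ray} and \ref{shrink to ray 2} as lying in the limits of nested puzzle sequences with the same angular spans, then use the shrinking theorem and the definitions of the extended $\Phi_\mathbf{B}$ and $\Phi_\mathbf{S}$ to equate both to the shrink point of the corresponding sequence $\Pi_t$ for $R_\nu$. The paper's proof is terser because it has already baked proposition \ref{angular span} into the notation $P_{[t_1,t_2]}$ (so a single angular span is unambiguous), whereas you spell out that step explicitly; your separate treatment of the dyadic case is also sound but not strictly needed, since proposition \ref{well defined} already guarantees the extension is unambiguous whichever maximal sequence is chosen.
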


\begin{proof}
Consider the nested puzzle sequences $\Pi^\mathbf{B}_{t} = \{P^\mathbf{B}_{[s_k, t_k]}\}_{k=0}^\infty$, $\Pi^\mathbf{S}_t = \{P^\mathbf{S}_{[s_k, t_k]}\}_{k=0}^\infty$ and $\Pi_t = \{P_{[s_k, t_k]}\}_{k=0}^\infty$. By proposition \ref{shrink to ray} and \ref{shrink to ray 2}, $\Pi^\mathbf{B}_{t}$ and $\Pi^\mathbf{S}_t$ shrink to $x$ and $y$ respectively. Let $z$ be the point that $\Pi_t$ shrinks to. By definition, $\Phi_\mathbf{B}(x) = z = \Phi_\mathbf{S}(y)$.
\end{proof}

\begin{proof} [Proof of theorem B] \hspace{1mm}

\vspace{3mm}

We verify the conditions in proposition \ref{mating def}. Let $f_{c_1} = f_\mathbf{B}$, $f_{c_2} = f_\mathbf{S}$, $\Lambda_1 = \Phi_\mathbf{B}$, $\Lambda_2 = \Phi_\mathbf{S}$, and $R = R_\nu$. Clearly, conditions (ii) and (iii) are satisfied. It remains to check condition (i).

Let $\tau_\mathbf{B} : \mathbb{R} / \mathbb{Z} \to J_\mathbf{B}$ and $\tau_\mathbf{S} : \mathbb{R} / \mathbb{Z} \to J_\mathbf{S}$ be the Carath\'eodory loop for $f_\mathbf{B}$ and $f_\mathbf{S}$ respectively (refer to section \ref{defnmating} for the definition of Carath\'eodory loop). Define $\sigma_\mathbf{B}(t) := \tau_\mathbf{B}(-t)$. By proposition \ref{ext angle match}, the following diagram commutes:
\comdia{\mathbb{R}/\mathbb{Z}}{J_\mathbf{B}}{J_\mathbf{S}}{J(R_\nu)}{\sigma_\mathbf{B}}{\Phi_\mathbf{S}}{\tau_\mathbf{S}}{\Phi_\mathbf{B}}
It follows that if $z \sim_{ray} w$, then $z$ and $w$ are mapped to the same point under $\Phi_\mathbf{B}$ or $\Phi_\mathbf{S}$.

To check the converse, it suffices to prove that for $z, w \in J_\mathbf{S}$, if $\Phi_\mathbf{S}(z) = \Phi_\mathbf{S}(w) = x \in J(R_\nu)$, then $z \sim_{ray} w$. First, observe that $\Phi_\mathbf{S}$ maps iterated preimages of $0$ homeomorphically onto the iterated preimages of $1$. Similarly, $\Phi_\mathbf{S}$ maps iterated preimages of $\mathbf{k}_\mathbf{S}$ homeomorphically onto the iterated preimages of $\kappa$. Now, by proposition \ref{num sequence}, two distinct maximal nested sequences for $R_\nu$ shrink to $x$ if and only if $x$ is an iterated preimage of $1$, $\kappa$ or $\beta$. If $x$ is an iterated preimage of $1$ or $\kappa$, then $z$ must be equal to $w$. If $x$ is an iterated preimage of $\beta$, then $z \sim_{ray} w$.
\end{proof}

\end{document}